%% LyX 2.1.3 created this file.  For more info, see http://www.lyx.org/.
%% Do not edit unless you really know what you are doing.
\documentclass[english]{article}
\usepackage[T1]{fontenc}
\usepackage[latin9]{inputenc}
\usepackage{color}
\usepackage{babel}
\usepackage{amsthm}
\usepackage{amsmath}
\usepackage{amssymb}
\usepackage{graphicx}
\usepackage[authoryear]{natbib}
\usepackage[unicode=true,pdfusetitle,
 bookmarks=true,bookmarksnumbered=false,bookmarksopen=false,
 breaklinks=true,pdfborder={0 0 0},backref=false,colorlinks=true]
 {hyperref}

\makeatletter

%%%%%%%%%%%%%%%%%%%%%%%%%%%%%% LyX specific LaTeX commands.
%% A simple dot to overcome graphicx limitations
\newcommand{\lyxdot}{.}

%%%%%%%%%%%%%%%%%%%%%%%%%%%%%% Textclass specific LaTeX commands.
  \theoremstyle{definition}
  \newtheorem{defn}{\protect\definitionname}
  \theoremstyle{remark}
  \newtheorem{rem}{\protect\remarkname}
  \theoremstyle{plain}
  \newtheorem{prop}{\protect\propositionname}
  \theoremstyle{plain}
  \newtheorem{cor}{\protect\corollaryname}
\theoremstyle{plain}
\newtheorem{thm}{\protect\theoremname}

\@ifundefined{date}{}{\date{}}
%%%%%%%%%%%%%%%%%%%%%%%%%%%%%% User specified LaTeX commands.
\usepackage{amsthm}\usepackage{mathrsfs}\usepackage{bm}\usepackage{fullpage}\usepackage{setspace}\usepackage{subfigure}\usepackage{color}

\bibliographystyle{elsart-harv}
\bibpunct{(}{)}{;}{a}{,}{,}

\usepackage{setspace}\usepackage[stable]{footmisc}
 \usepackage{subfigure}

\theoremstyle{plain}
\hypersetup{citecolor=blue}

 \DeclareRobustCommand*\textsubscript[1]{%
          \@textsubscript{\selectfont#1}}
        \def\@textsubscript#1{%
          {\m@th\ensuremath{_{\mbox{\fontsize\sf@size\z@#1}}}}}

\theoremstyle{remark}

\title{Parsimonious shooting heuristic for trajectory control of connected automated traffic part I: Theoretical analysis with generalized time geography}
\author{Fang Zhou$^{\mbox{a}}$, Xiaopeng Li$^{\mbox{a}}$\footnote{Corresponding author. Tel: 662-325-7196, E-mail: xli@cee.msstate.edu.}, Jiaqi Ma$^{\mbox{b}}$\\ 
a. Department of Civil and Environmental Engineering,\\
\quad Mississippi State University, MS 39762, USA\\
b. Transportation Solutions and Technology Applications Division,\\
\quad  Leidos, Inc., Reston, VA 20190, USA
}
%\date{February 28, 2009}

\makeatother

  \providecommand{\definitionname}{Definition}
  \providecommand{\propositionname}{Proposition}
  \providecommand{\remarkname}{Remark}
\providecommand{\corollaryname}{Corollary}
\providecommand{\theoremname}{Theorem}

\begin{document}
\maketitle
\begin{abstract}
This paper studies a problem of controlling trajectories of a platoon
of vehicles on a highway section with advanced connected and automated
vehicle technologies. This problem is very complex because each vehicle
trajectory is essentially an infinite-dimensional object and neighboring
trajectories have complex interactions (e.g., car-following behavior).
A parsimonious shooting heuristic algorithm is proposed to construct
vehicle trajectories on a signalized highway section that comply with
the boundary condition for vehicle arrivals, vehicle mechanical limits,
traffic lights and vehicle following safety. This algorithm breaks
each vehicle trajectory into a few segments that each is analytically
solvable. This essentially decomposes the original hard trajectory
control problem to a simple constructive heuristic. Then we slightly
adapt this shooting heuristic algorithm to one that can efficiently
solve the leading vehicle problem on an uninterrupted freeway. To
study theoretical properties of the proposed algorithms, the time
geography theory is generalized by considering finite accelerations.
With this generalized theory, it is found that under mild conditions,
these algorithms can always obtain a feasible solution to the original
complex trajectory control problem. Further, we discover that the
shooting heuristic solution is a generalization of the solution to
the classic kinematic wave theory by incorporating finite accelerations.
We identify the theoretical bounds to the difference between the shooting
heuristic solution and the kinematic wave solution. Numerical experiments
are conducted to verify the theoretical results and to draw additional
managerial insights into the potential of trajectory control in improving
traffic performance. In summary, this paper provides a methodological
and theoretical foundation for advanced traffic control by optimizing
the trajectories of connected and automated vehicles. Built upon this
foundation, an optimization framework will be presented in a following
paper as Part II of this study. 
\end{abstract}

\section{Introduction}

\subsection{Background}

As illustrated by the trajectories in the time-space diagram in Figure
\ref{fig:motivating_example}(a), traffic on a signalized arterial
is usually forced to decelerate and accelerate abruptly as a result
of alternating green and red lights. When traffic density is relatively
high, stop-and-go traffic patterns will be formed and propagated backwards
along so-called shock waves. Similar stop-and-go traffic also occurs
frequently on freeways even without explicit signal interruptions.
Such stop-and-go traffic imposes a number an adverse impacts to highway
performance. Obviously, vehicles engaged in abrupt stop-and-go movements
are exposed to a high crash risk \citep{hoffmann1994drivers}, not
to mention extra discomfort to drivers \citep{beard2013discomfort}.
Also, frequent decelerations and accelerations cause excessive fuel
consumption and emissions \citep{li2014stop}, which pose a severe
threat to the urban environment. Further, when vehicles slow down
or stop, the corresponding traffic throughput decreases and the highway
capacity drops \citep{Cassidy1999}, which can cause excessive travel
delay.

\begin{figure}
\begin{centering}
\includegraphics{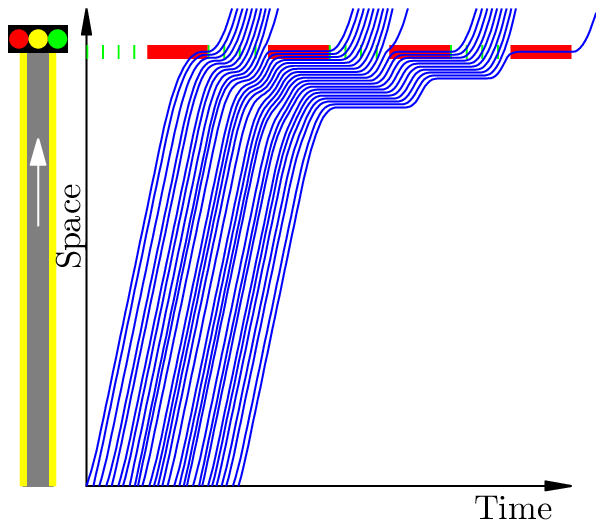}\includegraphics{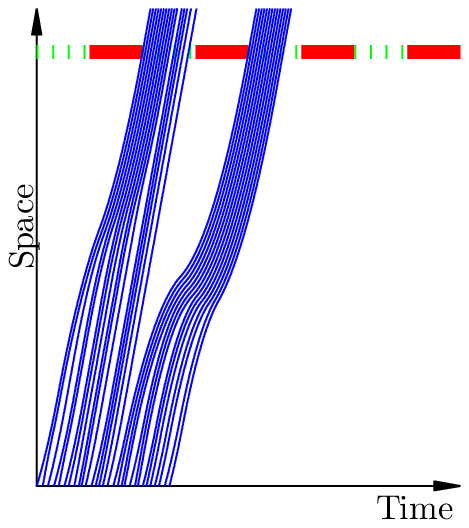}
\par\end{centering}

\begin{centering}
(a)\quad{}\quad{}\quad{}\quad{}\quad{}\quad{}\quad{}\quad{}\quad{}\quad{}\quad{}\quad{}\quad{}\quad{}(b)
\par\end{centering}

\protect\caption{Vehicle trajectories along a highway section upstream of an signalized
intersection: (a) benchmark manual vehicle trajectories; (b) smoothed
automated vehicle trajectories. \label{fig:motivating_example}}
\end{figure}

Although stop-and-go traffic has been intensively studied in the context
of freeway traffic with either theoretical models (e.g., \citet{Herman1958,Bando1995,Li09c})
or empirical observations (e.g., \citet{Kuhne1987,Kerner1996,Mauch2002,ahn2005,Li09c,Laval11}),
few studies had investigated how to smooth traffic and alleviate corresponding
adverse consequences on signalized highways until the advent of vehicle-based
communication (e.g., connected vehicles or CV) and control (e.g.,
automated vehicles or AV) technologies. CV basically enables real-time
information sharing and communications among individual vehicles and
infrastructure control units\footnote{http://www.its.dot.gov/connected\_vehicle/connected\_vehicle.htm.}.
AV aims to replacing a human driver with a robot that constantly receives
environmental information via various sensor technologies (as compared
to human eyes and ears) and consequently determines vehicle control
decisions (e.g., acceleration and braking) with proper computer algorithms
(as compared to human brains) and vehicle control mechanics (as compared
to human limbs)\footnote{http://en.wikipedia.org/wiki/Autonomous\_car}.
The combination of these two technologies, which is referred as connected
and automated vehicles (CAV), essentially enables disaggregated control
(or coordination) of individual vehicles with real-time vehicle-to-vehicle
and vehicle-to-infrastructure communications. Before these technologies,
highway vehicle dynamics was essentially determined by microscopic
human driving behavior. However, there is not even a universally accepted
formulation of human driving behavior \citep{treiber2010three} due
to the unpredictable nature of humans \citep{Kerner1996} and limited
empirical data to comprehensively describe such behavior \citep{daganzo1999possible}.
Therefore, it was very challenging, if not completely impossible,
to perfectly smooth vehicle trajectories with traditional infrastructure-based
controls (e.g., traffic signals) that are designed to accommodate
human behavior. Whereas CAV enables replacing (at least partially)
human drivers with programmable robots whose driving algorithms can
be flexibly customized and accurately executed. This opens up a range
of opportunities to control individual vehicle trajectories in ways
that cooperate with aggregated infrastructure-based controls so as
to optimize both individual drivers' experience and overall traffic
performance. These opportunities inspired several pioneering studies
to explore how to utilize CAV to improve mobility and safety at intersections
\citep{dresner2008,Lee2012} and reduce environmental impacts along
highway segments \citep{ahn2013ecodrive,yang2014control}. However,
these limited studies mostly focus on controlling one or very few
vehicles at a particular highway facility (e.g., either an intersection
or a segment) to achieve a certain specific objective (e.g., stability,
safety or fuel consumption) rather than smoothing a stream of vehicles
to improve its overall traffic performance. Mos of the developed control
algorithms require sophisticated numerical computations and their
real-time applications might be hindered by excessive computational
complexities. 

This study aims to propose a new CAV-based traffic control framework
that controls detailed trajectory shapes of a stream of vehicles on
a stretch of highway combining a one-lane section and a signalized
intersection. As illustrated in Figure \ref{fig:motivating_example}(b),
the very basic idea of this study is smoothing vehicle trajectories
and clustering them to platoons that can just properly occupy the
green light windows and pass the intersection at a high speed. Note
that a higher passing speed indicates a larger intersection capacity,
and thus we see that the CAVs in Figure \ref{fig:motivating_example}(b)
not only have much smoother trajectories but also spend much less
travel times compared with the benchmark manual vehicles in Figure
\ref{fig:motivating_example}(a). Further, smoother trajectories imply
safer traffic, less fuel consumption, milder emissions, and better
driver experience. While the research idea is intuitive, the technical
development is quite sophisticated, because this study needs to manipulate
continuous trajectories that not only individually have infinite control
points but also have complex interactions between one another due
to the shared rights of way. In order to overcome these modeling challenges,
we first partition each trajectory into a few parabolic segments that
each is analytically solvable. This essentially reduces an infinite-dimensional
trajectory into a few set of parabolic function parameters. Further,
we only use four acceleration and deceleration variables that are
nonetheless able to control the overall smoothness of the whole stream
of vehicle trajectories while assuring the exceptional parsimony and
simplicity of the proposed algorithm. With these treatments, we propose
an efficient shooting heuristic algorithm that can generate a stream
of smooth and properly platooned trajectories that can pass the intersection
efficiently and safely yielding minimum environmental impacts. Also,
note that this algorithm can be easily adapted to freeway speed harmonization
as well because the freeway trajectory control problem is essentially
a special case of the investigated problem with an infinite green
time. We investigate a lead vehicle problem to study this freeway
adaption. After generalizing the concept of time geography \citep{miller2005measurement}
by allowing finite acceleration and deceleration, we discover a number
of elegant properties of these algorithms in the feasibility to the
original trajectory control problem and the connection with classic
traffic flow models. 

This Part I paper focuses construction of parsimonious feasible algorithms
and analysis of related theoretical properties. We want to note that
the ultimate goal of this whole study is to establish a methodology
framework that determines the best trajectory vectors under several
traffic performance measures, such as travel time, fuel consumption,
emission and safety surrogate measures. While this paper also qualitatively
discusses optimality issues with visual patterns in trajectory plots,
we leave the detailed computational issues and the overall optimization
framework to the Part II paper \citep{Ma2015}.

\subsection{Literature Review}

Freeway traffic smoothing has drawn numerous attentions from both
academia and industry in the past several decades. Numerous studies
have been conducted in attempts to characterize stop-and-go traffic
on freeway \citep{Herman1958,Chandler1958,Kuhne1987,Bando1995,Kerner1997,Bando1998,Kerner1998,Mauch2002,ahn2005,Li09c,Laval11}
\citet{Herman1958,Bando1995,Li09c}. However, probably due to the
lack of high resolution trajectory data \citep{daganzo1999possible},
no consensus has been formed on fundamental mechanisms of stop-and-go
traffic formation and propagation, particularly at the microscopic
level \citep{treiber2010three}. To harmonize freeway traffic speed,
scholars and practitioners have proposed and tested a number of infrastructure-based
control methods mostly targeting at aggregated traffic (rather than
individual vehicles), including variable speed limits \citep{lu2014review},
ramp metering\citep{hegyi2005model}, and merging traffic control
\citep{spiliopoulou2009toll}. While theoretical results show that
these speed harmonization methods can drastically improve traffic
performance in all major performance measures, e.g., safety, mobility
and environmental impacts \citep{islam2013assessing,yang2014control},
field studies show the performances of these methods exhibit quite
some discrepancies \citep{bham2010evaluation}. Probably due to limited
understandings of microscopic behavior of highway traffic, these field
practices of speed harmonization are mostly based on empirical experience
and trial-and-error approaches without taking full advantage of theoretical
models. Also, drivers may not fully comply with the speed harmonization
control and their individual responses may be highly stochastic, which
further comprises the actual performance of these control strategies.
Therefore, these aggregated infrastructure-based traffic smoothing
measures may not perform as ideally as theoretical model predictions. 

With the advance of vehicle-based communication (i.e., CV) and control
(i.e., AV) technologies, researcher started exploring ways of freeway
traffic smoothing by controlling individual vehicles. \citet{schwarzkopf1977control}
analytically solved the optimal trajectory of a single vehicle on
certain grade profiles with simple assumptions of vehicle characteristics
based on Pontryagin's minimum principle. \citet{hooker1988optimal}
instead proposed a simulation approach that capture more realistic
vehicle characteristics. \citet{Van_Arem2006} finds that traffic
flow stability and efficiency at a merge point can be improved by
cooperative adapted cruise control (a longitudinal control strategy
of CAV) that smooths car-following movements. \citet{liu2012reducing}
solved an optimal trajectory for one single vehicle and used this
trajectory as a template to control multiple vehicles with variable
speed limits. \citet{ahn2013ecodrive} proposed an rolling-horizon
individual CAV control strategy that minimizes fuel consumption and
emission considering roadway geometries (e.g., grades). \citet{yang2014control}
proposed a vehicle speed control strategy to mitigate traffic oscillation
and reduce vehicle fuel consumption and emission based on connected
vehicle technologies. They found with only a 5 percent compliance
rate, this control strategy can reduce traffic fuel consumption by
up to 15 percent. Wang et. al. \citeyearpar{wang2014rolling-non-coop,wang2014rolling-coop}
proposed optimal control models based on dynamic programming and Pontryagin's
minimum principle that determine accelerations of a platoon of AVs
or CAVs to minimize a variety of objective cost functions. \citet{li2014stop}
revised a classic manual car-following model into one for CAV following
by incorporating CAV features such as faster responding time and shared
information. They found the CAV following rules can significantly
reduce magnitudes of traffic oscillation, emissions and travel time.
Despite relatively homogenous settings and complex algorithms, these
adventurous developments have demonstrated a great potential of these
advanced technologies in improving freeway mobility, safety and environment.

Despite these fruitful developments on the freeway side, traffic smoothing
on interrupted highways (i.e., with at-grade intersections) is a relative
recent concept. This concept is probably motivated by recent CAV technologies
that allow vehicles paths to be coordinated with signal controls.
The existing traffic smooth studies for interrupted highways can be
in general categorized into two types. The first type assumes that
CAVs can communicate with each other to pass an intersection in a
self-organized manner (e.g., like a school of fish) even without conventional
traffic signals. For example, \citet{dresner2008} proposes a heuristic
control algorithm that process vehicles as a queuing system. While
this development probably performs excellently when traffic is light
or moderate, its performance under dense traffic is yet to be investigated.
Further, \citet{lee2012development} proposes a nonlinear optimization
model to test the limits of a non-stop intersection control scheme.
They show that ideally, the optimal non-stop intersection control
can significantly outperform classic signalized control in both mobility
and environmental impacts at different congestion levels. \citet{zohdy2014intersection}
integrates an embedded car-following rule and an intersection communication
protocol into an nonlinear optimization model that manages a non-stop
intersection. This model considers different weather conditions, heterogeneous
vehicle characteristics and varying market penetrations. Overall,
these developments on non-stop unsignalized control usually only focus
on the operations of vehicles in the vicinity of an intersection and
require complicated control algorithms and simulation. How to implement
this complex mathematical programming model in real-time application
might need further investigations. 

The second type of studies for interrupted traffic smoothing consider
how to design vehicles trajectories in compliance with existing traffic
signal controls at intersections. The basic ideal is that a vehicle
shall slow down from a distance when it is approaching to a red light
so that this vehicle might be able to pass the next green light following
a relatively smooth trajectory without an abrupt stop. Trayford et
al. \citeyearpar{trayford1984fuel2,trayford1984fuel1} tested using
speed advice to vehicles approaching to an intersection so as to reduce
fuel consumption with computer simulation. Later studies extend the
speed advice approach to car-following dynamics \citep{sanchez2006predicting},
in-vehicle traffic light assistance \citep{iglesias2008i2v,wu2010energy}
multi-intersection corridors \citep{mandava2009arterial,guan2013predictive,de2013eco},
scaled-up simulation \citep{tielert2010impact}, and electric vehicles
\citep{wu2015Energy}. These approaches mainly focused on the bulky
part of a vehicle's trajectory with constant cruise speeds without
much tuning its microscopic acceleration. However, acceleration detail
actually largely affects a vehicle's fuel consumption and emissions
\citep{rakha2011eco}. To address this issue, \citet{kamalanathsharma2013multi}
proposes an optimization model that considers a more realistic yet
more sophisticated fuel-consumption objective function in smoothing
a single vehicle trajectory at an signalized intersection. While such
a model captures the advantage from microscopically tuning vehicle
acceleration, it requires a complicated numerical solution algorithm
that takes quite some computation resources even for a single trajectory. 

In summary, there have been increasing interests in vehicle smoothing
using advanced vehicle-based technologies in recent years. However,
most relevant studies only focus on controlling one or a few individual
vehicles. Most studies either ignore acceleration detail and allow
speed jumps to assure the model computational tractability or capture
acceleration in very sophisticated algorithms that are difficult to
be simply implemented in real time. Further, few studies investigated
theoretical properties of the proposed controls and their relationships
with classic traffic flow theories. Without such theoretical insights,
we would miss the great opportunity of transferring the vast elegant
developments on existing manual traffic in the past few decades to
future CAV traffic. 

This proposed trajectory optimization framework aims to fill these
research gaps. We investigate a general trajectory control problem
that optimizes individual trajectories of a long stream of interactive
CAVs on a signalized highway section. This problem is general such
that the lead vehicle problem on a freeway can be represented as its
special case (e.g., by setting the red light time to zero). This problem
is a very challenging infinite-dimension nonlinear optimization problem,
and thus it is very hard to solve its exact optimal solution. We instead
propose a heuristic shooting algorithm to solve an near-optimum solution
to this problem. This algorithm can be easily extended to the freeway
lead vehicle problem. While the proposed algorithms can flexibly control
trajectory shapes by tuning acceleration across a broad range, it
is extremely parsimonious: it compresses a trajectory into a very
few number of analytical segments, and includes only a few acceleration
levels as control variables. With such parsimony and simplicity, these
algorithms expect to be quite suitable for real-time applications
and further adaptations. The simple structure of these algorithms
also allows us to analyze its theoretical properties. By extending
the traditional time-geography theory to a second-order version that
considers finite acceleration, we are able to analytically investigate
the feasibility of the proposed algorithms and the implication to
the feasibility of the original problem. This novel extension also
allows us to relate the trajectories solved by the lead vehicle problem
algorithm to those generated by classic traffic flow models. This
helps us reveal the fundamental commonalities between these two highway
traffic management paradigms based on completely different technologies. 

This paper is organized as follows. Section \ref{sec:Problem-Statement}
states the studied CAV trajectory optimization problem on a signalized
highway section and its variant for the lead vehicle problem on a
freeway segment. Section \ref{sec:Shooting-Heuristic-Algorithms}
describes the proposed shooting heuristic algorithms for the original
problem and its variant, respectively. Section \ref{sec:Theoretical-Properties-of}
analyzes the theoretical properties of the proposed algorithms based
on an extended time-geography theory. Section \ref{sec:Numerical-Examples}
demonstrates the proposed algorithms and their properties with a few
illustrative examples. Section \ref{sec:Conclusion} concludes this
paper and briefly discusses future research directions.

\section{Problem Statement\label{sec:Problem-Statement}}

\subsection{Primary Problem}

This section describes the primary problem of trajectory construction
on a signalized one-lane highway segment, as illustrated in Figure
\ref{fig:problem_statement}. The problem setting is described below.

\begin{figure}
\begin{centering}
\includegraphics[width=0.6\textwidth]{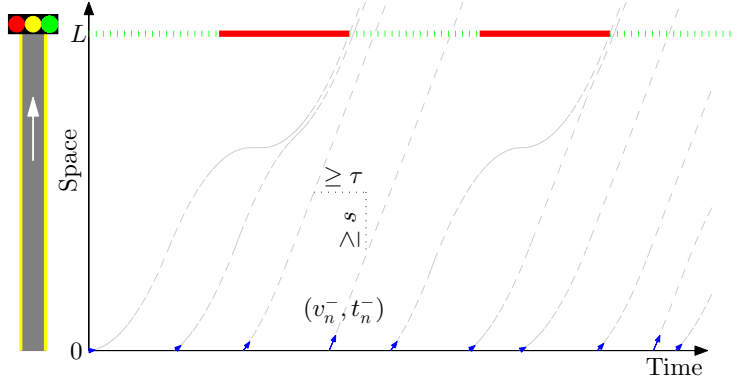}
\par\end{centering}

\protect\caption{Illustration of the studied problem. \label{fig:problem_statement}}
\end{figure}

\begin{description}
\item [{Roadway}] \textbf{Geometry}: We consider a single-lane highway
section of length $L$. Location on this segment starts from 0 at
the upstream and ends at $L$ at the downstream. We use location set
$[0,L]$ to denote this highway section. Traffic goes from location
$0$ to $L$ on this section. A fixed-time traffic light is installed
at location $L.$ The effective green phase starts at time $0$ with
an duration of $G$, followed by an effective red phase of duration
$R$, and this pattern continues all the way. This indicates the signal
cycle time is always $C:=R+G$. We denote the set of green time intervals
as $\mathcal{G}:=\left\{ \left[g_{m}:=mC,r_{m}:=mC+G\right)\right\} _{m\in\mathbb{Z}^{+}}$
where $\mathbb{Z}^{+}$ is the non-negative integer set. We define
function 
\[
G(t)=\min\{t'\in\mathcal{G},t'>t\},\forall t\in[-\infty,\infty],
\]
 which identifies the next closest green time to $t$. Note that $G(t)=t$
if $t\in\mathcal{G}$ or $G(t)>t$ if $t\notin\mathcal{G}$. 
\item [{Vehicle}] \textbf{Characteristics}: We consider a stream of $N$
identical automated vehicles indexed as $n\in\mathcal{N}:=\left\{ 1,2,\cdots,N\right\} $.
Each vehicle's acceleration at any time is no less than deceleration
limit $\underline{a}<0$ and no greater than acceleration limit $\overline{a}>0$.
The speed limit on this segment is $\bar{v}$, and we don't allow
a vehicle to back up, thus a vehicle's speed range is $[0,\overline{v}]$. 
\end{description}
We want to design a set of trajectories for these vehicles to follow
on this highway section. A trajectory is formally defined below.
\begin{defn}
A \emph{trajectory} is defined as a second-order semi-differentiable
function $p(t),\forall t\in(-\infty,\infty)$ such that its first
order differential (or \emph{velocity}) $\dot{p}(t)$ is absolutely
continuous and its second-order right-differential $\ddot{p}(t)$
(or \emph{acceleration}) is Riemann integrable over any $t\in(-\infty,\infty)$.
We denote the set of all trajectories by $\bar{\mathcal{T}}$. We
call the subsection of $p$ between times $t^{-}$ and $t^{+}$ ($-\infty\le t^{-}<t^{+}\le\infty$)
a\emph{ trajectory section}, denoted by $p\left(t^{-}:t^{+}\right)$. 
\end{defn}
We let function $p_{n}$ denote the trajectory of vehicle $n,\forall n\in\mathcal{N}$.
At any time $t$, $p_{n}(t)$ essentially denotes the location of
vehicle $n$'s front bumper. Collectively we denote all vehicle trajectories
by \emph{trajectory vector} $\mathbf{p}:=\left[p_{n}(t)\right]_{n\in\mathcal{N}}$.
The trajectories in vector $\mathbf{p}$ shall satisfy  the following
constraints.
\begin{description}
\item [{Kinematic}] \textbf{Constraints}: Trajectory $p$ is\emph{ kinetically
feasible} if $\dot{p}(t)\in[0,\bar{v}]$ and $\ddot{p}(t)\in\left[\underline{a},\bar{a}\right],\forall t\in(-\infty,\infty)$.
We denote the set of all kinetically feasible trajectories as 
\begin{equation}
\mathcal{T}:=\left\{ p\in\mathcal{\bar{T}}\left|0\le\dot{p}(t)\le\bar{v},\underline{a}\le\ddot{p}(t)\le\bar{a},\forall t\in\left(-\infty,\infty\right)\right.\right\} .\label{eq:set-kinematic-constraints}
\end{equation}

\item [{Entry}] \textbf{Boundary Condition}: Let $t_{n}^{-}$ and $v_{n}^{-}$
denote the time and the speed when vehicle $n$ arrives at the entry
of this segment (or location 0), $\forall n\in\mathcal{N}$. We require
$t_{1}^{-}<t_{2}^{-}<\cdots<t_{N}^{-}$ and separation between $t_{n-1}$
and $t_{n}$ is sufficient for the safety requirement, $\forall n\in\mathcal{N}\backslash\{1\}$.
Define the subset of trajectories in $\mathcal{T}$ that are consistent
with vehicle $n$'s entry boundary condition as 
\begin{equation}
\mathcal{T}_{n}^{-}:=\left\{ p\in\mathcal{T}\left|p(t_{n}^{-})=0,\dot{p}(t_{n}^{-})=v_{n}^{-}\right.\right\} ,\forall n\in\mathcal{N}.\label{eq: set-entry-boundary}
\end{equation}

\item [{Exit}] \textbf{Boundary Condition}: Due to the traffic signal at
location $L$, vehicles can only exit this section during a green
light. Denote the set of trajectories in $\mathcal{T}$ that pass
location $L$ during a green signal phase by 
\begin{equation}
\mathcal{T}^{+}:=\left\{ p\in\mathcal{T}\left|p^{-1}(L)\in\mathcal{G}\right.\right\} ,\forall n\in\mathcal{N},\label{eq: set-exit-boundary}
\end{equation}
where the generalized inverse function is defined as $p^{-1}(l):=\inf\left\{ t\left|p(t)\ge l\right.\right\} ,\forall l\in\left[0,\infty\right),p\in\mathcal{T}$.
Note that function $p^{-1}(\cdot)$ shall satisfy the following properties:
\end{description}
\vspace{0.01\baselineskip} 

\textbf{P1}: Function $p^{-1}(l)$ is increasing with $l\in\left(-\infty,\infty\right)$;

\textbf{P2}: Due to speed limit $\bar{v}$, $p^{-1}(l+\delta)\ge p^{-1}(l)+\delta/\bar{v},\forall l\in\left(-\infty,\infty\right),\delta\in[0,\infty)$.

Further, let $\mathcal{T}_{n}$ denote the set of trajectories that
satisfy both vehicle $n$'s entry and exit boundary conditions, i.e.,
$\mathcal{T}_{n}=\mathcal{T}_{n}^{-}\bigcap\mathcal{T}^{+}$. 
\begin{description}
\item [{Car-following}] \textbf{Safety}: We require that the separation
between vehicle $n$'s location at any time and its preceding vehicle
$(n-1$)'s location a communication delay $\tau$ ago is no less than
a jam spacing $s$ (which usually includes the vehicle length and
a safety buffer), $\forall n\in\mathcal{N}\backslash\{1\}$. For a
genetic trajectory $p\in\mathcal{T}$ , a \emph{safely following trajectory
}of $p$ is a trajectory $p'\in\mathcal{T}$ such that $p'(t-\tau)-p(t)\ge s,\forall t\in(-\infty,\infty)$.
We denote the set of all safely following trajectories of $p$ as
$\mathcal{F}(p)$, i.e., 
\begin{multline}
\mathcal{F}(p):=\left\{ p'\left|p'(t-\tau)-p(t)\ge s,\forall t\in(-\infty,\infty)\right.\right\} ,\forall p\in\mathcal{T}.\label{eq: set-safety-constraints}
\end{multline}
With this, we define $\mathcal{T}_{n}(p_{n-1}):=\mathcal{F}(p_{n-1})\bigcap\mathcal{T}_{n}$
that denotes the set of feasible trajectories for vehicle $n$ that
are safely following given vehicle $(n-1)$'s trajectory $p_{n-1}$. 
\end{description}
In summary, a feasible lead vehicle's trajectory $p_{1}$ has to fall
in $\mathcal{T}_{1}$, and any feasible following vehicle trajectory
$p_{n}$ has to belong to $\mathcal{T}_{n}(p_{n-1}),\forall n\in\mathcal{N}\backslash\{1\}$
. We say a trajectory vector $\mathbf{p}$ is \emph{feasible} if it
satisfies all above-defined constraints. Let $\mathcal{P}$ denote
the set of all\emph{ }feasible trajectory vectors, i.e., 
\begin{equation}
\mathcal{P}:=\left\{ \mathbf{p}:=\left[p_{n}\right]{}_{n\in\mathcal{N}}\left|p_{1}\in\mathcal{T}_{1},p_{n}\in\mathcal{T}_{n}\left(p_{n-1}\right),\forall n\in\mathcal{N}\backslash{1}\right.\right\} .\label{eq:P_feasible_platoon}
\end{equation}
The primary problem (PP) investigated in this paper is finding and
analyzing feasible solutions to $\mathcal{P}$. 
\begin{rem}
Although a realistic vehicle trajectory only has a limited length,
we set a trajectory's time horizon to $\left(-\infty,\infty\right)$
to make the mathematical presentation convenient without loss of generality.
In our study, we are only interested in trajectory sections between
locations $0$ and $L$, i.e., $p_{n}\left(t_{n}^{-},p_{n}^{-1}(L)\right).$
Therefore, we can just view $p_{n}(-\infty,t_{n}^{-})$ as the given
trajectory history that leads to the entry boundary condition, and
$p_{n}\left(p_{n}^{-1}(L),\infty\right)$ as some feasible yet trivial
projection above location $L$ (e.g., accelerating to $\bar{v}$ with
rate $\bar{a}$ and then cruising at $\bar{v}$). Further, with this
extension, safety constraint \eqref{eq: set-safety-constraints} also
ensures that vehicles did not collide before arriving location $0$
and will not collide after exiting location $L$.
\end{rem}

\begin{rem}
This study can be trivially extended to static yet time-variant signal
timing; i.e., the signal timing plan is pre-determined, yet different
cycles could have different green and red durations, e.g., alternating
like $G_{1},R_{1},G_{2},R_{2},\cdots$. In this case, define signal
timing switch points $g_{m}=\sum_{i=1}^{m}\left(G_{i}+R_{i}\right),$
$r_{m}=\sum_{i=1}^{m}\left(G_{i}+R_{i}\right)+G_{i},$ $\forall m\in\mathbb{Z}^{+}$
and $\mathcal{G}$ becomes $\left\{ \left[g_{m},r_{m}\right)\right\} _{\forall m=1,2}$
where $r_{0}:=0$, and all the following results shall remain valid. 
\end{rem}

\subsection{Problem Variation: Lead-Vehicle Problem}

In the classic traffic flow theory, the lead-vehicle problem (LVP)
is a well-known fundamental problem that predicts traffic flow dynamics
on one-lane freeway given the lead vehicle's trajectory and the following
vehicles' initial states \citep{Daganzo2006}. We notice that PP \eqref{eq:P_feasible_platoon}
can be easily adapted to LVP by relaxing exit boundary condition \eqref{eq: set-exit-boundary}
yet fixing trajectory $p_{1}$. The LVP is officially formulated as
follows. Given lead vehicle's trajectory $p_{1}\in\mathcal{T}$, the
set of feasible trajectories for LVP is 

\begin{equation}
\mathcal{P}^{\mbox{LVP}}\left(p_{1}\right):=\left\{ \mathbf{p}:=\left[p_{n}\right]{}_{n\in\mathcal{N}}\left|p_{n}\in\mathcal{F}(p_{n-1})\bigcap\mathcal{T}_{n}^{-},\forall n\in\mathcal{N}\backslash{1}\right.\right\} .\label{eq:P_feasible_platoon_LVP}
\end{equation}
The LVP investigated in this paper is finding and analyzing feasible
solutions to $\mathcal{P}^{\mbox{LVP}}$.

\section{Shooting Heuristic Algorithms \label{sec:Shooting-Heuristic-Algorithms}}

This section proposes customized heuristic algorithms to solve feasible
trajectory vectors to PP and LVP. Although a trajectory is defined
over the entire time horizon $(-\infty,\infty)$, these algorithms
only focus on the trajectory sections from the entry time $t_{n}^{-}$
for each vehicle $n\in\mathcal{N},$ because the trajectory sections
before $t_{n}^{-}$ should be trivial given history and do not affect
the algorithm results. Therefore, in the following presentation, we
view a trajectory and the corresponding trajectory section over time
$[t_{n}^{-},\infty)$ the same.

\subsection{Shooting Heuristic for PP}

This section presents a shooting heuristic (SH) algorithm that is
able to construct a smooth and feasible trajectory vector to solve
$\mathcal{P}$ in PP very efficiently. Traditional methods for trajectory
optimization include analytical approaches that can only solve simple
problems with special structures and numerical approaches that can
accommodate more complex settings yet may demand enormous computation
resources \citep{von1992direct}. Since a vehicle trajectory is essentially
an infinite-dimensional object along which the state (e.g. location,
speed, acceleration) at every point can be varied, it is challenging
to even construct one single trajectory, particularly under nonlinear
constraints. Note that our problem deals with a large number of trajectories
for vehicles in a traffic stream that constantly interact with each
other and are subject to complex nonlinear constraints \eqref{eq:set-kinematic-constraints}-\eqref{eq:P_feasible_platoon}.
Therefore we deem that it is very complex and time-consuming to tackle
this problem with a traditional approach. Therefore, we opt to devise
a new approach that circumvents the need for formulating high-dimensional
objects or complex system constraints. This leads to the development
of a shooting heuristic (SH) algorithm that can efficiently construct
a smooth feasible trajectory vector with only a few control parameters.

Figure \ref{fig:shooting-process} illustrates the components in the
proposed SH algorithm. Basically, for each vehicle $n\in\mathcal{N}$,
SH first constructs a trajectory, denoted by $p_{n}^{\mbox{f}}$ ,
with a forward shooting process that conforms with kinematic constraint
\eqref{eq:set-kinematic-constraints}, entry boundary constraint \eqref{eq: set-entry-boundary}
and safety constraint \eqref{eq: set-safety-constraints} (if $n>1$).
As illustrated in Figure \ref{fig:shooting-process}(a), if trajectory
$p_{n}^{\mbox{f}}$ (dashed blue curve) turns out far enough from
preceding trajectory $p_{n-1}$ (red solid curve) such that safety
constraint \eqref{eq: set-safety-constraints} is even not activated
(or if $n=1$ and $p_{n}^{\mbox{f}}$ is already the lead trajectory),
$p_{n}^{\mbox{f}}$ basically accelerates from its entry boundary
condition $\left(t_{n}^{-},v_{n}^{-}\right)$ at location 0 with a
forward acceleration rate of $\bar{a}^{\mbox{f}}\in(0,\bar{a}]$ until
reaching speed limit $\bar{v}$ and then cruises at constant speed
$\bar{v}$. Otherwise, as illustrated in Figure \ref{fig:shooting-process}(b),
if trajectory $p_{n}^{\mbox{f}}$ is blocked by $p_{n-1}$ due to
safety constraint \eqref{eq: set-safety-constraints}, we just let
$p_{n}^{\mbox{f}}$ smoothly merge into a safety bound (the red dotted
curve) translated from $p_{n-1}$ that just keeps spatial separation
$s$ and temporal separation $\tau$ from $p_{n}^{\mbox{f}}$. The
transitional segment connecting $p_{n}^{\mbox{f}}$ with the safety
bound decelerates at a forward deceleration rate of $\underline{a}^{\mbox{f}}\in[\underline{a},0)$.
If trajectory $p_{n}^{\mbox{f}}$ from the forward shooting process
is found to violate exit boundary constraint \eqref{eq: set-exit-boundary}
(or run into the red light), as illustrated in Figure \ref{fig:shooting-process}(c),
a backward shooting process is activated to revise $p_{n}^{\mbox{f}}$
to comply with constraint \eqref{eq: set-exit-boundary}. The backward
shooting process first shifts the section of $p_{n}^{\mbox{f}}$ above
location $L$ rightwards to the start of the next green phase to be
a backward shooting trajectory $p_{n}^{\mbox{b}}$ . Then $p_{n}^{\mbox{b}}$
shoots backwards from this start point at a backward acceleration
rate $\bar{a}^{\mbox{b}}\in(0,\bar{a}]$ until getting close enough
to merge into $p_{n}^{\mbox{f}}$ , which may require $p_{n}^{\mbox{b}}$
stops for some time if the separation between the backward shooting
start point and $p_{n}^{\mbox{f}}$ is long relative to acceleration
rate $\bar{a}^{\mbox{b}}$ . Then, $p_{n}^{\mbox{b}}$ shoots backwards
a merging segment at a backward deceleration rate of $\underline{a}^{\mbox{b}}\in[\underline{a},0)$
until getting tangent to $p_{n}^{\mbox{f}}$. Finally, merging $p_{n}^{\mbox{f}}$
and $p_{n}^{\mbox{b}}$ yields a feasible trajectory $p_{n}$ for
vehicle $n$. Such forward and backward shooting processes are executed
from vehicle $1$ through vehicle $N$ consecutively, and then SH
concludes with a feasible trajectory vector $\mathbf{p}=\left[p_{n}\right]{}_{n\in\mathcal{N}}\in\mathcal{P}.$
Note that SH only uses four control variables $\left\{ \bar{a}^{\mbox{f}},\underline{a}^{\mbox{f}},\bar{a}^{\mbox{b}},\underline{a}^{\mbox{b}}\right\} $
that are yet able to control the overall smoothness of all trajectories
(so as to achieve certain desired objectives). Further, the constructed
trajectories are composed of only a few quadratic (or linear) segments
that are all analytically solvable. Therefore, the proposed SH algorithm
is parsimonious and simple to implement.

\begin{figure}
\begin{centering}
\includegraphics[width=0.33\textwidth]{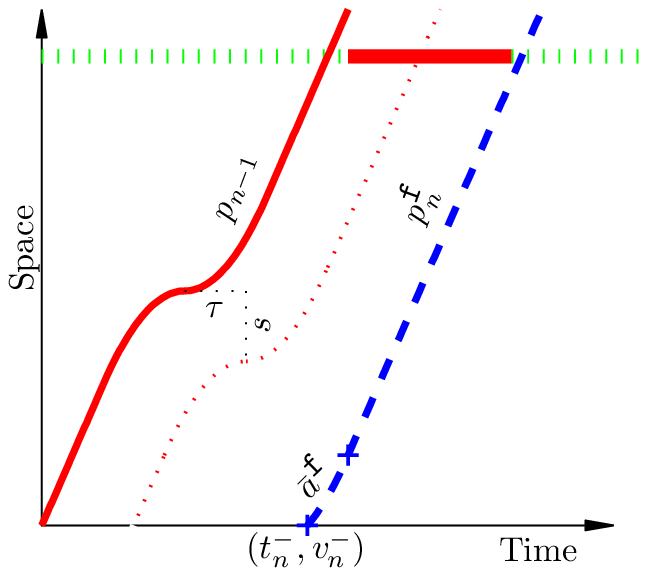}\includegraphics[width=0.33\textwidth]{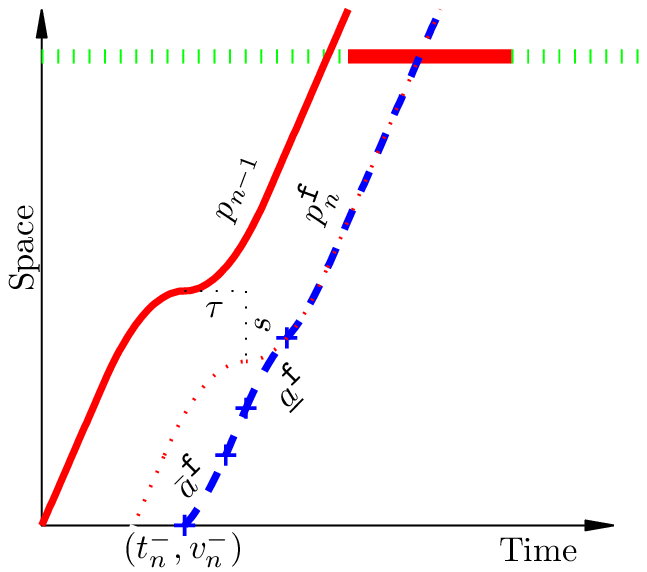}\includegraphics[width=0.33\textwidth]{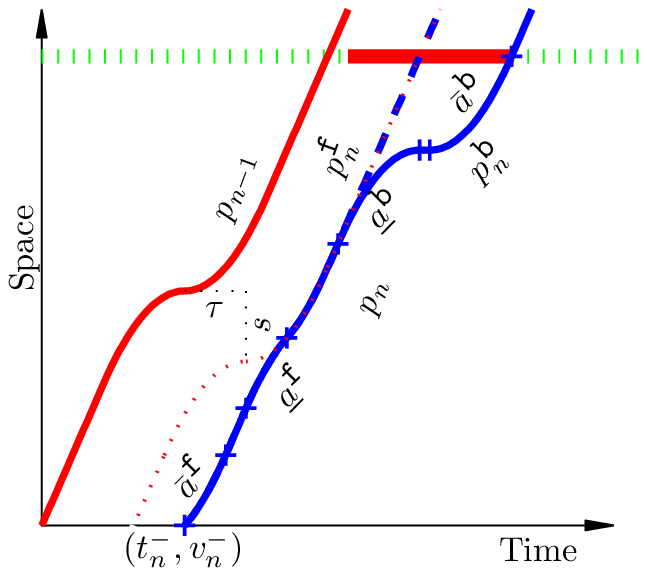}
\par\end{centering}

\begin{centering}
(a)\quad{}\quad{}\quad{}\quad{}\quad{}\quad{}\quad{}\quad{}\quad{}\quad{}\quad{}\quad{}\quad{}\quad{}(b)\quad{}\quad{}\quad{}\quad{}\quad{}\quad{}\quad{}\quad{}\quad{}\quad{}\quad{}\quad{}\quad{}\quad{}(c)
\par\end{centering}

\protect\caption{Components in the proposed shooting heuristic: (a) forward shooting
process without activating safety constraint \eqref{eq: set-safety-constraints};
(b) forward shooting process with activating safety constraint \eqref{eq: set-safety-constraints};
and (c) backward shooting process. \label{fig:shooting-process}. }
\end{figure}

To formally state SH, we first define the following terminologies
in Definitions \ref{def: state_point}-\ref{def: elemental_segment}
with respect to a single trajectory, as illustrated in Figure \ref{fig:seg_def}(a). 
\begin{defn}
We define a \emph{state point} by a three-element tuple $(l,v,t')$
, which represents that at time $t'$, the vehicle is at location
$l$ and operates at speed $v$. A \emph{feasible state point} $(l,v,t')$
should satisfy $v\in[0,\bar{v}]$.\label{def: state_point}
\end{defn}

\begin{defn}
We use a four-element tuple $(l,v,a,t')$ to denote the \emph{quadratic
function} that passes location $l$ at time $t'$ with velocity $v$
and acceleration $a$; i.e., $0.5a(t-t')^{2}+v(t-t')+l$ with respect
to time variable $t\in(-\infty,\infty)$. Note that this quadratic
function definition also includes a linear function (i.e., $a=0$).
For simplicity, we can use a boldface letter to denote a quadratic
function, e.g., $\mathbf{f}:=(l,v,a,t')$ and $\mathbf{f}(t):=0.5a(t-t')^{2}+v(t-t')+l$.
\end{defn}

\begin{defn}
We use a five-element tuple $\mathbf{s}:=(l,v,a,t',t'')$ to denote
a segment of quadratic function $\mathbf{f}=(l,v,a,t')$ between time
$\min\left\{ t',t''\right\} $ and $\max\left\{ t',t''\right\} $.
We call this tuple a \emph{quadratic segment}. Define $\mathbf{s}(t):=\mathbf{f}(t),\forall t\in\left[\min\left\{ t',t''\right\} ,\max\left\{ t',t''\right\} \right]$.
If the speed and the acceleration on every point along this segment
satisfy constraint \eqref{eq:set-kinematic-constraints}, we call
it a \emph{feasible quadratic segment}.  \label{def: elemental_segment}\end{defn}
\begin{rem}
In Definitions \ref{def: state_point}-\ref{def: elemental_segment},
if one or more elements in a tuple are unknown or variable, we use
``$\cdot$'' to hold their places (e.g., $(l,\cdot,t')$, $(l,v,\cdot,t',\cdot)$
). 
\end{rem}
\begin{figure}
\begin{centering}
\includegraphics[width=0.33\textwidth]{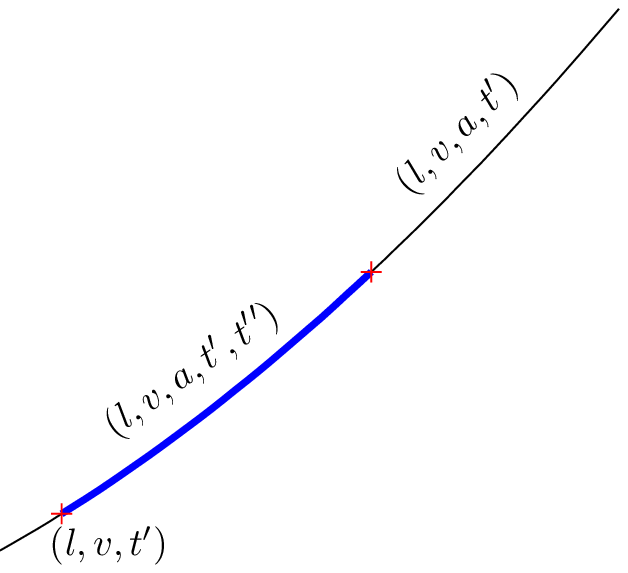}\includegraphics[width=0.33\textwidth]{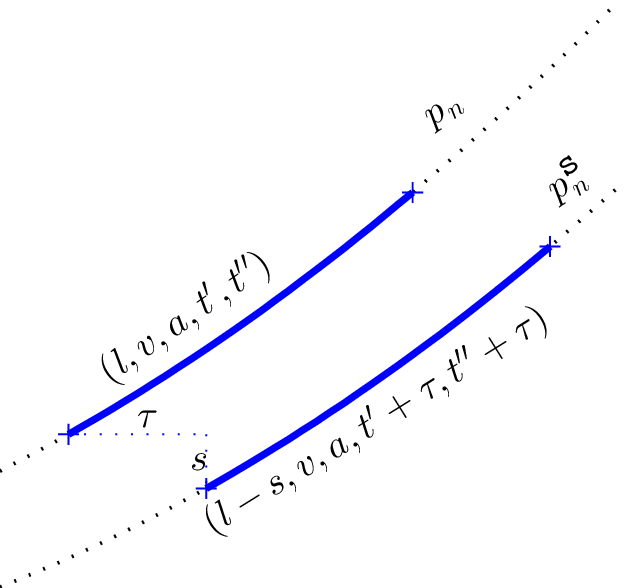}\includegraphics[width=0.33\textwidth]{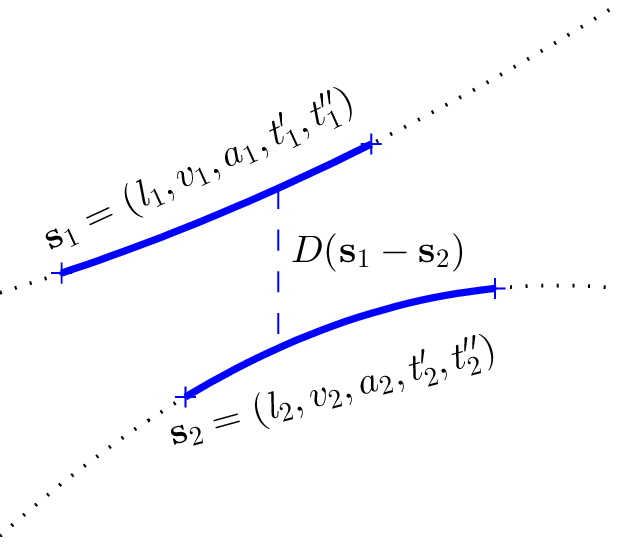}
\par\end{centering}

\begin{centering}
(a)\quad{}\quad{}\quad{}\quad{}\quad{}\quad{}\quad{}\quad{}\quad{}\quad{}\quad{}\quad{}\quad{}\quad{}(b)\quad{}\quad{}\quad{}\quad{}\quad{}\quad{}\quad{}\quad{}\quad{}\quad{}\quad{}\quad{}\quad{}\quad{}(c)
\par\end{centering}

\protect\caption{Illustrations of definitions ($x$-axis for time elapsing rightwards
and $y$-axis for location increasing upwards): (a) state point, quadratic
function and segment; (b) shadow trajectory and shadow segment; and
(c) segment distance. \label{fig:seg_def}. }
\end{figure}

\begin{defn}
For a trajectory $p\in\bar{\mathcal{T}}$ composed by a vector of
consecutive quadratic segments $\mathbf{s}_{k}:=$ $[(l_{k},v_{k},a_{k},t_{k},t_{k+1})]_{k=1,2,\cdots,\bar{k}}$
with $-\infty=t_{1}<t_{2}<\cdots<t_{\bar{k}+1}=\infty$, we denote
$p=\left[\mathbf{s}_{k}\right]_{k=1,2,\cdots,\bar{k}}$. Without loss
of generality, we assume that any trajectory in this study can be
decomposed into a vector of quadratic segments.
\end{defn}
During the forward shooting process, we need to check safety constraints
\eqref{eq: set-safety-constraints} at every move for any vehicle
$n\ge2$. As illustrated in Figure \ref{fig:seg_def}(b), we basically
create a shadow (or safety bound) of the preceding trajectory $p_{n-1}$
by shifting $p_{n-1}$ downwards by $s$ and rightwards by $\tau$.
Then safety constraint \eqref{eq: set-safety-constraints} is simply
equivalent to that $p_{n}$ does not exceed this shadow trajectory
at any time. The following definition specifies the shadow trajectory
and its elemental segments. 
\begin{defn}
For a trajectory $p_{n}(t),\forall t\in(-\infty,\infty)$, we define
its \emph{shadow trajectory} $p_{n}^{\mbox{s}}$ as $p_{n}^{\mbox{s}}(t):=p_{n}(t-\tau)-s,\forall t\in(-\infty,\infty)$.
It is obvious that $t^{-}\left(p_{n}^{\mbox{s}}\right)=t_{n}^{-}+\tau$
and $t^{+}\left(p_{n}^{\mbox{s}}\right)=\infty$. Note that if the
following trajectory $p_{n+1}(t)$ initiated at time $t_{n+1}^{-}$
satisfies $p_{n+1}(t)\le p_{n}^{\mbox{s}},\forall t\in(-\infty,\infty)$,
then $p_{n}$ and $p_{n+1}$ satisfies safety constraint \eqref{eq: set-safety-constraints}.
Further, a \emph{shadow segment} of $\mathbf{s}:=(l,v,a,t',t'')$
is simply $\mathbf{s}^{\mbox{s}}:=(l-s,v,a,t'+\tau,t''+\tau)$. We
also generalize this definition to the $m^{\mbox{th}}$\emph{-order
shadow trajectory} of $p_{n}$ as $p_{n}^{\mbox{s}^{m}}(t):=p_{n}(t-m\tau)-sm,\forall t\in(-\infty,\infty)$
and $m^{\mbox{th}}$\emph{-order shadow segment} $\mathbf{s}^{\mbox{s}^{m}}:=(l-ms,v,a,t'+m\tau,t''+m\tau)$,
which are essentially the results of repeating the shadow operation
by $m$ times.
\end{defn}

The following definitions specify an analytical function that checks
the distance between two segments (e.g., the current segment to be
constructed and a reference shadow segment in forward shooting), as
illustrated in Figure \ref{fig:seg_def}(c).

\begin{defn}
\label{def:trajectory_distance} Given two segments $\mathbf{s}_{1}:=\left(l_{1},v_{1},a_{1},t'_{1},t''_{1}\right)$,
$\mathbf{s}_{2}:=\left(l_{2},v_{2},a_{2},t'_{2},t''_{2}\right)$,
define \emph{segment distance} from $\mathbf{s}_{1}$ to $\mathbf{s}_{2}$
as 

\[
D\left(\mathbf{s}_{1}-\mathbf{s}_{2}\right):=\begin{cases}
\min_{t\in\left[t^{-},t^{+}\right]}\mathbf{s}_{1}(t)-\mathbf{s}_{2}(t), & \mbox{if }t^{-}\le t^{+};\\
\infty, & \mbox{otherwise,}
\end{cases}
\]
where $t^{-}:=\max\left\{ \min\{t'_{1},t''_{1}\},\min\{t'_{2},t''_{2}\}\right\} $
and $t^{+}:=\min\left\{ \max\{t'_{1},t''_{1}\},\max\{t'_{2},t''_{2}\}\right\} $.
If $t^{-}\le t^{+}$, $D\left(\mathbf{s}_{1}-\mathbf{s}_{2}\right)$
can be solved analytically as 

\[
D\left(\mathbf{s}_{1}-\mathbf{s}_{2}\right):=\begin{cases}
\mathbf{s}_{1}\left(t^{*}\left(\mathbf{s}_{1}-\mathbf{s}_{2}\right)\right)-\mathbf{s}_{2}\left(t^{*}\left(\mathbf{s}_{1}-\mathbf{s}_{2}\right)\right),\mbox{ if }a_{1}-a_{2}>0\mbox{ and }t^{*}\left(\mathbf{s}_{1}-\mathbf{s}_{2}\right)\in(t^{-},t^{+})\\
\min\left\{ \mathbf{s}_{1}\left(t^{-}\right)-\mathbf{s}_{2}\left(t^{-}\right),\mathbf{s}_{1}\left(t^{+}\right)-\mathbf{s}_{2}\left(t^{+}\right)\right\} , & \mbox{otherwise}.
\end{cases}
\]
where 
\begin{equation}
t^{*}\left(\mathbf{s}_{1}-\mathbf{s}_{2}\right):=\frac{v_{1}-a_{1}t'_{1}-v_{2}+a_{2}t'_{2}}{a_{2}-a_{1}}.\label{eq:t_min_dist}
\end{equation}
We also extend the distance definition to trajectory sections and
trajectories below. Given two trajectory sections $p(t^{-}:t^{+})$
and $p'(t'^{-}:t'^{+}),$ the\emph{ }distance from $p(t^{-}:t^{+})$
to $p'(t'^{-}:t'^{+})$ is defined as 
\[
D(p(t^{-}:t^{+})-p'(t'^{-}:t'^{+})):=\min_{t\in[\max(t^{-},t'^{-}),\min(t^{+},t'^{+})]}p(t)-p'(t)
\]
if $\max(t^{-},t'^{-})\le\min(t^{+},t'^{+})$ or $D(p(t^{-}:t^{+})-p'(t'^{-}:t'^{+})):=\infty$.
Suppose these two sections can be partitioned into quadratic segments,
i.e., $p(t^{-}:t^{+})=\left[\mathbf{s}_{k}\right]_{k=1,2,\cdots,\bar{k}}$
and $p'(t'^{-}:t'^{+})=\left[\mathbf{s}'_{k'}\right]_{k'=1,2,\cdots,\bar{k}'}$,
then 

\[
D\left(p(t^{-}:t^{+})-p'(t'^{-}:t'^{+})\right)=\min_{k=1,\cdots,\bar{k},k'=1,\cdots,\bar{k}',}D(\mathbf{s}_{k}-\mathbf{s}'_{k'}).
\]
Note that function $D(\cdot)$ has a transitive relationship; i.e.,
$D(A-B)\ge D_{AB}$ and $D(B-C)\ge D_{BC}$ indicates $D(A-C)\ge D_{AB}+D_{BC}$. 
\end{defn}

\begin{defn}
\label{def: forward_shooting } Next, we define an analytical operation
that determines how we take a move in the forward shooting process,
as illustrated in Figure \ref{fig:shooting_operation}(a). Given a
quadratic segment $\mathbf{s}':=\left(l',v',a',t'^{-},t'^{+}\right)$
with $t'^{-}<t'^{+}$, a feasible state point $\left(l,v,t^{-}\right)$
with $t^{-}<t'^{+}$, acceleration rate $a^{+}\ge0$ and deceleration
rate $a^{-}<0$ (and $a^{-}\le a'$), we want to construct a \emph{forward
shooting segment} $\mathbf{s}:=\left(l,v,a^{+},t^{-},t^{\mbox{m}}\right)$
followed by a \emph{forward merging segment} $\mathbf{s}^{\mbox{m}}:=\left(l^{\mbox{m}},v^{\mbox{m}},a^{-},t^{\mbox{m}},t^{+}\right)$
where $v^{\mbox{m}}:=v+a^{+}(t^{\mbox{m}}-t^{-})$, $l^{\mbox{m}}:=p+v(t^{\mbox{m}}-t^{-})+0.5a^{+}(t^{\mbox{m}}-t^{-})^{2}$
with $t^{+}\ge t^{\mbox{m}}\ge t^{-}$ in the following way. We basically
select $t^{-}$ and $t^{m}$ values to make $\mbox{\ensuremath{\mathbf{s}}}$
and $\mathbf{s}^{\mbox{m}}$ satisfy the following conditions. First,
we want to keep $\mathbf{s}'$ above $\mathbf{s}$ and the segment
extended from $\mathbf{s}^{\mbox{m}}$ to time $\infty$, i.e., 
\begin{equation}
D\left[\mathbf{s}'-\mathbf{s}\right]\ge0,\mbox{ and }D\left[\mathbf{s}'-\left(p^{\mbox{m}},v^{\mbox{m}},a^{-},t^{\mbox{m}},\infty\right)\right]\ge0.\label{eq:non_cross_condition}
\end{equation}
Further, the exact values of $t^{\mbox{m}}$ and $t^{+}$ shall be
determined in the following three cases: (I) if no $t^{\mbox{m}}\in[t^{-},\infty)$
can be found to satisfy constraint \eqref{eq:non_cross_condition},
this shooting operation is infeasible and return $t^{m}=t^{+}=-\infty$;
(II) otherwise, we try to find $t^{+}\in\left[\max\left\{ t'^{-},t^{-}\right\} ,t'^{+}\right]$
and $t^{\mbox{m}}\in[t^{-},t^{+}]$ such that $\mathbf{s}'$ and $\mathbf{s}^{\mbox{m}}$
get tangent at time $t^{+}$; and (III) if this trial fails, set $t^{\mbox{m}}=t^{+}=\infty$.
Fortunately, since these segments are all simple quadratic segments,
$t^{\mbox{m}}$ and $t^{+}$ can be solved analytically in the following
\emph{forward shooting operation (FSO) }algorithm, where the final
solutions to $t^{\mbox{m}}$ and $t^{+}$ are denoted by $t^{\mbox{mf}}\left(\mathbf{s}',\left(l,v,t^{-},a^{+}\right),a^{-}\right)$
and $t^{\mbox{+f}}\left(\mathbf{s}',\left(l,v,t^{-},a^{+}\right),a^{-}\right)$,
respectively.\end{defn}
\begin{description}
\item [{FSO-1:}] If $D\left(\mathbf{s}'-\bar{\mathbf{s}}\right)<0$ where
$\bar{\mathbf{s}}:=\left(l,v,a^{-},t^{-},\infty\right)$, there is
no feasible solution, and we just set $t^{\mbox{m}}=t^{+}=-\infty$
(Case I). Go to Step FSO-3.
\item [{FSO-2:}] Shift the origin to time $t^{-}$ and denote $\hat{t}'^{-}:=t'^{-}-t^{-}$,
$\hat{t}'^{+}:=t'^{+}-t^{-}$, $\hat{t}^{\mbox{m}}:=t^{\mbox{m}}-t^{-}$,
$\hat{t}^{\mbox{+}}:=t^{\mbox{+}}-t^{-}$ and $\hat{t}^{-}:=\max\{\hat{t}'^{-},0\}$.
Then get a quadratic function $\mathbf{q}$ by subtracting $\left(l^{\mbox{m}},v^{\mbox{m}},a^{-},t^{\mbox{m}}\right)$
from $(l',v',a',t'^{-})$, i.e., $\mathbf{q}:=\left(\hat{l},\hat{v},a'-a^{-},0\right)$
where 
\[
\hat{l}:=0.5a'\left(\hat{t}'^{-}\right)^{2}+0.5\left(a^{+}-a^{-}\right)\left(\hat{t}^{\mbox{m}}\right)^{2}-a'\hat{t}'^{-}+l'-l,
\]
and 
\[
\hat{v}=v'-v-a'\hat{t}'^{-}-\left(a^{+}-a^{-}\right)\hat{t}^{\mbox{m}}.
\]

\item [{FSO-2-1:}] If $a'=a^{-}$, test whether we can make $\mathbf{q}(t)=0,\forall t\in[\hat{t}^{-},\infty)$,
i.e., whether $\hat{l}=0$ with $\hat{t}^{\mbox{m}}=\left(v'-v-a'\hat{t}'^{-}\right)/\left(a^{+}-a^{-}\right)$.
If yes and $\hat{t}^{\mbox{m}}\in\left[\hat{t}^{-},\hat{t}'^{+}\right]$
(Case II), set $t^{\mbox{m}}=t^{-}+\hat{t}^{\mbox{m}}$ and $t^{\mbox{+}}=t'^{+}$.
Otherwise (Case III), set $t^{\mbox{+}}=t^{\mbox{m}}=\infty$. Go
to Step FSO-3.
\item [{FSO-2-2:}] If $a^{'}>a^{-}$,then $\mathbf{q}$ is a convex quadratic
function, and we need to solve $\alpha\left(\hat{t}^{\mbox{m}}\right)^{2}+\beta\hat{t}^{\mbox{m}}+\gamma=0$
where $\alpha:=\left(a^{+}-a^{-}\right)\left(a^{+}-a'\right)$, $\beta:=-2\left(a^{+}-a^{-}\right)\left(v'-v-a'\hat{t}'^{-}\right)$
and $\gamma:=\left(v'-v-a'\hat{t}'^{-}\right)^{2}-(a'-a^{-})\left(a'\left(\hat{t}'^{-}\right)^{2}-2v'\hat{t}'^{-}+2l'-2l\right)$
. In case of $\alpha=\beta=0$ (Case III), set $t^{\mbox{+}}=t^{\mbox{m}}=\infty$,
and go to Step FSO-3. Otherwise, we need to try candidate solutions
to $\hat{t}^{\mbox{m}}$ and $\hat{t}^{\mbox{+}}$, denoted by $\hat{t}^{\mbox{mc}}$
and $\hat{t}^{+\mbox{c}}$, respectively. In case of $\alpha=0$ but
$\beta\neq0$, solve $\hat{t}^{\mbox{mc}}=-\gamma/\beta$ and 
\begin{equation}
\hat{t}^{+\mbox{c}}=\frac{v'-v-a'\hat{t}'^{-}-\left(a^{+}-a^{-}\right)\hat{t}^{\mbox{mc}}}{a^{-}-a'}.\label{eq:t_hat_plus}
\end{equation}
Otherwise, we should have $\alpha\neq0$, and then solve both candidate
solutions
\begin{equation}
\hat{t}^{\mbox{mc}}=\frac{-\beta\pm\sqrt{\beta^{2}-4\alpha\gamma}}{2\alpha}\label{eq:t_hat_m}
\end{equation}
 and the corresponding $\hat{t}^{+\mbox{c}}$ with equation\eqref{eq:t_hat_plus}.
If the candidate solutions are not real numbers, then we just set
$\hat{t}^{\mbox{+c}}=\hat{t}^{\mbox{mc}}=\infty$. Otherwise, try
both sets of solutions and select the set satisfying $\hat{t}^{+\mbox{c}}\ge\hat{t}^{\mbox{mc}}\ge0$.
For either of these two cases, if $\hat{t}^{+\mbox{c}}\in\left[\hat{t}^{-},\hat{t}'^{+}\right]$
(Case II) , we set $t^{\mbox{m}}=t^{-}+\hat{t}^{\mbox{mc}}$ and $\hat{t}^{\mbox{+}}=\hat{t}^{\mbox{+c}}$.
Otherwise, set $t^{\mbox{+}}=t^{\mbox{m}}=\infty$ (Case III). Go
to Step FSO-3.
\item [{FSO-3:}] Finally, we return $t^{\mbox{mf}}\left(\mathbf{s}',\left(l,v,t^{-},a^{+}\right),a^{-}\right)=t^{\mbox{m}}$
and $t^{\mbox{+f}}\left(\mathbf{s}',\left(l,v,t^{-},a^{+}\right),a^{-}\right)=t^{\mbox{+}}$.
\end{description}
\begin{figure}
\begin{centering}
\includegraphics[width=0.33\textwidth]{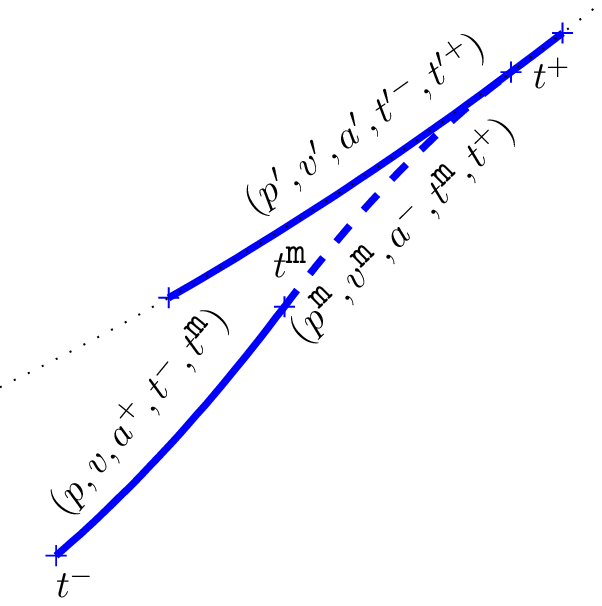}\quad{}\quad{}\quad{}\includegraphics[width=0.33\textwidth]{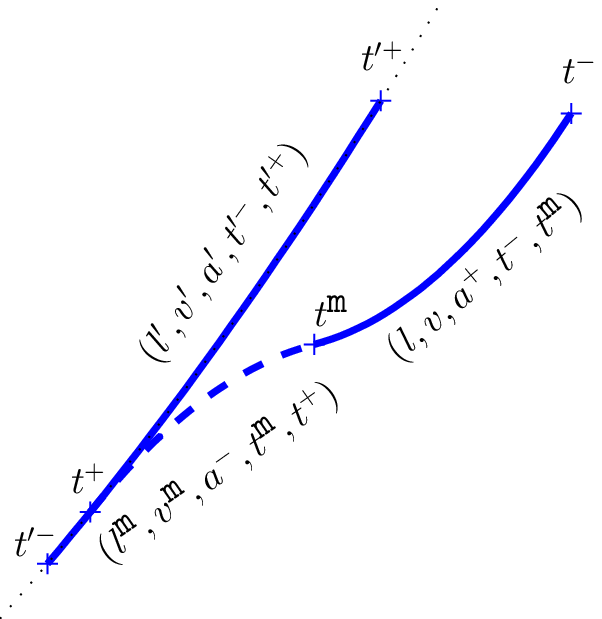}
\par\end{centering}

\begin{centering}
(a)\quad{}\quad{}\quad{}\quad{}\quad{}\quad{}\quad{}\quad{}\quad{}\quad{}\quad{}\quad{}\quad{}\quad{}\quad{}\quad{}(b)
\par\end{centering}

\protect\caption{Illustrations of shooting operations: (a) a forward shooting operation,
and (b) a backward shooting operation. \label{fig:shooting_operation}. }
\end{figure}

\begin{defn}
\label{def: FSP}We extend one forward shooting operation to the following
\emph{forward shooting process (FSP)} that generates a whole trajectory.
Given a shadow trajectory $p^{\mbox{s}}:=\left[\mathbf{s}_{h}^{\mbox{s}}:=\left(l_{h}^{\mbox{s}},v_{h}^{\mbox{s}},a_{h}^{\mbox{s}},t_{h}^{\mbox{s}},t_{h+1}^{\mbox{s}}\right)\right]{}_{h=1,\cdots,\bar{h}}.$
(with $t_{1}^{\mbox{s}}=-\infty$ and $t_{\bar{h}+1}^{\mbox{s}}=\infty$)
and a feasible entry state point $(l,v,t^{-})$, we basically want
to construct a \emph{forward shooting trajectory} $p^{\mbox{f}}((l,v,t^{-}),p^{\mbox{s}})$
that starts from $(l,v,t^{-})$ and maintains acceleration $\bar{a}^{\mbox{f}}$
or speed $\bar{v}$ until being bounded by $p^{\mbox{s}}$. We consider
a template trajectory starting at $(l,v,t^{-})$ and composed by these
two candidate segments $\mathbf{s}^{\mbox{a}}:=(l,v,\bar{a}^{\mbox{f}},t^{-},t^{\mbox{a}}:=t^{-}+(\bar{v}-v)/\bar{a}^{\mbox{f}})$
(which accelerates from $v$ to $\bar{v}$ given $v<\bar{v}$) and
$\mathbf{s}^{\mbox{\ensuremath{\infty}}}:=\left(l^{\mbox{a}}:=l+0.5\left(\bar{v}^{2}-v^{2}\right)/\bar{a}^{\mbox{f}},\bar{v},0,t^{\mbox{a}},\infty\right)$
(which maintains maximum speed $\bar{v}$ all the way), i.e.,

\[
p^{\mbox{t}}:=\begin{cases}
\left[\mathbf{s}^{\mbox{a}},\mathbf{s}^{\infty}\right], & \mbox{if }v<\bar{v};\\
\left[\mathbf{s}^{\infty}\right], & \mbox{if }v=\bar{v}.
\end{cases}
\]
Then $p^{\mbox{f}}((l,v,t^{-}),p^{\mbox{s}})$, if bounded by $p^{\mbox{s}}$,
shall first follow $p^{\mbox{t}}$ and then merge into $p^{\mbox{s}}$
with a merging segment $\left(p^{\mbox{t}}(t^{\mbox{m}}),\dot{p}^{\mbox{t}}(t^{\mbox{m}}),\underline{a}^{\mbox{f}},t^{\mbox{m}},t^{\mbox{+}}\right)$.
This can be solved analytically with the following FSP algorithm.\end{defn}
\begin{description}
\item [{FSP-1:}] Initiate $h=1$, $t^{+}=t^{\mbox{m}}=\infty$, $p^{\mbox{f}}=\emptyset$
and iterate through the segments in $p^{\mbox{s}}$ as follows.
\item [{FSP-2:}] If $v<\bar{v}$, apply the FSO algorithm to solve candidate
time points $t^{\mbox{mc}}:=t^{\mbox{mf}}\left(\mathbf{s}_{h}^{\mbox{s}},(l,v,\bar{a}^{\mbox{f}},t^{-}),\underline{a}^{\mbox{f}}\right)$
and $t^{+\mbox{c}}:=t^{\mbox{+f}}\left(\mathbf{s}_{h}^{\mbox{s}},(l,v,\bar{a}^{\mbox{f}},t^{-}),\underline{a}^{\mbox{f}}\right)$.
If $t^{\mbox{mc}}>t^{\mbox{a}}$, revise $t^{\mbox{mc}}:=(t^{\mbox{mf}}\mathbf{s}_{h}^{\mbox{s}},(l^{\mbox{a}},\bar{v},0,t^{\mbox{a}}),\underline{a}^{\mbox{f}})$
and $t^{\mbox{+c}}:=t^{\mbox{+f}}(\mathbf{s}_{h}^{\mbox{s}},(l^{\mbox{a}},\bar{v},0,t^{\mbox{a}}),\underline{a}^{\mbox{f}})$.
If $v=\bar{v}$, solve $t^{\mbox{mc}}:=t^{\mbox{mf}}(\mathbf{s}_{h}^{\mbox{s}},(l,\bar{v},0,t^{-}),\underline{a}^{\mbox{f}})$
and $t^{\mbox{+c}}:=(t^{\mbox{+f}}\mathbf{s}_{h}^{\mbox{s}},(l,\bar{v},0,t^{-}),\underline{a}^{\mbox{f}})$.
If $t^{\mbox{mc}}=-\infty$, the algorithm cannot find a feasible
solution and return $p^{\mbox{f}}((l,v,t^{-}),p^{\mbox{s}})=\emptyset$,
$t^{\mbox{mf}}((l,v,t^{-}),p^{\mbox{s}})=-\infty$ and $t^{+\mbox{f}}((p,v,t^{-}),p^{\mbox{s}})=-\infty$.
If $t^{\mbox{+c}}\in[t_{h}^{\mbox{s}},t_{h+1}^{\mbox{s}}]$, set $t^{\mbox{m}}=t^{\mbox{mc}}$
and $t^{+}=t^{+\mbox{c}}$, and go to the Step FSP-3. Otherwise, $t^{\mbox{+c}}$
shall be $\infty$. Then if $h<\bar{h}$, set $h=h+1$ and repeat
this step. 
\item [{FSP-3:}] If $v<\bar{v}$ and $t^{\mbox{a}}>t^{-}$, append segment
$(l,v,\bar{a}^{\mbox{f}},t^{-},\min(t^{\mbox{m}},t^{\mbox{a}}))$
to $p^{\mbox{f}}$ (appending means adding this segment as the last
element of $p^{\mbox{f}}$). If $t^{\mbox{m}}>t^{\mbox{a}},$ append
$(l^{\mbox{a}},\bar{v},0,t^{\mbox{a}},t^{\mbox{m}})$ to $p^{\mbox{f}}$.
If $t^{+}<t_{\bar{h}+1}^{\mbox{s}}$, which indicates $t^{+}\in[t_{h}^{\mbox{s}},t_{h+1}^{\mbox{s}}]$,
we first append segment $(l_{h}^{\mbox{s}}+v_{h}^{\mbox{s}}(t^{+}-t_{h}^{\mbox{s}})+0.5a_{h}^{\mbox{s}}(t^{+}-t_{h}^{\mbox{s}})^{2},v_{h}^{\mbox{s}}+a_{h}^{\mbox{s}}(t^{+}-t_{h}^{\mbox{s}}),a_{h}^{\mbox{s}},t^{+},t_{h+1}^{\mbox{s}})$
to $p^{\mbox{f}}$, and then append all segments $[\mathbf{s}_{h'}^{\mbox{s}}]_{h'=h+1,\cdots,\bar{h}}$
to $p^{\mbox{f}}$. 
\item [{FSP-4:}] Finally, return $p^{\mbox{f}}((l,v,t^{-}),p^{\mbox{s}})=p^{\mbox{f}}$,
$t^{\mbox{mf}}((l,v,t^{-}),p^{\mbox{s}})=t^{\mbox{m}}$ and $t^{+\mbox{f}}((p,v,t^{-}),p^{\mbox{s}})=t^{+}$.
\end{description}
\begin{defn}
\label{def:backward_shooting} This definition further specifies how
we take a symmetric move in the backward shooting, as illustrated
in Figure \ref{fig:shooting_operation}(b). Given a quadratic segment
$\mathbf{s}':=\left(l',v',a',t'^{-},t'^{+}\right)$ with $t'^{-}<t'^{+}$
(e.g., a segment generated from the forward shooting process), a feasible
state point $\left(l,v,t^{-}\right)$ with $t^{-}>t'^{-}$, acceleration
rate $a^{+}\ge0$ and deceleration rate $a^{-}<0$ (and $a^{-}\le a'$),
we construct a \emph{backward shooting segment} $\mathbf{s}:=\left(l,v,a^{+},t^{-},t^{\mbox{m}}\right)$
preceded by a \emph{backward merging segment} $\mathbf{s}^{\mbox{m}}:=\left(l^{\mbox{m}},v^{\mbox{m}},a^{-},t^{\mbox{m}},t^{+}\right)$
where again $v^{\mbox{m}}:=v+a^{+}(t^{\mbox{m}}-t^{-})$, $l^{\mbox{m}}:=l+v(t^{\mbox{m}}-t^{-})+0.5a^{+}(t^{\mbox{m}}-t^{-})^{2}$,
and $t^{+}\le t^{\mbox{m}}\le t^{-}$, such that again condition \eqref{eq:non_cross_condition}
is satisfied (and thus $\mathbf{s}'$ is above $\mathbf{s}$ and $\mathbf{s}{}^{\mbox{m}}$).
And again there are three cases in determining $t^{-}$ and $t^{\mbox{m}}$
values: (I) if no $t^{\mbox{m}}\in(-\infty,t^{-}]$ can be found to
satisfy constraint \eqref{eq:non_cross_condition}, this shooting
operation is infeasible and return $t^{m}=t^{+}=\infty$; (II ) otherwise
we try to find $t^{+}\in\left[t'^{+},\min\left\{ t^{-},t'^{-}\right\} \right]$
and $t^{\mbox{m}}\in[t^{+},t^{-}]$ such that $\mathbf{s}'$ and $\mathbf{s}^{\mbox{m}}$
get tangent at time $t^{+}$ (as Figure \ref{fig:shooting_operation}(b)
indicates); and (III) if no such $t^{+}$ is found, set $t^{m}=t^{+}=-\infty$.
Solutions $t^{\mbox{m}}$ and $t^{-}$ are denoted as functions $t^{\mbox{mb}}\left(\mathbf{s}',\left(l,v,t^{-}\right),a^{+},a^{-}\right)$
and $t^{\mbox{-b}}\left(\mathbf{s}',\left(l,v,t^{-}\right),a^{+},a^{-}\right)$,
respectively, and they can be solved analytically in the following
\emph{backward shooting operation} ($BSO$) algorithm. \end{defn}
\begin{description}
\item [{BSO-1:}] If $D[\mathbf{s}'-\left(l,v,a^{-},t^{-},-\infty\right)]<0$,
there is no feasible solution, and we just return $t^{\mbox{m}}=t^{+}=-\infty$
(Case I). Go to Step BSO-3.
\item [{BSO-2:}] Again shift the origin point to $t^{-}$ and denote $\hat{t}'^{-}:=t'^{-}-t^{-}$,
$\hat{t}'^{+}:=t'^{+}-t^{-}$, $\hat{t}^{\mbox{m}}:=t^{\mbox{m}}-t^{-}$,
$\hat{t}^{\mbox{+}}:=t^{\mbox{+}}-t^{-}$and $\hat{t}^{-}:=\min\{\hat{t}'^{+},0\}$.
And obtain $\mathbf{q}$ by subtracting $\left(l^{\mbox{m}},v^{\mbox{m}},a^{-},t^{\mbox{m}}\right)$
from $(l',v',a',t'^{-})$, which is formulated the same as that in
Step FSO-2. 
\item [{BSO-2-1:}] If $a^{'}=a^{-}$, test whether $\hat{l}=0$ with $\hat{t}^{\mbox{m}}=\left(v'-v-a'\hat{t}'^{-}\right)/\left(a^{+}-a^{-}\right)$.
If yes and $\hat{t}^{\mbox{m}}\in\left[\hat{t}^{-},\hat{t}'^{-}\right]$
(Case II),
\item [{\textmd{set}}] $t^{\mbox{m}}=t^{-}+\hat{t}^{\mbox{m}}$ and $t^{\mbox{+}}=t'^{-}$;
Otherwise, return $\hat{t}^{\mbox{m}}=\hat{t}{}^{+}=-\infty$ (Case
III). Go to BSO-3.
\item [{BSO-2-2:}] If $a^{'}>a^{-}$, we need to again solve $\alpha\left(\hat{t}^{\mbox{m}}\right)^{2}+\beta\hat{t}^{\mbox{m}}+\gamma=0$
formulated in Step FSO-2-2. In case of $\alpha=\beta=0$ (Case III),
set $t^{\mbox{+}}=t^{\mbox{m}}=-\infty$, and go to Step FSO-3. Otherwise,
we again try candidate solutions $\hat{t}^{\mbox{mc}}$ and $\hat{t}^{+\mbox{c}}$.
In case of $\alpha=0$ but $\beta\neq0$, solve $\hat{t}^{\mbox{mc}}=-\gamma/\beta$
and $\hat{t}^{+\mbox{c}}$ with equation \eqref{eq:t_hat_plus}. In
case of $\alpha\neq0$, then we solve two sets of solutions $\hat{t}^{\mbox{mc}}$
and $\hat{t}^{\mbox{+c}}$ with equations \eqref{eq:t_hat_m} and
\eqref{eq:t_hat_plus} respectively. if the candidate solutions are
not real numbers, then we just set $\hat{t}^{\mbox{+c}}=\hat{t}^{\mbox{mc}}=\infty$.
Otherwise, try both sets of solutions and select the set satisfying
$\hat{t}^{+\mbox{c}}\le\hat{t}^{\mbox{mc}}\le0$. With this, if we
obtain $\hat{t}^{+\mbox{c}}\in\left[\hat{t}^{-},\hat{t}'^{-}\right]$
(Case II), we set $t^{\mbox{m}}=t^{-}+\hat{t}^{\mbox{mc}}$ and $\hat{t}^{\mbox{+}}=\hat{t}^{\mbox{+c}}$.
Otherwise, set $t^{\mbox{+}}=t^{\mbox{m}}=\infty$. Go to Step BSO-3.
\item [{BSO-3:}] Finally, we return $t^{\mbox{mb}}\left(\mathbf{s}',\left(l,v,t^{-},a^{+}\right),a^{-}\right)=t^{m}$
and $t^{\mbox{+b}}\left(\mathbf{s}',\left(l,v,t^{-},a^{+}\right),a^{-}\right)=t^{+}$.\end{description}
\begin{defn}
Symmetric to Definition \ref{def: FSP}, we extend one backward move
BSO to the following \emph{backward shooting process} \emph{(BSP)}.
We consider \emph{a backward shooting template trajectory }starting
at a feasible entry state point $(l,v,t^{-})$ composed by one or
both of $\mathbf{s}^{-\mbox{a}}:=(l,v,\bar{a}^{\mbox{f}},t^{-},t^{-\mbox{a}}:=t^{-}-v/\bar{a}^{\mbox{b}})$
(which decelerates backward from $v$ to $0$ given $v>0$) and $\mathbf{s}^{-\mbox{\ensuremath{\infty}}}:=\left(l^{\mbox{-a}}:=l-0.5v^{2}/\bar{a}^{\mbox{b}},0,0,t^{\mbox{-a}},-\infty\right)$
(which denotes the vehicle is stopped prior to time $t^{\mbox{-a}}$),
i.e., 
\[
p^{\mbox{t}}:=\begin{cases}
\left[\mathbf{s}^{-\infty},\mathbf{s}^{\mbox{-a}}\right], & \mbox{if }v>0;\\
\left[\mathbf{s}^{-\infty}\right], & \mbox{if }v=0.
\end{cases}
\]
Further, we are given the original trajectory (e.g., those generated
from FSP) 
\[
p^{\mbox{f}}:=\left\{ \mathbf{s}_{h}:=\{l_{h},v_{h},a_{h},t_{h},t_{h+1}\}\right\} {}_{h=1,\cdots,\bar{h}}.
\]
We will find a \emph{backward shooting trajectory section} $p^{\mbox{b}}((l,v,t^{-}),p)$
that is to merge into $p^{\mbox{f}}$ with a merging segment $\left(p^{\mbox{t}}(t^{\mbox{m}}),\dot{p}(t^{\mbox{m}}),\underline{a}^{\mbox{b}},t^{\mbox{m}},t^{\mbox{+}}\right)$
and does not exceed (i.e., going left of) $p^{\mbox{f}}$ at any time.
This can be solved analytically with the following BSP algorithm. \end{defn}
\begin{description}
\item [{BSP-1:}] Initiate $h$ being the largest segment index of $p^{\mbox{f}}$
such that $t_{h}<t^{-}$, set $t^{+}=t^{\mbox{m}}=-\infty$, $p^{\mbox{b}}=\emptyset$
and iterate through the segments in $p^{\mbox{f}}$.
\item [{BSP-2:}] If $v>0$, apply BSO to solve $t^{\mbox{mc}}:=t^{\mbox{mb}}\left(\mathbf{s}_{h},(l,v,\bar{a}^{\mbox{b}},t^{-}),\underline{a}^{\mbox{b}}\right)$
and $t^{+\mbox{c}}:=t^{\mbox{mf}}\left(\mathbf{s}_{h},(l,v,\bar{a}^{\mbox{b}},t^{-}),\underline{a}^{\mbox{b}}\right)$.
If $t^{\mbox{mc}}<t^{-\mbox{a}}$, revise $t^{\mbox{mc}}:=t^{\mbox{mb}}\left(\mathbf{s}_{h},(l^{\mbox{-a}},0,0,t^{\mbox{-a}}),\underline{a}^{\mbox{b}}\right)$
and $t^{\mbox{+c}}:=t^{\mbox{+b}}\left(\mathbf{s}_{h},(l^{\mbox{-a}},0,0,t^{\mbox{-a}}),\underline{a}^{\mbox{b}}\right)$
with BSO. If $v=0$, directly solve $t^{\mbox{mc}}:=t^{\mbox{mb}}\left(\mathbf{s}_{h},(l,0,0,t^{-}),\underline{a}^{\mbox{b}}\right)$
and $t^{\mbox{+c}}:=t^{\mbox{+b}}\left(\mathbf{s}_{h},(l,0,0,t^{-}),\underline{a}^{\mbox{b}}\right)$
with BSO. If $t^{\mbox{mc}}=\infty$, the algorithm cannot find a
feasible solution (because any trajectory through $(l,v,t^{-})$ will
go above $p^{\mbox{f}}$) and thus returns $p^{\mbox{b}}((l,v,t^{-}),p)=p^{\mbox{eb}}\left((l,v,t^{-}),p^{\mbox{f}}\right)=\emptyset$.
If $t^{\mbox{+c}}=-\infty$ and $h=1$, the algorithm cannot find
a feasible backward trajectory that can touch $p^{\mbox{f}}$ and
thus return $p^{\mbox{b}}((l,v,t^{-}),p)=p^{\mbox{eb}}\left((l,v,t^{-}),p^{\mbox{f}}\right)=\emptyset$.
If $t^{\mbox{+c}}\in[t_{h}^{\mbox{s}},t_{h+1}^{\mbox{s}}]$, set $t^{\mbox{m}}=t^{\mbox{mc}}$
and $t^{+}=t^{+\mbox{c}}$ and go to Step BSP-3. Otherwise, if $h>1$,
set $h=h-1$ and repeat this step. 
\item [{BSP-3:}] If $v>0$ and $t^{\mbox{-a}}<t^{-}$, insert segment $\left(l^{-\mbox{ma}},v^{-\mbox{ma}},\bar{a}^{\mbox{b}},t^{-\mbox{ma}},t^{-}\right)$
to $p^{\mbox{b}}$ (inserting means adding this segment before the
head of $p^{\mbox{b}}$), where $t^{-\mbox{ma}}:=\max(t^{\mbox{m}},t^{\mbox{-a}})$,
$v^{-\mbox{ma}}:=v-\bar{a}^{\mbox{b}}(t^{-}-t^{-\mbox{ma}})$ and
$l^{-\mbox{ma}}:=l-v(t^{-}-t^{-\mbox{ma}})+0.5\bar{a}^{\mbox{b}}(t^{-}-t^{-\mbox{ma}})^{2}$.
If $t^{\mbox{m}}<t^{-\mbox{a}},$ insert $(l^{\mbox{-a}},0,0,t^{\mbox{m}},t^{\mbox{-a}})$
to $p^{\mbox{b}}$. Then we insert merging segment $(l^{\mbox{m}},v^{\mbox{m}}\bar{a}^{\mbox{b}},t^{+},t^{\mbox{m}})$
to $p^{\mbox{b}}$ where $l^{\mbox{m}}:=l_{h}+v_{h}(t^{+}-t_{h}^{\mbox{s}})+0.5a_{h}(t^{+}-t_{h}^{\mbox{s}})^{2},$
and $v^{\mbox{m}}:=v_{h}+a_{h}(t^{+}-t_{h})$. 
\item [{BSP-4:}] Now we have obtained a \emph{backward shooting trajectory
section} $p^{\mbox{b}}\left((l,v,t^{-}),p^{\mbox{f}}\right)=p^{\mbox{b}}\left(t^{\mbox{m}}:t^{-}\right)$
(or $p^{\mbox{b}}$ for simplicity). We further extend $p^{\mbox{b}}\left(t^{\mbox{m}}:t^{-}\right)$
by inserting $p^{\mbox{f}}(t_{1}:t^{\mbox{m}})$ and appending $p^{\mbox{f}}\left(\left(l,v,t^{-}\right),p^{\mbox{f}}\right)$
generated from an \emph{auxiliary FSP}, and construct the \emph{extended
backward shooting trajectory} $p^{\mbox{eb}}\left((l,v,t^{-}),p^{\mbox{f}}\right)$
$:=\left[p^{\mbox{f}}(t_{1}:t^{\mbox{m}}),p^{\mbox{b}}\left(t^{\mbox{m}}:t^{-}\right),p^{\mbox{f}}\left(\left(l,v,t^{-}\right),p^{\mbox{f}}\right)\right]$.
Return $p^{\mbox{b}}\left((l,v,t^{-}),p^{\mbox{f}}\right)$ and $p^{\mbox{eb}}\left((l,v,t^{-}),p^{\mbox{f}}\right)$. 
\item [{}]~
\end{description}
Now we are ready to present the proposed shooting algorithm that yields
a trajectory vector $P\left(\bar{a}^{\mbox{f}},\underline{a}^{\mbox{f}},\bar{a}^{\mbox{b}},\bar{a}^{b}\right)$
as a functional of these four acceleration rates. 
\begin{description}
\item [{SH-1:}] Initialize control variables $\bar{a}^{\mbox{f}}$, $\underline{a}^{\mbox{f}}$,
$\bar{a}^{\mbox{b}}$ and $\underline{a}^{\mbox{b}}$. Set $n=1$,
trajectory vector $P=\emptyset$.
\item [{SH-2:}] Apply the FSP to obtain $p_{n}^{\mbox{f}}=\left[\mathbf{s}_{nk}=\left(l_{nk},v_{nk},a_{nk},t_{nk},t_{n(k+1)}\right)\right]{}_{k=1,\cdots,\bar{k}_{n}}=p^{\mbox{f}}((0,v_{n}^{-},t_{n}^{-}),p_{n-1}^{\mbox{s}})$
(define $p_{0}^{\mbox{s}}:=\emptyset$). We call this process the
\emph{primary FSP} (to differentiate from the auxiliary FSP in the
BSP). If $p_{n}^{\mbox{f}}=\emptyset$, which means that this algorithm
cannot find a feasible solution for trajectory platoon $P$, set $P\left(\bar{a}^{\mbox{f}},\underline{a}^{\mbox{f}},\bar{a}^{\mbox{b}},\bar{a}^{b}\right):=P=\emptyset$
and return. 
\item [{SH-3:}] This steps checks the need for the BSP. Find the segment
index $k_{n}^{L}$ such that $L\in\left[l_{nk^{L}},l_{n\left(k^{L}+1\right)}\right)$
and solve the time $t_{n}^{L}$ when vehicle $n$ passes location
$L$ as follows 
\[
t_{n}^{L}:=t_{nk^{L}}+\begin{cases}
\frac{-v_{nk^{L}}+\sqrt{v_{nk^{L}}^{2}+2a_{nk^{L}}(L-l_{nk^{L}})}}{a_{nk^{L}}}, & \mbox{if }a_{nk^{L}}\neq0;\\
\frac{L-l_{nk^{L}}}{v_{nk^{L}}}, & \mbox{if }a_{nk^{L}}=0.
\end{cases}
\]
 If $t_{n}^{L}=G\left(t_{n}^{L}\right)$, which means that $p_{n}^{\mbox{f}}$
does not violate the exit boundary constraint \eqref{eq: set-exit-boundary}
(or does not run into the red light), we set $p_{n}=p_{n}^{\mbox{f}}$
and go to SH4. Otherwise, $p_{n}^{\mbox{f}}$ violates constraint
\eqref{eq: set-exit-boundary} and we need to apply BSP to revise
it in the following step. Set $v_{n}^{L}:=v_{nk^{L}}+a_{nk^{L}}\left(t_{n}^{L}-t_{nk^{L}}\right),$
apply the BSP to solve $p_{n}^{\mbox{b}}=p^{\mbox{b}}\left(\left(L,v_{n}^{L},G\left(t_{n}^{L}\right)\right),p_{n}^{\mbox{f}}\right)$.
If the start location of obtain $p_{n}^{\mbox{b}}$ is no less than
0, set $p_{n}=p^{\mbox{eb}}\left(\left(L,v_{n}^{L},G\left(t_{n}^{L}\right)\right),p_{n}^{\mbox{f}}\right).$
Otherwise, $p_{n}^{\mbox{b}}$ can not meet $p_{n}^{\mbox{f}}$ on
this highway section, and return $P\left(\bar{a}^{\mbox{f}},\underline{a}^{\mbox{f}},\bar{a}^{\mbox{b}},\bar{a}^{b}\right):=\emptyset$
.
\item [{SH-4:}] Append $p_{n}$ to $P$. Return $P^{\mbox{SH}}\left(\underline{a}^{\mbox{f}},\bar{a}^{\mbox{f}},\underline{a}^{\mbox{b}},\bar{a}^{\mbox{b}}\right):=P$
if $n=N$, or otherwise set $n=n+1$ and go to SH-2.
\end{description}
Although FSO (Definition \ref{def: forward_shooting }) and BSO (Definition
\ref{def:backward_shooting}) do not explicitly impose speed limits
to the generated trajectory segments, as long as a non-empty trajectory
vector $P$ is returned by the SH algorithm, $P$ shall satisfy all
constraints defined in Section \ref{sec:Problem-Statement} (or $P\in\mathcal{P}$),
as proven in the following proposition. 
\begin{prop}
\label{prop: SH_feas_necessity}If the SH algorithm successfully generates
a vector of trajectories $P\left(\underline{a}^{\mbox{f}},\bar{a}^{\mbox{f}},\underline{a}^{\mbox{b}},\bar{a}^{\mbox{b}}\right)$
with $\underline{a}^{\mbox{f}},\underline{a}^{\mbox{b}}\in\left[\underline{a},0\right)$
and $\bar{a}^{\mbox{f}},\bar{a}^{\mbox{b}}\in\left(0,\bar{a}\right]$,
they shall fall in the feasible trajectory vector set $\mathcal{P}$
defined in \eqref{fig:shooting_operation}. \end{prop}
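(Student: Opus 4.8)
The plan is to prove the statement by induction on the vehicle index $n$, showing that the $n$-th trajectory returned by SH lies in $\mathcal{T}_1$ when $n=1$ and in $\mathcal{T}_n(p_{n-1})=\mathcal{F}(p_{n-1})\cap\mathcal{T}_n^-\cap\mathcal{T}^+$ when $n\ge 2$; by \eqref{eq:P_feasible_platoon} this gives $P\in\mathcal{P}$. The only property of the already-constructed predecessors that the step for $p_n$ uses is that $p_{n-1}\in\mathcal{T}$, hence that the shadow $p_{n-1}^{\mbox{s}}$ is kinematically feasible --- it carries exactly the velocity and acceleration profiles of $p_{n-1}$, translated by $(\tau,-s)$ --- so the induction hypothesis is simply ``$p_1,\dots,p_{n-1}$ feasible''; the base case $n=1$ has $p_0^{\mbox{s}}=\emptyset$, so $p_1^{\mbox{f}}$ is just the accelerate-then-cruise template and everything below is immediate. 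Inside the step I would treat first the forward-shooting output $p_n^{\mbox{f}}$ of SH-2 and then, when SH-3 triggers the backward shooting, the revised $p_n=p^{\mbox{eb}}$.

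Most of the defining conditions of $\mathcal{T}_n(p_{n-1})$ fall out of unwinding the construction. \emph{Regularity ($p_n\in\bar{\mathcal{T}}$):} $p_n$ is a finite concatenation of quadratic segments whose junctions match in both position and velocity --- the template pieces $\mathbf{s}^{\mbox{a}},\mathbf{s}^{\infty}$ (resp.\ $\mathbf{s}^{-\mbox{a}},\mathbf{s}^{-\infty}$) are glued at carried-over state points, each forward/backward merging segment starts at such a state point and is made \emph{tangent} to its reference segment at the other end, and $p^{\mbox{eb}}$ is stitched from $p_n^{\mbox{f}}$ and $p_n^{\mbox{b}}$ at the tangency time $t^{\mbox{m}}$ --- so $\dot p_n$ is continuous and piecewise affine (absolutely continuous) and $\ddot p_n$ is piecewise constant (Riemann integrable). \emph{Entry ($p_n\in\mathcal{T}_n^-$):} the primary FSP is launched from $(0,v_n^-,t_n^-)$, the left endpoint of $p_n^{\mbox{f}}$, and $p^{\mbox{eb}}$ keeps the initial section $p_n^{\mbox{f}}(t_1:t^{\mbox{m}})$, so $p_n(t_n^-)=0$ and $\dot p_n(t_n^-)=v_n^-$. \emph{Exit ($p_n\in\mathcal{T}^+$):} since $\dot p_n\ge 0$ (next paragraph) $p_n$ is non-decreasing, so $p_n^{-1}(L)$ is the unique crossing time of $L$; in the no-BSP branch SH-3 keeps $p_n=p_n^{\mbox{f}}$ precisely because $t_n^L=G(t_n^L)\in\mathcal{G}$, and in the BSP branch the backward shooting relocates the crossing to $G(t_n^L)\in\mathcal{G}$ while the remaining pieces of $p^{\mbox{eb}}$ stay weakly left of $L$ (the initial piece because $p_n^{\mbox{f}}$ is non-decreasing and $t^{\mbox{m}}$ does not exceed $t_n^L$; the backward and auxiliary pieces by the BSP's design), so in both cases $p_n^{-1}(L)=G(t_n^L)\in\mathcal{G}$. \emph{Safety ($p_n\in\mathcal{F}(p_{n-1})$, $n\ge2$):} the FSP enforces the non-crossing condition \eqref{eq:non_cross_condition} against $p_{n-1}^{\mbox{s}}=[\mathbf{s}_h^{\mbox{s}}]_h$ segment by segment, so $p_n^{\mbox{f}}\le p_{n-1}^{\mbox{s}}$ pointwise; the BSP returns a trajectory not exceeding $p_n^{\mbox{f}}$, and so does the auxiliary FSP tail, whence $p^{\mbox{eb}}\le p_n^{\mbox{f}}\le p_{n-1}^{\mbox{s}}$ --- exactly the shadow characterization of $p_n\in\mathcal{F}(p_{n-1})$.

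The delicate point --- and the one I expect to be the main obstacle --- is the kinematic feasibility \eqref{eq:set-kinematic-constraints}, specifically the speed bound $\dot p_n(t)\in[0,\bar v]$, because FSO and BSO never impose it. The acceleration bound is routine: every segment carries acceleration in $\{\bar a^{\mbox{f}},\underline a^{\mbox{f}},\bar a^{\mbox{b}},\underline a^{\mbox{b}},0\}$ or one inherited from $p_{n-1}^{\mbox{s}}$, all in $[\underline a,\bar a]$ by the parameter ranges and the induction hypothesis. For the speed bound I would argue segment by segment: the acceleration is constant on each segment, so the speed is affine in $t$ there and hence \emph{monotone}, so it suffices to bound the speed at the segment endpoints; and these are controlled by the chain --- entry speed $v_n^-\in[0,\bar v]$; the forward template tops out at $\bar v$ and the backward template bottoms out at $0$; every forward merging segment ends tangent to a shadow segment of $p_{n-1}$, whose speed is in $[0,\bar v]$ by induction, and every backward merging segment ends tangent to a segment of $p_n^{\mbox{f}}$, whose speed is in $[0,\bar v]$ by the forward part of this same step; and $v_n^L$, the value SH-3 feeds to the BSP, is merely the speed of $p_n^{\mbox{f}}$ at $L$, again in $[0,\bar v]$. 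Monotonicity on each segment then propagates $[0,\bar v]$ to all of $p_n$, closing the induction. I would write out the forward direction in full and obtain the backward direction by the evident time reversal.
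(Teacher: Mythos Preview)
Your proposal is correct and follows essentially the same approach as the paper's proof: induction on the vehicle index, with the acceleration bound immediate from the explicit segment accelerations and the induction hypothesis on the shadow, and the speed bound argued by sandwiching each merging segment's endpoints between a template speed and a shadow (or forward-trajectory) speed already known to lie in $[0,\bar v]$. Your treatment is in fact more carefully itemized than the paper's --- you separate regularity, entry, exit, safety, and kinematics, and you make explicit the monotonicity-on-segments reduction to endpoint checks --- but the logical skeleton is identical.
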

\begin{proof}
Since the acceleration of each segment generated from the SH algorithm
is either explicitly specified within $[\underline{a},\bar{a}]$ (i.e.,
one of $\underline{a}^{\mbox{f}},\bar{a}^{\mbox{f}},\underline{a}^{\mbox{b}},\bar{a}^{\mbox{b}}$
and 0) or just following a shadow trajectory's acceleration that shall
fall in $[\underline{a},\bar{a}]$ as well. So the constraint with
respect to acceleration in \eqref{eq:set-kinematic-constraints} is
satisfied. 

Then, we will use mathematical induction to exam the remaining constraints
in \eqref{eq:set-kinematic-constraints}-\eqref{eq: set-safety-constraints}.
First for vehicle 1, FSP can generate $p_{1}$ with at maximum 2 segments,
which apparently falls in $\mathcal{T}_{1}^{-}$ (and thus both constraints
\eqref{eq:set-kinematic-constraints}-\eqref{eq: set-entry-boundary}.
are satisfied). If BSP is not needed, exit constraint \eqref{eq: set-exit-boundary}
is automatically satisfied and thus $p_{1}\in\mathcal{T}_{1}$. Otherwise,
the new segments generated from BSP below $L$ start from a speed
no greater than $\bar{v}$ and decelerate backwards to a value no
less than 0 and then increase the speed and merge into the forward
shooting trajectory at a speed no greater than $\bar{v}$. During
this process, the speed shall always stay within $[0,\bar{v}]$ and
therefore constraint \eqref{eq:set-kinematic-constraints} remains
valid. The auxiliary FSP is similar to the primary FSP and thus will
not violate constraint \eqref{eq:set-kinematic-constraints} as well.
Further, the BSP step SH3 does not affect the entry boundary condition
\eqref{eq: set-entry-boundary} and makes exit condition \eqref{eq: set-exit-boundary}
feasible in addition. Therefore we obtain $p_{1}\in\mathcal{T}_{1}$. 

Then we assume that $p_{n-1}\in\mathcal{T}_{n-1},\forall n=2,\cdots,N$,
and we will prove that $p_{n}\in\mathcal{T}_{n}(p_{n-1})$. If $p_{n}$
is not blocked by $p_{n-1}$ during the FSP, then the construction
of $p_{n}$ is similar to that of $p_{1}$ and thus $p_{n}$ should
automatically satisfy constraints \eqref{eq:set-kinematic-constraints}-\eqref{eq: set-exit-boundary}
and thus $p_{n}\in\mathcal{T}_{n}$. Further, $p_{n}$ shall be always
below $p_{n-1}^{\mbox{s}}$ and therefore $p_{n}$ shall satisfy safety
constraint \eqref{eq: set-safety-constraints}, i.e., $p_{n}\in\mathcal{T}_{n}(p_{n-1})$.
Otherwise, if $p_{n}$ is blocked by $p_{n-1}$, the construction
of $p_{n}$ would generate some more segments that merge the forward
trajectory into $p_{n-1}^{\mbox{s}}$ (e.g., as Figure \ref{fig:shooting_operation}(a)
illustrates) and then follow $p_{n-1}^{\mbox{s}}$, as compared with
the construction of $p_{1}$. Due to the induction assumption, the
segments following $p_{n-1}^{\mbox{s}}$ shall satisfy kinematic constraint
\eqref{eq:set-kinematic-constraints} the same as the corresponding
segments in $p_{n-1}$. For the merging segment, since it starts from
a forward shooting segment and ends at a shadow segment and therefore
its speed range should be bounded by $[0,\bar{v}]$. Therefore, we
obtain $p_{n}\in\mathcal{T}$. Again, the primary FSP ensures that
$p_{n}$ satisfies the entry boundary \eqref{eq: set-entry-boundary}
and BSP ensures that $p_{n}$ satisfy exit constraint \eqref{eq: set-exit-boundary}.
This yields $p_{n}\in\mathcal{T}_{n}$. Further we see that any segment
generated from FSP shall be either below or on $p_{n-1}^{\mbox{s}}$.
If the BSP generates new segments, they shall be strictly right to
the forward shooting trajectory. Therefore, $p_{n}$ shall fall in
$\mathcal{T}_{n}\left(p_{n-1}\right)$. This proves that $P\left(\underline{a}^{\mbox{f}},\bar{a}^{\mbox{f}},\underline{a}^{\mbox{b}},\bar{a}^{\mbox{b}}\right)\in\mathcal{P}$.
\end{proof}

\subsection{Shooting Heuristic for the Lead Vehicle Problem\label{sub:SH_LVP}
}

The above proposed SH algorithm can be easily adapted to solve LVP
\eqref{eq:P_feasible_platoon_LVP}. The only modifications are to
fix $p_{1}$ and to set $\mathcal{G}=[-\infty,\infty]$ (and therefore
no BSP is needed). We denote this adapted SH for LVP by SHL. More
importantly, we further find that SHL can be alliteratively implemented
in a parallel manner. Each trajectory can be calculated directly from
the input parameters without its preceding trajectory's information.
We denote this parallel alternative of SHL with PSHL. Apparently,
PSHL allows further improved computational efficiency with parallel
computing. This section describes PSHL and validates that PSHL indeed
solves LVP. We first introduce an \emph{extended forward shooting
operation} (EFSO) that merges two feasible trajectories.
\begin{description}
\item [{EFSO-1:}] Given two feasible trajectories $p:=\left[\mathbf{s}_{j}:=\left(l_{j},v_{j},a_{j},t_{j},t_{j+1}\right)\right]_{j=1,2,\cdots,J}$
and $p'=\mathbf{s}'_{k}:=\left[\left(l'_{k},v'_{k},a'_{k},\right.\right.$
$\left.\left.t'_{k},t'_{k+1}\right)\right]_{k=1,2,\cdots,K}\in\mathcal{T}$,
and deceleration rate $a^{-}<0$. Set iterators $j=1$ and $k=1$
. 
\item [{EFSO-2:}] Solve $t_{jk}^{\mbox{m}}:=t^{\mbox{mf}}\left(\mathbf{s}'_{k},\left(l_{j},v_{j},t_{j},a_{j}\right),a^{-}\right)$
and $t_{jk}^{+}=t^{\mbox{+f}}\left(\mathbf{s}'_{k},\left(l_{j},v_{j},t_{j},a_{j}\right),a^{-}\right)$.
If $t_{jk}^{\mbox{m}}=-\infty$, return $t^{\mbox{m}}(p',p,a^{-})=t^{+}(p',p,a^{-})=-\infty$.
If $t_{jk}^{\mbox{m}}<\infty$ and $t_{jk}^{\mbox{m}}\in\left[t_{j},t_{j+1}\right]$,
return $t^{\mbox{m}}(p',p,a^{-})=t_{jk}^{\mbox{m}}$, $t^{\mbox{+}}(p',p,a^{-})=t_{jk}^{+}$.
If $k<K$, set $k=k+1$ and repeat this step; otherwise, go to the
next step.
\item [{EFSO-2:}] If $j<J$, set $j=j+1$ and $k=1$ and go to Step EFSO-2.
Otherwise, return $t^{\mbox{m}}(p',p,a^{-})=t^{+}(p',p,a^{-})=\infty$.
\end{description}
Based on EFSO, we devise the following \emph{extended forward shooting
process} (EFSP) that generates a forward shooting trajectory constrained
by a series of upper bound trajectories.
\begin{description}
\item [{EFSP-1:}] Given a feasible state point $(l,v,t)$, and a set of
trajectories $\left\{ p_{m}\in\mathcal{T}\right\} _{m=1,\cdots,M}$.
Initiate $m=1$ and $p=p^{\mbox{f}}\left((l,v,t),\emptyset\right)$
with FSP.
\item [{EFSP-2:}] Call EFSO to solve $t^{\mbox{m}}:=t^{\mbox{m}}\left(p_{m},p,\underline{a}^{\mathbf{\mbox{f}}}\right)$
and $t^{\mbox{+}}:=t^{\mbox{+}}\left(p_{m},p,\underline{a}^{\mathbf{\mbox{f}}}\right)$.
If $t^{\mbox{m}}=-\infty$, return $p^{\mbox{f}}\left((l,v,t),\left\{ p_{m}\right\} _{m=1,\cdots,M}\right)=\emptyset$.
If $t^{\mbox{m}}<\infty$, revise $p:=\left[p\left(t^{-}\left(p\right):t^{\mbox{m}}\right),\left(p\left(t^{\mbox{m}}\right),\dot{p}\left(t^{\mbox{m}}\right),\underline{a}^{\mathbf{\mbox{f}}},t^{\mbox{m}},t^{+}\right),\right.$
$\left.p_{m}\left(t^{+}:\infty\right)\right]$. 
\item [{EFSP-3:}] If $m<M$, set $m=m+1$ and go to PSFP-2; otherwise return
$p^{\mbox{f}}\left((l,v,t),\left\{ p_{m}\right\} _{m=1,\cdots,M}\right)=p$.
\end{description}
Then we are ready to present the PSHL algorithm as follows.
\begin{description}
\item [{}]~
\item [{PSHL-1:}] Given lead trajectory $p_{1}\in\mathcal{T}$ and boundary
condition $\left[v_{n}^{-},t_{n}^{-}\right]_{n\in\mathcal{N}\backslash\{1\}}$.
Initialize acceleration parameters $\bar{a}^{\mbox{f}}$ and $\underline{a}^{\mbox{f}}$.
Set $\bar{p}_{1}:=p_{1},\bar{p}_{n}:=p^{\mbox{f}}\left((0,v_{n}^{-},t_{n}^{-}),\emptyset\right),\forall n\in\mathcal{N}\backslash\{1\}.$
Set initial trajectory platoon $P=[p_{1}]$,$n=2$.
\item [{PSHL-2:}] Solve $p_{n}:=p^{\mbox{f}}\left(\left(0,v_{n}^{-},t_{n}^{-}\right),\left\{ \bar{p}_{m}^{\mbox{s}^{n-m}}\right\} _{m=1,\cdots,n-1}\right)$
with EFSP. 
\item [{PSHL-3:}] If $p_{n}=\emptyset$, return $P^{\mbox{PSH}}\left(\underline{a}^{\mbox{f}},\bar{a}^{\mbox{f}}\right):=\emptyset$.
Otherwise, append $p_{n}$ to $P$. If $n=N$, return $P^{\mbox{PSH}}\left(\underline{a}^{\mbox{f}},\bar{a}^{\mbox{f}}\right):=P$;
otherwise, set $n=n+1$ and go to PSHL-2.\end{description}
\begin{prop}
$P^{\mbox{PSH}}\left(\underline{a}^{\mbox{f}},\bar{a}^{\mbox{f}}\right)$
obtained from PSHL \textup{is identical to $P^{\mbox{SH}}\left(\underline{a}^{\mbox{f}},\bar{a}^{\mbox{f}},\underline{a}^{\mbox{b}},\bar{a}^{\mbox{b}}\right)$}
when $p_{1}\in\mathcal{T}$ is fixed and $\mathcal{G}=\left(-\infty,\infty\right)$.\label{prop: PSHL=00003DSHL}\end{prop}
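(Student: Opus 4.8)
The plan is an induction on the vehicle index $n$. First I would record that under the two stated hypotheses the backward shooting process of SH is never triggered: with $\mathcal{G}=(-\infty,\infty)$ one has $G(t)=t$ for all $t$, so the test $t_n^L=G(t_n^L)$ in step SH-3 always succeeds and $p_n=p_n^{\mathrm f}$. Hence SH with $p_1$ fixed reduces to the sequential recursion $p_1^{\mathrm{SH}}=p_1$, $p_n^{\mathrm{SH}}=p^{\mathrm f}\!\big((0,v_n^-,t_n^-),(p_{n-1}^{\mathrm{SH}})^{\mathrm s}\big)$ for $n\ge 2$, each obtained from one primary FSP, whereas PSHL produces $p_n^{\mathrm{PSH}}=p^{\mathrm f}\!\big((0,v_n^-,t_n^-),\{\bar p_m^{\mathrm s^{n-m}}\}_{m=1,\dots,n-1}\big)$ by EFSP, with $\bar p_1=p_1$ and $\bar p_m$ ($m\ge2$) the unconstrained template. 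The induction hypothesis $H(n)$ I would carry has three parts: (a) $p_n^{\mathrm{SH}}=p_n^{\mathrm{PSH}}=:p_n$ (with the convention that both are $\emptyset$ together when the construction fails); (b) $p_n(t)\le \bar p_m^{\mathrm s^{n-m}}(t)$ for every $m\in\{1,\dots,n\}$ on the domain of $\bar p_m^{\mathrm s^{n-m}}$, where $\bar p_n^{\mathrm s^0}:=\bar p_n$; and (c) $p_n$ is, after amalgamating $C^1$-joined collinear pieces, a finite concatenation of \emph{template pieces} — each a sub-arc of some $\bar p_m^{\mathrm s^{n-m}}$ ($m\le n$) — alternating with \emph{merge pieces}, arcs of constant acceleration $\underline a^{\mathrm f}$ that are $C^1$-tangent to the adjacent template pieces at both endpoints, the first and last pieces being template pieces.

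The base case $H(1)$ is immediate, and $H(2)$ holds because EFSP applied to the single constraint $\{p_1^{\mathrm s}\}$ executes, through the same analytic formulas $t^{\mathrm{mf}},t^{\mathrm{+f}}$, exactly the operations of FSP applied to the shadow $p_1^{\mathrm s}$. For the inductive step, assume $H(n-1)$ and shadow parts (b) and (c): (b) gives $p_{n-1}^{\mathrm s}(t)\le \bar p_m^{\mathrm s^{n-m}}(t)$ for all $m\le n-1$, and (c) exhibits $p_{n-1}^{\mathrm s}$ as a concatenation of sub-arcs of the trajectories $\bar p_m^{\mathrm s^{n-m}}$ ($m\le n-1$) joined by $\underline a^{\mathrm f}$-merge pieces tangent to the adjacent sub-arcs at both ends. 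The crux is the identity
\[
p^{\mathrm f}\!\big((0,v_n^-,t_n^-),p_{n-1}^{\mathrm s}\big)=p^{\mathrm f}\!\big((0,v_n^-,t_n^-),\{\bar p_m^{\mathrm s^{n-m}}\}_{m=1,\dots,n-1}\big),
\]
which I would prove through the characterisation that each side is the pointwise-largest \emph{admissible} curve below the corresponding constraint(s) — admissible meaning: emanating from $(0,v_n^-,t_n^-)$, with $\dot p\in[0,\bar v]$ and $\ddot p\in[\underline a^{\mathrm f},\bar a^{\mathrm f}]$ — together with the fact that the two constraint systems cut out the same admissible feasible set. The inclusion $\{\,p\le p_{n-1}^{\mathrm s}\,\}\subseteq\{\,p\le \bar p_m^{\mathrm s^{n-m}}\ \forall m\le n-1\,\}$ is free from shadowed (b). For the reverse, take an admissible $q$ with $q\le\bar p_m^{\mathrm s^{n-m}}$ for all $m\le n-1$ and decompose $p_{n-1}^{\mathrm s}$ by (c): on a template sub-arc equal to $\bar p_m^{\mathrm s^{n-m}}$ one gets $q\le p_{n-1}^{\mathrm s}$ at once, while on a merge piece $\mathbf m$ over $[t_1,t_2]$, tangent to $\bar p_a^{\mathrm s^{n-a}}$ at $t_1$ and to $\bar p_b^{\mathrm s^{n-b}}$ at $t_2$, one has $q(t_1)\le\bar p_a^{\mathrm s^{n-a}}(t_1)=\mathbf m(t_1)$, $q(t_2)\le\bar p_b^{\mathrm s^{n-b}}(t_2)=\mathbf m(t_2)$, and $\ddot q\ge\underline a^{\mathrm f}=\ddot{\mathbf m}$ on $[t_1,t_2]$, so $q-\mathbf m$ is convex and nonpositive at both ends, hence nonpositive throughout; thus $q\le p_{n-1}^{\mathrm s}$. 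This gives part (a) of $H(n)$ (and the same equivalence of feasible sets matches the infeasible cases, since FSP and EFSP both fail exactly when the steepest admissible curve crosses the constraints); part (b) follows from $p_n^{\mathrm{SH}}\le p_{n-1}^{\mathrm s}\le\bar p_m^{\mathrm s^{n-m}}$ for $m\le n-1$ and $p_n^{\mathrm{SH}}\le\bar p_n$ (a forward-shooting trajectory never exceeds its own template); and part (c) is just the structural form of the FSP output, with the decomposition of $p_{n-1}^{\mathrm s}$ from $H(n-1)$ inherited in the tail.

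The step I expect to be the real obstacle is the maximality characterisation of the EFSP output — concretely, verifying that EFSP's sequential, order-dependent merging (first $\bar p_1^{\mathrm s^{n-1}}$, then $\bar p_2^{\mathrm s^{n-2}}$, and so on, the deepest shadow processed first) genuinely yields the pointwise-largest admissible curve below \emph{all} the constraints, rather than becoming locked onto a later-processed but eventually non-binding one; equivalently, that after each new merge the running trajectory still lies below every previously processed constraint. This is where the assumption that the entry data are consistent with safety must be used: it forces the shadowed templates $\bar p_m^{\mathrm s^{n-m}}$ to interlace so that any two meet at most once, the deeper-shadow member lying below in the tail, which is exactly what makes each EFSP merge permanent and also what produces the alternating structure in (c). I would therefore isolate a preliminary lemma on the geometry of the family $\{\bar p_m^{\mathrm s^{n-m}}\}$ and of EFSP-generated trajectories that establishes (c) and the maximality property simultaneously, and only then run the induction above; the convexity estimate and the two inclusions are routine once that lemma is available.
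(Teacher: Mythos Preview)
Your proposal is correct, but it follows a genuinely different route from the paper's own proof. The paper gives a short compositional induction: assuming $p'_k=p_k$, it writes SH's next trajectory as $p'_{k+1}=[\bar p_{k+1}(t_{k+1}^-:t^{\mathrm m}),\ (\text{merge segment}),\ p^{\prime\mathrm s}_k(t^+:\infty)]$, notes that by induction $p^{\prime\mathrm s}_k=p^{\mathrm s}_k$ is itself the EFSO-merge of the family $\{\bar p_m^{\mathrm s^{k+1-m}}\}_{m=1,\dots,k}$, and concludes that this single FSP step against the composite $p^{\mathrm s}_k$ is the same as one further EFSO merge on top of that family --- precisely PSHL-2 at index $k+1$. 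In effect the paper relies on an associativity of the merge operation (merging the template into the composite equals sequentially merging it with the components) without spelling out why it holds.

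Your approach is variational rather than compositional: you characterise each of the two outputs as the pointwise-maximal admissible curve below its constraint system and then show the two systems carve out the same admissible set, via the structural decomposition (c) and the convexity estimate on the $\underline a^{\mathrm f}$-merge pieces (a convex function nonpositive at both endpoints of an interval is nonpositive throughout, since its maximum is attained at an endpoint). This demands more scaffolding --- the three-part hypothesis, the preliminary interlacing lemma for the family $\{\bar p_m^{\mathrm s^{n-m}}\}$, and the maximality of EFSP --- but it makes explicit exactly the associativity the paper leaves implicit, and gives a clear conceptual reason for the equality. The obstacle you single out (that EFSP's order-dependent processing really yields the global maximiser) is precisely the step the paper passes over in a sentence, so isolating it as a lemma is a genuine gain in rigour; once that lemma is in hand, the rest of your induction is routine.
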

\begin{proof}
We first consider the cases that $P^{\mbox{PSH}}\left(\underline{a}^{\mbox{f}},\bar{a}^{\mbox{f}}\right)\neq\emptyset$.
We will prove this proposition via induction with the iterator being
vehicle index $n$. Let $\left[p_{1,}p_{2},\cdots,p_{N}\right]$ denote
the trajectories in $P^{\mbox{PSH}}\left(\underline{a}^{\mbox{f}},\bar{a}^{\mbox{f}}\right)$
and $\left[p'_{1,}p'_{2},\cdots,p'_{N}\right]$ denote the trajectories
in $P^{\mbox{SH}}\left(\underline{a}^{\mbox{f}},\bar{a}^{\mbox{f}},\underline{a}^{\mbox{b}},\bar{a}^{\mbox{b}}\right)$.
Apparently, when $n=1$,$p'_{1}=p_{1}$ since the lead vehicle's trajectory
is fixed. Assume that when $n=k$, $p'_{k}=p_{k}$. Based on the definition
of SH, $p'_{k+1}=\left[\bar{p}_{k+1}\left(t_{k+1}^{-}:t^{\mbox{m}}\right),\left(\bar{p}_{k+1}\left(t^{\mbox{m}}\right),\dot{\bar{p}}_{k+1}\left(t^{\mbox{m}}\right),\underline{a}^{\mathbf{\mbox{f}}},t^{\mbox{m}},t^{+}\right),p_{k}^{\mathbf{'\mbox{s}}}\left(t^{+}:\infty\right)\right]$
where $\bar{p}_{k+1}:=p^{\mbox{f}}\left((0,v_{k+1}^{-},t_{k+1}^{-}),\emptyset\right)$,
$p_{k}^{\mathbf{'\mbox{s}}}(t):=p'_{k}(t-\tau)-s$, $t^{\mbox{m}}:=t^{\mbox{m}}\left(p_{k}^{\mathbf{'\mbox{s}}},\bar{p}_{k+1},\underline{a}^{\mathbf{\mbox{f}}}\right)$,
$t^{\mbox{+}}:=t^{\mbox{+}}\left(p_{k}^{\mathbf{'\mbox{s}}},\bar{p}_{k+1},\underline{a}^{\mathbf{\mbox{f}}}\right)$.
Based on the induction assumption, $p_{k}^{\mathbf{'\mbox{s}}}=p_{k}^{\mathbf{\mbox{s}}}$
can be obtained by repeatedly calling EFSO to merge $\left\{ \bar{p}_{m}^{\mbox{s}^{k+1-m}}\right\} _{m=1,\cdots,k}$
as in PSHL-2. Therefore, $p'_{k+1}$ is obtained by merging $\left\{ \bar{p}_{m}^{\mbox{s}^{k+1-m}}\right\} _{m=1,\cdots,k+1}$
and thus $p'_{k+1}=p_{k+1}$. 

When $P^{\mbox{PSH}}\left(\underline{a}^{\mbox{f}},\bar{a}^{\mbox{f}}\right)=\emptyset$,
from the above discussion that shows the equivalence of generating
$p_{k}$ and $p'_{k}$, it is obvious that $P^{\mbox{SH}}\left(\underline{a}^{\mbox{f}},\bar{a}^{\mbox{f}}\right)=\emptyset$
as well. This completes the proof.\end{proof}
\begin{cor}
Given lead trajectory $p_{1}\in\mathcal{T}$ , PSHL yields $P^{\mbox{PSH}}\left(\underline{a}^{\mbox{f}},\bar{a}^{\mbox{f}}\right)\in\mathcal{P}^{\mbox{LVP}}\left(p_{1}\right)$
if $\underline{a}^{\mbox{f}},\bar{a}^{\mbox{f}}\in\left[\underline{a},\bar{a}\right]$. 
\end{cor}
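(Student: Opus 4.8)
The plan is to derive this corollary from the two propositions already established rather than re-run the constructive bookkeeping. First I would fix arbitrary admissible backward rates, say $\underline{a}^{\mbox{b}}\in[\underline{a},0)$ and $\bar{a}^{\mbox{b}}\in(0,\bar{a}]$; since the lead-vehicle adaptation sets $\mathcal{G}=(-\infty,\infty)$, the check in step SH-3 is always passed and no backward shooting ever fires, so this choice is immaterial. By Proposition~\ref{prop: PSHL=00003DSHL}, $P^{\mbox{PSH}}(\underline{a}^{\mbox{f}},\bar{a}^{\mbox{f}})$ then coincides with $P^{\mbox{SH}}(\underline{a}^{\mbox{f}},\bar{a}^{\mbox{f}},\underline{a}^{\mbox{b}},\bar{a}^{\mbox{b}})$ run with the given $p_{1}\in\mathcal{T}$ held fixed; and the last paragraph of that proof records that $P^{\mbox{PSH}}=\emptyset$ iff $P^{\mbox{SH}}=\emptyset$, so the empty case is vacuous and it suffices to show that a non-empty SH output lies in $\mathcal{P}^{\mbox{LVP}}(p_{1})$.

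Next I would observe that with $\mathcal{G}=(-\infty,\infty)$ the exit set $\mathcal{T}^{+}$ of \eqref{eq: set-exit-boundary} equals all of $\mathcal{T}$ (every value $p^{-1}(L)$ trivially lies in the ``green'' set), hence $\mathcal{T}_{n}=\mathcal{T}_{n}^{-}$ and $\mathcal{T}_{n}(p_{n-1})=\mathcal{F}(p_{n-1})\cap\mathcal{T}_{n}^{-}$. Comparing \eqref{eq:P_feasible_platoon} with \eqref{eq:P_feasible_platoon_LVP}, the feasible set $\mathcal{P}$ under this choice of $\mathcal{G}$ is exactly $\mathcal{P}^{\mbox{LVP}}(p_{1})$ except that $\mathcal{P}$ additionally demands $p_{1}\in\mathcal{T}_{1}^{-}$, whereas in the LVP one only assumes $p_{1}\in\mathcal{T}$. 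So what is really needed is a lead-vehicle version of Proposition~\ref{prop: SH_feas_necessity}, obtained by rerunning its induction on the vehicle index $n$: the base case now reads simply ``$p_{1}\in\mathcal{T}$ by hypothesis'' (replacing the FSP/BSP argument for vehicle~1), and the induction step is verbatim the second half of that proof. Indeed, given $p_{n-1}\in\mathcal{T}$ and, for $n-1\ge2$, $p_{n-1}\in\mathcal{F}(p_{n-2})\cap\mathcal{T}_{n-1}^{-}$, the primary FSP builds $p_{n}$ from the entry state $(0,v_{n}^{-},t_{n}^{-})$ out of segments whose acceleration is one of $\underline{a}^{\mbox{f}},\bar{a}^{\mbox{f}},0$ or a copied shadow acceleration (all in $[\underline{a},\bar{a}]$), whose speed is pinched between the template trajectory $p^{\mbox{t}}$ (speeds in $[v_{n}^{-},\bar{v}]\subseteq[0,\bar{v}]$) and the shadow $p_{n-1}^{\mbox{s}}$ (which inherits $p_{n-1}$'s speeds in $[0,\bar{v}]$), and which satisfies the non-crossing condition \eqref{eq:non_cross_condition} at every move, so that $p_{n}\le p_{n-1}^{\mbox{s}}$ everywhere, i.e. $p_{n}\in\mathcal{F}(p_{n-1})$, while $p_{n}$ passing $(0,v_{n}^{-},t_{n}^{-})$ gives $p_{n}\in\mathcal{T}_{n}^{-}$. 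This yields $p_{n}\in\mathcal{F}(p_{n-1})\cap\mathcal{T}_{n}^{-}$, closing the induction, so $P^{\mbox{SH}}\in\mathcal{P}^{\mbox{LVP}}(p_{1})$ and therefore $P^{\mbox{PSH}}(\underline{a}^{\mbox{f}},\bar{a}^{\mbox{f}})\in\mathcal{P}^{\mbox{LVP}}(p_{1})$.

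The only point requiring genuine care — and the one I would single out as the ``obstacle'' — is verifying that the induction step of Proposition~\ref{prop: SH_feas_necessity} never secretly used that $p_{1}$ was produced by FSP, but only that the upper-bounding predecessor $p_{n-1}$ is a kinematically feasible trajectory. This is indeed so: the FSO feasibility/speed analysis and the non-crossing condition \eqref{eq:non_cross_condition} reference the predecessor only through its shadow $p_{n-1}^{\mbox{s}}$, and the shadow of any $p\in\mathcal{T}$ is again in $\mathcal{T}$ (shadowing preserves speeds and accelerations). Everything else is routine: a one-line check that $p^{-1}(L)\in\mathcal{G}$ becomes automatic once $\mathcal{G}=(-\infty,\infty)$ (so SH degenerates to SHL), and an appeal to Proposition~\ref{prop: PSHL=00003DSHL} for the identification of the two algorithms' outputs.
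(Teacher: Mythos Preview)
Your proposal is correct and is exactly the intended derivation: the paper states this corollary without proof, immediately after Proposition~\ref{prop: PSHL=00003DSHL}, so its implicit argument is precisely the combination of Propositions~\ref{prop: SH_feas_necessity} and~\ref{prop: PSHL=00003DSHL} that you spell out. Your explicit handling of the base case --- replacing ``$p_{1}$ is built by FSP and lies in $\mathcal{T}_{1}^{-}$'' with ``$p_{1}\in\mathcal{T}$ by hypothesis'' --- and your check that the induction step of Proposition~\ref{prop: SH_feas_necessity} uses only $p_{n-1}\in\mathcal{T}$ (via its shadow) are genuine clarifications the paper glosses over, but they do not change the route.
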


\section{Theoretical Property Analysis \label{sec:Theoretical-Properties-of}}

This section analyze theoretical properties of the proposed SH algorithms,
including their solution feasibility and relationship with the classic
traffic flow theory. It is actually quite challenging to analyze such
properties because the original problem defined in Section \ref{sec:Problem-Statement}
involves infinite-dimensional trajectory variables and highly nonlinear
constraints. Fortunately, the concept of time geography \citep{miller2005measurement}
is found related to the bounds to feasible trajectory ranges. We generalize
this concept to enable the following theoretical analysis.

\subsection{Quadratic Time Geography }

We first generalize the time geography theory considering acceleration
range $[\bar{a},\underline{a}]$ in additional to speed range $[0,\bar{v}]$.
These generalized theory, which we call the \emph{quadratic time geography}
(QTG) theory, are illustrated in Figure \ref{fig:gen_time_geography}
and discussed in detail in this subsection.

\begin{figure}
\begin{centering}
\includegraphics[width=0.5\textwidth]{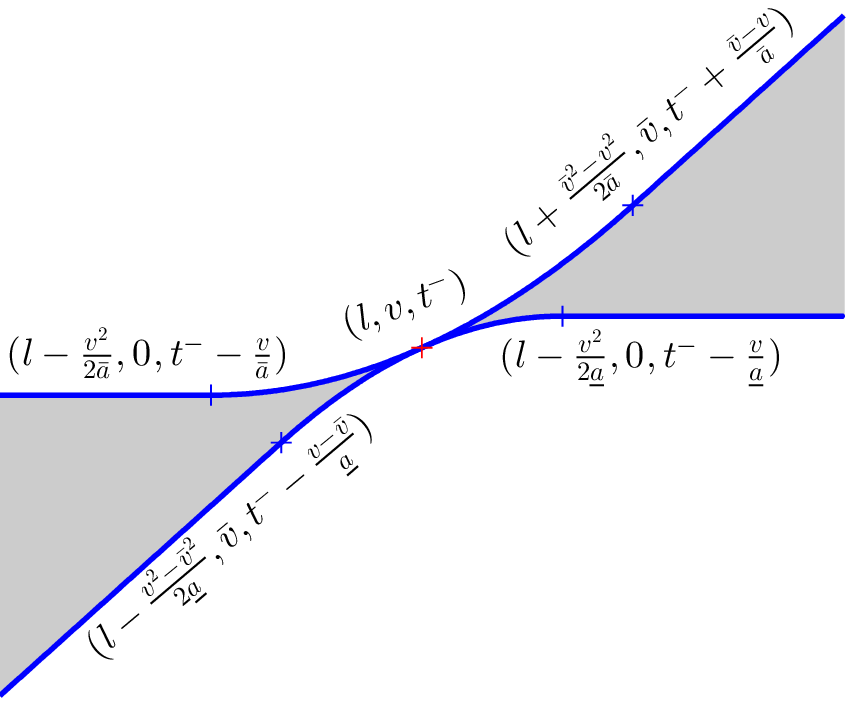}\includegraphics[width=0.5\textwidth]{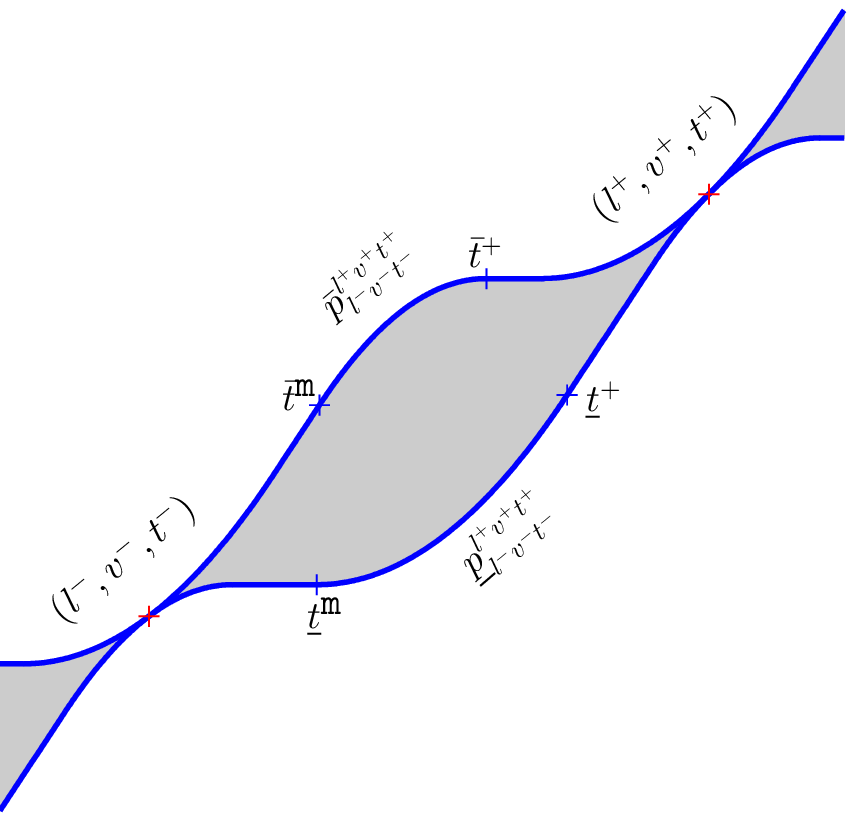}
\par\end{centering}

\begin{centering}
(a)\quad{}\quad{}\quad{}\quad{}\quad{}\quad{}\quad{}\quad{}\quad{}\quad{}\quad{}\quad{}\quad{}\quad{}\quad{}\quad{}(b)
\par\end{centering}

\protect\caption{Illustrations of the generalized time geography theory in : (a) quadratic
cone; and (b) quadratic prism. \label{fig:gen_time_geography}}
\end{figure}

\begin{defn}
We call the set of feasible trajectories (i.e., in $\mathcal{T}$)
passing a common feasible state point \emph{$\left(l,v,t^{-}\right)$
the quadratic cone} of $\left(l,v,t^{-}\right)$, denoted by $\mathcal{C}_{lvt^{-}}$,
illustrated as the shaded area in Figure \ref{fig:gen_time_geography}(a)
and formulated below: 
\[
\mathcal{C}_{lvt^{-}}=\left\{ p\left|p\in\mathcal{T},\,p(t^{-})=l,\,\dot{p}(t^{-})=v,\forall t\in\left(-\infty,\infty\right)\right.\right\} ,
\]
where\emph{ the upper bound} \emph{trajectory} $\bar{p}_{lvt^{-}}$
of $\left(l,v,t^{-}\right)$, illustrated as the top boundary of the
shade in Figure \ref{fig:gen_time_geography}(a), is formulated as

\[
\bar{p}_{lvt^{-}}(t):=\begin{cases}
l-\frac{v^{2}}{2\bar{a}}, & \mbox{if }t\in\left(-\infty,t^{-}-\frac{v}{\bar{a}}\right];\\
l+v(t-t^{-})+0.5\bar{a}(t-t^{-})^{2}, & \mbox{if }t\in\left[t^{-}-\frac{v}{\bar{a}},t^{-}+\frac{\bar{v}-v}{\bar{a}}\right];\\
l+\bar{v}(t-t^{-})-\frac{\left(\bar{v}-v\right)^{2}}{2\bar{a}}, & \mbox{if }t\in\left[t^{-}+\frac{\bar{v}-v}{\bar{a}},\infty\right),
\end{cases}
\]
and \emph{the lower bound} \emph{trajectory $\underline{p}_{lvt^{-}}$}of
$\left(l,v,t^{-}\right)$, illustrated as the bottom boundary of the
shade in Figure \ref{fig:gen_time_geography}(a), is formulated as

\[
\underline{p}_{lvt^{-}}(t)=\begin{cases}
l+\bar{v}(t-t^{-})-\frac{\left(\bar{v}-v\right)^{2}}{2\underline{a}}, & \mbox{if }t\in\left(-\infty,t^{-}-\frac{v-\bar{v}}{\underline{a}}\right];\\
l+v(t-t^{-})+0.5\underline{a}(t-t^{-})^{2}, & \mbox{if }t\in\left[t^{-}-\frac{v-\bar{v}}{\underline{a}},t^{-}+\frac{-v}{\underline{a}}\right];\\
l+\frac{-v^{2}}{2\underline{a}}, & \mbox{if }t\in\left[t^{-}+\frac{-v}{\underline{a}},\infty\right).
\end{cases}
\]
In other words, $\bar{p}_{lvt^{-}}$ is composed of quadratic segments
$(l-\frac{v^{2}}{2\bar{a}},0,0,t^{-}-\frac{v}{\bar{a}},-\infty)$,
$(l-\frac{v^{2}}{2\bar{a}},0,\bar{a},t^{-}-\frac{v}{\bar{a}},t^{-}+\frac{\bar{v}-v}{\bar{a}})$
and $(l+\frac{\bar{v}^{2}-v^{2}}{2\bar{a}},\bar{v},0,t^{-}+\frac{\bar{v}-v}{\bar{a}},\infty)$,
and $\bar{p}_{lvt^{-}}$ is comprised of quadratic segments $(l-\frac{v^{2}-\bar{v}^{2}}{2\underline{a}},\bar{v},0,t^{-}-\frac{v-\bar{v}}{\underline{a}},-\infty)$,
$(l-\frac{v^{2}-\bar{v}^{2}}{2\underline{a}},\bar{v},\bar{a},t^{-}-\frac{v-\bar{v}}{\underline{a}},t^{-}+\frac{-v}{\underline{a}})$
and $(l+\frac{-v^{2}}{2\underline{a}},0,0,t^{-}+\frac{-v}{\underline{a}},\infty)$.
Note that $\bar{p}_{lvt^{-}}=p^{\mbox{f}}\left(\left(l,v,t^{-}\right),\emptyset\right)$
from the FSP with $\bar{a}^{\mbox{f}}=\bar{a}$ and $\underline{a}^{\mbox{f}}=\underline{a}$,
$\bar{p}_{lvt^{-}}(t^{-})=\underline{p}_{lvt^{-}}(t^{-})=l$, and
$\bar{p}_{lvt^{-}}(t)\ge\underline{p}_{lvt^{-}}(t),\forall t\neq t^{-}$.
Further, $\mathcal{C}_{lvt^{-}}$ is always non-empty as long as $(l,v,t^{-})$
is feasible.
\end{defn}

\begin{defn}
We call the set of trajectories in $\mathcal{T}$ passing two feasible
state points $(l^{-},v^{-},t^{-})$ and $(l^{+},v^{+},t^{+})$ with
$t^{-}<t^{+}$, $l^{-}\le l^{+}$\emph{ a quadratic prism}, denoted
by $\mathcal{P}_{l^{-}v^{-}t^{-}}^{l^{+}v^{+}t^{+}}$, as illustrated
in Figure \ref{fig:gen_time_geography}(b) and formulated below: 

\[
\mathcal{P}_{l^{-}v^{-}t^{-}}^{l^{+}v^{+}t^{+}}:=\left\{ p\in\mathcal{T}\left|p(t^{-})=l^{-},\dot{p}(t^{-})=v^{-},p(t^{+})=l^{+},\dot{p}(t^{+})=v^{+}\right.\right\} =\mathcal{C}^{p^{-}v^{-}t^{-}}\bigcap\mathcal{C}^{p^{+}v^{+}t^{+}}.
\]
The upper bound of this $\mathcal{P}_{l^{-}v^{-}t^{-}}^{l^{+}v^{+}t^{+}}$
(as the top boundary of the shade in \ref{fig:gen_time_geography}(b)),
denoted by $\bar{p}_{l^{-}v^{-}t^{-}}^{l^{+}v^{+}t^{+}}$ , should
be composed by $\bar{p}_{l^{-}v^{-}t^{-}}\left(\infty:\bar{t}^{\mbox{m}}\right)$,
merging segment $\bar{\mathbf{s}}^{\mbox{m}}\left(\bar{p}_{l^{-}v^{-}t^{-}}\left(\bar{t}^{\mbox{m}}\right),\dot{\bar{p}}_{l^{-}v^{-}t^{-}}\left(\bar{t}^{\mbox{m}}\right),\bar{a},\bar{t}^{\mbox{m}},\bar{t}^{\mbox{+}}\right)$
and $\bar{p}_{l^{+}v^{+}t^{+}}\left(\bar{t}^{+}:\infty\right)$, where
we can actually apply FSP with $\bar{a}^{\mbox{f}}=\bar{a}$ and $\underline{a}^{\mbox{f}}=\underline{a}$
to obtain connection points $\bar{t}^{\mbox{m}}:=t^{\mbox{mf}}\left(\left(l^{-},v^{-},t^{-}\right),\bar{p}_{l^{+}v^{+}t^{+}}\right)$
and $\bar{t}^{\mbox{+}}:=t^{\mbox{+f}}\left(\left(l^{-},v^{-},t^{-}\right),\bar{p}_{l^{+}v^{+}t^{+}}\right)$.
The lower bound of this prism (as the bottom boundary of the shade
in \ref{fig:gen_time_geography}(b)), denoted by $\underline{p}_{l^{-}v^{-}t^{-}}^{l^{+}v^{+}t^{+}},$
is composed by section $\underline{p}_{l^{-}v^{-}t^{-}}(\infty:\underline{t}^{\mbox{m}})$,
merging segment $\bar{\mathbf{s}}^{\mbox{m}}\left(\underline{p}_{l^{-}v^{-}t^{-}}(\underline{t}^{\mbox{m}}),\dot{\underline{p}}_{l^{-}v^{-}t^{-}}(\underline{t}^{\mbox{m}}),\bar{a},\underline{t}^{\mbox{m}},\underline{t}^{\mbox{+}}\right)$
and $\underline{p}_{l^{+}v^{+}t^{+}}\left(\underline{t}^{+}:\infty\right)$,
where we can apply FSP in a transformed coordinate system to solve
$\underline{t}^{\mbox{m}}$ and $\underline{t}^{\mbox{+}}.$ We shift
the first shift the origin to $(l^{+},t^{+})$ and rotate the whole
coordinate system by 180 degree. Then state points $(l^{+},v^{+},t^{+})$
and $(l^{-},v^{-},t^{-})$ transfer into $(0,v^{+},0)$ and $(l^{+}-l^{-},v^{-},t^{+}-t^{-})$,
respectively, and $\underline{p}_{l^{-}v^{-}t^{-}}$ transfers into
$\hat{\underline{p}}_{l^{-}v^{-}t^{-}}$ defined as $\hat{\underline{p}}_{l^{-}v^{-}t^{-}}(t):=l^{+}-\underline{p}_{l^{-}v^{-}t^{-}}\left(2t^{+}-\left(t^{-}+t\right)\right)$.
Then we solve $\hat{\underline{t}}^{\mbox{m}}:=t^{\mbox{mf}}\left(\left(0,v^{+},0\right),\hat{\underline{p}}_{l^{-}v^{-}t^{-}}\right)$
and $\hat{\underline{t}}^{\mbox{+}}:=t^{\mbox{+f}}\left(\left(0,v^{+},0\right),\hat{\underline{p}}_{l^{-}v^{-}t^{-}}\right)$
with FSP with $a^{\mbox{f}}=-\underline{a}$ and $\underline{a}^{\mbox{f}}=-\bar{a}$.
Then we obtain $\underline{t}^{+}=t^{+}-\hat{\underline{t}}^{\mbox{m}}$
and $\underline{t}^{\mbox{m}}=t^{+}-\hat{\underline{t}}^{\mbox{+}}$. 
\end{defn}
Note that the feasibility of $\mathcal{P}_{l^{-}v^{-}t^{-}}^{l^{+}v^{+}t^{+}}$
depends on the values of $(l^{-},v^{-},t^{-})$ and $(l^{+},v^{+},t^{+})$,
as discussed in the following propositions.
\begin{prop}
\label{prop:prism_feasibility}Given two feasible state points $(l^{-},v^{-},t^{-})$
and $(l^{+},v^{+},t^{+})$, quadratic cone $\mathcal{P}_{l^{-}v^{-}t^{-}}^{l^{+}v^{+}t^{+}}$is
not empty if and only if $D\left(\bar{p}_{l^{+}v^{+}t^{+}}-\underline{p}_{l^{-}v^{-}t^{-}}\right)\ge0$
and $D\left(\bar{p}_{l^{-}v^{-}t^{-}}-\underline{p}_{l^{+}v^{+}t^{+}}\right)\ge0$
.\end{prop}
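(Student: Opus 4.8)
The plan is to treat the two inequalities separately for necessity and jointly for sufficiency, using the identity $\mathcal{P}_{l^-v^-t^-}^{l^+v^+t^+}=\mathcal{C}_{l^-v^-t^-}\cap\mathcal{C}_{l^+v^+t^+}$ together with the basic sandwiching fact that every $p\in\mathcal{C}_{lvt^-}$ satisfies $\underline{p}_{lvt^-}(t)\le p(t)\le\bar{p}_{lvt^-}(t)$ for all $t\in(-\infty,\infty)$. I would first record this sandwiching fact, which is a short reachability statement for a double integrator under the box constraints $\dot p\in[0,\bar v]$, $\ddot p\in[\underline a,\bar a]$: to be as far downstream as possible at a time $t>t^-$ one idles and then accelerates at $\bar a$, which traces $\bar p_{lvt^-}$, and to be as far upstream one brakes at $\underline a$ and then idles, which traces $\underline p_{lvt^-}$; for $t<t^-$ the extremal trajectories are the time-mirrored versions, which are exactly the remaining branches in the definitions of $\bar p_{lvt^-}$ and $\underline p_{lvt^-}$. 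Necessity is then immediate: if $p\in\mathcal{P}_{l^-v^-t^-}^{l^+v^+t^+}$ then $p\in\mathcal{C}_{l^-v^-t^-}$ gives $p\ge\underline p_{l^-v^-t^-}$ and $p\in\mathcal{C}_{l^+v^+t^+}$ gives $p\le\bar p_{l^+v^+t^+}$, so $\bar p_{l^+v^+t^+}-\underline p_{l^-v^-t^-}\ge 0$ everywhere and hence $D(\bar p_{l^+v^+t^+}-\underline p_{l^-v^-t^-})\ge 0$; the mirror comparison $\underline p_{l^+v^+t^+}\le p\le\bar p_{l^-v^-t^-}$ yields $D(\bar p_{l^-v^-t^-}-\underline p_{l^+v^+t^+})\ge 0$.

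For sufficiency I would exhibit the prism's upper bound $\bar p_{l^-v^-t^-}^{l^+v^+t^+}$ (constructed immediately before this statement by running FSP with $\bar a^{\mbox{f}}=\bar a$, $\underline a^{\mbox{f}}=\underline a$ from the state point $(l^-,v^-,t^-)$ against the shadow $\bar p_{l^+v^+t^+}$) as an explicit member of the prism, checking three things, each of which consumes one hypothesis. (i) The FSP does not return $\emptyset$: the only way it can is if the underlying forward shooting operation hits its failure branch FSO-1, which happens exactly when the hard-braking segment out of $(l^-,v^-,t^-)$ — coinciding with $\underline p_{l^-v^-t^-}$ on $[t^-,\infty)$ — exceeds the shadow somewhere, and $D(\bar p_{l^+v^+t^+}-\underline p_{l^-v^-t^-})\ge 0$ forbids precisely that. (ii) The FSP does not fall into Case III (no merge, running to $\infty$): since $\bar p_{l^-v^-t^-}(t^+)\ge\underline p_{l^+v^+t^+}(t^+)=l^+=\bar p_{l^+v^+t^+}(t^+)$ by the second hypothesis, the forward template $\bar p_{l^-v^-t^-}$ meets or crosses the shadow no later than $t^+$, so a tangent merge at some $\bar t^{\mbox{+}}$ is produced. (iii) The merge finishes in time, $\bar t^{\mbox{+}}\le t^+$, so that $\bar p_{l^-v^-t^-}^{l^+v^+t^+}$ coincides with $\bar p_{l^+v^+t^+}$ at $t^+$ and therefore genuinely passes through $(l^+,v^+,t^+)$ rather than overshooting it; this is again where $D(\bar p_{l^-v^-t^-}-\underline p_{l^+v^+t^+})\ge 0$ enters, the $\underline p_{l^+v^+t^+}$ in the bound encoding that enough time remains for the deceleration arc to become tangent to $\bar p_{l^+v^+t^+}$ before $t^+$. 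Once (i)--(iii) hold, $\bar p_{l^-v^-t^-}^{l^+v^+t^+}$ is a concatenation of a subarc of $\bar p_{l^-v^-t^-}$, one deceleration arc at rate $\underline a$ along which the speed stays monotone within $[0,\bar v]$, and a subarc of $\bar p_{l^+v^+t^+}$, hence lies in $\mathcal{T}$ and passes through both state points, so $\mathcal{P}_{l^-v^-t^-}^{l^+v^+t^+}\neq\emptyset$.

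The main obstacle is check (iii): unlike (i) and (ii) it is not a one-line envelope comparison but requires the explicit parabolic arithmetic of a forward shooting operation — writing the deceleration arc that leaves the cruise/accelerate branch of $\bar p_{l^-v^-t^-}$, solving for its tangency time with the idle/accelerate branch of $\bar p_{l^+v^+t^+}$, and then reading off that $\bar t^{\mbox{+}}\le t^+$ follows from $D(\bar p_{l^-v^-t^-}-\underline p_{l^+v^+t^+})\ge 0$. To keep this manageable I would exploit the reflection $(t,x)\mapsto(-t,-x)$, under which $\bar a$ and $\underline a$ swap roles, $\mathcal{P}_{l^-v^-t^-}^{l^+v^+t^+}$ maps to the prism with the two state points interchanged, and the two $D$-conditions swap; combined with the forward/backward symmetry already built into FSO and BSO, this relates check (iii) for the upper-bound construction to the easy non-abortion (analogue of check (i)) of the corresponding lower-bound construction, so that the delicate timing estimate can be imported from the mirror situation rather than redone from scratch.
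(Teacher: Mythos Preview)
Your plan is essentially the paper's: necessity by the sandwich $\underline p_{lvt}\le p\le\bar p_{lvt}$ plus transitivity of $D$, and sufficiency by exhibiting the FSP output (the prism upper bound) as an explicit member. Your checks (i) and (ii) match the paper's reasoning.

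The only substantive divergence is your handling of (iii). You treat it as the hard step and reach for a reflection $(t,x)\mapsto(-t,-x)$ to import it from a mirror problem. Two comments. First, that reflection does not preserve $\mathcal T$: the acceleration box $[\underline a,\bar a]$ maps to $[-\bar a,-\underline a]$, so unless $\bar a=-\underline a$ the ``mirror'' prism lives in a different kinematic class, and it is not clear you have broken the circularity rather than relocated it. Second, and more to the point, the paper dispatches (iii) in three lines by a direct contradiction, with no symmetry needed: assume the tangency time $\bar t^{\mathrm m}>t^+$; then at $t^+$ the merging arc $\mathbf s^{\mathrm m}$ (a pure $\underline a$-parabola tangent to $\bar p_{l^+v^+t^+}$ at $\bar t^{\mathrm m}$) lies strictly below $\bar p_{l^+v^+t^+}(t^+)=l^+=\underline p_{l^+v^+t^+}(t^+)$ and has speed strictly above $v^+$; since $\underline p_{l^+v^+t^+}$ is also an $\underline a$-parabola near $t^+$, the gap $\underline p_{l^+v^+t^+}-\mathbf s^{\mathrm m}$ is affine and increasing as $t$ decreases, hence stays positive down to the start time $\hat t^{\mathrm m}$; but $\mathbf s^{\mathrm m}(\hat t^{\mathrm m})=\bar p_{l^-v^-t^-}(\hat t^{\mathrm m})$, giving $\bar p_{l^-v^-t^-}(\hat t^{\mathrm m})<\underline p_{l^+v^+t^+}(\hat t^{\mathrm m})$, which contradicts $D(\bar p_{l^-v^-t^-}-\underline p_{l^+v^+t^+})\ge0$. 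So both hypotheses enter exactly where you placed them, but (iii) is a short comparison of two $\underline a$-parabolas rather than a structural symmetry argument.
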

\begin{proof}
We first prove the necessity. If there exists a feasible trajectory
$p\in\mathcal{P}_{l^{-}v^{-}t^{-}}^{l^{+}v^{+}t^{+}}$, then we know
the $D\left(\bar{p}_{l^{+}v^{+}t^{+}}-p\right)\ge0$ and $D\left(p-\underline{p}_{l^{-}v^{-}t^{-}}\right)\ge0$,
which indicates $D\left(\bar{p}_{l^{+}v^{+}t^{+}}-\underline{p}_{l^{-}v^{-}t^{-}}\right)\ge0$.
Symmetrically, $D\left(\bar{p}_{l^{-}v^{-}t^{-}}-p\right)\ge0$ and
$D\left(p-\underline{p}_{l^{+}v^{+}t^{+}}\right)\ge0$ indicates $D\left(\bar{p}_{l^{-}v^{-}t^{-}}-\underline{p}_{l^{+}v^{+}t^{+}}\right)\ge0$. 

Then we investigate the sufficiency. Given $D\left(\bar{p}_{l^{+}v^{+}t^{+}}-\underline{p}_{l^{-}v^{-}t^{-}}\right)\ge0$
and $D\left(\bar{p}_{l^{-}v^{-}t^{-}}-\underline{p}_{l^{+}v^{+}t^{+}}\right)\ge0$,
we can first obtain that $l^{-}<l^{+}$ and $t^{-}<t^{+}$. Further
we know that there exists a point $\bar{t}^{*}\in[t^{-},t^{+}]$ such
that $\bar{p}_{l^{+}v^{+}t^{+}}(t)\ge\bar{p}_{l^{-}v^{-}t^{-}}(t),\forall t\in\left[-\infty,\bar{t}^{*}\right]$,
$\bar{p}_{l^{+}v^{+}t^{+}}\left(\bar{t}^{*}\right)=\bar{p}_{l^{-}v^{-}t^{-}}\left(\bar{t}^{*}\right)$
and $\bar{p}_{l^{+}v^{+}t^{+}}(t)\le\bar{p}_{l^{-}v^{-}t^{-}}(t),\forall t\in\left[\bar{t}^{*},\infty\right]$.
Then we can obtain a trajectory $p^{\mbox{m}}\in\mathcal{C}_{l^{-}v^{-}t^{-}}$
composed by $\bar{p}_{lvt^{-}}\left(t^{-}:\hat{t}^{\mbox{m}}\right)$,
$\mathbf{s}^{\mbox{m}}:=\left(\bar{p}_{lvt^{-}}\left(\hat{t}^{\mbox{m}}\right),\dot{\bar{p}}_{lvt^{-}}\left(\hat{t}^{\mbox{m}}\right),\underline{a},\hat{t}^{\mbox{m}},\bar{t}^{\mbox{m}}\right)$,
$\bar{p}_{l^{+}v^{+}t^{+}}\left(\bar{t}^{\mbox{m}}:\infty\right)$
satisfying $t^{-}\le\hat{t}^{\mbox{m}}\le\bar{t}^{\mbox{m}}<\infty$
and $D\left(\bar{p}_{l^{+}v^{+}t^{+}}-p^{\mbox{m}}\right)\ge0$. Next,
we will prove $\bar{t}^{\mbox{m}}\le t^{+}$ by contradiction. If
$\bar{t}^{\mbox{m}}>t^{+}$, then $\bar{p}_{l^{+}v^{+}t^{+}}\left(\bar{t}^{\mbox{m}}\right)>l^{+}$
and $\dot{\bar{p}}_{l^{+}v^{+}t^{+}}\left(\bar{t}^{\mbox{m}}\right)>v^{+}$.
Since segment $\mathbf{s}^{\mbox{m}}$ decelerates at $\underline{a}$,
then $D\left(\underline{p}_{l^{+}v^{+}t^{+}}-\mathbf{s}^{\mbox{m}}\right)>0$,
which however is contradictory to $D\left(\bar{p}_{l^{-}v^{-}t^{-}}-\underline{p}_{l^{+}v^{+}t^{+}}\right)\ge0$
because the start point of $\mathbf{s}^{\mbox{m}}$ is on $\bar{p}_{l^{-}v^{-}t^{-}}$.
This proves that $t^{-}\le\hat{t}^{\mbox{m}}\le\bar{t}^{\mbox{m}}\le t^{+}$.
Therefore, $p^{\mbox{m}}\in\mathcal{C}_{l^{-}v^{-}t^{-}}\cap\mathcal{C}_{l^{+}v^{+}t^{+}}=\mathcal{P}_{l^{-}v^{-}t^{-}}^{l^{+}v^{+}t^{+}}$.
This completes the proof.\end{proof}
\begin{prop}
\label{prop: shift_prism_feasibility}Given any $\delta\ge0$ and
two feasible state points $(l^{-},v^{-},t^{-})$ and $(l^{+},v^{+},t^{+})$
with $D(\bar{p}_{l^{-},v^{-},t^{-}}-\underline{p}_{l^{+},v^{+},t^{+}})\ge0$,
if quadratic prism $\mathcal{P}_{l^{-}v^{-}t^{-}}^{l^{+}v^{+}\left(t^{+}+\delta\right)}$
is not empty, then $\mathcal{P}_{l^{-}v^{-}t^{-}}^{l^{+}v^{+}t^{+}}$
is not empty and $D\left(\bar{p}_{l^{-}v^{-}t^{-}}^{l^{+}v^{+}t^{+}}-p\right)\ge0,\forall p\in\mathcal{P}_{l^{-}v^{-}t^{-}}^{l^{+}v^{+}\left(t^{+}+\delta\right)}$.\end{prop}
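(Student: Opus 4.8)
The plan is to establish the two assertions in order, obtaining the non-emptiness of $\mathcal P_{l^-v^-t^-}^{l^+v^+t^+}$ first (so that $\bar p_{l^-v^-t^-}^{l^+v^+t^+}$ is even defined) and then the domination bound. Two elementary observations will be used repeatedly: the cone upper bound through a state point shifts rigidly in time with that point, so $\bar p_{l^+v^+(t^++\delta)}(t)=\bar p_{l^+v^+t^+}(t-\delta)$; and every feasible trajectory is non-decreasing, so $\bar p_{l^+v^+t^+}(t-\delta)\le\bar p_{l^+v^+t^+}(t)$ for $\delta\ge 0$. For non-emptiness: $(l^+,v^+,t^++\delta)$ is a feasible state point (it carries the feasible speed $v^+$), so applying the necessity direction of Proposition~\ref{prop:prism_feasibility} to the nonempty prism $\mathcal P_{l^-v^-t^-}^{l^+v^+(t^++\delta)}$ yields $D\!\left(\bar p_{l^+v^+(t^++\delta)}-\underline p_{l^-v^-t^-}\right)\ge 0$. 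Combining this with the two observations gives $D\!\left(\bar p_{l^+v^+t^+}-\underline p_{l^-v^-t^-}\right)\ge D\!\left(\bar p_{l^+v^+(t^++\delta)}-\underline p_{l^-v^-t^-}\right)\ge 0$, and together with the standing hypothesis $D\!\left(\bar p_{l^-v^-t^-}-\underline p_{l^+v^+t^+}\right)\ge 0$ the sufficiency direction of Proposition~\ref{prop:prism_feasibility} gives $\mathcal P_{l^-v^-t^-}^{l^+v^+t^+}\neq\emptyset$.

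For the domination bound, fix $p\in\mathcal P_{l^-v^-t^-}^{l^+v^+(t^++\delta)}$, write $\bar p:=\bar p_{l^-v^-t^-}^{l^+v^+t^+}$, and recall its structure: $\bar p$ coincides with $\bar p_{l^-v^-t^-}$ on $(-\infty,\bar t^{\mathrm m}]$, equals a merging segment of constant acceleration $\underline a$ on $[\bar t^{\mathrm m},\bar t^+]$ (with $\bar t^{\mathrm m}\le\bar t^+$), and coincides with $\bar p_{l^+v^+t^+}$ on $[\bar t^+,\infty)$. I check $p(t)\le\bar p(t)$ piece by piece. On $(-\infty,\bar t^{\mathrm m}]$, membership $p\in\mathcal C_{l^-v^-t^-}$ gives $p\le\bar p_{l^-v^-t^-}=\bar p$ (the cone upper bound dominates every member of the cone). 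On $[\bar t^+,\infty)$, membership $p\in\mathcal C_{l^+v^+(t^++\delta)}$ gives $p(t)\le\bar p_{l^+v^+(t^++\delta)}(t)=\bar p_{l^+v^+t^+}(t-\delta)\le\bar p_{l^+v^+t^+}(t)=\bar p(t)$ by the two observations; the same chain also disposes of any sub-arc on which $\bar p$ has stalled at its minimal height $l^+-\tfrac{(v^+)^2}{2\bar a}$, since there $\bar p_{l^+v^+t^+}(t)$ equals that height as well. In particular $p\le\bar p$ at both $\bar t^{\mathrm m}$ and $\bar t^+$.

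It remains to handle the middle interval $[\bar t^{\mathrm m},\bar t^+]$, and this is the one step that needs care. There $\bar p$ has second derivative equal to $\underline a$, the \emph{extreme} deceleration, whereas any feasible $p$ satisfies $\ddot p\ge\underline a$; hence $p-\bar p$ has nonnegative second derivative on $[\bar t^{\mathrm m},\bar t^+]$, i.e.\ it is convex there, and being $\le 0$ at both endpoints it is $\le 0$ on the whole interval. This yields $p\le\bar p$ everywhere, that is $D(\bar p-p)\ge 0$, completing the proof. I expect the main obstacle to be precisely the bookkeeping for this merging segment: confirming that its acceleration is the minimal value $\underline a$ (as it comes out of the FSP used to build $\bar p_{l^-v^-t^-}^{l^+v^+t^+}$, with $\underline a^{\mathrm f}=\underline a$), and then clearing away the degenerate configurations — an empty merge ($\bar t^{\mathrm m}=\bar t^+$) or a merge containing a stop — but in each of these the two endpoint regions already control the relevant times, so no new argument is required.
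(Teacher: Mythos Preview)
Your argument is correct. The non-emptiness half is essentially identical to the paper's: both invoke the necessity direction of Proposition~\ref{prop:prism_feasibility}, shift the cone upper bound from $t^++\delta$ to $t^+$ via monotonicity, and then apply the sufficiency direction together with the standing hypothesis.

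For the domination bound the two arguments diverge. The paper argues at the level of whole trajectories: from $p\le\bar p_{l^+v^+(t^++\delta)}$ and $\bar p_{l^+v^+(t^++\delta)}\le\bar p_{l^+v^+t^+}$ it obtains $D(\bar p_{l^+v^+t^+}-p)\ge 0$ by transitivity of $D$, then asserts that this (together with $p\in\mathcal C_{l^-v^-t^-}$) places $p$ under the prism upper bound $\bar p_{l^-v^-t^-}^{l^+v^+t^+}$. That last step is stated tersely in the paper (it even writes ``$p\in\mathcal P_{l^-v^-t^-}^{l^+v^+t^+}$'', which is literally false for $\delta>0$); what is really being used is that any feasible trajectory lying below both $\bar p_{l^-v^-t^-}$ and $\bar p_{l^+v^+t^+}$ must also lie below their FSP merge. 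Your piecewise argument makes exactly this implicit step explicit: on the two outer pieces you compare directly with the relevant cone upper bound, and on the merging segment you exploit that $\bar p$ has the extremal acceleration $\underline a$ (so $p-\bar p$ is convex there and nonpositive at the endpoints, hence nonpositive throughout). This is a cleaner and more self-contained justification of the same conclusion, and it correctly identifies the merging acceleration as $\underline a$ (the paper's displayed $\bar a$ in the prism definition is a typo; the FSP is run with $\underline a^{\mathrm f}=\underline a$).
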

\begin{proof}
If $\mathcal{P}_{l^{-}v^{-}t^{-}}^{l^{+}v^{+}\left(t^{+}+\delta\right)}$
is not empty, Proposition \ref{prop:prism_feasibility} indicates
that $D\left(\bar{p}_{l^{+}v^{+}\left(t^{+}+\delta\right)}-\underline{p}_{l^{-}v^{-}t^{-}}\right)\ge0$.
Further, apparently $D\left(\bar{p}_{l^{+}v^{+}t^{+}}-\bar{p}_{l^{+}v^{+}\left(t^{+}+\delta\right)}\right)\ge0$
and thus due to the transitive property of function $D$, we obtain
$D\left(\bar{p}_{l^{+}v^{+}t^{+}}-\underline{p}_{l^{-}v^{-}t^{-}}\right)\ge0$,
which combined with the given condition $D\left(\bar{p}_{l^{-},v^{-},t^{-}}-\underline{p}_{l^{+},v^{+},t^{+}}\right)\ge0$
indicates that $\mathcal{P}_{l^{-}v^{-}t^{-}}^{l^{+}v^{+}t^{+}}$
is not empty based on Proposition \ref{prop:prism_feasibility}. 

Further, for any $p\in\mathcal{P}_{l^{-}v^{-}t^{-}}^{l^{+}v^{+}\left(t^{+}+\delta\right)}$,
we have $D\left(\bar{p}_{l^{-}v^{-}t^{-}}^{l^{+}v^{+}\left(t^{+}+\delta\right)}-p\right)>0$.
Since $D\left(\bar{p}_{l^{+}v^{+}t^{+}}-\bar{p}_{l^{+}v^{+}\left(t^{+}+\delta\right)}\right)\ge0$,
we also have $D\left(\bar{p}_{l^{+}v^{+}t^{+}}-p\right)\ge0$ due
to the transitive property of function $D$. This implies that $p\in\mathcal{P}_{l^{-}v^{-}t^{-}}^{l^{+}v^{+}t^{+}}$
and thus $D\left(\bar{p}_{l^{-}v^{-}t^{-}}^{l^{+}v^{+}t^{+}}-p\right)\ge0.$
This completes the proof.\end{proof}
\begin{rem}
Note that as $\bar{a}\rightarrow\infty$ and $\underline{a}\rightarrow-\infty$,
every smooth speed transition segment on the borders of a quartic
cone or prism reduces into a vertex, and the QTG concept converges
to the classic time geography \citep{miller2005measurement}. Besides,
when the spatiotemporal range of the studied problem is far greater
than that where acceleration $\bar{a}$ and deceleration $\underline{a}$
is discernible, neither is QTG much different from the classic time
geography.
\end{rem}

\subsection{Relationship Between QTG and SH }

As preparing for the investigation to the feasibility and optimality
of the SH solution, we now discuss the relationships between trajectories
generated from FSP and BSP and the borders of the corresponding quadratic
cone and prism. For uniformity, this subsection only considers FSP
and BSP with the extreme acceleration control variable values, i.e.,
$\left[\bar{a}^{\mbox{f}},\underline{a}^{\mbox{f}},\bar{a}^{\mbox{b}},\underline{a}^{\mbox{b}}\right]=\left[\bar{a},\underline{a},\bar{a},\underline{a}\right]$.

\begin{prop}
\label{prop: FSP-p_bar}The forward shooting trajectory $p^{\mbox{f}}\left(\left(l,v,t^{-}\right),\emptyset\right)$
generated from FSP overlaps\textup{ $\bar{p}_{lvt^{-}}(t^{-}:\infty)$. }
\end{prop}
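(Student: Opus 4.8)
The plan is to prove the claim by directly unwinding the FSP algorithm of Definition~\ref{def: FSP} on the degenerate input $p^{\mbox{s}}=\emptyset$ and then matching the resulting quadratic segments, piece by piece, against the three pieces that make up the upper bound trajectory $\bar{p}_{lvt^{-}}$. Indeed, this is precisely the identity already flagged in the definition of the quadratic cone; the work is to make it explicit. Throughout I would use the standing convention of this subsection that $\bar{a}^{\mbox{f}}=\bar{a}$ and $\underline{a}^{\mbox{f}}=\underline{a}$; note that $\underline{a}^{\mbox{f}}$ never enters the argument because no merging segment is ever constructed.

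First I would observe that on the empty shadow there is nothing to merge into: with $p^{\mbox{s}}=\emptyset$ there are no segments $\mathbf{s}_{h}^{\mbox{s}}$ to iterate over, so the loop body FSP-2 is never entered and the values $t^{\mbox{m}}=t^{+}=\infty$ initialized in FSP-1 survive. Consequently FSP-3 appends exactly the two template segments $\mathbf{s}^{\mbox{a}}:=(l,v,\bar{a},t^{-},t^{\mbox{a}})$ with $t^{\mbox{a}}=t^{-}+(\bar{v}-v)/\bar{a}$ and $\mathbf{s}^{\infty}:=(l^{\mbox{a}},\bar{v},0,t^{\mbox{a}},\infty)$ with $l^{\mbox{a}}=l+(\bar{v}^{2}-v^{2})/(2\bar{a})$ (only $\mathbf{s}^{\infty}$, with $t^{\mbox{a}}=t^{-}$ and $l^{\mbox{a}}=l$, when $v=\bar{v}$), and it appends no shadow segments since $t^{+}=\infty$. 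In other words, $p^{\mbox{f}}((l,v,t^{-}),\emptyset)$ is exactly the template trajectory $p^{\mbox{t}}$ of Definition~\ref{def: FSP}.

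Next I would compare $p^{\mbox{t}}$ with $\bar{p}_{lvt^{-}}$ restricted to $[t^{-},\infty)$, using the piecewise formula for $\bar{p}_{lvt^{-}}$. The stopped piece of $\bar{p}_{lvt^{-}}$ lives on $(-\infty,t^{-}-v/\bar{a}]\subseteq(-\infty,t^{-}]$, so it contributes nothing for $t>t^{-}$ (and at $t^{-}$ it equals $l=\mathbf{s}^{\mbox{a}}(t^{-})$, consistently). For the acceleration arc, which the definition writes as $(l-v^{2}/(2\bar{a}),0,\bar{a},t^{-}-v/\bar{a},t^{-}+(\bar{v}-v)/\bar{a})$, expanding $0.5\bar{a}\bigl(t-(t^{-}-v/\bar{a})\bigr)^{2}+\bigl(l-v^{2}/(2\bar{a})\bigr)$ about $t=t^{-}$ shows it is the same quadratic as $\mathbf{s}^{\mbox{a}}$, and the two share the time window $[t^{-},t^{\mbox{a}}]$. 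Finally, the cruising piece of $\bar{p}_{lvt^{-}}$ begins at time $t^{-}+(\bar{v}-v)/\bar{a}=t^{\mbox{a}}$ at location $l+(\bar{v}^{2}-v^{2})/(2\bar{a})=l^{\mbox{a}}$ with speed $\bar{v}$, i.e.\ it is exactly $\mathbf{s}^{\infty}$. Concatenating, $p^{\mbox{f}}((l,v,t^{-}),\emptyset)$ and $\bar{p}_{lvt^{-}}$ coincide on all of $[t^{-},\infty)$, which is the claim.

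I do not anticipate a substantive obstacle; the statement is essentially definitional. The only points that need a bit of care are (i) pinning down that FSP does nothing on the empty shadow argument, so that the default return $t^{\mbox{m}}=t^{+}=\infty$ of FSP-1 is what survives, and (ii) the degenerate boundary cases $v=0$ and $v=\bar{v}$, in which one of the two template segments collapses to a point and must be handled explicitly. The reparametrization of the acceleration arc is a routine one-line computation.
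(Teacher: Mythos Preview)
Your proof is correct and takes essentially the same approach as the paper, which simply omits the argument as ``obvious from the definitions of FSP and BSP'' (and indeed already records the identity $\bar{p}_{lvt^{-}}=p^{\mbox{f}}((l,v,t^{-}),\emptyset)$ in the definition of the quadratic cone). Your explicit unwinding of FSP on the empty shadow and the segment-by-segment matching with $\bar{p}_{lvt^{-}}$ is exactly the verification the paper leaves to the reader.
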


\begin{prop}
\label{prop: BSP-p_bar}Given two feasible state points $\left(l^{-},v^{-},t^{-}\right)$
and $\left(l^{+},v^{+},t^{+}\right)$ with $l^{+}>l^{-}$ and $t^{+}>t^{-}$
such that quadratic prism $\mathcal{P}_{l^{-}v^{-}t^{-}}^{l^{+}v^{+}t^{+}}\neq\emptyset$,
the extended backward shooting trajectory $p^{\mbox{eb}}((l^{+},v^{+},t^{+}),p^{\mbox{f}}((l^{-},v^{-},t^{-}),\emptyset))$
generated from BSP overlaps \textup{$\bar{p}_{l^{-}v^{-}t^{-}}^{l^{+}v^{+}t^{+}}(t^{-}:\infty).$}
\end{prop}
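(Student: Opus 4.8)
The plan is to show that, under the extreme controls $[\bar a^{\mathrm f},\underline a^{\mathrm f},\bar a^{\mathrm b},\underline a^{\mathrm b}]=[\bar a,\underline a,\bar a,\underline a]$, the trajectory $p^{\mathrm{eb}}((l^+,v^+,t^+),p^{\mathrm f}((l^-,v^-,t^-),\emptyset))$ returned by BSP is the pointwise supremum over the prism $\mathcal P_{l^-v^-t^-}^{l^+v^+t^+}$, which is exactly the trajectory $\bar p_{l^-v^-t^-}^{l^+v^+t^+}$. First I would unpack the BSP construction in this regime. By Proposition~\ref{prop: FSP-p_bar}, the ``original'' trajectory $p^{\mathrm f}((l^-,v^-,t^-),\emptyset)$ fed to BSP coincides with $\bar p_{l^-v^-t^-}$ on $[t^-,\infty)$, and the auxiliary FSP launched from $(l^+,v^+,t^+)$ in step BSP-4 coincides with $\bar p_{l^+v^+t^+}$ on $[t^+,\infty)$. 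A short direct computation shows that the backward shooting template trajectory rooted at $(l^+,v^+,t^+)$ --- the stopped segment $\mathbf s^{-\infty}$ followed by the acceleration-$\bar a$ segment $\mathbf s^{-\mathrm a}$ --- is exactly $\bar p_{l^+v^+t^+}$ restricted to $(-\infty,t^+]$. Since BSP inserts a single merging segment of acceleration $\underline a$, it follows that $p^{\mathrm{eb}}$, restricted to $[t^-,\infty)$, has the form: coincide with $\bar p_{l^-v^-t^-}$ on $[t^-,\tau_1]$; follow a quadratic segment $B$ of acceleration $\underline a$ on $[\tau_1,\tau_2]$ that is $C^1$ at both joints; coincide with $\bar p_{l^+v^+t^+}$ on $[\tau_2,\infty)$. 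This is precisely the three-piece structural form of $\bar p_{l^-v^-t^-}^{l^+v^+t^+}$.

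Next I would use the hypothesis $\mathcal P_{l^-v^-t^-}^{l^+v^+t^+}\neq\emptyset$ to ensure BSP succeeds and that $t^-\le\tau_1\le\tau_2\le t^+$. By Proposition~\ref{prop:prism_feasibility} non-emptiness is equivalent to $D(\bar p_{l^+v^+t^+}-\underline p_{l^-v^-t^-})\ge0$ and $D(\bar p_{l^-v^-t^-}-\underline p_{l^+v^+t^+})\ge0$, and these two distance inequalities are exactly what exclude the ``infeasible'' branches of the BSO/FSP subroutines invoked inside BSP: in particular, since any $p$ passing both state points lies below $\bar p_{l^-v^-t^-}$, BSP cannot report that every trajectory through $(l^+,v^+,t^+)$ exceeds $p^{\mathrm f}$. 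The bound $\tau_2\le t^+$ follows by the same contradiction used in the proof of Proposition~\ref{prop:prism_feasibility}: if $\tau_2>t^+$, the deceleration-$\underline a$ segment $B$, which starts on $\bar p_{l^-v^-t^-}$, would sit strictly above $\underline p_{l^+v^+t^+}$, contradicting $D(\bar p_{l^-v^-t^-}-\underline p_{l^+v^+t^+})\ge0$; and $\tau_1\ge t^-$ is automatic because the merge occurs within the domain $[t^-,\infty)$ of $p^{\mathrm f}$. A quick speed check (on $B$, stepping backward from $\bar p_{l^+v^+t^+}$ the speed rises from a nonnegative value, and stepping forward from $\bar p_{l^-v^-t^-}$ it falls toward a value $\le\bar v$) gives $p^{\mathrm{eb}}\in\mathcal T$, so $p^{\mathrm{eb}}\in\mathcal P_{l^-v^-t^-}^{l^+v^+t^+}$.

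The core step is then the comparison $p^{\mathrm{eb}}(t)\ge p(t)$ for every $p\in\mathcal P_{l^-v^-t^-}^{l^+v^+t^+}$. On $[t^-,\tau_1]$ we have $p\le\bar p_{l^-v^-t^-}=p^{\mathrm{eb}}$ because $\bar p_{l^-v^-t^-}$ bounds the cone of $(l^-,v^-,t^-)$ from above; on $[\tau_2,\infty)$ likewise $p\le\bar p_{l^+v^+t^+}=p^{\mathrm{eb}}$; and on $[\tau_1,\tau_2]$ the function $p-B$ is convex, since $\ddot p\ge\underline a=\ddot B$ a.e., and it is nonpositive at the two endpoints by the cone bounds just used, hence nonpositive throughout, i.e.\ $p\le B=p^{\mathrm{eb}}$. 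Thus $p^{\mathrm{eb}}$ is the pointwise maximum over the prism. Since the constructive definition of $\bar p_{l^-v^-t^-}^{l^+v^+t^+}$ produces a trajectory of the very same three-piece form --- follow $\bar p_{l^-v^-t^-}$, a deceleration-$\underline a$ merging segment, then follow $\bar p_{l^+v^+t^+}$ --- the identical argument shows it too is the pointwise maximum over the prism, so the two coincide on $(t^-:\infty)$, which is the claim.

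I expect the main obstacle to lie in the bookkeeping of the first two steps rather than in the maximality comparison: one must reconcile the orientation-reversed internal labels of BSP (its ``$t^{\mathrm m}$'' and ``$t^+$'' are the backward peel-off and join times, playing the roles of $\bar t^+$ and $\bar t^{\mathrm m}$ in the prism definition), walk through the case splits of BSO/FSO --- whether the peel-off from $\bar p_{l^+v^+t^+}$ lands in its stopped phase or its acceleration phase, whether $\bar p_{l^-v^-t^-}$ is already cruising at $\bar v$ where $B$ departs it, and the degenerate cases $\tau_1=\tau_2$ and $\tau_1=t^-$ --- and check in each that BSP returns exactly the three-piece trajectory above and never aborts. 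The convexity observation $\ddot{(p-B)}\ge0$ is what keeps the middle-interval comparison clean, avoiding any need for a pointwise speed comparison there.
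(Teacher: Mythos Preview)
Your proposal is correct, and in fact far more detailed than the paper's own treatment: the paper simply asserts that this proposition ``obviously hold[s] based on the definitions of FSP and BSP'' and omits the proof entirely. The route the authors presumably have in mind is the direct one you sketch at the end --- the prism upper bound $\bar p_{l^-v^-t^-}^{l^+v^+t^+}$ is \emph{defined} by applying FSP from $(l^-,v^-,t^-)$ to merge into $\bar p_{l^+v^+t^+}$ via a $\underline a$-segment, while BSP from $(l^+,v^+,t^+)$ into $p^{\mathrm f}=\bar p_{l^-v^-t^-}$ finds the same $\underline a$-tangent arc from the other side; uniqueness of a $C^1$ $\underline a$-arc tangent to both curves makes the two constructions coincide.

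Your supremum characterization is a genuinely different and arguably cleaner justification: instead of arguing that FSO and BSO solve the same quadratic tangency conditions and therefore return the same merge times, you show that any three-piece trajectory of this form dominates every element of the prism (via the convexity of $p-B$ on the middle interval), hence is the unique pointwise maximum, hence the two constructions agree. This buys you robustness --- you never have to chase the case splits in FSO/BSO or worry about orientation conventions --- at the cost of a slightly longer argument. The bookkeeping you flag (checking BSP does not abort, and that $\tau_1,\tau_2$ land in $[t^-,t^+]$) is indeed where the work lies, and your use of Proposition~\ref{prop:prism_feasibility} to rule out the infeasible branches is the right lever.
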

These two propositions obviously hold based on the definitions of
FSP and BSP and thus we omit the proofs. These properties can be extended
to cases where the current trajectory is bounded by one or multiple
preceding trajectories from the top. 
\begin{defn}
Given a set of trajectories $\mathbf{q}=\left\{ q_{1},q_{2},\cdots,q_{M}\in\mathcal{T}\right\} $,
we define $u(\mathbf{q},t):=\min_{m\in\left\{ 1,\cdots,M\right\} }q_{m}(t),\forall t$
and we call function $u(\mathbf{q},\cdot)$ a \emph{quasi-trajectory}
and denote it with $u(\mathbf{q})$ for simplicity. Let $\mathcal{U}$
denote the set of all quasi-trajectories. Note that distance function
$D$ can be easily extended to $\mathcal{U}$, i.e., $D\left(u-u'\right):=\min_{t\in(\infty,\infty)}\left(u(t)-u'(t)\right),\forall u,u'\in\mathcal{U}$.
We can also denote EFSP result $p^{\mbox{f}}\left(\left(l,v,t\right),\mathbf{q}\right)$
as $p^{\mbox{f}}\left(\left(l,v,t\right),u(\mathbf{q})\right)$. Further,
let $p(\mathbf{q})$ denote the trajectory generated by merging all
trajectories in $\mathbf{q}$ with EFSO, and we can also denote BSP
result $p^{\mbox{b}}\left(\left(l,v,t\right),p(\mathbf{q})\right)$
and $p^{\mbox{eb}}\left(\left(l,v,t\right),p(\mathbf{q})\right)$
as $p^{\mbox{b}}\left(\left(l,v,t\right),u(\mathbf{q})\right)$ and
$p^{\mbox{eb}}\left(\left(l,v,t\right),u(\mathbf{q})\right)$, respectively,
for simplicity.
\end{defn}

\begin{defn}
Given a feasible state point $(l,v,t^{-})$ and a quasi-trajectory
$u\in\mathcal{U}$ , we define $\mathcal{C}_{lvt^{-}}^{u}:=\left\{ p\left|p\in\mathcal{C}_{lvt^{-}},D\left(u-p\right)\ge0\right.\right\} $,
which we call a \emph{bounded cone} of $(l,v,t^{-})$ with respect
to $u$. 
\end{defn}

\begin{defn}
Given two feasible state points $(l^{-},v^{-},t^{-})$, $(l^{+},v^{+},t^{+})$
and a quasi-trajectory $u\in\mathcal{U}$, we define $\mathcal{P}_{l^{-}v^{-}t^{-}}^{l^{+}v^{+}t^{+},u}:=\left\{ p\left|p\in\mathcal{P}_{l^{-}v^{-}t^{-}}^{l^{+}v^{+}t^{+}},D\left(u-p\right)\ge0\right.\right\} $,
which we call a \emph{bounded prism} of $(l^{-},v^{-},t^{-})$ and
$(l^{+},v^{+},t^{+})$ with respect to $u$.
\end{defn}
Apparently, bounded cones and prisms shall satisfy the following properties.
\begin{prop}
\label{prop:bounded_transitive}Given any two feasible state points
$(l^{-},v^{-},t^{-})$, $(l^{+},v^{+},t^{+})$ and two quasi-trajectories
$u,u'\in\mathcal{T}$ with $D(u'-u)\ge0$, then $\mathcal{C}_{l^{-}v^{-}t^{-}}^{u}\subseteq\mathcal{C}_{l^{-}v^{-}t^{-}}^{u'}$
and $\mathcal{P}_{l^{-}v^{-}t^{-}}^{l^{+}v^{+}t^{+},u}\subseteq\mathcal{P}_{l^{-}v^{-}t^{-}}^{l^{+}v^{+}t^{+},u'}$.
\end{prop}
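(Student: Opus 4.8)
The plan is to reduce both inclusions to the additivity (transitivity) of the distance function $D$ recorded at the end of Definition~\ref{def:trajectory_distance}. Observe first that $\mathcal{C}_{l^-v^-t^-}^{u}$ and $\mathcal{C}_{l^-v^-t^-}^{u'}$ are carved out of the \emph{same} ambient set $\mathcal{C}_{l^-v^-t^-}$, and likewise $\mathcal{P}_{l^-v^-t^-}^{l^+v^+t^+,u}$ and $\mathcal{P}_{l^-v^-t^-}^{l^+v^+t^+,u'}$ are carved out of the same $\mathcal{P}_{l^-v^-t^-}^{l^+v^+t^+}$. Hence membership in the two bounded cones (resp.\ two bounded prisms) differs only through the upper-bound constraint $D(u-p)\ge 0$ versus $D(u'-p)\ge 0$, and it suffices to prove the single implication: for any trajectory $p$, $D(u-p)\ge 0$ and $D(u'-u)\ge 0$ together force $D(u'-p)\ge 0$.

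To establish that implication I would fix an arbitrary $p\in\mathcal{C}_{l^-v^-t^-}^{u}$, so that $p\in\mathcal{C}_{l^-v^-t^-}$ and $D(u-p)\ge 0$. Since each quasi-trajectory is a pointwise minimum of finitely many trajectories in $\mathcal{T}$, both $u$ and $u'$ are defined and finite on all of $(-\infty,\infty)$, as is $p$; thus all three distances below are genuine infima over the full time axis, and the ``$\infty$'' branch of the definition of $D$ never arises. Then for every $t\in(-\infty,\infty)$,
\[
u'(t)-p(t)=\bigl(u'(t)-u(t)\bigr)+\bigl(u(t)-p(t)\bigr)\ge D(u'-u)+D(u-p)\ge 0,
\]
and taking the infimum over $t$ gives $D(u'-p)\ge 0$. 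Hence $p\in\mathcal{C}_{l^-v^-t^-}^{u'}$, which yields $\mathcal{C}_{l^-v^-t^-}^{u}\subseteq\mathcal{C}_{l^-v^-t^-}^{u'}$. (Equivalently, one may quote the transitivity clause of Definition~\ref{def:trajectory_distance} directly with the triple $u',u,p$ and the lower bounds $0$ and $0$.) The prism inclusion is then obtained verbatim: for $p\in\mathcal{P}_{l^-v^-t^-}^{l^+v^+t^+,u}$ one has $p\in\mathcal{P}_{l^-v^-t^-}^{l^+v^+t^+}$ and $D(u-p)\ge 0$, and the identical pointwise estimate gives $D(u'-p)\ge 0$, so $p\in\mathcal{P}_{l^-v^-t^-}^{l^+v^+t^+,u'}$.

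I do not expect a genuine obstacle here: the statement is essentially a one-line consequence of the fact that the minimum of a sum is at least the sum of the minima. The only point that merits explicit care is the preliminary remark that quasi-trajectories, being finite minima of elements of $\mathcal{T}$, inherit an everywhere-defined, finite-valued distance, so that the pointwise decomposition $u'-p=(u'-u)+(u-p)$ and the subsequent passage to the infimum are legitimate without any domain-overlap bookkeeping.
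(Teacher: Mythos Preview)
Your proof is correct and is exactly the natural argument: apply the transitivity of $D$ (recorded at the end of Definition~\ref{def:trajectory_distance}) to the triple $u',u,p$. The paper itself offers no proof for this proposition---it is introduced with ``Apparently, bounded cones and prisms shall satisfy the following properties'' and left unproved---so your write-up simply spells out what the authors take as evident.
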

Then we will prove less intuitive properties for bounded cones and
prisms. 
\begin{prop}
\label{prop: FSP-feasibility}Given a feasible state point $(l,v,t^{-})$
and a trajectory $p'\in\mathcal{T}$, then trajectory $p^{\mbox{f}}((l,v,t^{-}),p')$
obtained from FSP is not empty $\Leftrightarrow$ $\mathcal{C}_{lvt^{-}}^{p'}\neq\emptyset$
$\Leftrightarrow$ $D\left(p'-\underline{p}_{lvt^{-}}\right)\ge0$. \end{prop}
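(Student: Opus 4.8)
The plan is to prove the two equivalences separately, both via the description of a quadratic cone by its extremal trajectories. Write $\underline{p}:=\underline{p}_{lvt^{-}}$, and recall from the QTG construction that $\underline{p}$ is the pointwise-minimal element of $\mathcal{C}_{lvt^{-}}$: for $p\in\mathcal{C}_{lvt^{-}}$ one obtains $\dot p(t)\ge\dot{\underline p}(t)$ for $t\ge t^{-}$ by integrating $\ddot p\ge\underline a$ forward from $t^{-}$ and using $\dot p\ge0$, and $\dot p(t)\le\dot{\underline p}(t)$ for $t\le t^{-}$ by integrating $\ddot p\ge\underline a$ backward and using $\dot p\le\bar v$; integrating once more from $t^{-}$ in each direction gives $p\ge\underline p$ everywhere, and in particular $\underline p\in\mathcal{C}_{lvt^{-}}$ whenever $(l,v,t^{-})$ is feasible. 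With this, $\mathcal{C}_{lvt^{-}}^{p'}\neq\emptyset\Leftrightarrow D(p'-\underline p)\ge0$ is immediate: if $D(p'-\underline p)\ge0$ then $\underline p\in\mathcal{C}_{lvt^{-}}$ lies weakly below $p'$, hence $\underline p\in\mathcal{C}_{lvt^{-}}^{p'}$; conversely any $p\in\mathcal{C}_{lvt^{-}}^{p'}$ satisfies $D(p'-p)\ge0$ and $D(p-\underline p)\ge0$ (minimality), so $D(p'-\underline p)\ge0$ by the transitivity of $D$ recorded in Definition \ref{def:trajectory_distance}.

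It then remains to show $p^{\mbox{f}}((l,v,t^{-}),p')\neq\emptyset\Leftrightarrow\mathcal{C}_{lvt^{-}}^{p'}\neq\emptyset$. The implication ``$\Rightarrow$'' is read off the construction: whenever FSP returns a nonempty trajectory, each of its segments carries an acceleration in $\{\bar a^{\mbox{f}},0,\underline a^{\mbox{f}}\}$ or equal to that of a segment of $p'\in\mathcal{T}$, hence in $[\underline a,\bar a]$; its speed stays in $[0,\bar v]$ (the template caps at $\bar v$, the merging segment decelerates from a speed $\le\bar v$ into a point of $p'$ whose speed lies in $[0,\bar v]$, and afterward the trajectory coincides with $p'$); it passes through $(l,v,t^{-})$; and condition \eqref{eq:non_cross_condition}, enforced at every FSO call, together with its coinciding with $p'$ past the merge point, keeps it weakly below $p'$. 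Thus $p^{\mbox{f}}((l,v,t^{-}),p')\in\mathcal{C}_{lvt^{-}}^{p'}$. For ``$\Leftarrow$'' I would invoke the first paragraph to assume $D(p'-\underline p)\ge0$ and argue FSP cannot abort. The only way FSP returns $\emptyset$ is that some FSO call reaches Case I, which by step FSO-1 happens exactly when the maximal-deceleration ray $(\ell,\nu,\underline a^{\mbox{f}},\theta,\infty)$ shot from the state $(\ell,\nu,\theta)$ under consideration drops strictly below a segment of $p'$. At the initial state $(l,v,t^{-})$ this ray coincides with $\underline p$ from $t^{-}$ up to the vertex $t^{-}-v/\underline a$ and, being a downward parabola, stays below the constant tail of $\underline p$ afterward; since $p'\ge\underline p$ everywhere, the ray stays weakly below $p'$ on $[t^{-},\infty)$, so FSO-1 cannot fire at the initial state. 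For the re-invocation FSP performs from the post-acceleration state $(l^{\mbox{a}},\bar v,t^{\mbox{a}})$, one observes that FSP reaches that state only if $p'$ dominates the capped template (accelerate to $\bar v$, then cruise) over the portion already built, so $p'$ matches or lies above the cruise line issued from $(l^{\mbox{a}},\bar v,t^{\mbox{a}})$ at the start of the segment then being processed; the maximal-deceleration ray from $(l^{\mbox{a}},\bar v,t^{\mbox{a}})$ lies pointwise below any trajectory that cruises at $\bar v$ and then decelerates at $\underline a$ to meet that segment tangentially from below, hence below $p'$ there as well, so FSO-1 again does not fire. With no Case I ever reached, FSP terminates with a nonempty output.

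In this plan the two extremal-trajectory arguments are routine; the step I expect to be the main obstacle is the latter half of the second paragraph --- confirming that Case I of FSO is genuinely the only exit returning $\emptyset$, and making the post-acceleration re-invocation (and the degenerate case in which the template first meets $p'$ exactly at $t^{\mbox{a}}$) rigorous, which requires the invariant ``FSP has reached this state $\Rightarrow$ $p'$ dominates the template built so far'' and the structural condition $\underline a^{\mbox{f}}\le a$ for the accelerations $a$ of the segments of $p'$ (which keeps the FSO tangency well-posed), rather than the hypothesis $D(p'-\underline p)\ge0$ on its own.
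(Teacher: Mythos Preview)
Your treatment of the equivalence $\mathcal{C}_{lvt^{-}}^{p'}\neq\emptyset\Leftrightarrow D(p'-\underline{p}_{lvt^{-}})\ge0$ and of the easy direction $p^{\mbox{f}}\neq\emptyset\Rightarrow\mathcal{C}_{lvt^{-}}^{p'}\neq\emptyset$ is fine and matches the paper (your use of $\underline{p}_{lvt^{-}}$ itself as a witness is slightly slicker than what the paper writes). The substantive divergence is in the hard direction $D(p'-\underline{p}_{lvt^{-}})\ge0\Rightarrow p^{\mbox{f}}\neq\emptyset$. You try to follow the FSP execution and argue that FSO's Case~I never fires; the paper instead bypasses the algorithm entirely and builds the output by an intermediate-value argument. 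Concretely, for $\hat{t}\in[t^{-},\infty)$ it forms the trajectory $\underline{p}_{\hat t}$ that follows $\bar{p}_{lvt^{-}}$ on $[t^{-},\hat t]$ and then decelerates at $\underline a$, sets $d(\hat t):=D(p'-\underline{p}_{\hat t})$, notes that $d$ is continuous and monotone in $\hat t$, checks $d(t^{-})\ge0$ (since $\underline{p}_{t^{-}}=\underline{p}_{lvt^{-}}$) and $d(t^{*})\le0$ at the first time $t^{*}$ where $p'$ meets $\bar{p}_{lvt^{-}}$, and then invokes Bolzano's theorem to produce $\hat t^{\mbox{m}}$ with $d(\hat t^{\mbox{m}})=0$, i.e.\ a tangent contact between $\underline{p}_{\hat t^{\mbox{m}}}$ and $p'$. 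The concatenation $\bar{p}_{lvt^{-}}(t^{-}{:}\hat t^{\mbox{m}})$, the $\underline a$-merge, and $p'$ thereafter is then identified with the FSP output.

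Your process-level route is not wrong in spirit, but the gap you flag is real and not merely cosmetic. The assertion that ``FSP reaches the post-acceleration state only if $p'$ dominates the template over the portion already built'' is not something FSP maintains as an explicit invariant: FSP-2 re-shoots from $(l,v,t^{-})$ (and conditionally from $(l^{\mbox a},\bar v,t^{\mbox a})$) anew for every segment $h$, so having passed earlier segments without merging does not by itself pin down the position of $\mathbf{s}_h$ relative to the ray $(l^{\mbox a},\bar v,\underline a,t^{\mbox a},\infty)$. To close this you would need to exploit the information carried by the \emph{first} FSO call on $\mathbf{s}_h$ (which did not return $-\infty$ and returned $t^{\mbox{mc}}>t^{\mbox a}$): condition~\eqref{eq:non_cross_condition} then forces $\mathbf{s}_h$ to sit above both the $\bar a$-parabola on $[t^{-},t^{\mbox{mc}}]$ and the extended $\underline a$-merge on $[t^{\mbox{mc}},\infty)$, each of which dominates the $\underline a$-ray from $(l^{\mbox a},\bar v,t^{\mbox a})$ on its piece; separately handling Case~III of the first call still requires work. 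This can be pushed through, but it is exactly the multi-case bookkeeping the paper's Bolzano argument avoids; if you want a clean proof, I would recommend switching to the continuous $d(\hat t)$ construction.
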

\begin{proof}
It is apparent that $p^{\mbox{f}}:=p^{\mbox{f}}((l,v,t^{-}),p')\neq\emptyset$
leads to $\mathcal{C}_{lvt^{-}}^{p'}\neq\emptyset$ because $p^{\mbox{f}}\in\mathcal{C}_{lvt^{-}}^{p'}$.
Further if we can find a feasible trajectory in $p\in\mathcal{C}_{lvt^{-}}^{p'}$,
we know that $D\left(p'-p\right)\ge0$ and $D\left(p-\underline{p}_{lvt^{-}}\right)\ge0$,
which yields $D\left(p'-\underline{p}_{lvt^{-}}\right)\ge0$ since
$D$ is transitive. Now we only need to prove that $D\left(p'-\underline{p}_{lvt^{-}}\right)\ge0$
leads to $p^{\mbox{f}}((l,v,t^{-}),p')\neq\emptyset$. Now we are
given $p'(t)\ge\underline{p}_{lvt^{-}}(t),\forall t\in[t^{-},\infty)$.
If $p'(t)\ge\bar{p}_{lvt^{-}}(t),\forall t\in[t^{-},\infty)$, then
Proposition \ref{prop: FSP-p_bar} indicates $p^{\mbox{f}}((l,v,t^{-}),p')=p^{\mbox{f}}((l,v,t^{-}),\emptyset)$
that overlaps $\bar{p}_{lvt^{-}}(t^{-}:\emptyset)$. Therefore $p^{\mbox{f}}((l,v,t^{-}),p')$
should be always non-empty. Otherwise, it should be that $p'(t)\ge\bar{p}_{lvt^{-}}(t),\forall t\in[t^{-},t^{*})$,
$p'(t^{*})=\bar{p}_{lvt^{-}}(t^{*})$ and $p'(t)<\bar{p}_{lvt^{-}}(t),\forall t\in\left(t^{*},\infty\right)$
for some $t^{*}\in[t^{-},\infty)$. We first define a continuous function
of time $\hat{t}\in[t^{-},\infty)$ as follows. We construct a trajectory
denoted by $\underline{p}_{\hat{t}}$ composed by maximally accelerating
section $\bar{p}_{lvt^{-}}(t^{-}:\hat{t})$ and a maximally decelerating
section $\underline{p}_{\underline{p}_{lvt^{-}}(\hat{t})\dot{\underline{p}}_{lvt^{-}}(\hat{t})t^{-}}(t^{-}:\infty)$.
Then we define function 
\[
d(\hat{t}):=D\left(p'-\underline{p}_{\hat{t}}\right)=\min_{t\in[t^{-},\infty)}p'(t)-\underline{p}_{\hat{t}}(t).
\]
Note that as $\hat{t}$ increases continuously, $\underline{p}_{\hat{t}}(t)$
increases continuously at every $t\in[t^{-},\infty)$. Then we can
see that function $d(\hat{t})$ shall continuously decrease with $\hat{t}$.
Note that $\underline{p}_{\hat{t}}$ is identical to $\underline{p}_{lvt^{-}}$
when $\hat{t}=0$. Then since $p'(t)\ge\underline{p}_{lvt^{-}}(t),\forall t\in[t^{-},\infty)$,
we obtain $d(0)\ge0$. Further, as $\hat{t}$ increases to $t^{*}$,
then $\underline{p}_{\hat{t}}(t)$ and $p'(t)$ shall intersection
at $t^{*}$, which indicates that $d(t^{*})\le0$. Due to Bolzano's
Theorem \citep{apostol1969calculus}, we can always find a $\hat{t}^{\mbox{m}}\in[t^{-},t^{*}]$
such that $d(\hat{t}^{\mbox{m}})=0$, which indicates that $p'$ and
$\underline{p}_{\hat{t}^{\mbox{m}}}$ get tangent at a time $\bar{t}{}^{\mbox{m}}\in[\hat{t}^{\mbox{m}},\infty).$
Then a trajectory $p^{\mbox{f}}$ can be obtained by concatenating
$\bar{p}_{lvt^{-}}\left(t^{-}:\hat{t}^{\mbox{m}}\right)$, $\left(\bar{p}_{lvt^{-}}\left(\hat{t}^{\mbox{m}}\right),\dot{\bar{p}}_{lvt^{-}}\left(\hat{t}^{\mbox{m}}\right),\underline{a},\hat{t}^{\mbox{m}},\bar{t}^{\mbox{m}}\right)$,
$p'\left(\bar{t}^{\mbox{m}}:\infty\right)$. Note that $p^{\mbox{f}}$
is exactly $p^{\mbox{f}}((l,v,t^{-}),p')$ and thus $p^{\mbox{f}}((l,v,t^{-}),p')$
is not empty. This completes the proof.
\end{proof}
We can further extend this result to a quasi-trajectory as a upper
bound.
\begin{cor}
\label{cor:FSP-Feasibility}Given a feasible state point $(l,v,t^{-})$
and a quasi-trajectory $u\in\mathcal{U}$, then trajectory $p^{\mbox{f}}((l,v,t^{-}),u)$
obtained from the EFSP is not empty $\Leftrightarrow$ $\mathcal{C}_{lvt^{-}}^{u}\neq\emptyset$
$\Leftrightarrow$ $D\left(u-\underline{p}_{lvt^{-}}\right)\ge0$. \end{cor}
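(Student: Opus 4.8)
The plan is to prove the chain of equivalences in three moves, two of which are bookkeeping and one of which carries the real content. The cheap observation is that for $u=u(\mathbf q)$ with $\mathbf q=\{q_1,\dots,q_M\}$ we have $D\!\left(u-\underline{p}_{lvt^-}\right)=\min_{m}D\!\left(q_m-\underline{p}_{lvt^-}\right)$, since taking a pointwise minimum over $m$ commutes with the pointwise minimum over $t$; hence $D\!\left(u-\underline{p}_{lvt^-}\right)\ge0$ iff $D\!\left(q_m-\underline{p}_{lvt^-}\right)\ge0$ for every $m$, which by Proposition~\ref{prop: FSP-feasibility} is iff every individual cone $\mathcal{C}_{lvt^-}^{q_m}$ is non-empty (equivalently each $p^{\mbox{f}}((l,v,t^-),q_m)\neq\emptyset$). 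The second cheap move is $\mathcal{C}_{lvt^-}^{u}\neq\emptyset\Rightarrow D\!\left(u-\underline{p}_{lvt^-}\right)\ge0$: a witness $p\in\mathcal{C}_{lvt^-}^{u}$ satisfies $D(u-p)\ge0$, and since $p\in\mathcal{C}_{lvt^-}$ it lies above the lower-bound trajectory, $D\!\left(p-\underline{p}_{lvt^-}\right)\ge0$, so transitivity of $D$ gives the claim. What remains — and what is \emph{not} a set-theoretic triviality, since an intersection of non-empty sets can be empty — is to show that $D\!\left(u-\underline{p}_{lvt^-}\right)\ge0$ forces the EFSP to return a non-empty trajectory that moreover lies in $\mathcal{C}_{lvt^-}^{u}=\bigcap_{m}\mathcal{C}_{lvt^-}^{q_m}$.

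I would handle this by induction on the EFSP iteration index $m=0,1,\dots,M$, with the invariant: the trajectory $p^{(m)}$ held by EFSP after processing $q_1,\dots,q_m$ is non-empty, lies in $\mathcal{T}$, passes $(l,v,t^-)$ with velocity $v$ (so $p^{(m)}\in\mathcal{C}_{lvt^-}$, hence $p^{(m)}\ge\underline{p}_{lvt^-}$), and is the pointwise supremum of $\mathcal{C}_{lvt^-}^{u_m}$, where $u_m:=\min_{j\le m}q_j$; in particular this supremum is \emph{attained}. The base case $m=0$ is Proposition~\ref{prop: FSP-p_bar}: $p^{(0)}=p^{\mbox{f}}((l,v,t^-),\emptyset)$ overlaps $\bar{p}_{lvt^-}$, the supremum of $\mathcal{C}_{lvt^-}$. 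For the inductive step, given the invariant at $m-1$ together with $D\!\left(q_m-\underline{p}_{lvt^-}\right)\ge0$, I would re-run the Bolzano/continuity argument of Proposition~\ref{prop: FSP-feasibility} with one modification: the shooting template is ``follow $p^{(m-1)}$ up to a switch time $\hat t$, then decelerate at $\underline a$'' rather than ``accelerate maximally, then decelerate''. This template is still pointwise non-decreasing in $\hat t$ (because $\ddot p^{(m-1)}\ge\underline a$, so $p^{(m-1)}$ never descends faster than maximal braking), it coincides with $\underline{p}_{lvt^-}$ when $\hat t=t^-$ (so its distance-to-$q_m$ function is $\ge0$ there, by hypothesis), and it tends to $p^{(m-1)}$ as $\hat t\to\infty$; continuity then produces a switch time at which the template is tangent to $q_m$ from below, which is exactly the merging segment EFSO/EFSP splices in. The resulting $p^{(m)}$ is the pointwise supremum of $\{\tilde p\in\mathcal{C}_{lvt^-}:\tilde p\le p^{(m-1)},\ \tilde p\le q_m\}$; the $(m-1)$-invariant $p^{(m-1)}=\sup\mathcal{C}_{lvt^-}^{u_{m-1}}$ lets me rewrite $\{\tilde p\in\mathcal{C}_{lvt^-}:\tilde p\le p^{(m-1)}\}=\mathcal{C}_{lvt^-}^{u_{m-1}}$, so this set equals $\mathcal{C}_{lvt^-}^{u_{m-1}}\cap\{\tilde p:\tilde p\le q_m\}=\mathcal{C}_{lvt^-}^{u_m}$, closing the induction. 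At $m=M$ the invariant states that $p^{\mbox{f}}((l,v,t^-),u)=p^{(M)}$ is non-empty and equals $\sup\mathcal{C}_{lvt^-}^{u}\in\mathcal{C}_{lvt^-}^{u}$, which settles the two remaining implications at once: $D\!\left(u-\underline{p}_{lvt^-}\right)\ge0\Rightarrow p^{\mbox{f}}((l,v,t^-),u)\neq\emptyset$ and $p^{\mbox{f}}((l,v,t^-),u)\neq\emptyset\Rightarrow\mathcal{C}_{lvt^-}^{u}\neq\emptyset$.

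The main obstacle is the inductive step's claim that ``EFSO merging $p^{(m-1)}$ into $q_m$ succeeds and returns $\sup\mathcal{C}_{lvt^-}^{u_m}$'' — in effect an inductive strengthening of Proposition~\ref{prop: FSP-feasibility} in which the forward template no longer starts from pure maximal acceleration but from an already constrained trajectory. Making this airtight requires (i) verifying monotonicity of the modified template in the switch time, (ii) checking it degenerates to $\underline{p}_{lvt^-}$ at $\hat t=t^-$, and (iii) confirming that the EFSO code — which iterates $t^{\mbox{mf}}$ over the segments of $p^{(m-1)}$ and $q_m$ — actually reports success precisely in this case and returns the continuity root. A closely related subtlety, and the reason the proof cannot be shortcut, is the attainment clause: one must know the supremum of a non-empty bounded cone is itself a trajectory in $\mathcal{T}$, because that is exactly what lets the $M$ ceiling constraints compose across iterations, upgrading ``each $\mathcal{C}_{lvt^-}^{q_m}\neq\emptyset$'' to ``$\bigcap_m\mathcal{C}_{lvt^-}^{q_m}\neq\emptyset$''. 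An alternative route would be to show the trajectory $p(\mathbf q)$ from the preceding definition is the greatest element of $\{\tilde p\in\mathcal{T}:\tilde p\le q_m\ \forall m\}$ and that EFSP on $\mathbf q$ coincides with FSP into $p(\mathbf q)$, after which the corollary is immediate from Proposition~\ref{prop: FSP-feasibility}; but establishing that reduction costs essentially the same effort.
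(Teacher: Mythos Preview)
The paper states this result as a corollary of Proposition~\ref{prop: FSP-feasibility} without supplying any proof; the only accompanying text is ``We can further extend this result to a quasi-trajectory as a upper bound.'' So there is no paper proof to compare against in any substantive sense: the authors evidently regard the extension from a single bounding trajectory $p'$ to a quasi-trajectory $u=\min_m q_m$ as routine.

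Your proposal is correct and supplies exactly the details the paper suppresses. You rightly isolate the one step that is not pure bookkeeping: knowing each $\mathcal{C}_{lvt^-}^{q_m}$ is non-empty does not immediately give $\bigcap_m\mathcal{C}_{lvt^-}^{q_m}\neq\emptyset$, and you handle this by an induction along the EFSP iterations, carrying the invariant that the partial output $p^{(m)}$ is the attained supremum of $\mathcal{C}_{lvt^-}^{u_m}$. The modified Bolzano argument (template ``follow $p^{(m-1)}$ then brake at $\underline a$'', monotone in the switch time because $\ddot p^{(m-1)}\ge\underline a$, degenerating to $\underline p_{lvt^-}$ at $\hat t=t^-$) is the right mechanism, and your observation that attainment of the supremum is what lets the constraints compose across $m$ is precisely the point. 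The alternative route you sketch at the end --- reducing EFSP on $\mathbf q$ to FSP into the merged trajectory $p(\mathbf q)$ and then invoking Proposition~\ref{prop: FSP-feasibility} directly --- is in fact closer in spirit to how the paper seems to think about it (cf.\ the definition just before the corollary, which introduces $p(\mathbf q)$ for exactly this purpose), but as you note, justifying that reduction costs the same work.
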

\begin{prop}
\label{prop: FSP-shift-bound}Given a feasible state point $(l,v,t^{-})$,
a quasi-trajectory $u\in\mathcal{U}$ and scalars \textup{$\delta,\delta'\ge0$,}
if $\mathcal{C}_{lvt}^{u}\neq\emptyset$, then $\mathcal{C}_{(l-\delta)v\left(t^{-}+\delta'\right)}^{u}\neq\emptyset$
and we obtain $D\left(p^{\mbox{f}}((l,v,t^{-}),u)-p\right)\ge0,\forall p\in\mathcal{C}_{(l-\delta)v\left(t^{-}+\delta'\right)}^{u}$.\end{prop}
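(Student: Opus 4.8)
The plan is to prove the two assertions in turn: the non-emptiness first, which is a short consequence of Corollary~\ref{cor:FSP-Feasibility}, and then the domination, which needs a direct comparison between $p^{\mbox{f}}((l,v,t^{-}),u)$ and an arbitrary competitor. Throughout I will use the translation covariance of the bounding trajectories in their defining state point: reading off the piecewise formulas, $\underline{p}_{(l-\delta)v(t^{-}+\delta')}(t)=\underline{p}_{lvt^{-}}(t-\delta')-\delta$ and likewise $\bar{p}_{(l-\delta)v(t^{-}+\delta')}(t)=\bar{p}_{lvt^{-}}(t-\delta')-\delta$ for all $t$.

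\emph{Step 1 (non-emptiness).} Since $\mathcal{C}_{lvt^{-}}^{u}\neq\emptyset$, Corollary~\ref{cor:FSP-Feasibility} gives $D(u-\underline{p}_{lvt^{-}})\ge 0$, i.e. $u(t)\ge\underline{p}_{lvt^{-}}(t)$ for all $t$. Because every trajectory in $\mathcal{T}$ (hence $\underline{p}_{lvt^{-}}$) is non-decreasing and $\delta,\delta'\ge 0$, I get $\underline{p}_{(l-\delta)v(t^{-}+\delta')}(t)=\underline{p}_{lvt^{-}}(t-\delta')-\delta\le\underline{p}_{lvt^{-}}(t)\le u(t)$ for all $t$, so $D(u-\underline{p}_{(l-\delta)v(t^{-}+\delta')})\ge 0$. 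Applying Corollary~\ref{cor:FSP-Feasibility} in the reverse direction yields $\mathcal{C}_{(l-\delta)v(t^{-}+\delta')}^{u}\neq\emptyset$ and, at the same time, that $q:=p^{\mbox{f}}((l,v,t^{-}),u)$ is non-empty, so the second claim is meaningful.

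\emph{Step 2 (domination).} Fix any $p\in\mathcal{C}_{(l-\delta)v(t^{-}+\delta')}^{u}$ and set $g:=q-p$ on the overlap $[t^{-}+\delta',\infty)$; I must show $g\ge 0$ there. From the construction of FSP/EFSP, Proposition~\ref{prop: FSP-p_bar}, and the explicit form of $p^{\mbox{f}}((l,v,t^{-}),\cdot)$ used in the proof of Proposition~\ref{prop: FSP-feasibility}, $q$ is a concatenation of: an initial stretch coinciding with $\bar{p}_{lvt^{-}}$ (acceleration at $\bar{a}$ then cruising at $\bar{v}$), then deceleration segments at rate $\underline{a}$ each ending where $q$ becomes tangent to $u$ from below, interleaved with stretches where $q$ coincides with $u$. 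The base point is handled immediately: $q$ is non-decreasing with $q(t^{-})=l$, hence $q(t^{-}+\delta')\ge l\ge l-\delta=p(t^{-}+\delta')$, so $g(t^{-}+\delta')\ge 0$. Now suppose $g$ were negative somewhere and put $\hat{t}:=\inf\{t\ge t^{-}+\delta':g(t)<0\}$; by continuity $g(\hat{t})=0$ and $g<0$ on a right-neighbourhood of $\hat{t}$. I rule this out by a case split on what $q$ does just to the right of $\hat{t}$. If $q$ coincides with $u$ there, then $g=u-p\ge 0$ there (as $p\le u$), a contradiction. If $q$ accelerates at $\bar{a}$ (or cruises at $\bar{v}$) there, then on that maneuver $\dot{q}(t)=v+\bar{a}(t-t^{-})\ge v+\bar{a}(t-t^{-}-\delta')\ge\dot{p}(t)$ (respectively $\dot{q}=\bar{v}\ge\dot{p}$), because $q$ has been speeding up since the earlier instant $t^{-}\le t^{-}+\delta'$; hence $g$ is non-decreasing just to the right of $\hat{t}$ and stays $\ge g(\hat{t})=0$, a contradiction. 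If instead $q$ decelerates at rate $\underline{a}$ on $[\hat{t},t_{\mbox{end}}]$, where $t_{\mbox{end}}$ ends that deceleration segment, then on $[\hat{t},t_{\mbox{end}}]$ one has $\ddot{g}=\underline{a}-\ddot{p}\le 0$, so $g$ is concave there; it is non-negative at both endpoints (at $\hat{t}$ by definition, and at $t_{\mbox{end}}$ because $q$ touches $u$, so $g(t_{\mbox{end}})=u(t_{\mbox{end}})-p(t_{\mbox{end}})\ge 0$), whence $g\ge 0$ on all of $[\hat{t},t_{\mbox{end}}]$, again contradicting $g<0$ just right of $\hat{t}$. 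In every case we reach a contradiction, so $g\ge 0$ on $[t^{-}+\delta',\infty)$, i.e. $D(p^{\mbox{f}}((l,v,t^{-}),u)-p)\ge 0$.

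\emph{Main obstacle.} Step 1 is routine once the translation covariance and monotonicity of $\underline{p}$ are noted. The crux is the deceleration portions of $q$ in Step 2: there one \emph{cannot} compare velocities directly, since $q$ may be moving more slowly than $p$. The idea that unlocks the argument is that on such a segment $q$ carries the extreme acceleration $\underline{a}$, so $g=q-p$ is \emph{concave} there, and a concave function that is non-negative at both endpoints is non-negative throughout; moreover $g$ is non-negative at the right endpoint precisely because that endpoint is where $q$ rejoins the bound $u$, which dominates $p$. The only remaining work is to verify that the handful of degenerate configurations of $q$ (the deceleration bringing the speed down to $0$, or several successive merges into $u$) still decompose into exactly the three cases above; each follows the same pattern and introduces no new difficulty.
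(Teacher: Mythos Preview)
Your Step~1 is essentially the paper's argument: both invoke Corollary~\ref{cor:FSP-Feasibility} together with the monotone shift $\underline{p}_{(l-\delta)v(t^{-}+\delta')}\le\underline{p}_{lvt^{-}}$.

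Your Step~2, however, takes a genuinely different route. The paper first treats the unshifted case $\delta=\delta'=0$: it writes $p^{\mbox{f}}$ as $[\bar{p}_{lvt^{-}},\ \mathbf{s}^{\mbox{m}},\ p']$ with a single merging segment $\mathbf{s}^{\mbox{m}}$ at rate $\underline{a}$, observes that any violation $p(\tilde t)>p^{\mbox{f}}(\tilde t)$ must occur on $\mathbf{s}^{\mbox{m}}$, then introduces the auxiliary curve $\underline{p}_{p(\tilde t)\dot p(\tilde t)\tilde t}$ and argues it would have to cross $\mathbf{s}^{\mbox{m}}$ twice, which is impossible since both carry deceleration $\underline{a}$. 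The shifted case $\delta,\delta'>0$ is then handled by claiming one can lift any $p\in\mathcal{C}_{(l-\delta)v(t^{-}+\delta')}^{u}$ to some $p'\in\mathcal{C}_{lvt^{-}}^{u}$ with $p'\ge p$, ``with a similar argument.'' By contrast, you compare $q=p^{\mbox{f}}$ and $p$ directly via a first-crossing time $\hat t$ and a three-way case split; the crux on deceleration pieces is your concavity observation $\ddot g=\underline{a}-\ddot p\le 0$ together with $g(\hat t)=0$ and $g(t_{\mbox{end}})=u(t_{\mbox{end}})-p(t_{\mbox{end}})\ge 0$. This is the same underlying fact the paper exploits (extremality of $\underline{a}$ on the merge), but packaged more elementarily and without the auxiliary trajectory. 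Two practical advantages of your approach: it handles the shift $\delta,\delta'\ge 0$ in one pass (via $g(t^{-}+\delta')\ge\delta\ge 0$ and $\dot q\ge\dot p$ on the initial accel/cruise piece), and it extends cleanly to multiple successive merges, whereas the paper's write-up is phrased for a single merging segment and leaves the extension implicit. One small point worth making explicit in your write-up: the claim $q(t_{\mbox{end}})=u(t_{\mbox{end}})$ uses that the final EFSP output satisfies $q\le u$ everywhere, so that on any piece where $q$ coincides with a constituent $q_m$ one has $q_m\le u\le q_m$, hence equality; this is exactly what underpins both your Case~(a) and the right endpoint of Case~(c).
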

\begin{proof}
We first prove if $\mathcal{C}_{lvt}^{u}\neq\emptyset$, then $\mathcal{C}_{(l-\delta)v\left(t^{-}+\delta'\right)}^{u}\neq\emptyset$.
Due to Corollary \ref{cor:FSP-Feasibility}, we know $D\left(u-\underline{p}_{lvt^{-}}\right)\ge0$.
Since apparently $D\left(\underline{p}_{lvt^{-}}-\underline{p}_{(l-\delta)v\left(t^{-}+\delta'\right)}\right)\ge0$,
we obtain $D\left(u-\underline{p}_{(l-\delta)v\left(t^{-}+\delta'\right)}\right)\ge0$
due to the transitive property of operator $D$. This yields $\mathcal{C}_{(l-\delta)v\left(t^{-}+\delta'\right)}^{u}\neq\emptyset$
based on Corollary \ref{cor:FSP-Feasibility}.

The we prove the remaining part of this proposition. In case that
$D\left(u-\bar{p}_{lvt^{-}}\right)\ge0$, $p^{\mbox{f}}:=p^{\mbox{f}}((l,v,t^{-}),u)$
shall be identical to $\bar{p}_{lvt^{-}}$ and thus $D\left(p^{\mbox{f}}-p\right)\ge0$
obviously holds $\forall p\in\mathcal{C}_{lvt^{-}}^{u}=\mathcal{C}_{lvt^{-}}.$
Otherwise, then we know that $p^{\mbox{f}}=$$\left[\bar{p}_{lvt^{-}}\left(t^{-}:t^{\mbox{m}}\right),\mathbf{s}^{\mbox{m}}:=\left(\bar{p}_{lvt^{-}}\left(\hat{t}^{\mbox{m}}\right),\dot{\bar{p}}_{lvt^{-}}\left(\hat{t}^{\mbox{m}}\right),\underline{a},\hat{t}^{\mbox{m}},\bar{t}^{\mbox{m}}\right),p'\left(t^{\mbox{+}}:\infty\right)\right]$
for some $t^{\mbox{m}}$ and $t^{+}$ satisfying $t^{-}\le t^{\mbox{m}}\le t^{\mbox{+}}\le\infty$,
where $p'$ is the lower-bound trajectory merged by all elements in
$u$ with EFSO. If $\exists p\in\mathcal{C}_{lvt^{-}}^{u}$ such that
$D\left(p^{\mbox{f}}-p\right)<0$, then there much exist a $\tilde{t}\in(t^{\mbox{m}},t^{\mbox{+}})$
such that $p\left(\tilde{t}\right)$ is strictly above $\mathbf{s}^{\mbox{m}}$.
Since $D$$\left(\bar{p}_{lvt^{-}}-p\right)\ge0$ and $D$$\left(p-\underline{p}_{p\left(\tilde{t}\right)\dot{p}\left(\tilde{t}\right)\tilde{t}}\right)\ge0$,
thus we have $D$$\left(\bar{p}_{lvt^{-}}-\underline{p}_{p\left(\tilde{t}\right)\dot{p}\left(\tilde{t}\right)\tilde{t}}\right)\ge0$,
which indicates $\underline{p}_{p\left(\tilde{t}\right)\dot{p}\left(\tilde{t}\right)\tilde{t}}$
and $\mathbf{s}^{\mbox{m}}$ have to intersect at a time $t'\in\left[\hat{t}^{\mbox{m}},\tilde{t}\right)$.
However, sine $D\left(u-p\right)\ge0$, thus $\underline{p}_{p\left(\tilde{t}\right)\dot{p}\left(\tilde{t}\right)\tilde{t}}$
needs to intersect with $\mathbf{s}^{\mbox{m}}$ at another time $t"\in\left(\tilde{t},\bar{t}^{\mbox{m}}\right]$.
This is contradictory to the fact that $\mathbf{s}^{\mbox{m}}$ has
already decelerated at the extreme deceleration rate $\underline{a}$.
This contradiction proves that $D\left(p^{\mbox{f}}-p\right)\ge0,\forall p\in\mathcal{C}_{lvt^{-}}^{p'}$.
Further, given $\delta,\delta'\ge0$, for any $p\in\mathcal{C}_{(l-\delta)v\left(t^{-}+\delta'\right)}^{u},$we
can find a $p'\in\mathcal{C}_{lvt^{-}}^{p'}$satisfying $D(p'-p)\ge0$
with a similar argument. This proves that $D\left(p^{\mbox{f}}-p'\right)\ge0,\forall p'\in\mathcal{C}_{(l-\delta)v\left(t^{-}+\delta'\right)}^{u},\delta,\delta'\ge0.$
\end{proof}
Symmetrically, we can obtain similar properties for BSP as well.
\begin{cor}
\label{cor:BSP-Feasibility}Given a feasible state point $(l,v,t^{+})$
and a quasi-trajectory $u\in\mathcal{U}$,then trajectory $p^{\mbox{eb}}((l,v,t^{+}),u)$
obtained from BSP is not empty $\Leftrightarrow$ $\mathcal{C}_{lvt^{+}}^{u}\neq\emptyset$
$\Leftrightarrow$ $D\left(u-\underline{p}_{pvt^{+}}\right)\ge0$.
If $\mathcal{C}_{lvt^{+}}^{u}\neq\emptyset$, we obtain $\mathcal{C}_{(l-\delta)v\left(t^{+}+\delta'\right)}^{u}\neq\emptyset$
and $D\left(p^{\mbox{eb}}-p\right)\ge0,\forall p\in\mathcal{C}_{(l-\delta)v\left(t^{+}+\delta'\right)}^{u}.$
\end{cor}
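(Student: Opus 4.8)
The statement is the backward-shooting counterpart of Proposition~\ref{prop: FSP-feasibility}, Corollary~\ref{cor:FSP-Feasibility} and Proposition~\ref{prop: FSP-shift-bound}, so the plan is to re-run their proofs with ``forward'' replaced throughout by ``backward'' --- the same substitution that turns the forward-shooting operation into the backward-shooting operation of Definition~\ref{def:backward_shooting}: the direction in which the trajectory is built is reversed, but the trajectory still merges into $u$ from below and stays weakly below $u$. I would stress that this is genuinely a ``run the argument in reverse time'' step and not the $180^{\circ}$ space--time rotation used to build the lower edge of a quadratic prism; that rotation additionally interchanges the roles of $\bar{a}$ and $\underline{a}$ and flips which side of the bound the trajectory lies on, so the geometric lemmas have to be mirrored by hand rather than quoted as a symmetry of the configuration. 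What makes the mirroring legitimate is that the proofs of the three cited results rest only on $\bar{a}>0>\underline{a}$, the geometry of the cone boundaries $\bar{p}_{lvt^{+}}$ and $\underline{p}_{lvt^{+}}$, and the transitivity of $D$, none of which is sensitive to time orientation.

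Carrying this out in the order of the originals: first the two easy implications, namely $p^{\mbox{eb}}((l,v,t^{+}),u)\neq\emptyset\Rightarrow\mathcal{C}_{lvt^{+}}^{u}\neq\emptyset$ because $p^{\mbox{eb}}$ itself lies in $\mathcal{C}_{lvt^{+}}^{u}$, and $\mathcal{C}_{lvt^{+}}^{u}\neq\emptyset\Rightarrow D(u-\underline{p}_{lvt^{+}})\ge0$ by transitivity of $D$ applied to $D(u-p)\ge0$ and $D(p-\underline{p}_{lvt^{+}})\ge0$, exactly as in Proposition~\ref{prop: FSP-feasibility}. The substantive implication $D(u-\underline{p}_{lvt^{+}})\ge0\Rightarrow p^{\mbox{eb}}((l,v,t^{+}),u)\neq\emptyset$ mirrors the intermediate-value construction there: for a single bounding trajectory, build a one-parameter family of feasible trajectories through $(l,v,t^{+})$ that on $(-\infty,t^{+}]$ consist of the upper-bound profile of the cone followed backward from the apex to a variable switch time and then continued, still going back in time, by a maximal-rate merging segment; show that the distance from this family to the bound varies continuously from a nonnegative value (the family coinciding with $\underline{p}_{lvt^{+}}$ on that half at one extreme) to a nonpositive value, and apply Bolzano's theorem to pick out the tangency that BSP returns. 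The passage from a single trajectory to a quasi-trajectory $u$ then copies how Corollary~\ref{cor:FSP-Feasibility} extends Proposition~\ref{prop: FSP-feasibility}, handling the finitely many component trajectories of $u$ one at a time through the merging operation. For the monotonicity and dominance part I would split $p^{\mbox{eb}}$ at the apex time $t^{+}$: on $[t^{+},\infty)$ the relevant piece is the auxiliary forward shot, to which Proposition~\ref{prop: FSP-shift-bound} applies directly; on $(-\infty,t^{+}]$ the relevant piece is the backward shot together with the prefix of the merged bound, for which the mirrored form of the proof of Proposition~\ref{prop: FSP-shift-bound} yields nonemptiness of $\mathcal{C}_{(l-\delta)v(t^{+}+\delta')}^{u}$ and the corresponding domination; since any competitor $p\in\mathcal{C}_{(l-\delta)v(t^{+}+\delta')}^{u}$ passes through the shifted apex, the two half-line dominations combine to $D(p^{\mbox{eb}}-p)\ge0$ on all of $(-\infty,\infty)$.

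The main obstacle I anticipate is handling $p^{\mbox{eb}}$ as a genuinely composite object: unlike $p^{\mbox{f}}$, which FSP returns as a single complete trajectory, $p^{\mbox{eb}}$ is the concatenation of the left part of the merged bound, the backward shot, and an auxiliary forward shot, so one must verify that these pieces join at their two breakpoints with matching value and slope, that the whole concatenation stays weakly below $u$, and that ``highest feasible trajectory below $u$ through $(l,v,t^{+})$'' is simultaneously the correct description of the forward shot on $[t^{+},\infty)$ and of the backward shot on $(-\infty,t^{+}]$; getting the dominance to hold across the apex rather than only on each half is the step that will need the most care. Closely related is the need to check that the mirrored Proposition~\ref{prop: FSP-shift-bound} really does apply to the ``down-and-later'' shift of the apex appearing in Corollary~\ref{cor:BSP-Feasibility}: a naive time reversal of that proposition would instead produce an ``up-and-earlier'' shift, which is precisely why the backward-side dominance must be re-derived from the cone geometry rather than imported verbatim. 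Everything else is routine --- keeping track of time orientation in the quadratic formulas of the shooting operations, and noting that replacing a single-trajectory bound by a $\min$-type quasi-trajectory changes nothing, since $D(u-p)\ge0$ for $u=\min_{m}q_{m}$ is literally $p\le q_{m}$ for every $m$.
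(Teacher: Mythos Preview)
Your proposal is correct and follows the same approach as the paper, which in fact offers no proof at all: the corollary is preceded only by the sentence ``Symmetrically, we can obtain similar properties for BSP as well'' and is left to the reader. Your plan to mirror the proofs of Proposition~\ref{prop: FSP-feasibility}, Corollary~\ref{cor:FSP-Feasibility}, and Proposition~\ref{prop: FSP-shift-bound} is exactly what that sentence invites, and you have gone well beyond the paper in flagging the genuine subtleties---the composite structure of $p^{\mbox{eb}}$, the need to split the dominance argument at the apex $t^{+}$, and the fact that the shift direction $(l-\delta,\,t^{+}+\delta')$ is not the literal time-reversal of the forward shift---that the authors simply do not address.
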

Combining Corollaries \ref{cor:FSP-Feasibility}and \ref{cor:BSP-Feasibility}
leads to the following property with respect to a quadratic prism. 
\begin{cor}
\label{cor:BSP-Feasibility-Prism}Given two feasible state point $(l^{-},v^{-},t^{-})$,
$(l^{+},v^{+},t^{+})$ satisfying $D\left(\bar{p}_{l^{-}v^{-}t^{-}}-\underline{p}_{l^{+}v^{+}t^{+}}\right)\ge0$
and $D\left(\bar{p}_{l^{+}v^{+}t^{+}}-\underline{p}_{l^{-}v^{-}t^{-}}\right)\ge0$,
and a quasi-trajectory $u\in\mathcal{U}$, then $p^{\mbox{f}}((l^{-},v^{-},t^{-}),u)\neq\emptyset$
and $p^{\mbox{eb}}((l^{+},v^{+},t^{+}),u)\neq\emptyset$ $\Leftrightarrow$
$\mathcal{P}_{l^{-}v^{-}t^{-}}^{l^{+}v^{+}t^{+},u}\neq\emptyset$
$\Leftrightarrow$ $D(u-\underline{p}_{l^{-}v^{-}t^{-}})\ge0$ and
$D(u-\underline{p}_{l^{+}v^{+}t^{+}})\ge0$. Whenever $\mathcal{P}_{l^{-}v^{-}t^{-}}^{l^{+}v^{+}t^{+},u}\neq\emptyset$,
we obtain $D\left(p^{\mbox{eb}}-p\right)\ge0,$ \textup{$\forall p\in\mathcal{P}_{\left(l^{-}-\epsilon\right)v^{-}\left(t^{-}+\epsilon'\right)}^{\left(l^{+}-\delta\right)v^{+}\left(t^{+}+\delta'\right),u},\delta,\delta',\epsilon,\epsilon'\ge0$.}
\end{cor}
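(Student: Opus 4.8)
The plan is to reduce every clause to Corollaries~\ref{cor:FSP-Feasibility} and~\ref{cor:BSP-Feasibility}, Proposition~\ref{prop:prism_feasibility}, and transitivity of $D$, after recording the identity $\mathcal{P}_{l^{-}v^{-}t^{-}}^{l^{+}v^{+}t^{+},u}=\mathcal{C}_{l^{-}v^{-}t^{-}}^{u}\cap\mathcal{C}_{l^{+}v^{+}t^{+}}^{u}$, which is immediate from the definitions of a bounded prism and of bounded cones. With this in hand, the first ``$\Leftrightarrow$'' is pure bookkeeping: Corollary~\ref{cor:FSP-Feasibility} at $(l^{-},v^{-},t^{-})$ gives $p^{\mbox{f}}((l^{-},v^{-},t^{-}),u)\neq\emptyset\Leftrightarrow\mathcal{C}_{l^{-}v^{-}t^{-}}^{u}\neq\emptyset\Leftrightarrow D(u-\underline{p}_{l^{-}v^{-}t^{-}})\ge 0$, and Corollary~\ref{cor:BSP-Feasibility} at $(l^{+},v^{+},t^{+})$ gives $p^{\mbox{eb}}((l^{+},v^{+},t^{+}),u)\neq\emptyset\Leftrightarrow\mathcal{C}_{l^{+}v^{+}t^{+}}^{u}\neq\emptyset\Leftrightarrow D(u-\underline{p}_{l^{+}v^{+}t^{+}})\ge 0$, whose conjunction is exactly the third clause. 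For ``$\mathcal{P}_{l^{-}v^{-}t^{-}}^{l^{+}v^{+}t^{+},u}\neq\emptyset\Rightarrow$ third clause'', take any $p$ in the bounded prism; then $p\in\mathcal{C}_{l^{-}v^{-}t^{-}}\cap\mathcal{C}_{l^{+}v^{+}t^{+}}$ forces $D(p-\underline{p}_{l^{-}v^{-}t^{-}})\ge 0$ and $D(p-\underline{p}_{l^{+}v^{+}t^{+}})\ge 0$, and combining with $D(u-p)\ge 0$ via transitivity of $D$ gives the third clause.

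The one step with content is ``third clause $\Rightarrow\mathcal{P}_{l^{-}v^{-}t^{-}}^{l^{+}v^{+}t^{+},u}\neq\emptyset$''. I would exhibit the explicit witness $p^{\mbox{m}}:=p^{\mbox{f}}\big((l^{-},v^{-},t^{-}),\min(u,\bar{p}_{l^{+}v^{+}t^{+}})\big)$ generated by the EFSP. Since $D\big(\min(u,\bar{p}_{l^{+}v^{+}t^{+}})-\underline{p}_{l^{-}v^{-}t^{-}}\big)=\min\{D(u-\underline{p}_{l^{-}v^{-}t^{-}}),\,D(\bar{p}_{l^{+}v^{+}t^{+}}-\underline{p}_{l^{-}v^{-}t^{-}})\}\ge 0$ by the third clause together with the hypothesis $D(\bar{p}_{l^{+}v^{+}t^{+}}-\underline{p}_{l^{-}v^{-}t^{-}})\ge 0$, Corollary~\ref{cor:FSP-Feasibility} guarantees $p^{\mbox{m}}\neq\emptyset$; by construction $p^{\mbox{m}}$ starts at $(l^{-},v^{-},t^{-})$ and stays weakly below $\min(u,\bar{p}_{l^{+}v^{+}t^{+}})$, hence below $u$. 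It then remains to verify $p^{\mbox{m}}(t^{+})=l^{+}$ and $\dot{p}^{\mbox{m}}(t^{+})=v^{+}$, i.e. $p^{\mbox{m}}\in\mathcal{C}_{l^{+}v^{+}t^{+}}^{u}$.

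For this verification I would argue as follows. From $D(u-\underline{p}_{l^{+}v^{+}t^{+}})\ge 0$ we get $u(t^{+})\ge\underline{p}_{l^{+}v^{+}t^{+}}(t^{+})=l^{+}=\bar{p}_{l^{+}v^{+}t^{+}}(t^{+})$, so the bounding quasi-trajectory equals $l^{+}$ at $t^{+}$, and since the EFSP output never exceeds its bound, $p^{\mbox{m}}(t^{+})\le l^{+}$. Next, the two hypotheses and Proposition~\ref{prop:prism_feasibility} give $\mathcal{P}_{l^{-}v^{-}t^{-}}^{l^{+}v^{+}t^{+}}\neq\emptyset$, and from the proof of that proposition $\bar{p}_{l^{+}v^{+}t^{+}}$ exceeds $\bar{p}_{l^{-}v^{-}t^{-}}$ only on $(-\infty,\bar{t}^{*}]$ with $\bar{t}^{*}\le t^{+}$; hence beyond $\bar{t}^{*}$ the acceleration-maximal curve $\bar{p}_{l^{-}v^{-}t^{-}}$ already overshoots $\bar{p}_{l^{+}v^{+}t^{+}}$ and thus the bound, so $p^{\mbox{m}}$ is forced off $\bar{p}_{l^{-}v^{-}t^{-}}$ no later than $t^{+}$, and because it cannot sit above $l^{+}$ at $t^{+}$ it must have finished its $\underline{a}$-deceleration onto the bound by then. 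Thus $p^{\mbox{m}}$ touches $\bar{p}_{l^{+}v^{+}t^{+}}$ at $t^{+}$, and $t^{+}$ being interior to $p^{\mbox{m}}$'s domain with both curves having continuous velocity, $\dot{p}^{\mbox{m}}(t^{+})=\dot{\bar{p}}_{l^{+}v^{+}t^{+}}(t^{+})=v^{+}$ (the degenerate subcase $\bar{t}^{*}=t^{+}$ needs only the remark that a transversal crossing of $\bar{p}_{l^{-}v^{-}t^{-}}$ and $\bar{p}_{l^{+}v^{+}t^{+}}$ at $t^{+}$ would leave $\mathcal{P}_{l^{-}v^{-}t^{-}}^{l^{+}v^{+}t^{+}}$ empty, contradicting the above). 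This places $p^{\mbox{m}}$ in $\mathcal{C}_{l^{-}v^{-}t^{-}}^{u}\cap\mathcal{C}_{l^{+}v^{+}t^{+}}^{u}=\mathcal{P}_{l^{-}v^{-}t^{-}}^{l^{+}v^{+}t^{+},u}$, finishing the equivalence.

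Finally the domination clause is again bookkeeping: if $\mathcal{P}_{l^{-}v^{-}t^{-}}^{l^{+}v^{+}t^{+},u}\neq\emptyset$ then the equivalence gives $D(u-\underline{p}_{l^{+}v^{+}t^{+}})\ge 0$, so $p^{\mbox{eb}}:=p^{\mbox{eb}}((l^{+},v^{+},t^{+}),u)$ is well-defined; for any $\delta,\delta',\epsilon,\epsilon'\ge 0$ every $p\in\mathcal{P}_{(l^{-}-\epsilon)v^{-}(t^{-}+\epsilon')}^{(l^{+}-\delta)v^{+}(t^{+}+\delta'),u}$ in particular lies in $\mathcal{C}_{(l^{+}-\delta)v^{+}(t^{+}+\delta')}^{u}$, so the shift part of Corollary~\ref{cor:BSP-Feasibility} (applied with shifts $\delta$ and $\delta'$) yields $D(p^{\mbox{eb}}-p)\ge 0$. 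The only real obstacle is the witness step: everything else is transitivity of $D$ plus the two corollaries, but showing that $p^{\mbox{m}}$ threads exactly through $(l^{+},v^{+},t^{+})$ in both position and velocity requires re-running the merge-timing and tangency analysis of Proposition~\ref{prop:prism_feasibility} in the presence of the extra upper bound $u$, the key point being that $u$ cannot delay the deceleration merge past $t^{+}$ precisely because $u(t^{+})\ge l^{+}$.
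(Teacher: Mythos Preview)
The paper does not give a proof of this corollary at all; it only asserts that ``combining Corollaries~\ref{cor:FSP-Feasibility} and~\ref{cor:BSP-Feasibility} leads to'' it. Your proposal is therefore more detailed than anything in the paper, and the bookkeeping portions (the two outer equivalences via Corollaries~\ref{cor:FSP-Feasibility} and~\ref{cor:BSP-Feasibility}, the implication from nonemptiness of the bounded prism to the $D$-conditions, and the final domination clause via the shift part of Corollary~\ref{cor:BSP-Feasibility}) are all clean and correct. You are also right that the only substantive step is producing a witness in $\mathcal{P}_{l^{-}v^{-}t^{-}}^{l^{+}v^{+}t^{+},u}$ from the two $D$-conditions, a step the paper entirely suppresses.

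There is, however, a genuine gap in your witness argument. From $p^{\mbox{m}}(t^{+})\le l^{+}$ and ``$p^{\mbox{m}}$ has left $\bar{p}_{l^{-}v^{-}t^{-}}$ by $t^{+}$'' you conclude that $p^{\mbox{m}}$ has \emph{finished} its deceleration merge by $t^{+}$; but nothing you wrote rules out that $p^{\mbox{m}}$ is still mid-merge at $t^{+}$ and sits \emph{strictly} below $l^{+}$. The inequality $p^{\mbox{m}}(t^{+})\le l^{+}$ is one-sided, and you have not supplied the matching lower bound $p^{\mbox{m}}(t^{+})\ge l^{+}$. A clean way to close this is to show first that the prism lower bound $\underline{p}_{l^{-}v^{-}t^{-}}^{l^{+}v^{+}t^{+}}$ lies weakly below $u$: on the two outer pieces this is exactly $D(u-\underline{p}_{l^{-}v^{-}t^{-}})\ge 0$ and $D(u-\underline{p}_{l^{+}v^{+}t^{+}})\ge 0$, and on the $\bar{a}$-acceleration merge segment it follows because $q-\mathbf{s}^{\mbox{m}}$ is concave for every component $q$ of $u$ (since $\ddot{q}\le\bar{a}=\ddot{\mathbf{s}}^{\mbox{m}}$) and nonnegative at both merge endpoints, hence nonnegative throughout. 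This already gives a witness directly, namely $\underline{p}_{l^{-}v^{-}t^{-}}^{l^{+}v^{+}t^{+}}\in\mathcal{P}_{l^{-}v^{-}t^{-}}^{l^{+}v^{+}t^{+},u}$; alternatively, since $\underline{p}_{l^{-}v^{-}t^{-}}^{l^{+}v^{+}t^{+}}$ now belongs to $\mathcal{C}_{l^{-}v^{-}t^{-}}^{\min(u,\bar{p}_{l^{+}v^{+}t^{+}})}$ and $p^{\mbox{m}}$ dominates that cone by Proposition~\ref{prop: FSP-shift-bound}, you recover $p^{\mbox{m}}(t^{+})\ge l^{+}$ and a squeeze between $\underline{p}_{l^{-}v^{-}t^{-}}^{l^{+}v^{+}t^{+}}$ and $\bar{p}_{l^{+}v^{+}t^{+}}$ forces $\dot{p}^{\mbox{m}}(t^{+})=v^{+}$.
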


\subsection{Feasibility Properties\label{sub:Feasibility-Properties}}

Although the proposed heuristic algorithms are essentially heuristics
that may not explore the entire feasible region of the original problem,
it can be used as a touchstone for the feasibility of the original
problems under certain mild conditions. This section will discuss
the relationship between the feasibility of the SH solutions and that
of the original problems. Note that with different acceleration values,
the SH algorithms yields different solutions. For uniformity, this
section only investigates the representative SH algorithms with $\bar{a}^{\mbox{f}}=\bar{a}^{\mbox{b}}=\bar{a}$
and $\underline{a}^{\mbox{f}}=\underline{a}^{\mbox{b}}=\underline{a}$.

The following analysis investigates LVP, i.e., the feasibility of
$\mathcal{P}^{\mbox{LVP}}$ and that of $P^{\mbox{PSH}}\left(\underline{a},\bar{a}\right)$
.
\begin{defn}
Entry boundary condition $\left[v_{n}^{-},t_{n}^{-}\right]_{n\in\mathcal{N}}$
is \emph{proper} if $v_{n}^{-}\in\left[0,\bar{v}\right],\forall n\in\mathcal{N}$
and 
\[
D\left(\bar{p}_{(ms-ns)v_{m}^{-}(t_{m}^{-}+n\tau-m\tau)}-\underline{p}_{0v_{n}^{-}t_{n}^{-}}\right)\ge0,\forall m<n\in\mathcal{N}.
\]
\end{defn}
\begin{prop}
\label{prop:proper_condition}$\mathcal{P}\neq\emptyset$ $\Rightarrow$$\left[v_{n}^{-},t_{n}^{-}\right]_{n\in\mathcal{N}}$
is proper.\end{prop}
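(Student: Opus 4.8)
The plan is to take an arbitrary feasible trajectory vector $\mathbf{p}=[p_n]_{n\in\mathcal{N}}\in\mathcal{P}$ and verify directly the two requirements in the definition of properness. The first, $v_n^-\in[0,\bar v]$ for every $n$, will be immediate: since $p_n\in\mathcal{T}_n\subseteq\mathcal{T}$, kinematic feasibility \eqref{eq:set-kinematic-constraints} forces $\dot p_n(t)\in[0,\bar v]$ for all $t$, and the entry condition defining $\mathcal{T}_n^-$ gives $v_n^-=\dot p_n(t_n^-)\in[0,\bar v]$.

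For the second requirement I would fix $m<n$ in $\mathcal{N}$ and first iterate the car-following safety condition down the chain $p_m,p_{m+1},\dots,p_n$. Since each $p_k$ safely follows $p_{k-1}$ for $k=m+1,\dots,n$, i.e.\ $p_{k-1}(t-\tau)-p_k(t)\ge s$ for all $t$ (equivalently $p_k(t)\le p_{k-1}^{\mathrm s}(t)$, the shadow reformulation of \eqref{eq: set-safety-constraints}), substituting successively at the shifted arguments $t-\tau,t-2\tau,\dots$ gives, for all $t\in(-\infty,\infty)$,
\[
p_n(t)\ \le\ p_m\bigl(t-(n-m)\tau\bigr)-(n-m)s\ =\ p_m^{\mathrm{s}^{\,n-m}}(t).
\]
This uses only repeated substitution (no monotonicity), and it is valid on all of $(-\infty,\infty)$ because \eqref{eq: set-safety-constraints} is imposed there (cf.\ the Remark extending trajectories to the whole time axis).

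Next I would place the two trajectories inside quadratic cones. Since $p_m\in\mathcal{T}$ with $p_m(t_m^-)=0$ and $\dot p_m(t_m^-)=v_m^-$, its $(n-m)$th-order shadow is again in $\mathcal{T}$ (shifting preserves the speed and acceleration profiles) and passes the state point $\bigl(-(n-m)s,\ v_m^-,\ t_m^-+(n-m)\tau\bigr)=\bigl(ms-ns,\ v_m^-,\ t_m^-+n\tau-m\tau\bigr)$, which is feasible by the first requirement; hence $p_m^{\mathrm{s}^{\,n-m}}\in\mathcal{C}_{(ms-ns)\,v_m^-\,(t_m^-+n\tau-m\tau)}$ and is therefore dominated by that cone's upper-bound trajectory, $p_m^{\mathrm{s}^{\,n-m}}(t)\le\bar p_{(ms-ns)\,v_m^-\,(t_m^-+n\tau-m\tau)}(t)$ for all $t$. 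Likewise $p_n\in\mathcal{C}_{0\,v_n^-\,t_n^-}$ gives $p_n(t)\ge\underline p_{0\,v_n^-\,t_n^-}(t)$ for all $t$. Chaining,
\[
\bar p_{(ms-ns)\,v_m^-\,(t_m^-+n\tau-m\tau)}(t)\ \ge\ p_m^{\mathrm{s}^{\,n-m}}(t)\ \ge\ p_n(t)\ \ge\ \underline p_{0\,v_n^-\,t_n^-}(t),
\]
and minimizing over $t$ yields $D\bigl(\bar p_{(ms-ns)v_m^-(t_m^-+n\tau-m\tau)}-\underline p_{0v_n^-t_n^-}\bigr)\ge0$, which is exactly the inequality in the definition of properness; since $m<n$ were arbitrary, $[v_n^-,t_n^-]_{n\in\mathcal{N}}$ is proper.

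The one point that needs care is the bookkeeping in the iteration step: one must confirm that passing through $n-m$ shadow operations accumulates precisely the spatial offset $(n-m)s$ and temporal offset $(n-m)\tau$ that appear (as $ms-ns$ and $n\tau-m\tau$) in the statement; everything else is a direct invocation of the quadratic-cone definition and the transitivity of $D$. The identical argument, using only $\mathcal F(\cdot)$ and $\mathcal{T}_n^-$ (the exit condition $\mathcal T^+$ is never needed), shows the same necessary condition for $\mathcal{P}^{\mathrm{LVP}}(p_1)\neq\emptyset$.
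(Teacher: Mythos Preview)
Your argument is correct and follows essentially the same route as the paper: iterate the safety constraint to obtain $p_n(t)\le p_m^{\mathrm{s}^{\,n-m}}(t)$, then sandwich via the quadratic-cone upper and lower bounds and use transitivity of $D$. The only differences are cosmetic---you work pointwise while the paper works in $D$-notation throughout, and you explicitly check $v_n^-\in[0,\bar v]$, which the paper tacitly assumes.
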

\begin{proof}
When $\mathcal{P}\neq\emptyset,$ let $P=\left[p_{n}\right]_{n\in\mathcal{N}}$
denote a generic feasible trajectory vector in $\mathcal{P}$. For
any given $m<n\in\mathcal{N}$, based on safety constraint \eqref{eq: set-safety-constraints},
we obtain $D\left(p_{m}^{s}-p_{m+1}\right)\ge0,$ $D\left(p_{m+1}^{s}-p_{m+2}\right)\ge0,\cdots,$
$D\left(p_{n-1}^{s}-p_{n}\right)\ge0$. This can be translated as
$D\left(p_{m}^{s^{n-m}}-p_{m+1}^{s^{n-m-1}}\right)\ge0,$ $D\left(p_{m+1}^{s^{n-m-1}}-p_{m+2}^{s^{n-m-2}}\right)\ge0,\cdots,$
$D\left(p_{n-1}^{s}-p_{n}\right)\ge0$, which indicates that $D\left(p_{m}^{s^{n-m}}-p_{n}\right)\ge0$
due to the transitive property of function $D\left(\cdot\right)$.
Further, since $D\left(\bar{p}_{(ms-ns)v_{m}^{-}\left[t_{m}^{-}+(n-m)\tau\right]}-p_{m}^{s^{n-m}}\right)\ge0$
and $D\left(p_{n}-\underline{p}_{0v_{n}^{-}t_{n}^{-}}\right)\ge0$
, we obtain $D\left(\bar{p}_{(ms-ns)v_{m}^{-}\left[t_{m}^{-}+(n-m)\tau\right]}-\underline{p}_{0v_{n}^{-}t_{n}^{-}}\right)\ge0,\forall m<n\in\mathcal{N}$.
This completes the proof.\end{proof}
\begin{thm}
\label{theo: feasibility_all_green} $\mathcal{P}^{\mbox{LVP}}\neq\emptyset$
$\Leftrightarrow$ \textup{$P^{\mbox{PSH}}\left(\underline{a},\bar{a}\right)\neq\emptyset$}
$\Leftrightarrow$ $\left[v_{n}^{-},t_{n}^{-}\right]_{n\in\mathcal{N}}$
is proper. \end{thm}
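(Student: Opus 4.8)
The plan is to close the cycle
(i) $\{[v_n^-,t_n^-]\}$ proper $\Rightarrow P^{\mbox{PSH}}(\underline a,\bar a)\neq\emptyset$; (ii) $P^{\mbox{PSH}}(\underline a,\bar a)\neq\emptyset\Rightarrow\mathcal P^{\mbox{LVP}}\neq\emptyset$; (iii) $\mathcal P^{\mbox{LVP}}\neq\emptyset\Rightarrow\{[v_n^-,t_n^-]\}$ proper. Implication (ii) is already available: the Corollary following Proposition~\ref{prop: PSHL=00003DSHL} states that a non-empty PSHL output lies in $\mathcal P^{\mbox{LVP}}(p_1)$. Implication (iii) is the LVP analogue of Proposition~\ref{prop:proper_condition} and I would prove it by the identical argument: take any $\mathbf p=[p_n]\in\mathcal P^{\mbox{LVP}}$, read the safety conditions as $D(p_k^{\mbox{s}}-p_{k+1})\ge0$, telescope them into $D(p_m^{\mbox{s}^{n-m}}-p_n)\ge0$ by shadowing both sides equally and invoking the transitivity of $D$, then use $p_m\le\bar p_{0v_m^-t_m^-}$ (since $p_m\in\mathcal C_{0v_m^-t_m^-}$) so that $p_m^{\mbox{s}^{n-m}}\le\bar p_{0v_m^-t_m^-}^{\mbox{s}^{n-m}}=\bar p_{(ms-ns)v_m^-(t_m^-+n\tau-m\tau)}$, and $p_n\ge\underline p_{0v_n^-t_n^-}$ (since $p_n\in\mathcal C_{0v_n^-t_n^-}$); one more application of transitivity produces exactly the defining inequality of properness for each pair $m<n$.

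The substance of the theorem is implication (i), which I would establish by induction on the vehicle index $n$, showing that step PSHL-2 never returns $\emptyset$. By Corollary~\ref{cor:FSP-Feasibility}, the stage-$n$ call $p_n=p^{\mbox{f}}\big((0,v_n^-,t_n^-),u_n\big)$ with $u_n:=u\big(\{\bar p_m^{\mbox{s}^{n-m}}\}_{m=1,\dots,n-1}\big)$ succeeds if and only if $D(u_n-\underline p_{0v_n^-t_n^-})\ge0$; since $D$ commutes with the pointwise minimum defining a quasi-trajectory, this is equivalent to $D(\bar p_m^{\mbox{s}^{n-m}}-\underline p_{0v_n^-t_n^-})\ge0$ for every $m<n$. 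For $m\ge2$, Proposition~\ref{prop: FSP-p_bar} identifies $\bar p_m=p^{\mbox{f}}((0,v_m^-,t_m^-),\emptyset)$ with $\bar p_{0v_m^-t_m^-}$, whose $(n-m)$-th shadow is $\bar p_{(ms-ns)v_m^-(t_m^-+n\tau-m\tau)}$, so the required inequality is precisely the properness condition for $(m,n)$. The key feature that makes this go through — and the reason PSHL is ``parallel'' — is that each $u_n$ depends only on the boundary data (and on $p_1$), never on the previously constructed trajectories, so the $N-1$ feasibility checks decouple and, read in aggregate, are logically equivalent to properness; the induction therefore yields $P^{\mbox{PSH}}(\underline a,\bar a)\neq\emptyset$, and the same bookkeeping read backwards in fact gives the stronger ``$P^{\mbox{PSH}}(\underline a,\bar a)\neq\emptyset\Leftrightarrow$ properness'' directly.

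I expect the obstacle to be bookkeeping rather than conceptual, concentrated in two places. First, one must verify carefully that the $m$-fold shadow operator, applied to the quadratic-segment decomposition of $\bar p_{0v_m^-t_m^-}$, shifts its generating state point to exactly $\big((ms-ns),v_m^-,t_m^-+(n-m)\tau\big)$, and that the degenerate situations absorbed into Corollary~\ref{cor:FSP-Feasibility} (entry speed $v_n^-=\bar v$, a shadow that never actually binds, EFSO returning $\pm\infty$) are genuinely covered. Second, and this is the only delicate modeling point, the term $m=1$ involves the lead trajectory itself: PSHL uses the given $p_1$ as the first upper bound rather than $\bar p_{0v_1^-t_1^-}$, so the $m=1$ instance of properness must be understood with $p_1$ in place of $\bar p_{0v_1^-t_1^-}$. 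Under the standing convention of this subsection that the lead vehicle shares the entry state $(0,v_1^-,t_1^-)$ and is generated flat out (so $p_1=\bar p_{0v_1^-t_1^-}$), the two readings coincide and the argument above applies verbatim; I would make this convention explicit at the start of the proof so that the $m=1$ chain $D(p_1^{\mbox{s}^{n-1}}-p_n)\ge0,\ p_n\ge\underline p_{0v_n^-t_n^-}$ delivers the same properness inequality as the $m\ge2$ terms.
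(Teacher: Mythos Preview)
Your proposal is correct and follows essentially the same three-implication cycle as the paper's proof: proper $\Rightarrow P^{\mbox{PSH}}\neq\emptyset$ via Corollary~\ref{cor:FSP-Feasibility} applied to each EFSP call, $P^{\mbox{PSH}}\neq\emptyset\Rightarrow\mathcal P^{\mbox{LVP}}\neq\emptyset$ via the Corollary after Proposition~\ref{prop: PSHL=00003DSHL}, and $\mathcal P^{\mbox{LVP}}\neq\emptyset\Rightarrow$ proper by the argument of Proposition~\ref{prop:proper_condition}. You are in fact more explicit than the paper, which compresses the first implication into a one-line appeal to the corollary (and actually cites Corollary~\ref{cor:BSP-Feasibility} where Corollary~\ref{cor:FSP-Feasibility} is meant) and handles the third implication by invoking Proposition~\ref{prop:proper_condition} directly, glossing over the fact that $\mathcal P^{\mbox{LVP}}\neq\emptyset$ does not literally imply $\mathcal P\neq\emptyset$; your observation that the \emph{proof} of that proposition uses only safety and entry data, hence transfers verbatim to LVP, is the right fix. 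Your flag on the $m=1$ term is also well taken: the paper tacitly identifies $\bar p_1$ with $\bar p_{0v_1^-t_1^-}$, which is exactly the convention of Section~\ref{sub:Feasibility-Properties} (extreme accelerations), so your proposed resolution matches the paper's intended reading.
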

\begin{proof}
When $\left[v_{n}^{-},t_{n}^{-}\right]_{n\in\mathcal{N}}$ is proper,
based on Corollary \ref{cor:BSP-Feasibility}, we know that every
EFSP step will generate a feasible trajectory. Thus $P^{\mbox{PSH}}\left(\underline{a},\bar{a}\right)$
is feasible. Since $P^{\mbox{PSH}}\left(\underline{a},\bar{a}\right)\in\mathcal{P}^{\mbox{LVP}}$,
we see that $\mathcal{P}^{\mbox{LVP}}$ is feasible, too. Further,
Proposition \ref{prop:proper_condition} indicates that when $\mathcal{P}^{\mbox{LVP}}$
is feasible, which means $\mathcal{P}$ too is feasible, $\left[v_{n}^{-},t_{n}^{-}\right]_{n\in\mathcal{N}}$
is proper. This completes the proof.
\end{proof}
Now we add back the signal control and investigate the feasibility
of $\mathcal{P}$ under milder conditions with the SH solution. We
consider a special subset of $\mathcal{P}$ where every trajectory
has the maximum speed of $\bar{v}$ at the exit location $L$, i.e,
\[
\hat{\mathcal{P}}:=\left\{ \left[p_{n}\right]{}_{n\in\mathcal{N}}\in\mathcal{P}\left|\dot{p}_{n}\left(p_{n}^{-1}\left(L\right)\right)=\bar{v},\forall n\in\mathcal{N}\right.\right\} .
\]
This subset is not too restrictive, because in order to assure a high
traffic throughput rate, the exist speed of each vehicle should be
high. The following analysis investigates the relationship between
$\hat{\mathcal{P}}$ and $P^{\mbox{SH}}\left(\underline{a},\bar{a},\underline{a},\bar{a}\right)$.
\begin{prop}
\label{prop: exit_maximum_speed}When $L\ge\bar{v}^{2}/(2\bar{a})$,
if $P^{\mbox{SH}}\left(\underline{a},\bar{a},\underline{a},\bar{a}\right)=\left[p_{n}\right]{}_{n\in\mathcal{N}}$
is feasible, then $\dot{p}_{n}\left(t\right)=\bar{v},\forall t\ge p_{n}^{-1}(L),n\in\mathcal{N}$
.\end{prop}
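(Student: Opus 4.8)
The plan is to argue by induction on the vehicle index $n$, reading off the structure of $p_{n}$ from the SH algorithm (run, as the proposition specifies, with $\bar{a}^{\mbox{f}}=\bar{a}^{\mbox{b}}=\bar{a}$ and $\underline{a}^{\mbox{f}}=\underline{a}^{\mbox{b}}=\underline{a}$). Two ingredients drive the whole argument. First, a purely geometric fact: since every vehicle enters at location $0$ with $v_{n}^{-}\in[0,\bar{v}]$ and the forward-shooting template accelerates at $\bar{a}$, that template attains $\bar{v}$ at location $(\bar{v}^{2}-(v_{n}^{-})^{2})/(2\bar{a})\le\bar{v}^{2}/(2\bar{a})\le L$, so on its template portion $p_{n}^{\mbox{f}}$ is already cruising at $\bar{v}$ before reaching $L$. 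Second, a consequence of the induction hypothesis for the shadow: if $\dot{p}_{n-1}(t)=\bar{v}$ for all $t\ge p_{n-1}^{-1}(L)$, then because $p_{n-1}^{\mbox{s}}(t)=p_{n-1}(t-\tau)-s$ one has $p_{n-1}^{\mbox{s}}(t)\ge L-s\Leftrightarrow p_{n-1}(t-\tau)\ge L\Leftrightarrow t-\tau\ge p_{n-1}^{-1}(L)$, whence $\dot{p}_{n-1}^{\mbox{s}}(t)=\dot{p}_{n-1}(t-\tau)=\bar{v}$ there; that is, the shadow travels at $\bar{v}$ at every location $\ge L-s$, in particular at $L$ and beyond.

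The core of the induction is to show $v_{n}^{L}=\bar{v}$ and, more precisely, that $\dot{p}_{n}^{\mbox{f}}(t)=\bar{v}$ whenever $p_{n}^{\mbox{f}}(t)\ge L$. For $n=1$ (the base case) $p_{1}^{\mbox{f}}$ \emph{is} the template, so this follows from the first fact alone. For $n\ge2$ I would inspect the FSP output: if $p_{n}^{\mbox{f}}$ is never blocked by $p_{n-1}^{\mbox{s}}$ it equals the template and we are again done; if it is blocked, it merges tangentially into $p_{n-1}^{\mbox{s}}$ along a segment decelerating at $\underline{a}$, so at the tangent point its speed is $<\bar{v}$ (it has just decelerated there from a speed $\le\bar{v}$), and the shadow property then forces that tangent point to lie at a location $<L-s$. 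Consequently $p_{n}^{\mbox{f}}$ coincides with $p_{n-1}^{\mbox{s}}$ over all locations $\ge L-s$, so it travels at $\bar{v}$ there, giving $v_{n}^{L}=\bar{v}$. Degenerate configurations (a zero-length merge, or tangency occurring exactly at speed $\bar{v}$) collapse to the template case and are treated identically. If the SH algorithm does not invoke backward shooting for vehicle $n$, then $p_{n}=p_{n}^{\mbox{f}}$ and the conclusion is immediate.

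When BSP \emph{is} triggered, $v_{n}^{L}=\bar{v}$ makes the backward-shooting start point $(L,\bar{v},G(t_{n}^{L}))$. I would then verify that $p_{n}^{\mbox{b}}$ — a decelerating merge segment, possibly a stopped segment, then an acceleration segment at $\bar{a}$ — is nondecreasing in location from its tangency with $p_{n}^{\mbox{f}}$ (which, by the argument above, occurs at a location $<L$) up to $L$, reaching $L$ only at time $G(t_{n}^{L})$; and that the auxiliary FSP launched from $(L,\bar{v},G(t_{n}^{L}))$ and bounded by $p_{n}^{\mbox{f}}$ (which already travels at $\bar{v}$ for all locations $\ge L$) reduces to a pure cruise at $\bar{v}$. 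Hence $p_{n}=p_{n}^{\mbox{eb}}$ stays below $L$ until $G(t_{n}^{L})$ and then cruises at $\bar{v}$, so $p_{n}^{-1}(L)=G(t_{n}^{L})$ and $\dot{p}_{n}(t)=\bar{v}$ for all $t\ge p_{n}^{-1}(L)$, closing the induction.

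The main obstacle I anticipate is precisely the step $v_{n}^{L}=\bar{v}$: showing that a \emph{blocked} forward-shooting trajectory cannot be dragged below the speed limit by the time it reaches $L$. That is where the length assumption $L\ge\bar{v}^{2}/(2\bar{a})$ (which governs the template portion) and the inductive shadow-saturation property (which governs the portion following $p_{n-1}^{\mbox{s}}$, guaranteeing the shadow is sub-$\bar{v}$ only at locations below $L-s$) must be used together; neither alone suffices. Everything afterward — the cruise behaviour of the auxiliary FSP and the location/monotonicity bookkeeping for $p_{n}^{\mbox{b}}$ — is routine, though the degenerate tangency and zero-length-segment cases need to be checked explicitly rather than waved away.
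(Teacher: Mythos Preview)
Your proposal is correct and follows essentially the same route as the paper: induction on $n$, a case split on whether the forward shooting is blocked by $p_{n-1}^{\mbox{s}}$, the length hypothesis $L\ge\bar v^{2}/(2\bar a)$ handling the template branch and the induction hypothesis (shadow at $\bar v$ for locations $\ge L-s$) handling the blocked branch, and finally the observation that BSP launched from $(L,\bar v,G(t_{n}^{L}))$ together with the auxiliary FSP produces a pure $\bar v$-cruise beyond $L$. The paper phrases the blocked case slightly differently---it simply notes that once $p_{k}^{\mbox{f}}$ has merged into $p_{k-1}^{\mbox{s}}$ one has $\dot p_{k}^{\mbox{f}}\!\left((p_{k}^{\mbox{f}})^{-1}(L)\right)=\dot p_{k-1}\!\left(p_{k-1}^{-1}(L+s)\right)=\bar v$ without locating the tangent point---so your degenerate-tangency worry is in fact unnecessary: after the merge, $p_{n}^{\mbox{f}}$ \emph{is} $p_{n-1}^{\mbox{s}}$, and the induction hypothesis gives the speed at $L$ directly, regardless of where the merge occurred.
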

\begin{proof}
We use induction to prove this proposition. The induction assumption
is $\dot{,p}_{n}\left(t\right)=\bar{v},\forall t\ge p_{n}^{-1}(L)$.
If the BSP is not needed, Proposition \ref{prop: FSP-p_bar} indicates
that $p_{1}=p_{1}^{\mbox{f}}=\bar{p}_{0v_{1}^{-}t_{1}^{-}}\left(t_{1}^{-}:\infty\right)$
and thus since $L\ge\bar{v}^{2}/(2\bar{a})$, we obtain $\dot{p}_{1}\left(t\right)=\dot{\bar{p}}_{0v_{1}^{-}t_{1}^{-}}\left(t\right)=\bar{v},\forall t\ge p_{1}^{-1}(L)=\bar{p}_{0v_{1}^{-}t_{1}^{-}}^{-1}(L)$.
Otherwise, if BSP is activated, it shoots backwards from the state
point $\left(L,\bar{p}_{0v_{1}^{-}t_{1}^{-}}\left(\bar{p}_{0v_{1}^{-}t_{1}^{-}}^{-1}(L)\right)=\bar{v},G\left(\bar{p}_{0v_{1}^{-}t_{1}^{-}}^{-1}(L)\right)\right)$,
and thus Proposition \ref{prop: BSP-p_bar} indicates that $p_{1}=\bar{p}_{0v_{1}^{-}t_{1}^{-}}^{L\bar{v}G\left(\bar{p}_{0v_{1}^{-}t_{1}^{-}}^{-1}(L)\right)}$.
Therefore, $\dot{p}_{1}\left(t\right)=\bar{v},\forall t\ge p_{1}^{-1}(L)$
holds again since $L\ge\bar{v}^{2}/(2\bar{a})$. Then assume that
this assumption holds for $n=k-1$, and we will investigate whether
it holds for $n=k,\forall k\in\mathcal{N}\backslash\{1\}.$ If the
primary FSP is not blocked by $p_{k-1}^{\mbox{s}}$, apparently $p_{k}=\bar{p}_{0v_{k}^{-}t_{k}^{-}}$
if the BSP is not needed, or $p_{k}=\bar{p}_{0v_{k}^{-}t_{k}^{-}}^{L\bar{v}G\left(\bar{p}_{0v_{k}^{-}t_{k}^{-}}^{-1}(L)\right)}$
if the BSP is activated. Either way $\dot{p}_{k}\left(t\right)=\bar{v},\forall t\ge p_{k}^{-1}(L)$
holds since $L\ge\bar{v}^{2}/(2\bar{a})$. Otherwise, $p_{k}^{\mbox{f}}$
merges into $p_{k-1}^{\mbox{s}}$ before reaching $L$, and thus based
on the induction assumption we obtain $\dot{p}_{k}^{\mbox{f}}\left(p_{k}^{\mbox{f}-1}(L)\right)=\dot{p}_{k-1}^{\mbox{s}}\left(p_{k-1}^{\mbox{s}-1}(L)\right)=\dot{p}_{k-1}\left(p_{k-1}^{-1}(L+s)\right)=\bar{v}.$
Note that again the BSP could only shift segments in parallel and
thus $\dot{p}_{k}\left(p_{k}^{-1}(L)\right)=\dot{p}_{k}^{\mbox{f}}\left(p_{k}^{\mbox{f}-1}(L)\right)=\bar{v}$.
The induction assumption also indicates that the segments of $p_{k-1}$
used in the auxiliary FSP for $p_{k}$ is at constant speed $\bar{v}$.
This means that the auxiliary FSP for $p_{k}$ will not be blocked
by $p_{k-1}$, and thus we obtain $\dot{p}_{k}\left(t\right)=\bar{v},\forall t\ge p_{k}^{-1}(L)$.
This completes the proof.\end{proof}
\begin{thm}
When $L\ge\bar{v}^{2}/(2\bar{a})$ , $\hat{\mathcal{P}}\neq\emptyset$
$\Leftrightarrow$ $P^{\mbox{SH}}\left(\underline{a},\bar{a},\underline{a},\bar{a}\right)\neq\emptyset$\textup{
}.\label{theo:P_hat_feasibility}\end{thm}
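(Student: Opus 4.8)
The plan is to prove the two implications separately. The direction $P^{\mbox{SH}}(\underline{a},\bar{a},\underline{a},\bar{a})\neq\emptyset\Rightarrow\hat{\mathcal P}\neq\emptyset$ is immediate: by Proposition \ref{prop: SH_feas_necessity} the returned vector lies in $\mathcal P$, and since $L\ge\bar{v}^{2}/(2\bar a)$, Proposition \ref{prop: exit_maximum_speed} shows each component travels at speed $\bar v$ at and beyond the instant it passes $L$, so the vector lies in $\hat{\mathcal P}$. The substance is the converse, which I would prove by induction on the vehicle index.

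Suppose $\hat{\mathcal P}\neq\emptyset$, fix a reference vector $[p_{n}]_{n\in\mathcal N}\in\hat{\mathcal P}$, and run SH with $\bar a^{\mbox{f}}=\bar a^{\mbox{b}}=\bar a$ and $\underline a^{\mbox{f}}=\underline a^{\mbox{b}}=\underline a$. The induction hypothesis I would carry is: \emph{SH does not abort at vehicle $n$, its output $p_{n}^{\mbox{SH}}$ satisfies $D(p_{n}^{\mbox{SH}}-p_{n})\ge 0$, and $\dot p_{n}^{\mbox{SH}}(t)=\bar v$ for all $t\ge (p_{n}^{\mbox{SH}})^{-1}(L)$.} Once this is known for all $n$, SH never returns $\emptyset$, which is exactly $P^{\mbox{SH}}(\underline{a},\bar{a},\underline{a},\bar{a})\neq\emptyset$. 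The key bridging observation is that shadowing shifts a trajectory rigidly by $(\tau,s)$, so $D(p_{n}^{\mbox{SH}}-p_{n})\ge 0$ is equivalent to $D\big((p_{n}^{\mbox{SH}})^{\mbox{s}}-p_{n}^{\mbox{s}}\big)\ge 0$; combining this with the safety inequality $D(p_{n}^{\mbox{s}}-p_{n+1})\ge 0$ and the transitivity of $D$ yields $D\big((p_{n}^{\mbox{SH}})^{\mbox{s}}-p_{n+1}\big)\ge 0$ --- i.e. the reference trajectory $p_{n+1}$ remains admissible under the (more permissive) SH safety bound.

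For the inductive step, write $u:=(p_{n}^{\mbox{SH}})^{\mbox{s}}$, with $u:=\emptyset$ when $n=0$ (the base case $n=1$). Because $p_{n+1}\in\hat{\mathcal P}$, it passes the state points $(0,v_{n+1}^{-},t_{n+1}^{-})$ and $(L,\bar v,p_{n+1}^{-1}(L))$ with $p_{n+1}^{-1}(L)\in\mathcal G$, and $D(u-p_{n+1})\ge 0$, so $p_{n+1}$ witnesses $\mathcal P_{0v_{n+1}^{-}t_{n+1}^{-}}^{L\bar v\,p_{n+1}^{-1}(L),u}\neq\emptyset$; in particular the associated bounded cone is nonempty, so by Corollary \ref{cor:FSP-Feasibility} the primary forward shoot $p_{n+1}^{\mbox{f}}:=p^{\mbox{f}}\big((0,v_{n+1}^{-},t_{n+1}^{-}),u\big)$ exists, and by the domination property of FSP (Proposition \ref{prop: FSP-shift-bound}, with zero shifts) it lies above every trajectory through $(0,v_{n+1}^{-},t_{n+1}^{-})$ bounded by $u$, hence above $p_{n+1}$, hence it passes $L$ no later than $p_{n+1}$. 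Using $L\ge\bar v^{2}/(2\bar a)$, $p_{n+1}^{\mbox{f}}$ passes $L$ at speed exactly $\bar v$: if it is never blocked it coincides with $\bar p_{0v_{n+1}^{-}t_{n+1}^{-}}$, which attains $\bar v$ within distance $\bar v^{2}/(2\bar a)\le L$; if it merges into $u$ before reaching $L$, then at $L$ it agrees with $u$, whose speed at location $L$ equals that of $p_{n}^{\mbox{SH}}$ at location $L+s\ge L$, namely $\bar v$ by the induction hypothesis. Let $t_{n+1}^{L}$ denote the time $p_{n+1}^{\mbox{f}}$ passes $L$ (the quantity computed in SH-3); then $G(t_{n+1}^{L})\le p_{n+1}^{-1}(L)$, and SH-3 shoots backward from $(L,\bar v,G(t_{n+1}^{L}))$ (the backward shoot being degenerate when $t_{n+1}^{L}\in\mathcal G$). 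Now $D(u-\underline p_{0v_{n+1}^{-}t_{n+1}^{-}})\ge 0$, since $p_{n+1}$ lies above $\underline p_{0v_{n+1}^{-}t_{n+1}^{-}}$ and below $u$; and $D\big(u-\underline p_{L\bar v\,G(t_{n+1}^{L})}\big)\ge 0$, since $p_{n+1}^{\mbox{f}}$ reaches $L$ at full speed at $t_{n+1}^{L}\le G(t_{n+1}^{L})$ and then proceeds at full speed, so a direct comparison gives $p_{n+1}^{\mbox{f}}\ge\underline p_{L\bar v\,G(t_{n+1}^{L})}$ pointwise, while $p_{n+1}^{\mbox{f}}\le u$. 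By Corollary \ref{cor:BSP-Feasibility-Prism} these two inequalities imply both that SH-3 succeeds --- returning $p_{n+1}^{\mbox{SH}}=p^{\mbox{eb}}\big((L,\bar v,G(t_{n+1}^{L})),u\big)$, whose start location is $0\ge 0$ so SH does not abort --- and that $D(p_{n+1}^{\mbox{SH}}-p)\ge 0$ for every $p\in\mathcal P_{0v_{n+1}^{-}t_{n+1}^{-}}^{L\bar v\,p_{n+1}^{-1}(L),u}$, in particular for $p=p_{n+1}$. Finally the auxiliary FSP appended after $(L,\bar v,G(t_{n+1}^{L}))$ is unobstructed, so $\dot p_{n+1}^{\mbox{SH}}(t)=\bar v$ for $t\ge (p_{n+1}^{\mbox{SH}})^{-1}(L)$. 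This closes the induction; the base case $n=1$ is the same computation with $u=\emptyset$ and the bounded prism replaced by the ordinary prism $\mathcal P_{0v_{1}^{-}t_{1}^{-}}^{L\bar v\,p_{1}^{-1}(L)}\ni p_{1}$.

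The step I expect to be the main obstacle is the exit-time mismatch inside SH-3: SH commits vehicle $n$ to cross $L$ at the earliest admissible green instant $G(t_{n}^{L})$, which is generally strictly earlier than the crossing time $p_{n}^{-1}(L)$ of the reference trajectory, whereas the only a priori feasibility witness, $p_{n}$, lives in the prism anchored at the later time. The resolution is to promote the maximally aggressive forward shoot $p_{n}^{\mbox{f}}$ to a second witness: it crosses $L$ at full speed no later than $G(t_{n}^{L})$, stays at full speed thereafter, and sits below $(p_{n-1}^{\mbox{SH}})^{\mbox{s}}$ --- which is precisely the certificate $D\big((p_{n-1}^{\mbox{SH}})^{\mbox{s}}-\underline p_{L\bar v\,G(t_{n}^{L})}\big)\ge 0$ that Corollary \ref{cor:BSP-Feasibility-Prism} consumes. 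A secondary but essential point, and the reason the theorem is stated for $\hat{\mathcal P}$ rather than $\mathcal P$, is that the prism/cone machinery requires the \emph{full} exit state $(L,\bar v,\cdot)$ to be matched on both the SH side and the reference side; the hypothesis $L\ge\bar v^{2}/(2\bar a)$ supplies this on the SH side via Proposition \ref{prop: exit_maximum_speed}, and membership in $\hat{\mathcal P}$ supplies it for the reference trajectories.
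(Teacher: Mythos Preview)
Your proposal is correct and follows essentially the same route as the paper's own proof: the easy direction via Propositions~\ref{prop: SH_feas_necessity} and~\ref{prop: exit_maximum_speed}; the hard direction by induction on $n$ carrying the domination $D(p_n^{\mathrm{SH}}-p_n)\ge 0$, using Corollary~\ref{cor:FSP-Feasibility}/Proposition~\ref{prop: FSP-shift-bound} for the forward shoot, Corollary~\ref{cor:BSP-Feasibility-Prism} for the backward shoot, and the monotonicity of $G$ to compare $G(t_n^L)$ with the reference exit time. The only cosmetic difference is that you fold the exit-speed property $\dot p_n^{\mathrm{SH}}\equiv\bar v$ beyond $L$ into the induction hypothesis, whereas the paper invokes Proposition~\ref{prop: exit_maximum_speed} separately; and you make the exit-time mismatch explicit where the paper leaves it implicit. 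One small omission on your side (which the paper glosses over with ``it is easy to see'') is verifying the two prism preconditions $D(\bar p_{0v_{n+1}^-t_{n+1}^-}-\underline p_{L\bar v\,G(t_{n+1}^L)})\ge 0$ and $D(\bar p_{L\bar v\,G(t_{n+1}^L)}-\underline p_{0v_{n+1}^-t_{n+1}^-})\ge 0$ before invoking Corollary~\ref{cor:BSP-Feasibility-Prism}; these follow from your two witnesses $p_{n+1}^{\mathrm{f}}\in\mathcal P_{0v_{n+1}^-t_{n+1}^-}^{L\bar v\,t_{n+1}^L}$ and $p_{n+1}\in\mathcal P_{0v_{n+1}^-t_{n+1}^-}^{L\bar v\,p_{n+1}^{-1}(L)}$ together with $t_{n+1}^L\le G(t_{n+1}^L)\le p_{n+1}^{-1}(L)$ and the obvious monotonicity of $\bar p_{L\bar v\,\cdot}$ and $\underline p_{L\bar v\,\cdot}$ in the time anchor.
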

\begin{proof}
The proof of the sufficiency is simple. Again, we can write $P^{\mbox{SH}}\left(\underline{a},\bar{a},\underline{a},\bar{a}\right)$
as $\left[p_{n}\right]{}_{n\in\mathcal{N}}$. When $P^{\mbox{SH}}\left(\underline{a},\bar{a},\underline{a},\bar{a}\right)\neq\emptyset$,
Proposition \ref{prop: exit_maximum_speed} indicates that $P\in\hat{\mathcal{P}}$
and thus $\hat{\mathcal{P}}\neq\emptyset.$ 

Then we only need to prove the necessity. When there exists $P'=\left[p'_{n}\right]{}_{n\in\mathcal{N}}\in\hat{\mathcal{P}}$
that is not empty, we will show that $\left[p_{n}\right]{}_{n\in\mathcal{N}}$
too is not empty with the following induction. For the notation convenience,
we denote $t{}_{n}^{'+}=\left(p'_{n}\right)^{-1}(L)$ and $t{}_{n}^{+}:=p{}_{n}^{-1}(L)$.
The induction assumption is that $p_{n}$ exists and $D\left(p_{n}-p'_{n}\right)\ge0$.
When $n=1,$ if BSP is not activated in constructing $p_{1}$, then
$p_{1}=p^{\mbox{f}}\left(\left(0,v_{1}^{-},t_{1}^{-}\right),\emptyset\right)=\bar{p}_{0v_{1}^{-}t_{1}^{-}}(t^{-}:\infty)$
based on Proposition \ref{prop: FSP-p_bar}. Therefore, $p_{1}$ exists
and $D\left(p_{1}-p'_{1}\right)\ge0$. Otherwise, denote $\bar{t}_{1}^{+}=\bar{p}_{0v_{1}^{-}t_{1}^{-}}^{-1}(L)$
and $t_{1}^{+}:=G\left(\bar{t}_{1}^{+}\right)$. Note that since $L\ge\bar{v}^{2}/(2\bar{a})$,
$\dot{\bar{p}}_{0v_{1}^{-}t_{1}^{-}}\left(\bar{p}_{0v_{1}^{-}t_{1}^{-}}^{-1}(L)\right)=\bar{v}$.
Apparently, $D\left(\bar{p}_{0v_{1}^{-}t_{1}^{-}}-\underline{p}_{L\bar{v}\bar{t}_{1}^{+}}\right)\ge0$
and $D\left(\underline{p}_{L\bar{v}\bar{t}_{1}^{+}}-\underline{p}_{L\bar{v}t_{1}^{+}}\right)\ge0$,
which indicates $D\left(\bar{p}_{0v_{1}^{-}t_{1}^{-}}-\underline{p}_{L\bar{v}t_{1}^{+}}\right)\ge0$.
Further since $L\ge\bar{v}^{2}/(2\bar{a})$, we know $D\left(\bar{p}_{L\bar{v}t_{1}^{+}}-\underline{p}_{0v_{1}^{-}t_{1}^{-}}\right)\ge0$.
Then we obtain $p_{1}=\bar{p}_{0v_{1}^{-}t_{1}^{-}}^{L\bar{v}t_{1}^{+}}$
based on Proposition \ref{prop: BSP-p_bar}. Further, apparently $t{}_{1}^{'+}\ge\bar{t}_{1}^{+}$,
and then $t{}_{1}^{'+}=G\left(t{}_{1}^{'+}\right)\ge G\left(\bar{t}_{1}^{+}\right)=t_{1}^{+}$
since function $G(\cdot)$ is increasing. Then Proposition \ref{prop: shift_prism_feasibility}
indicates that $D\left(p_{1}-p'_{1}\right)\ge0$. 

Then assume that the induction assumption holds for $n=k-1,\forall k\in\mathcal{N}\backslash\{1\}.$
Then for $n=k$, Since $p'\in\mathcal{P}_{0v_{k}^{-}t_{k}^{-}}^{L\bar{v}t_{k}^{'+},p_{k-1}^{\mbox{'s}}}$,
then we obtain from Corollary \ref{cor:BSP-Feasibility-Prism} that
$D\left(p_{k-1}^{\mbox{'s}}-\underline{p}_{0v_{k}^{-}t_{k}^{-}}\right)\ge0$
where $p_{k-1}^{\mbox{'s}}$ is the shadow trajectory of $p'_{k-1}$.
And the induction assumption tells that $D\left(p_{k-1}^{\mbox{s}}-p_{k-1}^{\mbox{'s}}\right)\ge0$
where $p_{k-1}^{\mbox{s}}$ is the shadow trajectory of $p_{k-1}$,
which further indicates that $D\left(p_{k-1}^{\mbox{s}}-\underline{p}_{0v_{k}^{-}t_{k}^{-}}\right)\ge0$
based on the transitive property of $D$. Then Proposition \ref{prop: FSP-feasibility}
indicate that $p^{\mbox{f}}\left((0,v_{k}^{-},t_{k}^{-}),p_{k-1}^{\mbox{s}}\right)\neq\emptyset$.
For the simplicity of presentation, we denote $p^{\mbox{f}}\left((0,v_{k}^{-},t_{k}^{-}),p_{k-1}^{\mbox{s}}\right)$
by $p_{k}^{\mbox{f}}$. If BSP is not used in constructing $p_{k}$,
then $p_{k}=p_{k}^{\mbox{f}}$ exists and Proposition \ref{prop: FSP-shift-bound}
indicates $D\left(p_{k}-p'_{k}\right)\ge0$. Otherwise, let $\bar{t}_{k}^{+}=p_{k}^{\mbox{f}-1}\left(L\right)$
and $t_{k}^{+}=G\left(\bar{t}_{k}^{+}\right)$. Then if $p_{k}$ exists,
then $p_{k}^{-1}(L)=t_{k}^{+}$ and $\dot{p}_{k}\left(t_{k}^{+}\right)=\bar{v}$
since $L\ge\bar{v}^{2}/(2\bar{a})$. Again, it is easy to see that
$D\left(\bar{p}_{0v_{k}^{-}t_{k}^{-}}-\underline{p}_{L\bar{v}t_{k}^{+}}\right)\ge0$
and $D\left(\bar{p}_{L\bar{v}t_{k}^{+}}-\underline{p}_{0v_{k}^{-}t_{k}^{-}}\right)\ge0$,
and therefore Corollary \ref{cor:BSP-Feasibility-Prism} indicates
that $p_{k}$ exists. Also, apparently $t{}_{k}^{'+}\ge\bar{t}_{k}^{+}$,
and then $t{}_{k}^{'+}=G\left(t{}_{k}^{'+}\right)\ge G\left(\bar{t}_{k}^{+}\right)=t_{k}^{+}$
since function $G(\cdot)$ is increasing. Note that $p_{k}=p^{\mbox{eb}}\left((L,\bar{v},t_{k}^{+}),p_{k}^{\mbox{f}}\right)$
from BSP with $\bar{a}^{\mbox{f}}=\bar{a}^{\mbox{b}}=\bar{a}$ and
$\underline{a}^{\mbox{f}}=\underline{a}^{\mbox{b}}=\underline{a}$.
Then Corollary \ref{cor:BSP-Feasibility-Prism} further indicates
that $D\left(p_{k}-p'_{k}\right)\ge0$. This completes the proof.
\end{proof}
Further, we will show that when $L$ is sufficiently long, the feasibility
of $P^{\mbox{SH}}\left(\underline{a},\bar{a},\underline{a},\bar{a}\right)$
is equivalent to the feasibility of $\mathcal{P}$. 
\begin{thm}
When $L\ge\frac{\bar{v}^{2}}{2\bar{a}}+\frac{\bar{v}^{2}}{-2\underline{a}^{\mbox{b}}}+\frac{\bar{v}^{2}}{-2\underline{a}^{\mbox{b}}}+s(N-1)$
, $P^{\mbox{SH}}\left(\underline{a},\bar{a},\underline{a},\bar{a}\right)\neq\emptyset$
$\Leftrightarrow$ $\mathcal{P}\neq\emptyset$ $\Leftrightarrow$
$\left[v_{n}^{-},t_{n}^{-}\right]_{n\in\mathcal{N}}$ is proper.\label{theo:P_feasibility}\end{thm}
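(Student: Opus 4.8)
The plan is to prove the three statements equivalent by closing the cycle (A) $P^{\mbox{SH}}(\underline{a},\bar{a},\underline{a},\bar{a})\neq\emptyset$ $\Rightarrow$ (B) $\mathcal{P}\neq\emptyset$ $\Rightarrow$ (C) $\left[v_{n}^{-},t_{n}^{-}\right]_{n\in\mathcal{N}}$ is proper $\Rightarrow$ (A). The first implication is immediate from Proposition \ref{prop: SH_feas_necessity}, which already gives $P^{\mbox{SH}}(\underline{a},\bar{a},\underline{a},\bar{a})\in\mathcal{P}$; the second implication is exactly Proposition \ref{prop:proper_condition} and needs no constraint on $L$. Hence the whole content of the theorem, and the only place the length bound on $L$ enters, is the last implication (C) $\Rightarrow$ (A), which I would prove by exhibiting an element of $\hat{\mathcal{P}}$ and then invoking Theorem \ref{theo:P_hat_feasibility} (whose hypothesis $L\ge\bar{v}^{2}/(2\bar{a})$ is subsumed by our stronger bound) to conclude $P^{\mbox{SH}}(\underline{a},\bar{a},\underline{a},\bar{a})\neq\emptyset$.

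To build the witness in $\hat{\mathcal{P}}$, I would first apply Theorem \ref{theo: feasibility_all_green}: properness yields the all--green parallel solution $\left[q_{n}\right]_{n\in\mathcal{N}}:=P^{\mbox{PSH}}(\underline{a},\bar{a})\in\mathcal{P}^{\mbox{LVP}}$, a vector of pure forward--shooting trajectories satisfying $q_{n}\le q_{n-1}^{\mbox{s}}$ and, by the EFSP construction together with Proposition \ref{prop: FSP-p_bar}, each accelerating to $\bar{v}$ within a distance at most $\bar{v}^{2}/(2\bar{a})$ of the entry and cruising at $\bar{v}$ thereafter. From $\left[q_{n}\right]$ I would define $\left[p'_{n}\right]$ as follows: let $\bar{t}_{n}:=q_{n}^{-1}(L)$; if $\bar{t}_{n}\in\mathcal{G}$ set $p'_{n}:=q_{n}$, and otherwise let $p'_{n}$ coincide with $q_{n}$ up to a peel--off location $\ell_{n}$, then decelerate at $\underline{a}^{\mbox{b}}$ to a complete stop at a location $\ell_{n}^{\mbox{stop}}$, idle for exactly as long as needed, and finally accelerate at $\bar{a}^{\mbox{b}}$ so as to reach $L$ at speed $\bar{v}$ at time $G(\bar{t}_{n})$, continuing above $L$ with the trivial cruising projection. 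The free parameters would be chosen \emph{staggered}, $\ell_{n}^{\mbox{stop}}=L-\tfrac{\bar{v}^{2}}{2\bar{a}}-(n-1)s$ and $\ell_{n}=\ell_{n}^{\mbox{stop}}-\tfrac{\bar{v}^{2}}{-2\underline{a}^{\mbox{b}}}$, so that the whole decelerate--idle--accelerate maneuver occupies a fixed distance independent of the idle time; the bound $L\ge\tfrac{\bar{v}^{2}}{2\bar{a}}+\tfrac{\bar{v}^{2}}{-2\underline{a}^{\mbox{b}}}+\tfrac{\bar{v}^{2}}{-2\underline{a}^{\mbox{b}}}+s(N-1)$ is precisely what guarantees that $\ell_{N}\ge\bar{v}^{2}/(2\bar{a})$, so that every $q_{n}$ has already reached $\bar{v}$ at its own peel--off point and the entire maneuver fits inside $[0,L]$.

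It then remains to verify $\left[p'_{n}\right]\in\hat{\mathcal{P}}$. Kinematic feasibility \eqref{eq:set-kinematic-constraints} holds segment by segment by construction; the entry condition \eqref{eq: set-entry-boundary} holds because $p'_{n}$ agrees with $q_{n}$ near $t_{n}^{-}$; the exit condition \eqref{eq: set-exit-boundary} and the defining requirement of $\hat{\mathcal{P}}$ hold because $p'_{n}$ crosses $L$ at speed $\bar{v}$ exactly at $G(\bar{t}_{n})\in\mathcal{G}$. The substantive step --- and the one I expect to be the main obstacle --- is the car--following safety constraint \eqref{eq: set-safety-constraints}, i.e. $p'_{n}\le(p'_{n-1})^{\mbox{s}}$. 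The right way to organize this is to compare $p'_{n}$ with the trajectory obtained from $p'_{n-1}$'s maneuver shifted by $\tau$ in time and $s$ in location; the inequality $q_{n}\le q_{n-1}^{\mbox{s}}$ from $\left[q_{n}\right]\in\mathcal{P}^{\mbox{LVP}}$ shows $p'_{n}$ reaches each relevant location at least $\tau$ later than $p'_{n-1}$ reaches the location $s$ ahead, and since the decelerate--idle--accelerate arcs are monotone in the accumulated delay, the resulting piecewise--quadratic gap never drops below $s$. Technically this is a finite collection of elementary sign checks on quadratic differences, cleanly handled with the transitive segment--distance calculus of Definition \ref{def:trajectory_distance}, using Proposition \ref{prop: shift_prism_feasibility} (and, if the comparison is phrased through prisms, Corollary \ref{cor:BSP-Feasibility-Prism}) to absorb the non--negative temporal offset $G(\bar{t}_{n})-G(\bar{t}_{n-1})$. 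Once $\left[p'_{n}\right]\in\hat{\mathcal{P}}$ is established, Theorem \ref{theo:P_hat_feasibility} gives (A), completing the cycle; Proposition \ref{prop: exit_maximum_speed} moreover confirms that the SH solution itself lies in $\hat{\mathcal{P}}$, so the implications are internally consistent.
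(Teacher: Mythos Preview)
Your cycle $(A)\Rightarrow(B)\Rightarrow(C)$ is fine and matches the paper. For $(C)\Rightarrow(A)$, however, your route differs from the paper's and, as written, has a genuine gap in the safety verification. The issue is precisely the case you glossed over: suppose $\bar t_{n-1}\notin\mathcal G$ but $\bar t_n\in\mathcal G$ (which can certainly occur, since $\bar t_n-\bar t_{n-1}$ need only exceed $\tau+s/\bar v$). Then under your rule $p'_n=q_n$ is unmodified and cruises at $\bar v$, while $p'_{n-1}$ has been brought to a full stop at $\ell_{n-1}^{\mbox{stop}}$ and idles there. At the moment $q_n$ passes $\ell_{n-1}^{\mbox{stop}}$, the safety requirement $p'_{n-1}(t-\tau)-p'_n(t)\ge s$ forces $p'_{n-1}$ to already be at least $s$ beyond its own stop location, which fails whenever the idle time is long. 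Your ``monotone in the accumulated delay'' heuristic does not cover this, because the delays $G(\bar t_n)-\bar t_n$ are \emph{not} monotone in $n$: here the $(n-1)$-st delay is positive while the $n$-th is zero. Any fix requires coupling $p'_n$ to $p'_{n-1}$ (e.g.\ delaying $p'_n$'s exit to $\max\{G(\bar t_n),\,(p'_{n-1})^{-1}(L)+\tau+s/\bar v\}$ and checking this is still green), at which point the construction ceases to be the simple ``independent per-vehicle'' template you proposed and essentially reproduces the SH cascade.

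The paper avoids this obstacle by not constructing a witness in $\hat{\mathcal P}$ at all. Instead it argues \emph{directly} that the SH algorithm succeeds: the BSP modification to each $p_n$ is confined to the spatial window $[\hat L,L]$ with $\hat L:=L-\bar v^{2}/(2\bar a^{\mbox b})-\bar v^{2}/(-2\underline a^{\mbox b})$, and the shadow of such a modification propagates back at most $s$ per vehicle, so below $\hat L_n:=\hat L-s(n-1)$ the SH forward-shooting trajectory of vehicle $n$ coincides with the all-green PSHL trajectory $\hat p_n$. Since properness makes $[\hat p_n]$ exist (Theorem~\ref{theo: feasibility_all_green}), the FSP step of SH cannot fail; and the length hypothesis on $L$ is exactly what ensures $\hat L_N\ge\bar v^{2}/(2\bar a)$, so that each $\hat p_n$ (hence $p_n^{\mbox f}$) has already reached $\bar v$ before entering the window where BSP acts, guaranteeing BSP also succeeds. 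This direct route sidesteps the inter-vehicle safety bookkeeping entirely, because safety is already built into the SH construction via the shadow bound.
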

\begin{proof}
When $P^{\mbox{SH}}\left(\underline{a},\bar{a},\underline{a},\bar{a}\right)\neq\emptyset$,
it is trivial to see that $\mathcal{P}\neq\emptyset$ since $P^{\mbox{SH}}\left(\underline{a},\bar{a},\underline{a},\bar{a}\right)\in\mathcal{P}$.
Proposition \ref{prop:proper_condition} indicates that $\mathcal{P}\neq\emptyset$
leads to that $\left[v_{n}^{-},t_{n}^{-}\right]_{n\in\mathcal{N}}$
is proper. Then we only need to prove that when $\left[v_{n}^{-},t_{n}^{-}\right]_{n\in\mathcal{N}}$
is proper, $P^{\mbox{SH}}\left(\underline{a},\bar{a},\underline{a},\bar{a}\right)\neq\emptyset$.
We denote the trajectories in $P^{\mbox{SH}}\left(\underline{a},\bar{a},\underline{a},\bar{a}\right)$
with $\left[p_{n}\right]{}_{n\in\mathcal{N}}$. Note that for each
trajectory $p_{n}$, if BSP in SH3 is activated, then it shall be
always successfully completed within highway segment $\left[\hat{L}:=L-\bar{v}^{2}/\left(-2\bar{a}^{\mbox{b}}\right)-\bar{v}^{2}/\left(-2\bar{a}^{\mbox{b}}\right),L\right]$.
Therefore, BSP neither causes infeasibility nor affects the shape
of $p_{\mbox{n}}$ within $\left[0,\hat{L}_{\mbox{n}}:=L-\bar{v}^{2}/\left(-2\bar{a}^{\mbox{b}}\right)-\bar{v}^{2}/\left(-2\bar{a}^{\mbox{b}}\right)-s(n-1)\right]$.
Even if we consider the backward wave propagation caused by the previous
trajectories. This indicates $p_{n}\left(t_{n}^{-}:\dot{p}_{n}^{-1}\left(\hat{L}_{\mbox{n}}\right)\right)=p_{n}^{\mbox{f}}\left(t_{n}^{-}:\dot{p}_{n}^{-1}\left(\hat{L}_{\mbox{n}}\right)\right)$,
and if the feasibility check of $p_{n}$ fails in SH, it must happens
in this section. Note that Theorem \ref{theo: feasibility_all_green}
shows that if $\left[v_{n}^{-},t_{n}^{-}\right]_{n\in\mathcal{N}}$
is proper, $P^{\mbox{PSH}}\left(\underline{a},\bar{a}\right)$ is
not empty. We denote $P^{\mbox{PSH}}\left(\underline{a},\bar{a}\right)$
with $\left[\hat{p}_{n}\right]{}_{n\in\mathcal{N}}$. Then to prove
this theorem, we only need to show that $p_{n}\left(t_{n}^{-}:p_{n}^{-1}\left(\hat{L}_{n}\right)\right)=\hat{p}_{n}\left(t_{n}^{-}:p_{n}^{-1}\left(\hat{L}_{n}\right)\right),\forall n\in\mathcal{N}$
from the following induction. 

The induction assumption is $p_{n}\left(t_{n}^{-}:p_{n}^{-1}\left(\hat{L}_{n}\right)\right)=\hat{p}_{n}\left(t_{n}^{-}:p_{n}^{-1}\left(\hat{L}_{n}\right)\right)$
and $\dot{p}_{n}(t)=\bar{v},\forall t\in[p_{n}^{-1}(\hat{L}_{n}),p_{n}^{-1}(\hat{L})]$.
When $n=1$, the trajectory generated from FSP satisfies $p_{1}^{\mbox{f}}:=p_{1}^{\mbox{f}}\left((0,v_{1}^{-},t_{1}^{-}),\emptyset\right)=\hat{p}_{n}$.
Since BSP will not affect the shape of $p_{1}$ below $\hat{L}_{1}$,
thus $p_{1}\left(t_{1}^{-}:p_{1}^{-1}\left(\hat{L}_{1}\right)\right)=\hat{p}_{1}\left(t_{1}^{-}:p_{1}^{-1}\left(\hat{L}_{1}\right)\right)$.
Since $p_{1}^{\mbox{f}}$ shall finish accelerating to $\bar{v}$
before reaching location $\hat{L}$ based on the value of $\hat{L}$,
then $\dot{p}_{1}(t)=\dot{\hat{p}}_{1}(t)=\bar{v},\forall t\in[p_{n}^{-1}(\hat{L}_{n}),p_{n}^{-1}(\hat{L})]$.
Then we assume that the induction assumption holds for $n=k-1,\forall k\in\mathcal{N}\backslash\{1\}.$
When $n=k$, the forward shooting trajectory is $p_{k}^{\mbox{f}}:=p^{\mbox{f}}\left(\left(0,v_{k}^{-},t_{k}^{-}\right),p_{k-1}^{\mbox{s}}\right)$.
If $D\left(p_{k-1}^{\mbox{s}}-\bar{p}_{0v_{k}^{-}t_{k}^{-}}\right)\ge0$,
then apparently $p_{k}^{\mbox{f}}=\hat{p}_{k}=p^{\mbox{f}}\left(\left(0,v_{k}^{-},t_{k}^{-}\right),\emptyset\right),$
and thus $p_{k}(t_{k}^{-}:p_{k}^{-1}(\hat{L}_{k}))=p_{k}^{\mbox{f}}(t_{k}^{-}:p_{k}^{-1}(\hat{L}_{k}))=\hat{p}_{k}(t_{k}^{-}:p_{k}^{-1}(\hat{L}_{k}))$
and $\dot{p}_{k}(t)=\bar{v},\forall t\in[p_{k}^{-1}(\hat{L}_{k}),p_{k}^{-1}(\hat{L})]$.
Otherwise, note that $p_{k}^{\mbox{f}}=p^{\mbox{f}}((0,v_{k}^{-},t_{k}^{-}),p_{k-1}^{s})=p^{\mbox{f}}((0,v_{k}^{-},t_{k}^{-}),\hat{p}_{k-1}^{s})=\hat{p}_{k}$
and thus the existence of $\hat{p}_{k}$ indicates that $p_{k}^{\mbox{f}}$
exists as well. The induction assumption of $\dot{p}_{k-1}(t)=\bar{v},\forall t\in[p_{k-1}^{-1}(\hat{L}_{k-1}),p_{k-1}^{-1}(\hat{L})]$
indicates that $\dot{p}_{k-1}^{\mbox{s}}(t)=\bar{v},\forall t\in[p_{k-1}^{\mbox{s}-1}(\hat{L}_{k}=\hat{L}_{k-1}-s),p_{k-1}^{\mbox{s}-1}(\hat{L}-s)]$,
and therefore, $p_{k}^{\mbox{f}}$ shall merge with $p_{k-1}^{\mbox{s}}$
at a location before $\hat{L}$ because the merging speed has to be
less than $\bar{v}$, and therefore $\dot{p}_{k}(t)=\dot{p}_{k}^{\mbox{f}}(t)=\bar{v},\forall t\in[p_{k}^{-1}(\hat{L}_{k}),p_{k}^{-1}(\hat{L})]$.
Therefore, $p_{k}$ and $\hat{p}_{k}$ overlap over highway segment
$\left[0,\hat{L}_{n}\right]$, or $p_{k}\left(t_{k}^{-}:p_{k}^{-1}\left(\hat{L}_{k}\right)\right)=\hat{p}_{k}\left(t_{k}^{-}:p_{k}^{-1}\left(\hat{L}_{k}\right)\right)$
. This completes the proof.
\end{proof}

\subsection{Relationship to Classic Traffic Flow Models}

Evolution of highway traffic has been traditionally investigated with
various microscopic models (e.g., car following \citep{Brackstone1999181},
cellular automata \citep{Nagel1992}) and macroscopic kinematic models
(kinemetic models \citep{Lighthill1955,Richards1956} and cell transmission
\citep{Daganzo1994}). \citet{Daganzo2006} proves the equivalence
between the kinematic wave model with the triangular fundamental diagram
(KWT) \citep{newell1993simplified}, Newell's lower-order model \citep{Newell2002}
and the linear cellular automata model \citep{Nagel1992}. We will
just show the relevance of the the SHL solution to the KWT model,
and this relevance can be easily transferred to other models based
on their equivalence. Given the first vehicle's trajectory $q_{1}=p_{1}\in\mathcal{T}$,
the KWT model specifies a rule to construct vehicle $n$'s trajectory,
denoted by $q_{n}$, with vehicle $n's$ entry condition $(0,\cdot,t_{n}^{-})$
and preceding trajectory $q_{n-1}$, as formulated below 
\begin{equation}
q_{n}(t)=\min\left\{ \bar{v}(t-t_{n}^{-}),q_{n-1}(t-\tau)-s\right\} ,\forall n\in\mathcal{N}\backslash\{1\},t\in[t_{n}^{-},\infty).\label{eq:KWT-rule}
\end{equation}
For notation convenience, we denote this equation with $q_{n}=q^{\mbox{KWT}}\left((0,\cdot,t_{n}^{-}),q_{n-1}\right)$.
This section analyzes the relationship between the SHL solution with
the KWT solution for the same LVP setting. We denote the SHL trajectory
vector with $P^{\mbox{LVP}}\left(\underline{a}^{\mbox{f}},\bar{a}^{\mbox{f}}\right)=\left[p_{n}\right]_{n\in\mathcal{N}}$
and that from KWT by $Q=\left[q_{n}\right]_{n\in\mathcal{N}}$ . Without
loss of generality, we only investigate the case when $\left[v_{n}^{-},t_{n}^{-}\right]_{n\in\mathcal{N}}$
is proper or $P^{\mbox{LVP}}\left(\underline{a}^{\mbox{f}},\bar{a}^{\mbox{f}}\right)$
is feasible.

\citet{Daganzo2006} showed that KWT has a contraction property; i.e.,
the result of KWT is insensitive to small input errors, as stated
in the following proposition.
\begin{prop}
Given two trajectories $q,q'\in\bar{\mathcal{T}}$ satisfying $\max_{t\in(-\infty,\infty)}\left|q(t)-q'(t)\right|\le\epsilon$
for some $\epsilon>0$, then $\max_{t\in(-\infty,\infty)}\left|q^{\mbox{KWT}}\left((0,\cdot,t^{-}),q^{\mbox{s}}\right)-q^{\mbox{KWT}}\left((\delta,\cdot,t^{-}),q^{'\mbox{s}}\right)\right|\le\epsilon$\textup{
}for any $\left|\delta\right|<\epsilon$ and\textup{ $t^{-}$ }yielding
\textup{$q^{\mbox{KWT}}\left((0,\cdot,t^{-}),q^{\mbox{s}}\right)\neq\emptyset$}
and\textup{ $q^{\mbox{KWT}}\left((\delta,\cdot,t^{-}),q^{'\mbox{s}}\right)\neq\emptyset$.}
\end{prop}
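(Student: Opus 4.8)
The plan is to collapse the claim to a single elementary fact about pointwise minima of functions, after which the branch-switching behaviour of the KWT rule becomes irrelevant and only two trivial $\epsilon$-estimates remain. First I would record the lemma: for reals $a,a',b,b'$ with $|a-a'|\le\epsilon$ and $|b-b'|\le\epsilon$ one has $|\min\{a,b\}-\min\{a',b'\}|\le\epsilon$. Its proof is one line: assume without loss of generality that $\min\{a,b\}\ge\min\{a',b'\}$ and that the right-hand minimum is attained by $a'$ (the $b'$ case is symmetric); then $\min\{a,b\}\le a\le a'+\epsilon=\min\{a',b'\}+\epsilon$, which together with the assumed ordering gives $0\le\min\{a,b\}-\min\{a',b'\}\le\epsilon$. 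The purpose of isolating this is precisely that we never have to track which branch of each KWT minimum is active.

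Next I would write out the two trajectories from rule \eqref{eq:KWT-rule}, reading $q^{\mbox{KWT}}\big((l,\cdot,t^{-}),u\big)(t)=\min\{l+\bar{v}(t-t^{-}),\,u(t)\}$ (which is \eqref{eq:KWT-rule} after substituting the shadow $q^{\mbox{s}}(t)=q(t-\tau)-s$ for $u$): for $t\in[t^{-},\infty)$,
\[
q^{\mbox{KWT}}\big((0,\cdot,t^{-}),q^{\mbox{s}}\big)(t)=\min\{\bar{v}(t-t^{-}),\,q^{\mbox{s}}(t)\},\qquad q^{\mbox{KWT}}\big((\delta,\cdot,t^{-}),q^{'\mbox{s}}\big)(t)=\min\{\delta+\bar{v}(t-t^{-}),\,q^{'\mbox{s}}(t)\}.
\]
Then I would check the two ingredient bounds. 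The free-flow branches are parallel lines offset by exactly $\delta$, so $\big|\bar{v}(t-t^{-})-(\delta+\bar{v}(t-t^{-}))\big|=|\delta|<\epsilon$ for every $t$. The shadow operation is an isometry for the sup norm: $\sup_{t}\big|q^{\mbox{s}}(t)-q^{'\mbox{s}}(t)\big|=\sup_{t}\big|(q(t-\tau)-s)-(q'(t-\tau)-s)\big|=\sup_{t}|q(t)-q'(t)|\le\epsilon$ by hypothesis. Applying the lemma pointwise at each $t\in[t^{-},\infty)$ with $a=\bar{v}(t-t^{-})$, $a'=\delta+\bar{v}(t-t^{-})$, $b=q^{\mbox{s}}(t)$, $b'=q^{'\mbox{s}}(t)$ gives that the two KWT values at $t$ differ by at most $\epsilon$; taking the supremum over $t$ finishes the proof. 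The non-emptiness hypotheses enter only to guarantee that both expressions actually define trajectories emanating from their prescribed entry state points (equivalently $q^{\mbox{s}}(t^{-})\ge0$ and $q^{'\mbox{s}}(t^{-})\ge\delta$), and the common domain should be read as $[t^{-},\infty)$; neither point affects the estimate.

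The only step requiring any care --- and ``obstacle'' overstates it --- is the temptation to do a case analysis on which branch of each minimum is active along each subinterval of $[t^{-},\infty)$; this is exactly what the non-expansiveness lemma lets us avoid, and it is why Daganzo's contraction property is so short. If one instead wants the fully general $q^{\mbox{KWT}}\big((l,\cdot,t^{-}),u\big)$ with a quasi-trajectory upper bound $u$ in place of a single shadow, the argument is unchanged, since $D$-distance (and hence the sup norm) between quasi-trajectories is again preserved by a fixed space-time shift and the lemma applies verbatim.
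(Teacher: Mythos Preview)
Your argument is correct. The non-expansiveness lemma for $\min$ is exactly the right tool: once you have $|a-a'|\le\epsilon$ and $|b-b'|\le\epsilon$ pointwise for the free-flow and shadow branches respectively, the pointwise minimum inherits the $\epsilon$-bound and you are done. Your reading of the KWT rule as $q^{\mbox{KWT}}\big((l,\cdot,t^{-}),u\big)(t)=\min\{l+\bar v(t-t^{-}),u(t)\}$ matches equation~\eqref{eq:KWT-rule}, the shadow computation is right, and the non-emptiness caveat is handled appropriately.

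As for comparison: the paper does not supply its own proof of this proposition. It is stated as a known result attributed to \citet{Daganzo2006}, and the paper immediately moves on to prove the analogous contraction property for the SHL trajectory (Theorem~\ref{Theo: contraction}). So there is nothing in the paper to compare your argument against; your proof simply fills in what the paper leaves to the citation, and does so cleanly.
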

We now show that $P^{\mbox{LVP}}\left(\underline{a}^{\mbox{f}},\bar{a}^{\mbox{f}}\right)$
has the same contraction property.
\begin{thm}
\label{Theo: contraction}Given two feasible trajectories $p,p'\in\mathcal{T}$
satisfying $\max_{t\in(-\infty,\infty)}\left|p(t)-p'(t)\right|\le\epsilon$,
then $\max_{t\in(-\infty,\infty)}\left|p^{\mbox{f}}((0,v^{-},t^{-}),p^{\mbox{s}})-p^{\mbox{f}}((\delta,v{}^{-},t^{-}),p{}^{'\mbox{s}})\right|\le\epsilon$
for any $\left|\delta\right|<\epsilon$ and any \textup{$(v^{-},t^{-})$
}yielding $p^{\mbox{f}}((0,v^{-},t^{-}),$ $p^{\mbox{s}})\neq\emptyset$
and $p^{\mbox{f}}((\delta,v^{-},t^{-}),p{}^{'\mbox{s}})\neq\emptyset$.\end{thm}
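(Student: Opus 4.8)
The plan is to bound the sup-norm of the difference by reducing it to two one-sided pointwise estimates, using three ingredients that are essentially already in hand. First, the shadow map is a rigid translation, $p^{\mbox{s}}(t):=p(t-\tau)-s$, so $\max_{t}|p(t)-p'(t)|\le\epsilon$ immediately gives $\max_{t}|p^{\mbox{s}}(t)-p{}^{'\mbox{s}}(t)|\le\epsilon$. Second, the forward shooting process is invariant under a common vertical shift: raising both the entry position and the upper-bound (quasi-)trajectory by a constant $\eta$ raises the whole output by $\eta$, because the accelerating template, the merge segment at rate $\underline{a}^{\mbox{f}}$, and the trailing bound segments all translate rigidly. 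Third, Proposition~\ref{prop: FSP-shift-bound} says that $p^{\mbox{f}}((l,v,t^{-}),u)$ is the pointwise largest element of the bounded cone $\mathcal{C}_{lvt^{-}}^{u}$, and in fact dominates every element of any bounded cone $\mathcal{C}_{(l-\eta)vt^{-}}^{u}$ with a \emph{lower} entry position ($\eta\ge0$); together with the monotonicity $\mathcal{C}_{lvt^{-}}^{u}\subseteq\mathcal{C}_{lvt^{-}}^{u'}$ for $u\le u'$ pointwise (Proposition~\ref{prop:bounded_transitive}), this lets me squeeze one forward shooting trajectory between vertical translates of the other. Write $q_{1}:=p^{\mbox{f}}((0,v^{-},t^{-}),p^{\mbox{s}})$ and $q_{2}:=p^{\mbox{f}}((\delta,v^{-},t^{-}),p{}^{'\mbox{s}})$; it suffices to show $q_{2}\le q_{1}+\epsilon$ and $q_{1}\le q_{2}+\epsilon$ everywhere.

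For $q_{2}\le q_{1}+\epsilon$: by vertical-shift invariance, $q_{1}+\epsilon=p^{\mbox{f}}((\epsilon,v^{-},t^{-}),p^{\mbox{s}}+\epsilon)$, which is nonempty since $q_{1}$ is. From $|p-p'|\le\epsilon$ we get $p{}^{'\mbox{s}}\le p^{\mbox{s}}+\epsilon$, so by Proposition~\ref{prop:bounded_transitive} $q_{2}\in\mathcal{C}_{\delta v^{-}t^{-}}^{p{}^{'\mbox{s}}}\subseteq\mathcal{C}_{\delta v^{-}t^{-}}^{p^{\mbox{s}}+\epsilon}$. Applying Proposition~\ref{prop: FSP-shift-bound} with upper bound $u=p^{\mbox{s}}+\epsilon$, entry point $(\epsilon,v^{-},t^{-})$, and shift $\eta=\epsilon-\delta\ge0$ (here is where $\delta<\epsilon$ is used) gives $D(q_{1}+\epsilon-p)\ge0$ for every $p\in\mathcal{C}_{\delta v^{-}t^{-}}^{p^{\mbox{s}}+\epsilon}$; taking $p=q_{2}$ yields $q_{1}+\epsilon\ge q_{2}$ pointwise. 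The estimate $q_{1}\le q_{2}+\epsilon$ is the mirror image: $q_{2}+\epsilon=p^{\mbox{f}}((\delta+\epsilon,v^{-},t^{-}),p{}^{'\mbox{s}}+\epsilon)$ is nonempty; $p^{\mbox{s}}\le p{}^{'\mbox{s}}+\epsilon$ gives $q_{1}\in\mathcal{C}_{0v^{-}t^{-}}^{p^{\mbox{s}}}\subseteq\mathcal{C}_{0v^{-}t^{-}}^{p{}^{'\mbox{s}}+\epsilon}$; and Proposition~\ref{prop: FSP-shift-bound} with $u=p{}^{'\mbox{s}}+\epsilon$, entry point $(\delta+\epsilon,v^{-},t^{-})$, and shift $\eta=\delta+\epsilon\ge0$ (using $\delta>-\epsilon$) gives $q_{2}+\epsilon\ge q_{1}$. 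Combining the two estimates gives $\max_{t}|q_{1}(t)-q_{2}(t)|\le\epsilon$, which is the claim. Notice that the strict bound $|\delta|<\epsilon$ is used precisely to make both shifts, $\epsilon-\delta$ and $\delta+\epsilon$, nonnegative regardless of the sign of $\delta$.

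The routine parts are the vertical-shift invariance of FSP (a direct check from the definition) and verifying that each invocation of Proposition~\ref{prop: FSP-shift-bound} meets its nonemptiness hypothesis --- which holds because the relevant translate of $q_{1}$ or $q_{2}$ itself lies in the cone in question. The step I would watch most closely is the bookkeeping of entry positions: one output starts at height $0$, the other at height $\delta$, and the $\epsilon$-translate of each must be aligned so that its entry position dominates (resp.\ is dominated by) that of the other, so that Proposition~\ref{prop: FSP-shift-bound} is applied in the correct direction. A secondary point, if the theorem is intended for control rates other than the extreme values $\bar{a}^{\mbox{f}}=\bar{a}$, $\underline{a}^{\mbox{f}}=\underline{a}$ for which Propositions~\ref{prop: FSP-shift-bound} and~\ref{prop:bounded_transitive} are stated, is that one must first re-establish the same envelope and monotonicity statements for the forward shooting trajectory at the rates $\bar{a}^{\mbox{f}},\underline{a}^{\mbox{f}}$ actually used, by the same merge-tangency case analysis employed in the proofs of Propositions~\ref{prop: FSP-feasibility}--\ref{prop: FSP-shift-bound}; once that is in place the argument above carries over verbatim.
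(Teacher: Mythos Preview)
Your proof is correct and follows essentially the same route as the paper's: both arguments introduce the $\epsilon$-shifted shadow bounds $p^{\mbox{s}}\pm\epsilon$, invoke the vertical-shift invariance of FSP (which the paper records tersely as $p^{\mbox{f}}=p^{\mbox{f}+\epsilon}-\epsilon=p^{\mbox{f}-\epsilon}+\epsilon$), and then combine Proposition~\ref{prop:bounded_transitive} with Proposition~\ref{prop: FSP-shift-bound} to sandwich $q_{2}$ between $q_{1}\pm\epsilon$. The only cosmetic difference is that in the second direction the paper shifts $q_{1}$ down by $\epsilon$ and shows $q_{2}\ge q_{1}-\epsilon$, whereas you shift $q_{2}$ up and show $q_{2}+\epsilon\ge q_{1}$; these are equivalent via the same shift invariance. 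Your remark about re-establishing Propositions~\ref{prop: FSP-feasibility}--\ref{prop: FSP-shift-bound} at non-extreme rates $(\bar{a}^{\mbox{f}},\underline{a}^{\mbox{f}})$ is a valid caveat that the paper does not make explicit.
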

\begin{proof}
Since $\max_{t\in(-\infty,\infty)}\left|p(t)-p'(t)\right|\le\epsilon$,
we obtain $\max_{t\in(-\infty,\infty)}\left|p^{\mbox{s}}(t)-p^{'\mbox{s}}(t)\right|\le\epsilon.$
Further, define $p^{\mbox{s}+\epsilon}(t):=p^{\mbox{s}}(t)+\epsilon,$
$p^{\mbox{s}-\epsilon}(t):=p^{\mbox{s}}(t)-\epsilon$, $p^{\mbox{f}+\epsilon}:=p^{\mbox{f}}\left(\left(\epsilon,v^{-},t^{-}\right),p^{\mbox{s}+\epsilon}\right)$
and $p^{\mbox{f}-\epsilon}:=p^{\mbox{f}}\left(\left(-\epsilon,v^{-},t^{-}\right),p^{\mbox{s}-\epsilon}\right)$.
Note that $p(t)=p^{+\epsilon}(t)-\epsilon=p^{-\epsilon}(t)+\epsilon,\forall t\in(-\infty,\infty)$.
Since $D\left(p^{s+\epsilon}-p^{\mbox{s}}\right)=\epsilon$ and $\max_{t\in(-\infty,\infty)}\left|p(t)-p'(t)\right|\le\epsilon$,
we obtain $D\left(p^{s+\epsilon}-p^{\mbox{'s}}\right)\ge0$. Then
Proposition \ref{prop:bounded_transitive} indicates that $\mathcal{C}_{\delta vt^{-}}^{p^{\mbox{'s}}}\subseteq\mathcal{C}_{\delta vt^{-}}^{p^{\mbox{s}+\epsilon}}$.
Further, Proposition \ref{prop: FSP-shift-bound} indicates $D\left(p^{\mbox{f}+\epsilon}-\hat{p}\right)\ge0,\forall\hat{p}\in\mathcal{C}_{\delta vt^{-}}^{p^{\mbox{s}+\epsilon}}$.
We denote $p^{\mbox{f}}\left((0,v^{-},t^{-}),p^{\mbox{s}}\right)$
and $p^{\mbox{f}}\left((\delta,v{}^{-},t^{-}),p{}^{'\mbox{s}}\right)$
as $p^{\mbox{f}}$ and $p^{'\mbox{f}}$ for short, respectively. Since
$p^{'\mbox{f}}\in\mathcal{C}_{\delta vt^{-}}^{p^{\mbox{'s}}}\subseteq\mathcal{C}_{\delta vt^{-}}^{p^{\mbox{s}+\epsilon}}$,
we obtain $D\left(p^{\mbox{f}+\epsilon}-p^{'\mbox{f}}\right)\ge0$.
On the other hand, we obtain $D\left(p^{\mbox{'s}}-p^{s-\epsilon}\right)\ge0$.
Then Proposition \ref{prop:bounded_transitive} indicates $\mathcal{C}_{-\epsilon vt^{-}}^{p^{\mbox{s}-\epsilon}}\subseteq\mathcal{C}_{-\epsilon vt^{-}}^{p^{\mbox{'s}}}$.
Further, Proposition \ref{prop: FSP-shift-bound} indicates $D\left(p'-\hat{p}\right)\ge0,\forall\hat{p}\subseteq\mathcal{C}_{-\epsilon vt^{-}}^{p^{\mbox{'s}}}$.
Since $p^{\mbox{f}-\epsilon}\in\mathcal{C}_{-\epsilon vt^{-}}^{p^{\mbox{s}-\epsilon}}\subseteq\mathcal{C}_{-\epsilon vt^{-}}^{p^{\mbox{'s}}}$,
we obtain $D\left(p^{'\mbox{f}}-p^{\mbox{f}-\epsilon}\right)\ge0$.
Combining $D\left(p^{\mbox{f}+\epsilon}-p^{'\mbox{f}}\right)\ge0$
and $D\left(p^{'\mbox{f}}-p^{\mbox{f}-\epsilon}\right)\ge0$ yields
$\max_{t\in(-\infty,\infty)}\left|p^{\mbox{f}}(t)-p^{'\mbox{f}}(t)\right|\le\epsilon$
.
\end{proof}
Theorem \ref{Theo: contraction} impliesthat $P^{\mbox{LVP}}\left(\underline{a}^{\mbox{f}},\bar{a}^{\mbox{f}}\right)$
is not sensitive to small input errors as well. However, we shall
note that the solution to the SH algorithm for a signalized section
may be sensitive to small errors because the exit time of a trajectory,
if close to the start of a red phase, could be pushed back to the
next green phase due to a small input perturbation. Nonetheless, this
kind of ``jump'' only affects a limited number of trajectories that
are close to a red phase, and most other trajectories will not be
much affected. 

The following analysis investigates the difference between $P^{\mbox{LVP}}\left(\underline{a}^{\mbox{f}},\bar{a}^{\mbox{f}}\right)$
and $Q$.
\begin{prop}
\label{prop:Parallel_KWT}$Q=\left\{ q_{n}\right\} _{n\in\mathcal{N}}$
formulated in \eqref{eq:KWT-rule} can be solved as 
\begin{equation}
q_{n}(t)=\min\left\{ p_{1}^{\mbox{s}^{n-1}}(t),\bar{v}\left(t-t_{n}^{-}\right)\right\} ,\forall t\in[t_{n}^{-},\infty),n\in\mathcal{N}.\label{eq:Parallel-KWT}
\end{equation}
where $p_{1}^{\mbox{s}^{k}}(t):=p_{1}\left(t-k\tau\right)-ks,\forall k.$\end{prop}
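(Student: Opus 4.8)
The plan is to establish \eqref{eq:Parallel-KWT} by induction on the vehicle index $n$, at each step unfolding one application of the recursion \eqref{eq:KWT-rule} and absorbing the induced spatial/temporal shift into one further shadow operation on $p_1$. For the base case $n=1$ I would use that $q_1=p_1\in\mathcal{T}$ satisfies $p_1(t_1^-)=0$ and $\dot p_1(t)\le\bar v$; integrating the speed bound gives $p_1(t)\le\bar v(t-t_1^-)$ for all $t\ge t_1^-$, so $q_1(t)=p_1(t)=\min\{p_1(t),\bar v(t-t_1^-)\}=\min\{p_1^{\mbox{s}^0}(t),\bar v(t-t_1^-)\}$, which is \eqref{eq:Parallel-KWT} at $n=1$.

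For the inductive step, suppose \eqref{eq:Parallel-KWT} holds for $q_{n-1}$ on $[t_{n-1}^-,\infty)$. Substituting this into \eqref{eq:KWT-rule} and distributing the ``$-s$'' across the minimum gives, for $t\ge t_n^-$,
\[
q_n(t)=\min\Big\{\bar v(t-t_n^-),\ p_1^{\mbox{s}^{n-2}}(t-\tau)-s,\ \bar v\big(t-\tau-t_{n-1}^-\big)-s\Big\}.
\]
The middle term is exactly $p_1^{\mbox{s}^{n-1}}(t)$ by the definition $p_1^{\mbox{s}^{k}}(t)=p_1(t-k\tau)-ks$ (one more shadow shift of $p_1^{\mbox{s}^{n-2}}$). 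So the whole argument reduces to showing that the third term $\bar v(t-\tau-t_{n-1}^-)-s$ is redundant in this minimum for every $t\ge t_n^-$.

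To discharge that I would extract from the ``proper'' Entry Boundary Condition the separation inequality $t_n^--t_{n-1}^-\ge\tau+s/\bar v$: this is what ``the separation between $t_{n-1}$ and $t_n$ is sufficient for the safety requirement'' amounts to, and it also falls out of the defining inequality of properness for the pair $(m,n)=(n-1,n)$ by evaluating the bound trajectories at $t=t_n^-$, where $\underline p_{0v_n^-t_n^-}(t_n^-)=0$ while $\bar p_{-s,v_{n-1}^-,t_{n-1}^-+\tau}(t_n^-)\le -s+\bar v(t_n^--t_{n-1}^--\tau)$. Given this inequality one has $\bar v(t-\tau-t_{n-1}^-)-s\ge\bar v(t-t_n^-)$ for all $t$, so the third term is never strictly smallest and may be deleted, leaving $q_n(t)=\min\{p_1^{\mbox{s}^{n-1}}(t),\bar v(t-t_n^-)\}$ as claimed; the same inequality, being strictly bigger than $\tau$, also ensures $t-\tau\ge t_{n-1}^-$ whenever $t\ge t_n^-$, so the induction hypothesis is applied only on its valid domain.

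The routine algebra (distributing $\min$ over the shift, composing shadow operations) is not the difficulty; the one genuine step is the bookkeeping that ties ``sufficient separation''/properness to the clean inequality $t_n^--t_{n-1}^-\ge\tau+s/\bar v$ and confirms that this is precisely the regime in which \eqref{eq:KWT-rule} yields a trajectory starting at location $0$ (otherwise $q_n(t_n^-)<0$). A cheaper alternative, if one prefers not to invoke properness explicitly, is to prove \eqref{eq:Parallel-KWT} only on $[t_{n-1}^-+\tau,\infty)$ by the same induction and note that $q_n$ is only of interest where it is nonnegative.
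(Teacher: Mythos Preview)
Your proposal is correct and follows essentially the same route as the paper's proof: induction on $n$, substituting the induction hypothesis into \eqref{eq:KWT-rule} to obtain three terms, recognizing the middle term as $p_1^{\mbox{s}^{n-1}}(t)$, and using the properness condition to extract $t_n^--t_{n-1}^-\ge\tau+s/\bar v$ so that the third term is dominated by the first. If anything, you are slightly more careful than the paper in justifying why properness yields that headway inequality and in checking that $t-\tau\ge t_{n-1}^-$ so the induction hypothesis applies on its stated domain.
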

\begin{proof}
We prove this proposition with induction. Apparently, equation \eqref{eq:Parallel-KWT}
holds for $n=1$ since $p_{1}^{\mbox{s}^{0}}=p_{1}\in\mathcal{T}$
and thus $\bar{v}\left(t-t_{1}^{-}\right)\ge p_{1}^{\mbox{s}^{0}}(t),\forall t\in[t_{1}^{-},\infty)$.
Assume that equation \eqref{eq:Parallel-KWT} holds for $n=k-1$.
When $n=k$, equation \eqref{eq:KWT-rule} indicates $q_{k}(t)=\min\{\bar{v}(t-t_{k}^{-}),q_{k-1}(t-\tau)-s\}$.
Then plug the induction assumption into the above equation and we
obtain $q_{k}(t)=\min\{\bar{v}(t-t_{k}^{-}),p_{1}^{\mbox{s}^{k-2}}(t-\tau)-s,\bar{v}\left(t-t_{k-1}^{-}-\tau\right)-s\}$$=\min\{p_{1}^{\mbox{s}^{k-1}}(t),\bar{v}(t-\max\{t_{k}^{-},t_{k-1}^{-}+\tau+\bar{v}/s\})\}.$
Since $\left[v_{n}^{-},t_{n}^{-}\right]_{n\in\mathcal{N}}$ is proper,
we obtain $\max\left\{ t_{k}^{-},t_{k-1}^{-}+\tau+\bar{v}/s\right\} =t_{k}^{-}$.
This completes the proof.\end{proof}
\begin{thm}
\label{theo:KWT_SHL_bounds} If $P^{\mbox{LVP}}\left(\underline{a}^{\mbox{f}},\bar{a}^{\mbox{f}}\right)\neq\emptyset$
and $Q\neq\emptyset$, then $D\left(q_{n}-p_{n}\right)=0$ and 
\begin{equation}
D\left(p_{n}-q_{n}\right)\ge\min\left\{ -0.5\bar{v}^{2}/\bar{a}^{\mbox{f}},0.5\bar{v}^{2}/\underline{a}^{\mbox{f}}\right\} ,\forall n\in\mathcal{N}\backslash\left\{ 1\right\} .\label{eq:SH-KWT lower bound}
\end{equation}
\end{thm}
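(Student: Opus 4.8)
The plan is to treat the two assertions separately: $D(q_n-p_n)=0$ follows from a direct sandwiching, while the lower bound on $D(p_n-q_n)$ is proved by induction on $n$.

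First I would dispose of $D(q_n-p_n)=0$. By Proposition~\ref{prop:Parallel_KWT}, $q_n(t)=\min\{\bar{v}(t-t_n^{-}),\,p_1^{\mbox{s}^{n-1}}(t)\}$ on $[t_n^{-},\infty)$. Since $p_n\in\mathcal{T}$ with $p_n(t_n^{-})=0$ and $\dot p_n\in[0,\bar v]$, we get $p_n(t)\le\bar v(t-t_n^{-})$; and from the safety constraints $D(p_{k-1}^{\mbox{s}}-p_k)\ge0$ for $k=2,\dots,n$ (valid because $P^{\mbox{PSH}}(\underline{a}^{\mbox{f}},\bar a^{\mbox{f}})$ is a feasible LVP solution, by the corollary following Proposition~\ref{prop: PSHL=00003DSHL}) together with transitivity of $D$ one obtains $p_n\le p_1^{\mbox{s}^{n-1}}$. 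Hence $p_n\le q_n$ pointwise, so $D(q_n-p_n)\ge0$; and at $t=t_n^{-}$ we have $p_n(t_n^{-})=0$ and $q_n(t_n^{-})=\min\{0,\,p_1^{\mbox{s}^{n-1}}(t_n^{-})\}=0$, the last equality because $p_1^{\mbox{s}^{n-1}}(t_n^{-})\ge p_n(t_n^{-})=0$. Thus the minimum defining $D(q_n-p_n)$ is attained and equals $0$.

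For the lower bound, set $\Delta:=\max\{\bar v^{2}/(2\bar a^{\mbox{f}}),\,\bar v^{2}/(-2\underline a^{\mbox{f}})\}$, so that \eqref{eq:SH-KWT lower bound} is exactly $q_n(t)-p_n(t)\le\Delta$ for all $t$. I would prove this by induction on $n$, the case $n=1$ being vacuous since $q_1=p_1$. For the inductive step I use the recursions $p_n=p^{\mbox{f}}\big((0,v_n^{-},t_n^{-}),p_{n-1}^{\mbox{s}}\big)$ (the SHL/PSHL construction) and $q_n(t)=\min\{\bar v(t-t_n^{-}),\,q_{n-1}^{\mbox{s}}(t)\}$, with induction hypothesis $q_{n-1}^{\mbox{s}}-p_{n-1}^{\mbox{s}}\le\Delta$. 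The trajectory $p_n$ is built from at most three consecutive pieces: a free-flow template (accelerating from $v_n^{-}$ at $\bar a^{\mbox{f}}$, then cruising at $\bar v$), a merging segment decelerating at $\underline a^{\mbox{f}}$, and a tail coinciding with $p_{n-1}^{\mbox{s}}$. On the tail, $q_n\le q_{n-1}^{\mbox{s}}\le p_{n-1}^{\mbox{s}}+\Delta=p_n+\Delta$ by the hypothesis. On the template, $q_n(t)-p_n(t)\le\bar v(t-t_n^{-})-p_n(t)$, and the largest lag of a free-flow trajectory behind the $\bar v$-ray through its start point is $(\bar v-v_n^{-})^{2}/(2\bar a^{\mbox{f}})\le\bar v^{2}/(2\bar a^{\mbox{f}})\le\Delta$. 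On the merging segment, let $(l^{+},v^{+})$ be the location and speed at the tangency time $t^{+}$ where $p_n$ rejoins $p_{n-1}^{\mbox{s}}$; deceleration at $\underline a^{\mbox{f}}$ confines that segment to locations in $\big[\,l^{+}-(\bar v^{2}-(v^{+})^{2})/(-2\underline a^{\mbox{f}}),\ l^{+}\,\big]$, whereas $q_n$ on the same time interval stays at or below $l^{+}$ — by $q_n\le q_{n-1}^{\mbox{s}}$ and the monotonicity of the (non-decreasing) shadow, with the ray branch $q_n\le\bar v(\cdot-t_n^{-})$ used to absorb the hypothesis gap $\Delta$ — so the deviation there is at most $\bar v^{2}/(-2\underline a^{\mbox{f}})\le\Delta$. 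Combining the three cases and propagating the induction gives the bound; together with $D(q_n-p_n)=0$ this yields both claims.

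The routine parts are $D(q_n-p_n)=0$ and the template/tail cases; the real obstacle is the merging-segment case. The naive estimate there bounds $q_n-p_n$ by the hypothesis gap $\Delta$ plus the deceleration deficit $\bar v^{2}/(-2\underline a^{\mbox{f}})$, which overshoots $\Delta$. The point that must be made precise is that these two effects never act at once: while $p_n$ is paying off the deceleration deficit it is approaching $p_{n-1}^{\mbox{s}}$ from below, and in that region $q_n$ has already dropped onto its shadow branch $q_{n-1}^{\mbox{s}}$, which is non-decreasing and whose value at $t^{+}$ differs from $p_n(t^{+})=l^{+}$ by at most the hypothesis $\Delta$; the remaining ray-branch sub-cases are controlled by the $\bar v$-ray bound, which by itself never lags more than $\bar v^{2}/(2\bar a^{\mbox{f}})$. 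Making these case distinctions airtight will likely require strengthening the inductive statement so that the admissible gap $q_{n-1}^{\mbox{s}}-p_{n-1}^{\mbox{s}}$ shrinks where the speed of $p_{n-1}$ drops (in particular vanishing at stops, where $q_{n-1}$ and $p_{n-1}$ in fact coincide), so that the accumulated $\Delta$ cannot coexist with a simultaneous full deceleration deficit. That bookkeeping is the crux of the argument.
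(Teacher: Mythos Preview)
Your argument for $D(q_n-p_n)=0$ is correct and essentially the same as the paper's.

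For the lower bound, however, your approach diverges from the paper's in a way that matters. You attempt induction on $n$ via the sequential recursion $p_n=p^{\mbox{f}}((0,v_n^{-},t_n^{-}),p_{n-1}^{\mbox{s}})$, and you have correctly identified the obstacle: on the merging segment the naive estimate yields the inductive gap $\Delta$ \emph{plus} the deceleration deficit $\bar v^{2}/(-2\underline a^{\mbox{f}})$, and you have not actually shown that these do not stack. Your proposed fix---strengthening the hypothesis so the admissible gap shrinks with speed---is only a sketch, and as stated the proof is incomplete.

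The paper avoids this difficulty entirely by abandoning induction and using the \emph{parallel} formulation of PSHL (Proposition~\ref{prop: PSHL=00003DSHL}): $p_n$ is obtained directly as $p^{\mbox{f}}\big((0,v_n^{-},t_n^{-}),\{\bar p_m^{\mbox{s}^{n-m}}\}_{m=1,\dots,n-1}\big)$, where $\bar p_m=p^{\mbox{f}}((0,v_m^{-},t_m^{-}),\emptyset)$ for $m\ge2$ are simple free-flow templates and only $\bar p_1=p_1$ involves the lead trajectory. The proof then splits $p_n$ into (i) the trajectory $p_n'$ built from the templates $\{\bar p_m^{\mbox{s}^{n-m}}\}_{m\ge2}$ alone, (ii) the final merging segment $\hat s_n$ into $p_1^{\mbox{s}^{n-1}}$, and (iii) the tail on $p_1^{\mbox{s}^{n-1}}$. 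Piece (i) is compared directly to the ray $\bar v(t-t_n^{-})$, giving the $\bar v^{2}/(2\bar a^{\mbox{f}})$ term with no accumulated error; piece (iii) satisfies $D(p_1^{\mbox{s}^{n-1}}-q_n)\ge0$ outright since $q_n\le p_1^{\mbox{s}^{n-1}}$; and piece (ii) lies within $\bar v^{2}/(-2\underline a^{\mbox{f}})$ of $p_1^{\mbox{s}^{n-1}}\ge q_n$. Because each piece is compared to $q_n$ directly rather than through $p_{n-1}$, nothing accumulates across $n$, and the merging-segment difficulty you flagged simply does not arise.

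In short: the missing idea in your proposal is to exploit the parallel PSHL representation so that only \emph{one} merge into the lead-vehicle shadow ever occurs, rather than trying to propagate a uniform bound through the sequential recursion.
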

\begin{proof}
We compare parallel formulations from $P^{\mbox{LVP}}\left(\underline{a}^{\mbox{f}},\bar{a}^{\mbox{f}}\right)$
from PSHL in Section \ref{sub:SH_LVP} with equation \eqref{eq:Parallel-KWT}
for $Q$. We only need to prove this theorem for a generic $n\in\mathcal{N}\backslash\left\{ 1\right\} $.
From PSHL, we see that $p_{n}$ is essentially obtained by smoothing
quasi-trajectory $u_{n}:=u\left(\left\{ \bar{p}_{m}^{\mbox{s}^{n-m}}\right\} _{m=1,\cdots,n}\right)$
with tangent segments at constant decelerating rate $\underline{a}$.
Define $\bar{q}_{n}(t):=\bar{v}\left(t-t_{n}^{-}\right),\forall t\in\left[t_{n}^{-},+\infty\right),\underline{q}_{n}:=p^{\mbox{f}}\left(\left(0,0,\underline{t}_{n}\right),\emptyset\right)$.
Note that $D\left(\underline{q}_{n}-\bar{q}_{n}\right)=-0.5\bar{v}^{2}/\bar{a}^{\mbox{f}}$,
and $q_{n}(t)=\min\left\{ \bar{q}_{n}(t),p_{1}^{\mbox{s}^{n-1}}(t)\right\} .$
Note that $D\left(q_{n}-u_{n}\right)\ge0$ since $\bar{q}_{n}\ge\bar{p}_{n}$.
Also, $D\left(u_{n}-p_{n}\right)\ge0$ since the merging segments
generated from EFSO-2 are all below $u_{n}$. This indicates $D\left(q_{n}-p_{n}\right)\ge0$.
Further since $q_{n}$ and $p_{n}$ always meet at the initial point
$(0,t_{n}^{-})$, we obtain $D\left(q_{n}-p_{n}\right)=0$.

Then we will prove $D\left(p_{n}-q_{n}\right)\ge\min\left\{ -0.5\bar{v}^{2}/\bar{a}^{\mbox{f}},0.5\bar{v}^{2}/\underline{a}^{\mbox{f}}\right\} $.
We first examine $p'_{n}:=p^{\mbox{f}}((0,v_{n}^{-},t_{n}^{-}),$
$\{\bar{p}_{m}^{\mbox{s}^{n-m}}\}_{m=2,\cdots,n-1}),\forall n\in\mathcal{N}\backslash\{1\}$,
and claim $D\left(p'_{n}-\underline{q}_{n}\right)\ge0$. Basically,
$p'_{n}$ is composed by some shooting segments from $\left\{ \bar{p}_{m}^{\mbox{s}^{n-m}}\right\} _{m=2,\cdots,n-1}$and
intermediate tangent merging segments. Apparently, $D\left(\bar{p}_{m}^{\mbox{s}^{n-m}}-\underline{q}_{n}\right)\ge0$.
Note that every merging segment shall be above $\underline{p}_{n}$
or it will not catch up with the next shooting segment that accelerates
at the maximum rate $\bar{a}^{\mbox{f}}$ before reaching speed $\bar{v}$.
Thus this claim holds and thus $D\left(p'_{n}-\bar{q}_{n}\right)\ge-0.5\bar{v}^{2}/\bar{a}^{\mbox{f}}$.
Therefore, $D\left(p'_{n}-q_{n}\right)\ge-0.5\bar{v}^{2}/\bar{a}^{\mbox{f}}$.
Note that $p_{n}$ is obtained by merging $p'_{n}$ and $\bar{p}_{1}^{\mbox{s}^{n-1}}$with
a tangent segment denoted by $\hat{s}_{n}$. Apparently, $D\left(\bar{p}_{1}^{\mbox{s}^{n-1}}-q_{n}\right)\ge0$
since $q_{n}(t)$ is defined as $\min\left\{ \bar{q}_{n}(t),p_{1}^{\mbox{s}^{n-1}}(t)\right\} $.
Further note that $D\left(\hat{s}_{n}-\bar{p}_{1}^{\mbox{s}^{n-1}}\right)\ge0.5\bar{v}^{2}/\underline{a}^{\mbox{f}}$,
and since $D\left(\bar{p}_{1}^{\mbox{s}^{n-1}}-q_{n}\right)\ge0$,
we obtain $D\left(\hat{s}_{n}-q_{n}\right)\ge0.5\bar{v}^{2}/\underline{a}^{\mbox{f}}$.
Therefore $D(p_{n}-q_{n})$$\ge\min\left\{ D\left(p'_{n}-q_{n}\right),D\left(\bar{p}_{1}^{\mbox{s}^{n-1}}-q_{n}\right),D\left(\hat{s}_{n}-q_{n}\right)\right\} \ge\min\left\{ -0.5\bar{v}^{2}/\bar{a}^{\mbox{f}},0.5\bar{v}^{2}/\underline{a}^{\mbox{f}}\right\} $.
This completes the proof.
\end{proof}
The above theorem reveals that a trajectory generated from$P^{\mbox{LVP}}\left(\underline{a}^{\mbox{f}},\bar{a}^{\mbox{f}}\right)$
should be always below the counterpart trajectory in $Q$ and the
difference is attributed to smoothed accelerations instead of speed
jumps. One elegant finding from this theorem is that this difference
does not accumulate much across vehicles but is bounded by a constant.
Further, the lower bound to this difference is actually tight and
we can find instances where $D\left(p_{n}-q_{n}\right)$ for every
vehicle $n\in\mathcal{N}\backslash\left\{ 1\right\} $ is exactly
identical to $\min\left\{ -0.5\bar{v}^{2}/\bar{a}^{\mbox{f}},0.5\bar{v}^{2}/\underline{a}^{\mbox{f}}\right\} $.
One such instance can be specified by setting $p_{1}(t)=L,\forall t$
for a sufficiently long $L$, and$v_{n}^{-}=0,\forall n\in\mathcal{N}\backslash\left\{ 1\right\} $.
This theorem also leads to the following asymptotic relationship.
\begin{cor}
If $\bar{a}^{\mbox{f}}\rightarrow\infty$ and $\underline{a}^{\mbox{f}}\rightarrow-\infty$,
then we have $P^{\mbox{LVP}}\left(\underline{a}^{\mbox{f}},\bar{a}^{\mbox{f}}\right)\rightarrow Q.$\label{cor_KWT_SH_asymptotic}
\end{cor}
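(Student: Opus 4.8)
The plan is to read the corollary off the two-sided estimate already established in Theorem~\ref{theo:KWT_SHL_bounds}. Throughout I would work under the standing assumption (made explicit in the text just before Theorem~\ref{theo:KWT_SHL_bounds}) that the entry boundary condition $\left[v_{n}^{-},t_{n}^{-}\right]_{n\in\mathcal{N}}$ is proper, so that $Q\neq\emptyset$ and, by Theorem~\ref{theo: feasibility_all_green}, $P^{\mbox{LVP}}\left(\underline{a},\bar{a}\right)\neq\emptyset$. The first thing to check is that these non-emptiness hypotheses persist as $\bar{a}^{\mbox{f}}\to\infty$ and $\underline{a}^{\mbox{f}}\to-\infty$: enlarging the acceleration range only raises every upper-bound trajectory $\bar{p}_{\cdot}$ and lowers every lower-bound trajectory $\underline{p}_{\cdot}$, so each inequality $D\!\left(\bar{p}_{\cdots}-\underline{p}_{\cdots}\right)\ge0$ in the definition of properness is preserved, and hence $P^{\mbox{LVP}}\left(\underline{a}^{\mbox{f}},\bar{a}^{\mbox{f}}\right)\neq\emptyset$ for all the parameter values in question. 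Consequently the hypotheses of Theorem~\ref{theo:KWT_SHL_bounds} hold for every $\bar{a}^{\mbox{f}}$ and $\underline{a}^{\mbox{f}}$ along the limit.

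Next I would convert the two conclusions of Theorem~\ref{theo:KWT_SHL_bounds} into a single sup-norm bound. Write $P^{\mbox{LVP}}\left(\underline{a}^{\mbox{f}},\bar{a}^{\mbox{f}}\right)=\left[p_{n}\right]_{n\in\mathcal{N}}$ and $Q=\left[q_{n}\right]_{n\in\mathcal{N}}$. For $n\in\mathcal{N}\backslash\{1\}$, the relation $D\left(q_{n}-p_{n}\right)=0$ forces $p_{n}(t)\le q_{n}(t)$ for all $t\in(-\infty,\infty)$, while $D\left(p_{n}-q_{n}\right)\ge\min\left\{-0.5\bar{v}^{2}/\bar{a}^{\mbox{f}},\,0.5\bar{v}^{2}/\underline{a}^{\mbox{f}}\right\}$ forces $p_{n}(t)-q_{n}(t)\ge-\epsilon$ for all $t$, where $\epsilon:=\max\left\{0.5\bar{v}^{2}/\bar{a}^{\mbox{f}},\,0.5\bar{v}^{2}/(-\underline{a}^{\mbox{f}})\right\}>0$. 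Combining the two inequalities gives $\max_{t\in(-\infty,\infty)}\left|p_{n}(t)-q_{n}(t)\right|\le\epsilon$ for every $n\in\mathcal{N}\backslash\{1\}$, and trivially $p_{1}=q_{1}$ since the lead trajectory is fixed; taking the maximum over the finitely many vehicles, $\max_{n\in\mathcal{N}}\max_{t}\left|p_{n}(t)-q_{n}(t)\right|\le\epsilon$. Since $\epsilon\to0$ as $\bar{a}^{\mbox{f}}\to\infty$ and $\underline{a}^{\mbox{f}}\to-\infty$, the whole trajectory vector $P^{\mbox{LVP}}\left(\underline{a}^{\mbox{f}},\bar{a}^{\mbox{f}}\right)$ converges (componentwise uniformly on $(-\infty,\infty)$) to $Q$, which is the asserted conclusion $P^{\mbox{LVP}}\left(\underline{a}^{\mbox{f}},\bar{a}^{\mbox{f}}\right)\to Q$.

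I do not anticipate a substantive obstacle: the corollary is essentially a limit computation on the explicit bound of Theorem~\ref{theo:KWT_SHL_bounds}. The only points needing a little care, which I would state explicitly rather than gloss over, are (i) fixing the precise meaning of the convergence, namely componentwise uniform convergence of the trajectory functions, and (ii) verifying that the feasibility hypotheses $P^{\mbox{LVP}}\left(\underline{a}^{\mbox{f}},\bar{a}^{\mbox{f}}\right)\neq\emptyset$ and $Q\neq\emptyset$ remain in force along the entire limiting process, which is exactly what the properness/monotonicity observation in the first paragraph supplies.
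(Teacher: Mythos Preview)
Your proposal is correct and takes essentially the same approach as the paper: the corollary is stated there as an immediate consequence of Theorem~\ref{theo:KWT_SHL_bounds} without a separate proof, and your argument is precisely the intended derivation---converting the two $D(\cdot)$ bounds into a uniform estimate $\max_t|p_n(t)-q_n(t)|\le\epsilon$ with $\epsilon\to0$. Your added care about the meaning of convergence and the persistence of feasibility along the limit is more explicit than the paper, but does not diverge from its route.
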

This relationship indicates that SHL can be viewed as a generalization
of KWT as well as other equivalent models, including Newell's lower
order model and the linear cellular automata model. Essentially, the
SH solution can be viewed as a smoothed version of these classic models
that circumvents a common issue of these classic models, i.e., infinite
acceleration/deceleration or ``speed jumps''. Such speed jumps would
cause unrealistic evaluation of traffic performance measures, particularly
those on safety and environmental impacts. Further, SHL inherits the
simple structure of these models and thus can be solved very efficiently.
One commonality between SHL and KWT is that at stationary states,
i.e., when each trajectory move at a constant speed, both the SHL
solution and the KWT solution are consistent with a triangular fundamental
diagram \citealt{newell1993simplified}, as stated below. 
\begin{thm}
\label{theo: fundamental} If solutions $P^{\mbox{LVP}}\left(\underline{a}^{\mbox{f}},\bar{a}^{\mbox{f}}\right)$
and $Q$ are both feasible and each $\dot{p}_{n}(t)$ or $\dot{q}_{n}(t)$
remains constant $\forall t\ge t_{n}^{-}$, then there exists some
$V\in[0,\bar{v}]$ such that

\begin{equation}
p_{n}(t)=q_{n}(t)=V\left(t-t_{n}^{-}\right),\dot{p}_{n}(t)=\dot{q}_{n}(t)=V,\forall t\in\left[t_{n}^{-},\infty\right),n\in\mathcal{N}.\label{eq:p_q_equal}
\end{equation}
Further, if $V=\bar{v}$,
\begin{equation}
t_{n}^{-}-t_{n-1}^{-}\ge\tau+V/s,\forall n\in\mathcal{N},\label{eq:t_n_minus_sep_less}
\end{equation}
If $V<\bar{v},$ then 

\begin{equation}
t_{n}^{-}=(n-1)(\tau+V/s)-t_{1}^{-},\forall n\in\mathcal{N}\backslash\left\{ 1\right\} .\label{eq:t_n_minus_sep_equal}
\end{equation}
Further, define the traffic density as $K:=\frac{N-1}{\sum_{n=2}^{N}\left(p_{n-1}(t)-p_{n}(t)\right)}=\frac{N-1}{\sum_{n=2}^{N}\left(q_{n-1}(t)-q_{n}(t)\right)}$
for any $t\ge t_{n}^{-}$ and traffic volume as $O:=\frac{N-1}{p_{N}^{-1}(l)-p_{1}^{-1}(l)}=\frac{N-1}{q_{N}^{-1}(l)-q_{1}^{-1}(l)}$
for any $l\ge0$, then we will always have 
\begin{equation}
\begin{cases}
K\le1/(s+V\tau), & \mbox{if }V=\bar{v};\\
K=1/(s+V\tau) & \mbox{if }V<\bar{v}.
\end{cases}\label{eq:K-V}
\end{equation}

\begin{equation}
O=KV=\min\left\{ K\bar{v},(1-sK)/\tau\right\} \in\left[0,\bar{v}/(s+\bar{v}\tau)\right].\label{eq:O-K}
\end{equation}
\end{thm}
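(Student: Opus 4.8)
The plan is to exploit stationarity to collapse every $p_n$ and $q_n$ to a straight line through its entry state, then to pin down a single common slope $V$ shared by all of them, and finally to read off the density/flow identities by elementary geometry. Concretely: since $\dot q_n$ is constant on $[t_n^-,\infty)$ and $q_n$ must satisfy $q_n(t_n^-)=0$, we have $q_n(t)=V_n(t-t_n^-)$ for a constant $V_n\in[0,\bar v]$ (the upper cap coming from the $\bar v(t-t_n^-)$ term in rule \eqref{eq:KWT-rule} and from kinematic feasibility), and likewise $p_n(t)=W_n(t-t_n^-)$ with $W_n\in[0,\bar v]$; for $n=1$, $p_1=q_1$ is the line of slope $V:=v_1^-$, so $V_1=W_1=V$.

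The core step is an induction on $n$ showing $V_n=V$ for all $n$, carried out on the recursive form \eqref{eq:KWT-rule}. Assuming $q_{n-1}(t)=V(t-t_{n-1}^-)$, we get $q_n(t)=\min\{\bar v(t-t_n^-),\,V(t-\tau-t_{n-1}^-)-s\}$ on $[t_n^-,\infty)$. If $V=\bar v$ the two arguments are parallel lines, $q_n$ is a single line through $(0,t_n^-)$, so the free-flow argument must be the active one, $q_n(t)=\bar v(t-t_n^-)$, and evaluating the congested argument at $t_n^-$ gives $t_n^--t_{n-1}^-\ge\tau+s/\bar v$, i.e. \eqref{eq:t_n_minus_sep_less}. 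If $V<\bar v$ the free-flow argument has strictly larger slope, so the two lines cross exactly once; a crossing strictly inside $(t_n^-,\infty)$ would force a kink in $q_n$ there, contradicting constancy of $\dot q_n$, so the crossing is at or before $t_n^-$, the congested argument is active throughout, $q_n(t)=V(t-\tau-t_{n-1}^-)-s$, and imposing $q_n(t_n^-)=0$ forces $t_n^--t_{n-1}^-=\tau+s/V$, which telescopes to \eqref{eq:t_n_minus_sep_equal}. Either way $\dot q_n\equiv V$, closing the induction.

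To transfer this to the SH side I would invoke Theorem \ref{theo:KWT_SHL_bounds}, available since $P^{\mbox{LVP}}\left(\underline a^{\mbox{f}},\bar a^{\mbox{f}}\right)\neq\emptyset$ and $Q\neq\emptyset$. From $D(q_n-p_n)=0$ we get $q_n\ge p_n$ everywhere, hence $(V-W_n)(t-t_n^-)\ge0$ for $t\ge t_n^-$, so $W_n\le V$. From $D(p_n-q_n)\ge\min\{-\tfrac12\bar v^2/\bar a^{\mbox{f}},\,\tfrac12\bar v^2/\underline a^{\mbox{f}}\}=:-c$ with $c\ge0$ we get $(V-W_n)(t-t_n^-)\le c$ for all $t\ge t_n^-$; letting $t\to\infty$ forces $W_n=V$. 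Hence $p_n=q_n$ is the line of slope $V$ through $(0,t_n^-)$, which is \eqref{eq:p_q_equal}. For the fundamental diagram: with every trajectory equal to $V(t-t_n^-)$, the spacing at a fixed time is $p_{n-1}(t)-p_n(t)=V(t_n^--t_{n-1}^-)$, which equals $V\tau+s$ when $V<\bar v$ and is $\ge V\tau+s$ when $V=\bar v$, giving \eqref{eq:K-V}; and since $p_n^{-1}(l)=t_n^-+l/V$ we have $O=(N-1)/(t_N^--t_1^-)$, which equals $KV$ because $\sum_{n=2}^N(p_{n-1}(t)-p_n(t))=V(t_N^--t_1^-)$. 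Finally $K\bar v\le(1-sK)/\tau\iff K(s+\bar v\tau)\le1$, which holds in the free-flow regime ($V=\bar v$, $K\le1/(s+\bar v\tau)$) and reverses in the congested regime ($V<\bar v$, $K=1/(s+V\tau)$, where $(1-sK)/\tau=V/(s+V\tau)=KV$), so $O=KV=\min\{K\bar v,(1-sK)/\tau\}$; monotonicity of $V\mapsto V/(s+V\tau)$ on $[0,\bar v]$ together with $K\le 1/(s+\bar v\tau)$ in free flow gives $O\in[0,\bar v/(s+\bar v\tau)]$, i.e. \eqref{eq:O-K}.

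The main obstacle is the regime dichotomy in the KWT induction: one must combine constancy of $\dot q_n$ with the fact that $q_n$ is defined on the ray $[t_n^-,\infty)$ rather than all of $\mathbb{R}$ to conclude that exactly one of the two arguments of the $\min$ is active on the whole ray, and then extract the separation relations with the correct orientation (an equality in the congested case, an inequality in the free-flow case). Once that point is settled and Theorem \ref{theo:KWT_SHL_bounds} is brought in, the remaining steps are routine bookkeeping.
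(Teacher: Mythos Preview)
Your argument is correct, and it takes a genuinely different route from the paper's own proof.

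The paper works on the SH side first: it argues directly that if two consecutive constant-speed trajectories $p_{n-1},p_n$ had different slopes they would either collide (violating safety) or separate indefinitely, in which case the SH construction would force the follower to accelerate, contradicting constancy of $\dot p_n$. This pins a common slope $V$ for all $p_n$; the separation relations \eqref{eq:t_n_minus_sep_less}--\eqref{eq:t_n_minus_sep_equal} are then read off from properness of the boundary (Theorem~\ref{theo: feasibility_all_green}). Only afterwards does the paper turn to $Q$, plugging the already-established $t_n^-$ spacings back into the KWT rule \eqref{eq:KWT-rule} to verify $q_n=p_n$ case by case.

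You instead attack the KWT side first, using the explicit $\min$-formula and constancy of $\dot q_n$ to rule out kinks; this gives the common slope $V$ and the separation relations in one elementary induction, without appealing to the inner workings of the SH algorithm. Your transfer to the SH side is also different: rather than rerunning the algorithm, you invoke Theorem~\ref{theo:KWT_SHL_bounds} and let $t\to\infty$ in the two-sided bound to force $W_n=V$. This is clean and reuses a result already on the shelf; the paper's route avoids that dependency but pays for it with the somewhat informal ``must accelerate'' step. The fundamental-diagram computations in your last paragraph match the paper's, modulo the evident typo $\tau+V/s$ for $\tau+s/V$ in the statement.
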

\begin{proof}
We fist investigate the case when every $\dot{p}_{n}(t)$ is constant.
If there exist two consecutive trajectories not parallel with each
other, then they will either intersect or depart from each other to
an infinite spacing since they both are straight lines. The former
is impossible because of safety constraints \eqref{eq: set-safety-constraints}.
Neither is the latter possible since the following trajectory has
to accelerate when their spacing exceeds the shadow spacing and runs
at a speed lower than the preceding vehicle. Therefore $\dot{p}_{n}(t)$
values are identical to a $V\in[0,\bar{v}]$ across $n\in\mathcal{N}$.
Therefore, $p_{n}(t)=V\left(t-t_{n}^{-}\right),\dot{p}_{n}(t)=V,\forall t\in\left[t_{n}^{-},\infty\right),n\in\mathcal{N}.$

Since $P^{\mbox{LVP}}\left(\underline{a}^{\mbox{f}},\bar{a}^{\mbox{f}}\right)$
is feasible, then we know from Theorem \ref{theo: feasibility_all_green}
that $\left[v_{n}^{-},t_{n}^{-}\right]_{n\in\mathcal{N}}$ is proper,
which indicates that equation \eqref{eq:t_n_minus_sep_less} holds
either $V=\bar{v}$ or $V<\bar{v}$. Further, when $V<\bar{v}$, equation
\eqref{eq:t_n_minus_sep_less} becomes a strict equality (or otherwise
the following trajectory needs to accelerate). Thus equation \eqref{eq:t_n_minus_sep_equal}
holds in this case. 

Then we will show that $q_{n}=p_{n}$. Since all trajectories in $P^{\mbox{LVP}}\left(\underline{a}^{\mbox{f}},\bar{a}^{\mbox{f}}\right)$
are at speed $V$, we know that $v_{n}^{-}=V,\forall n\in\mathcal{N}$
and the lead trajectory is given as $p_{1}(t)=V\left(t-t_{1}^{-}\right)$.
If $V=\bar{v}$, apparently no trajectory blocks its following trajectory,
and thus $q_{n}(t)=V\left(t-t_{n}^{-}\right)=p_{n}(t),\forall t\in\left[t_{n}^{-},\infty\right),n\in\mathcal{N}.$
Otherwise if $V<\bar{v}$, due to equation \eqref{eq:t_n_minus_sep_equal},
$q_{n}$ has no room to accelerate and thus has to stay at speed $V$.
Therefore, equation \eqref{eq:p_q_equal} holds. Then we will prove
equations \eqref{eq:K-V} and \eqref{eq:O-K}, which quantify the
triangular fundamental diagram. First, since $\left[v_{n}^{-},t_{n}^{-}\right]_{n\in\mathcal{N}}$
is proper, then we obtain $p_{n-1}\left(t_{n}^{-}\right)-p_{n}\left(t_{n}^{-}\right)\ge V\tau+s,\forall n\in\mathcal{N}\backslash\left\{ 1\right\} $,
and thus $K\ge1/(V\tau+s)$ holds. When $V<\bar{v}$, due to \eqref{eq:t_n_minus_sep_equal},
then $p_{n-1}\left(t_{n}^{-}\right)-p_{n}\left(t_{n}^{-}\right)=V\tau+s,\forall n\in\mathcal{N}\backslash\left\{ 1\right\} $,
and thus $K=1/(V\tau+s)$ holds. This proves equation \eqref{eq:K-V}.
Further note that $p_{n}^{-1}(0)-p_{n-1}^{-1}(0)=\left(p_{n-1}\left(t_{n}^{-}\right)-p_{n}\left(t_{n}^{-}\right)\right)/V,\forall n\in\mathcal{N}\backslash\left\{ 1\right\} $,
then $O=\frac{N-1}{\sum_{n=2}^{N}\left(p_{n}^{-1}(0)-p_{n-1}^{-1}(0)\right)}=\frac{N-1}{\sum_{n=2}^{N}\left(p_{n-1}\left(t_{n}^{-}\right)-p_{n}\left(t_{n}^{-}\right)\right)/V}=KV$.
When $V=\bar{v}$, plugging inequality $K\le1/(s+V\tau)$ in equation
\eqref{eq:K-V} into the previous equation, we obtain $KV=K\bar{v}\le\bar{v}/(s+\bar{v}\tau)$,
and $K\le1/(s+V\tau)$ can be rearranged into to $(1-sK)/\tau\ge KV$.
Therefore, equation \eqref{eq:O-K} holds in this case. When $V<\bar{v}$,
apparently, $KV<K\bar{v}$. Further, equality $K=1/(s+V\tau)$ in
equation \eqref{eq:K-V} can be rearranged as $KV=(1-sK)/\tau$. Note
that $K=1/(s+V\tau)>1/(s+\bar{v}\tau)$, and plugging this inequality
into $KV=(1-sK)/\tau$ yields $KV=(1-sK)/\tau<(1-s/(s+\bar{v}\tau))/\tau=\bar{v}/(s+\bar{v}\tau)$.
This indicates that equation \eqref{eq:O-K} holds in the case too.
This completes the proof. 
\end{proof}

\section{Illustrative Examples \label{sec:Numerical-Examples}}

This section presents a few illustrative examples that help visualize
results of the algorithms and related theoretical properties. In the
following examples, we set the parameters to their default values
unless explicitly stated otherwise. These default parameter values
are $L=1000$ m, $N=50$, $\underline{a}=-5\mbox{ m/s}^{2}$, $\overline{a}=2\mbox{ m/s}^{2}$,
$\bar{v}=25$ m/s, $G=R=25\mbox{ s}$, $s=7$ m, and the default boundary
condition is generated as follows. We first set the arrival times
as 
\begin{equation}
t_{1}^{-}=0,\,t_{n}^{-}=t_{n-1}^{-}+(\tau+s/\bar{v})\left[1+\left(\frac{C}{Gf^{\mbox{s}}}-1\right)\left(1-\alpha+\alpha\epsilon_{n}\right)\right],\forall n\in\mathcal{N}\backslash\{1\},\label{eq:t_dist}
\end{equation}
where $f^{\mbox{s}}\in\left(0,C/G\right]$ is the traffic saturation
rate (or the ratio of the arrival traffic volume to the intersection's
maximum capacity), $\alpha\in[0,1]$ is the dispersion factor of the
headway distribution, and $\left\{ \epsilon_{n}\right\} _{n\in\mathcal{N}\backslash\{1\}}$
are uniformly distributed non-negative random numbers that satisfy
$\sum_{n\in\mathcal{N}\backslash\{1\}}\epsilon_{n}=N-1$. The default
values of $f^{\mbox{s}}$ and $\alpha$ are both set to 1. We set
the arrival times in this way such that the average time headway equals
$(\tau+s/\bar{v})C/\left(Gf^{\mbox{s}}\right)$ yet the individual
arrival times can be stochastic. The stochasticity of the arrival
times increases with headway dispersion factor $\alpha$. Note that
we always maintain every headway no less than $(\tau+s/\bar{v})$
because the boundary condition would be infeasible otherwise. Next,
for each $n\in\mathcal{N}$, the initial speed $v_{n}^{-}$ is consecutively
drawn as a random number uniformly distributed over the corresponding
lower bound and upper bound. These two bounds assure that $v_{n}^{-}$
satisfies $v_{n}^{-}\in[0,\bar{v}]$, $D\left(\bar{p}_{(ms-ns)v_{m}^{-}\left[t_{m}^{-}+(n-m)\tau\right]}-\underline{p}_{0v_{n}^{-}t_{n}^{-}}\right)\ge0,\forall m<n\in\mathcal{N}$
and $D\left(\bar{p}_{(ns-ms)v_{n}^{-}\left[t_{n}^{-}+(m-n)\tau\right]}-\underline{p}_{0v_{m}^{-}t_{m}^{-}}\right)\ge0,\exists v_{m}^{-}\in[0,\bar{v}],\forall m>n\in\mathcal{N}$
with the maximum acceleration and the minimum deceleration for QTG
downscaled to $\bar{a}/3$ and $\underline{a}/3$, respectively. The
downscale of the acceleration limits assures that the boundary condition
is feasible for any forward acceleration $\bar{a}^{\mbox{f}}\ge\bar{a}/3$
and any forward deceleration $\underline{a}^{\mbox{f}}\ge\underline{a}/3$,
which allows to test a range of $\bar{a}^{\mbox{f}}$ and $\underline{a}^{\mbox{f}}$
values. If a proper boundary condition cannot be found, we will regenerate
random parameters $\left\{ \epsilon_{n}\right\} _{n\in\mathcal{N}\backslash\{1\}}$
with a different random seed and repeat this process until finding
a proper boundary condition.

Section \ref{sub:Manual-v.s.-Automated} compares the SH solutions
with a benchmark instance that simulates the manually-driven traffic
counterpart. This comparison aims to qualitatively show the advantaged
of the proposed CAV control strategies over manual driving. Section
\ref{sub:Leading-Vehicle-Problem} compares SHL and PSHL results and
shows they produce the identical results for the same LVP input. Section
\ref{sub:Feasibility-Tests} tests the feasibility of SH and SHL solutions
with different boundary conditions to verify some theory predictions
and reveal insights into how parameter changes affect the solution
feasibility. Section \ref{sub:Comparison-with-Classic} compares SHL
and LWK results and measures their difference to check the theoretical
error bounds.

\subsection{Manual v.s. Automated Trajectories\label{sub:Manual-v.s.-Automated}}

To illustrated the advantage of results, we construct a benchmark
instance that simulates the manually-driven traffic counterpart. We
adapt the Intelligent Driver model \citep{Treiber2000b} as the manual-driving
rule for every vehicle:

\begin{equation}
\ddot{p}_{n}(t)=\max\left\{ \min\left\{ \bar{a}\left(1-\frac{s^{*}}{f{}_{n-1}(t-\tau)-p_{n}(t-\tau)-l^{0}}\right),\bar{a}'\right\} ,\underline{a}'\right\} ,\label{eq:IDM}
\end{equation}
where vehicle length $l^{0}=5$m, acceleration bounds $\bar{a}'=\begin{cases}
0, & \mbox{if }\dot{p}_{n}(t)\ge\bar{v};\\
\bar{a}, & \mbox{otherwise},
\end{cases}$ and $\underline{a}'=\begin{cases}
0, & \mbox{if }\dot{p}_{n}(t)\le0;\\
\underline{a}, & \mbox{otherwise},
\end{cases}$ , comfort deceleration $b=1.67$m/s$^{2}$, and the desired spacing
$s^{*}=(s-l^{0})+\dot{p}_{n}(t-\tau)\tau+\dot{p}_{n}(t-\tau)\frac{\dot{p}_{n}(t-\tau)-\dot{p}'_{n-1}(t-\tau)}{\sqrt{\bar{a}b}}$.
The effect of traffic lights is emulated with a bounding frontier
$f{}_{n-1}$ that is $p{}_{n-1}$ if $p_{n}$ is not blocked by a
red light or a virtual vehicle parked at $L+s$ otherwise. We define
the yellow time prior to the beginning of a red phase as $y=3s$ and
$f{}_{n-1}$ can be formulated as 
\[
f{}_{n-1}(t)=\begin{cases}
L+s, & \mbox{if }L-p_{n}(t-\tau)<\bar{v}^{2}/(2b),t\in[mC-y,mC+G]\mbox{ and }\bar{p}_{p_{n}(t-\tau)\dot{p}_{n}(t-\tau)(t-\tau)}^{-1}(L)\notin\mathcal{G};\\
p{}_{n-1}(t), & \mbox{otherwise},
\end{cases},
\]
To accommodate the lead vehicle that does not have a preceding trajectory,
without loss of generality, we define a virtual preceding vehicle
$p_{0}(t)=\infty$ and $\dot{p}_{0}(t)=\bar{v},\forall t$. There
are two reasons to select this model. First, the trajectories produced
from this model appear to be consistent with our driving experience
at a signalized intersection: we tend to slow down and make a stop
only when we get close the intersection at a yellow or red light.
Secondly, it is easy to verify that at a stationary state, the same
macroscopic relationship between density and flow volume defined in
Theorem \ref{theo: fundamental} holds. This way, the comparison between
the SH solution and this benchmark will only focus on the ``trajectory
smoothing'' effect rather than improvement of stationary traffic
characteristics, which has been investigated in other studies (e.g.,
\citet{Shladover2009}). 

Figure \ref{fig:traj_results} compares the benchmark result with
the SH output. Figure \ref{fig:traj_results}(a) plots the benchmark
manual-driving trajectories generated with car-following model \eqref{eq:IDM}.
We see that due to abrupt accelerations and decelerations in the vicinity
of the traffic lights, a number of consecutive stop-and-go waves are
formed and propagated backwards from the intersection. These stop-and-go
waves slow down the passing speed of the vehicles at the intersections,
and thus decrease the traffic throughput and increase the travel delay.
As a result, the total travel time (i.e., the time duration between
the first vehicle's entry at location 0 and the last vehicle's exist
at location $L$) is over 300 seconds for the benchmark case. Further,
it is intuitive that these stop-and-go waves adversely impact fuel
consumptions and emissions and amplify collision risks. Figure \ref{fig:traj_results}(b)
plots the SH result with $\left(\bar{a}^{\mbox{f}},\underline{a}^{\mbox{f}},\bar{a}^{\mbox{b}},\underline{a}^{\mbox{b}}\right)$
identical to their bounding values $\left(\bar{a},\underline{a},\bar{a},\underline{a}\right)$.
We see that despite some sharp accelerations and decelerations, all
vehicles can pass the intersection at the maximum speed and thus the
traffic throughput gets maximized. The total travel time now is only
around 170 seconds. Figure \ref{fig:traj_results}(c) plots the SH
result with $\left(\bar{a}^{\mbox{f}},\underline{a}^{\mbox{f}},\bar{a}^{\mbox{b}},\underline{a}^{\mbox{b}}\right)$
downscaled to $\left(\bar{a}/3,\underline{a}/3,\bar{a}/3,\underline{a}/9\right)$.
We see that with the acceleration/deceleration magnitudes reduces,
trajectories become much smoother while the total travel time keeps
low around 170 seconds. This will further reduce the traffic's environmental
impacts and enhance its safety.

\begin{figure}
\begin{centering}
\includegraphics{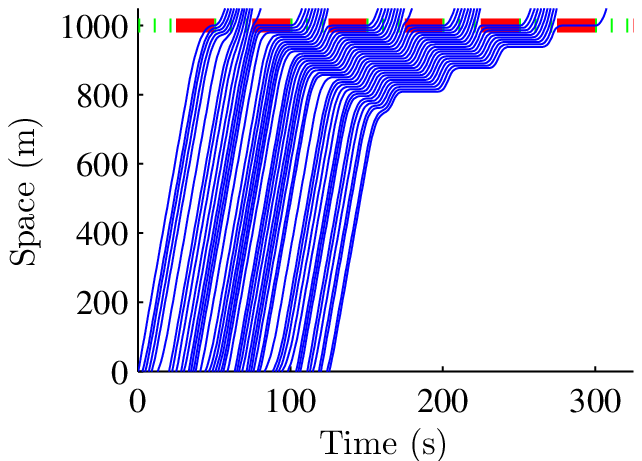}\includegraphics{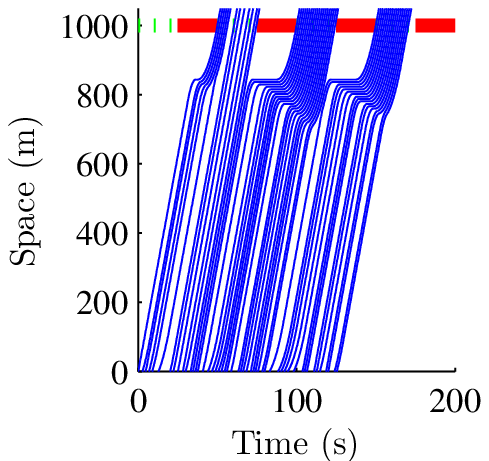}\includegraphics{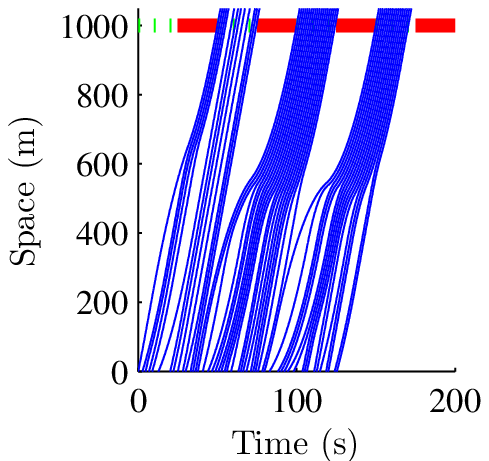}
\par\end{centering}

\begin{centering}
\qquad{}\qquad{}(a)\qquad{}\qquad{}\qquad{}\qquad{}\qquad{}\qquad{}\qquad{}(b)\qquad{}\qquad{}\qquad{}\qquad{}\qquad{}\qquad{}\qquad{}(c)
\par\end{centering}

\protect\caption{(a) Benchmark manual-driving trajectories, (b) SH result $P\left(\bar{a},\underline{a},\bar{a},\underline{a}\right),$
and (c) SH result $P\left(\bar{a}/3,\underline{a}/3,\bar{a}/3,\underline{a}/9\right).$
\label{fig:traj_results}}
\end{figure}

To inspect the differences between the benchmark and the SH result
from a macroscopic point of view, we measure the macroscopic traffic
characteristics, including density and flow volumes for the trajectory
sets in Figure \ref{fig:traj_results} with the measuring method proposed
by \citet{Laval2011}. Basically, we roll a parallelogram with a length
of 100m and a time interval of 5s along the shock wave direction (at
a speed of $-s/\tau$) across the trajectories in every plot in Figure
\ref{fig:traj_results} by a 100m$\times$5s step size. We measure
the flow volume and density at each parallelogram and plot the measurements
as circles in the corresponding diagrams in Figure \ref{fig:fund_measurements},
where the solid curves represent the stationary flow-density relationships
specified in equation \ref{eq:O-K}. We see that in Figure \ref{fig:fund_measurements}(a)
for the bench mark trajectories, many measurements are distributed
on the congested side of this diagram and most of them are below the
stationary curve, which explains why the performance of the benchmark
case is the worst. This is probably because traffic the stop and go
waves (or traffic oscillations) result in a lower traffic throughput
even at the same density, which is known as the capacity drop phenomenon
\citep{Cassidy1999,Ma2015}. In Figure \ref{fig:fund_measurements}(b)
for the SH result $P\left(\bar{a},\underline{a},\bar{a},\underline{a}\right)$,
there are much fewer measurements falling in the congested branch,
and these measurements are closer to the stationary curve. In Figure
\ref{fig:fund_measurements}(c) for the smoothed SH result $P\left(\bar{a}/3,\underline{a}/3,\bar{a}/3,\underline{a}/9\right)$,
even more measurements lie in the free-flow branch, and these measurements
become consistent with the stationary curve. This suggests that the
proposed SH algorithm with proper parameter values can counteract
the capacity drop phenomenon and bring macroscopic traffic characteristics
toward the free-flow branch of the stationary curve. 

\begin{figure}
\begin{centering}
\includegraphics[width=0.33\textwidth]{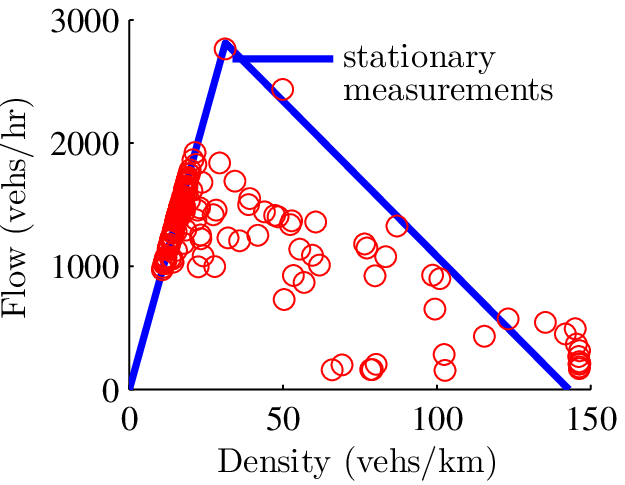}\includegraphics[width=0.33\textwidth]{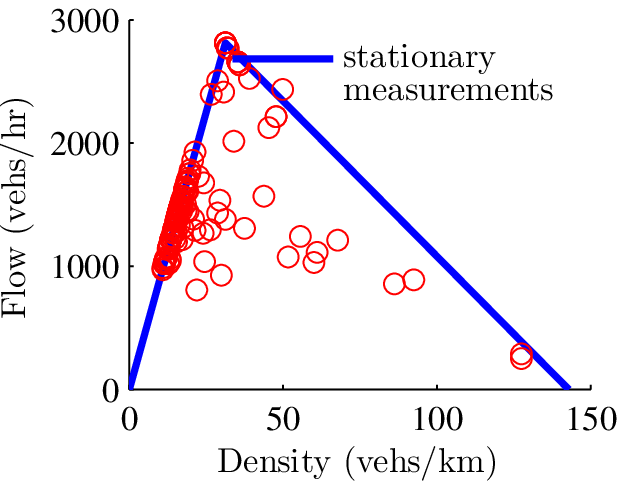}\includegraphics[width=0.33\textwidth]{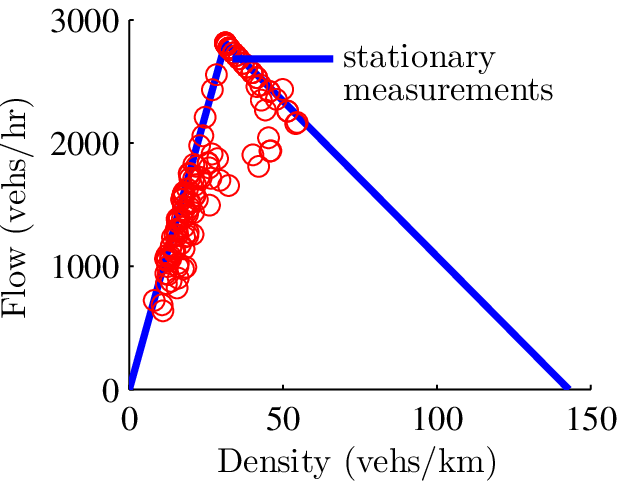}
\par\end{centering}

\begin{centering}
\qquad{}\qquad{}(a)\qquad{}\qquad{}\qquad{}\qquad{}\qquad{}\qquad{}\qquad{}(b)\qquad{}\qquad{}\qquad{}\qquad{}\qquad{}\qquad{}\qquad{}(c)
\par\end{centering}

\protect\caption{Macroscopic characteristics for (a) the benchmark trajectories, (b)
$P\left(\bar{a},\underline{a},\bar{a},\underline{a}\right)$, and
(c) $P\left(\bar{a}/3,\underline{a}/3,\bar{a}/3,\underline{a}/9\right).$
\label{fig:fund_measurements}}
\end{figure}

Overall, the results in this Subsection show that the proposed SH
algorithm can much improve the highway traffic performance in mobility,
environment and safety. To realize the full utility of the SH algorithm,
quantitative optimization needs to be conducted, which will be detailed
in Part II of this study.

\subsection{Lead Vehicle Problem \label{sub:Leading-Vehicle-Problem}}

This subsection presents LVP results from manual driving law \eqref{eq:IDM}
and the proposed CAV driving algorithms. In the LVP, we set $G=\infty$
and $R=0$, and we update saturation rate to $f_{s}=0.5$ in generating
the boundary condition (so that the average headway remains $0.5(\tau+s/\bar{v})$).
The lead trajectory is set to initially cruise at speed $\bar{v}$
for 20 seconds, then deceleration to the zero speed with a decelerating
rate of $\underline{a}/3$, then keep stopped for 20 seconds, then
accelerate to $\bar{v}$ with a rate of $\bar{a}/3$, and finally
keep cruising at this speed, i.e., 
\begin{eqnarray}
p_{1}: & = & \left[\left(0,\bar{v},0,0,20\right),\left(20\bar{v},\bar{v},\frac{\underline{a}}{3},20,20-\frac{3\bar{v}}{\underline{a}}\right),\left(20\bar{v}-\frac{3\bar{v}^{2}}{2\underline{a}},0,0,20-\frac{3\bar{v}}{\underline{a}},40-\frac{3\bar{v}}{\underline{a}}\right),\right.\nonumber \\
 &  & \left.\left(20\bar{v}-\frac{3\bar{v}^{2}}{2\underline{a}},0,\frac{\underline{a}}{3},40-\frac{3\bar{v}}{\underline{a}},40-\frac{3\bar{v}}{\underline{a}}+\frac{3\bar{v}}{\bar{a}}\right),\left(20\bar{v}-\frac{3\bar{v}^{2}}{2\underline{a}}+\frac{3\bar{v}^{2}}{2\bar{a}},\bar{v},0,40-\frac{3\bar{v}}{\underline{a}}+\frac{3\bar{v}}{\bar{a}},\infty\right)\right].\label{eq:leading_traj_gen}
\end{eqnarray}
This way, $p_{1}$ triggers a stopping wave and we can examine its
propagation under different driving conditions. Figure \ref{fig:traj_results-LVP}
shows the trajectory comparison results. We see that first, in Figures
\ref{fig:traj_results-LVP} (b) and (c), the PSHL results (solid lines)
exactly overlap with the SHL results (crosses), which verifies Proposition
\ref{prop: PSHL=00003DSHL} that states the equivalence between PSHL
and SHL. Compared with the manual-driving case in Figure \ref{fig:traj_results-LVP}(a),
the automated-driving case in Figure \ref{fig:traj_results-LVP}(b),
even with the extreme acceleration and deceleration rates $\left(\bar{a},\underline{a}\right)$,
relatively better absorbs the backward stopping wave within a fewer
number of vehicles. Reducing the acceleration and deceleration magnitudes
to $\left(\bar{a}/3,\underline{a}/3\right)$ in Figure \ref{fig:traj_results-LVP}(c)
can further smooth vehicle trajectories and dampen the impact from
the stopping wave. These results imply that proper CAV controls can
effectively smooth stop-and-go traffic and reduce backward shock wave
propagation on a freeway.

\begin{figure}
\begin{centering}
\includegraphics[width=0.33\linewidth]{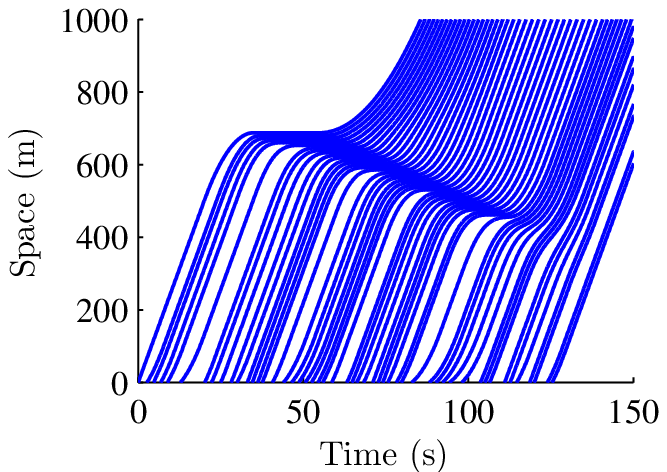}\includegraphics[width=0.33\textwidth]{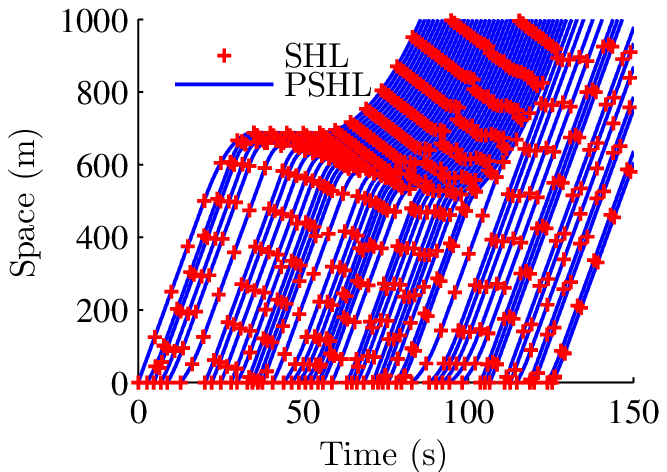}\includegraphics[width=0.33\textwidth]{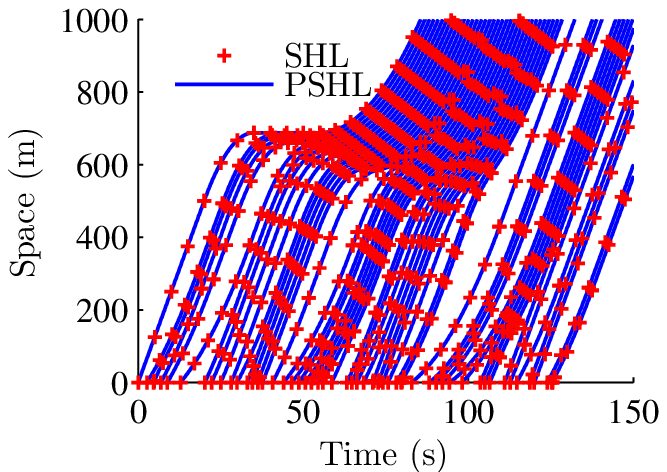}
\par\end{centering}

\begin{centering}
\qquad{}\qquad{}(a)\qquad{}\qquad{}\qquad{}\qquad{}\qquad{}\qquad{}\qquad{}(b)\qquad{}\qquad{}\qquad{}\qquad{}\qquad{}\qquad{}\qquad{}(c)
\par\end{centering}

\protect\caption{(a) Benchmark manual-driving trajectories, (b) SHL and PSHL results
with $\left(\bar{a}^{\mbox{f}},\underline{a}^{\mbox{f}}\right)=\left(\bar{a},\underline{a}\right),$
and (c) SHL and PSHL results with $\left(\bar{a}^{\mbox{f}},\underline{a}^{\mbox{f}}\right)=\left(\bar{a}/3,\underline{a}/3\right).$
\label{fig:traj_results-LVP}}
\end{figure}

\subsection{Feasibility Tests\label{sub:Feasibility-Tests}}

This section conducts some numerical tests to test the feasibility
of the proposed algorithms with different input settings. We first
investigate SHL (or PSHL) for LVP with $\left(\bar{a}^{\mbox{f}},\underline{a}^{\mbox{f}}\right)=\left(\bar{a},\underline{a}\right).$
Theorem \ref{theo: feasibility_all_green} proves that SHL is feasible
if and only if boundary condition $\left[v_{n}^{-},t_{n}^{-}\right]_{n\in\mathcal{N}}$
is proper. Thus we investigate how the feasibility of SHL changes
with the distribution of $v_{n}^{-}$ and $t_{n}^{-}$ . Since the
default boundary condition generation method always assures that $\left[v_{n}^{-},t_{n}^{-}\right]_{n\in\mathcal{N}}$
is proper, this subsection uses a different generation method to allow
$\left[v_{n}^{-},t_{n}^{-}\right]_{n\in\mathcal{N}}$ to be non-proper.
We still use equation \eqref{eq:t_dist} to generate $t_{n}^{-}$,
and thus the dispersion of time headway is controlled by $\alpha$.
In the next step, each $v_{n}^{-}$ is instead randomly pulled along
a uniform interval $\left[(1-\beta)\bar{v},\bar{v}\right]$ where
$\beta\in[0,1]$ is the speed dispersion factor and increases with
the dispersion of the $v_{n}^{-}$ distribution. Note that the final
values of $t_{n}^{-}$ and $v_{n}^{-}$ are randomly generated. We
generate 20 boundary condition instances with the same $\alpha$ and
$\beta$ values yet different random seeds. Then we feed each boundary
condition instance to the SHL algorithm and record the feasibility
of the result. We call the percentage of feasible solutions over all
20 boundary conditions the \emph{feasibility rate} with regard to
this specific parameter setting. Figure \ref{fig:feasibility_PSHL}
plots hot maps for the feasibility rate over $\alpha\times\beta\in[0,1]\times[0,1]$
with different $f^{\mbox{s}}$ values. We can see that overall, as
$\alpha$ and $\beta$ increase, the feasibility rate decreases, and
more instances are infeasible as $f^{\mbox{s}}$ increases. This is
because higher dispersion of $t_{n}^{-}$ and $v_{n}^{-}$ is more
likely to cause conflicts between trajectories that cannot be reconciled
under safety constraint \eqref{eq: set-safety-constraints}, and such
conflicts may increase as traffic gets denser. Note that in all maps
in Figure \ref{fig:feasibility_PSHL}, the transition band between
100\% feasibility rate (the white color) and 0\% feasibility rate
(the black color) is very narrow. This indicates that SHL's feasibility
is dichotomous. With this observation, the parameters could be partitioned
into only two phases (feasible and infeasible) to facilitate relevant
analysis. 

\begin{figure}
\begin{centering}
\includegraphics[height=0.3\textwidth]{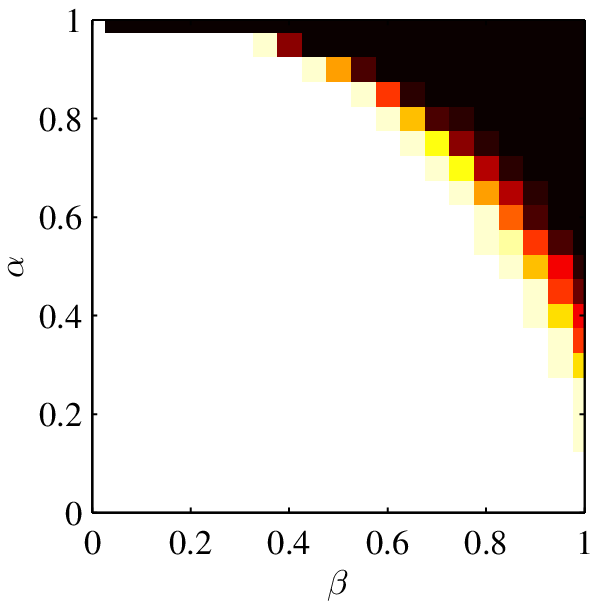}\includegraphics[height=0.3\textwidth]{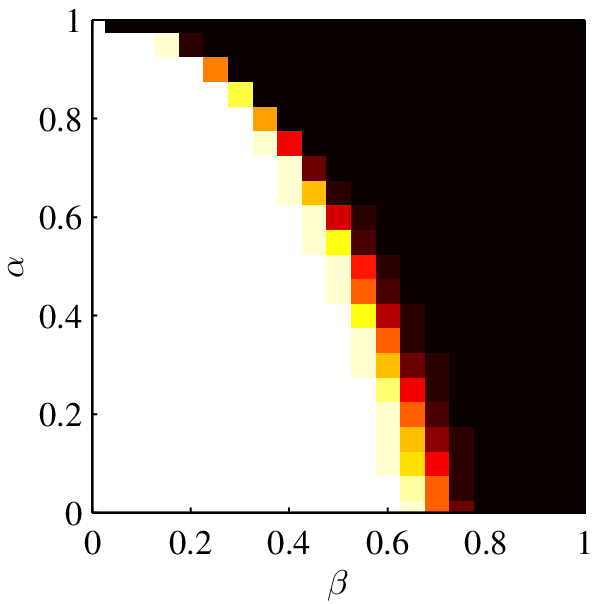}\includegraphics[height=0.3\textwidth]{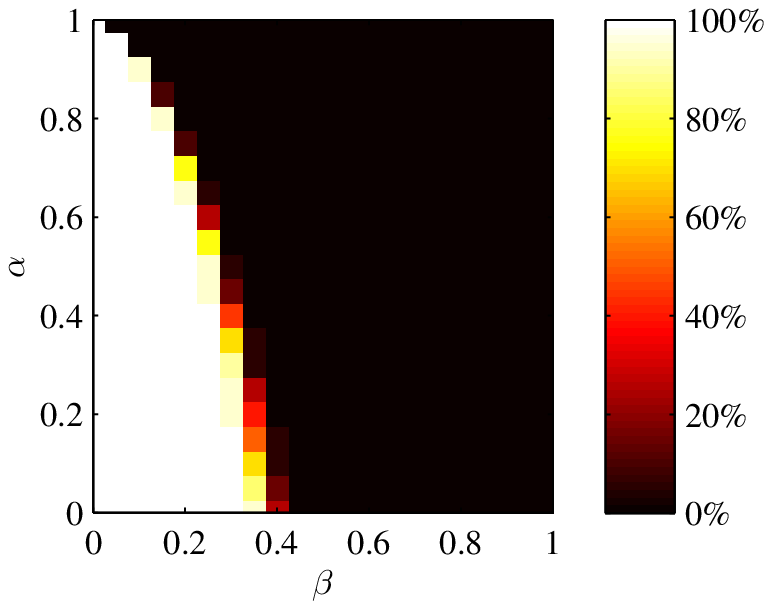}
\par\end{centering}

\begin{centering}
\qquad{}\qquad{}(a)\qquad{}\qquad{}\qquad{}\qquad{}\qquad{}\qquad{}\qquad{}(b)\qquad{}\qquad{}\qquad{}\qquad{}\qquad{}\qquad{}\qquad{}(c)
\par\end{centering}

\protect\caption{Feasibility rate of PSHL with (a) $f^{\mbox{s}}=0.2$, (b) $f^{\mbox{s}}=0.5$,
and (c) $f^{\mbox{s}}=0.8.$ \label{fig:feasibility_PSHL}}
\end{figure}

Next we investigate the SH algorithm considering traffic lights with
$\left(\bar{a}^{\mbox{f}},\underline{a}^{\mbox{f}},\bar{a}^{\mbox{b}},\underline{a}^{\mbox{b}}\right)=\left(\bar{a},\underline{a},\bar{a},\underline{a}\right)$.
We conduct similar experiments as those for Figure \ref{fig:feasibility_PSHL}
with the same adapted boundary condition generation method. Theorems
\ref{theo:P_hat_feasibility} and \ref{theo:P_feasibility} suggest
that the feasibility of SH is related to segment length $L$. Traffic
congestion $f^{\mbox{s}}$ and platoon size $N$ shall also affect
SH's feasibility. This time, we fix $\alpha=\beta=0.5$ and analyze
how the feasibility rate varies with $L$ and $f^{\mbox{s}}$ over
different $N$ values, and the results are shown in Figure \ref{fig:feasibility_SH}.
We see that again the increase of $f^{\mbox{s}}$ raises the chance
of infeasibility. Further, as $L$ decreases, the likelihood of feasibility
diminishes. This is because a short section may not be sufficient
to store enough stopping or slowly moving vehicles to both comply
with the signal phases and allow the following vehicles to enter the
section at their due times. Also, more instances are infeasible as
$N$ increases. This is because more vehicles shall bare a higher
chance of producing an irreconcilable conflict between two consecutive
vehicles against safety constraint \eqref{eq: set-safety-constraints}.
Similarly, the SH feasibility is dichotomous and a two-phase representation
might be applicable. 

\begin{figure}
\begin{centering}
\includegraphics[height=0.3\textwidth]{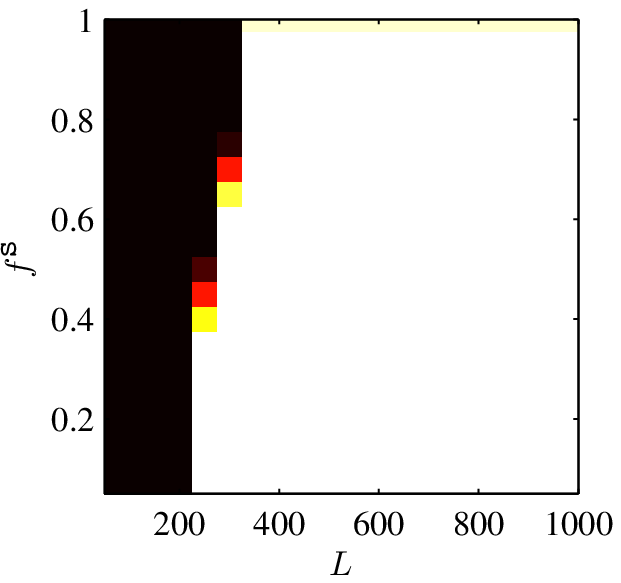}\includegraphics[height=0.3\textwidth]{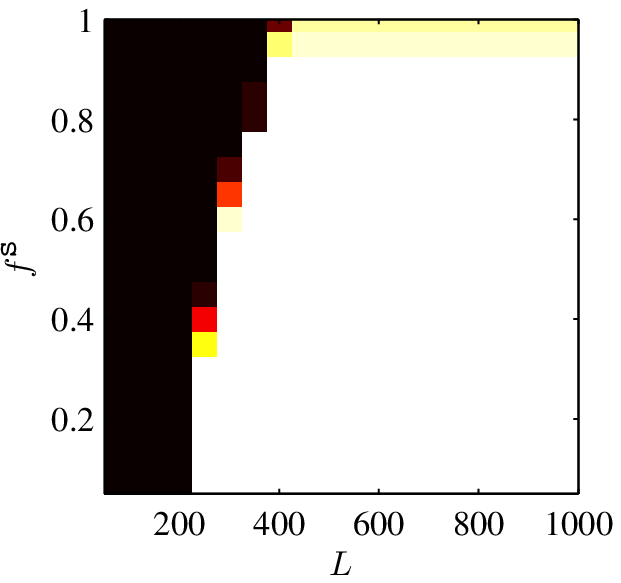}\includegraphics[height=0.3\textwidth]{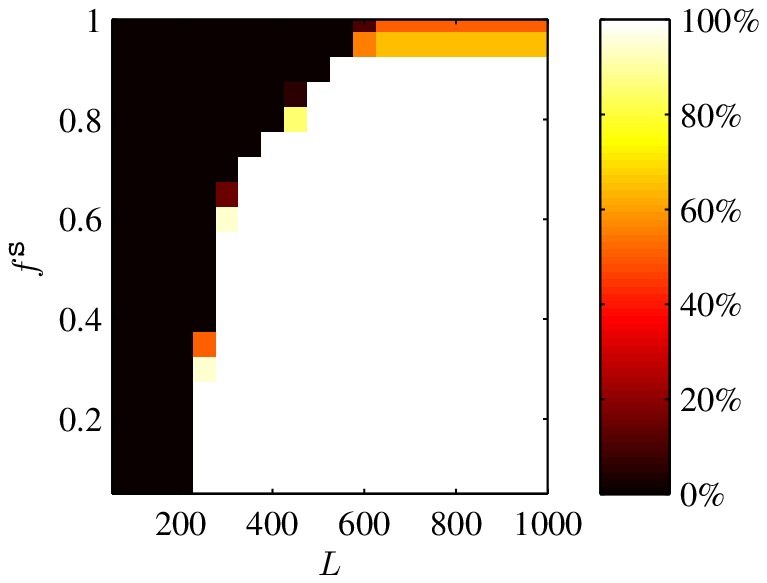}
\par\end{centering}

\begin{centering}
\qquad{}\qquad{}(a)\qquad{}\qquad{}\qquad{}\qquad{}\qquad{}\qquad{}\qquad{}(b)\qquad{}\qquad{}\qquad{}\qquad{}\qquad{}\qquad{}\qquad{}(c)
\par\end{centering}

\protect\caption{Feasibility rate of PSHL with (a) $N=25$; (b) $N=50$; and (c) $N=100.$
\label{fig:feasibility_SH}}
\end{figure}

\subsection{Comparison with Classic Traffic Flow Models\label{sub:Comparison-with-Classic}}

This section compares KWT solution $Q$ and SHL solution $P^{\mbox{LVP}}\left(\underline{a}^{\mbox{f}},\bar{a}^{\mbox{f}}\right)$
for LVP. Again, we set $G=\infty$ and $R=0$, $f^{\mbox{s}}=0.5$.
The lead trajectory is generated with equation \eqref{eq:leading_traj_gen},
and the boundary condition is generated with the default method associated
with equation \eqref{eq:t_dist} to assure the feasibility. Figure
\ref{fig:KWT_SHL_Traj} compares these trajectories with different
$\left(\bar{a}^{\mbox{f}},\underline{a}^{\mbox{f}}\right)$ values.
In this section we allow $\left(\bar{a}^{\mbox{f}},\underline{a}^{\mbox{f}}\right)$
go beyond $\left(\bar{a},\underline{a}\right)$ so as to investigate
the asymptotic properties of SHL. We see that in general, trajectories
in $Q$ have abrupt turns while those in $P^{\mbox{LVP}}\left(\underline{a}^{\mbox{f}},\bar{a}^{\mbox{f}}\right)$
are relatively smooth. All trajectories in $P^{\mbox{LVP}}\left(\underline{a}^{\mbox{f}},\bar{a}^{\mbox{f}}\right)$
is below those in $P^{\mbox{LVP}}\left(\underline{a}^{\mbox{f}},\bar{a}^{\mbox{f}}\right)$,
which is consistent with the upper bound property stated in Theorem
\ref{theo:KWT_SHL_bounds}. As $\left(\bar{a}^{\mbox{f}},\underline{a}^{\mbox{f}}\right)$
amplifies from $\left(\bar{a}/3,\underline{a}/3\right)$ to $\left(3\bar{a},3\underline{a}\right)$,
we see that accelerations and decelerations in $P^{\mbox{LVP}}\left(\underline{a}^{\mbox{f}},\bar{a}^{\mbox{f}}\right)$
become sharper and trajectories in $P^{\mbox{LVP}}\left(\underline{a}^{\mbox{f}},\bar{a}^{\mbox{f}}\right)$
get closer to those in $Q$, which is consistent with the asymptotic
property stated in Corollary \ref{cor_KWT_SH_asymptotic}. 

\begin{figure}
\begin{centering}
\includegraphics[width=0.33\textwidth]{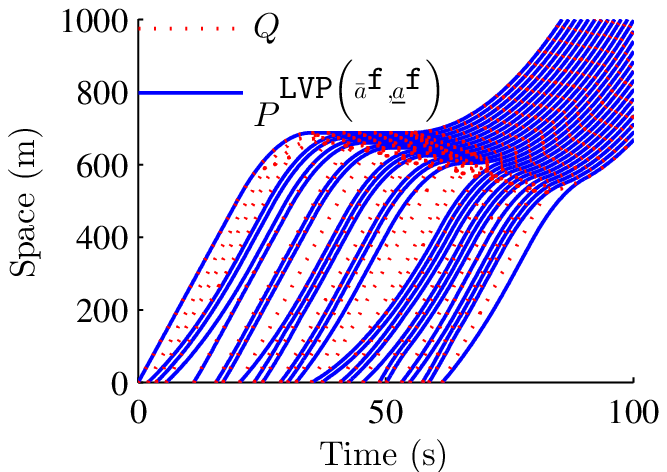}\includegraphics[width=0.33\linewidth]{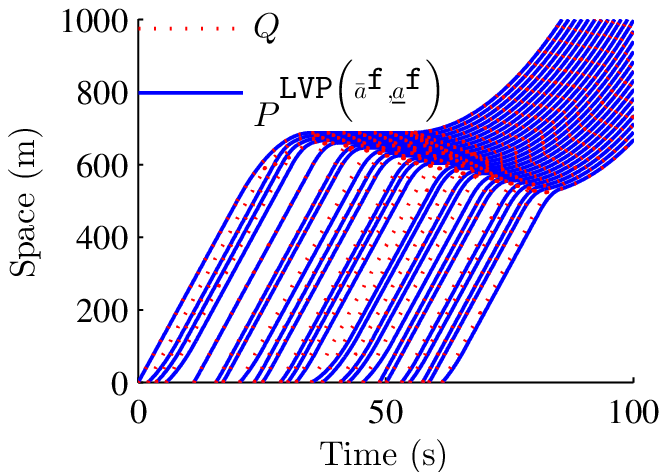}\includegraphics[width=0.33\linewidth]{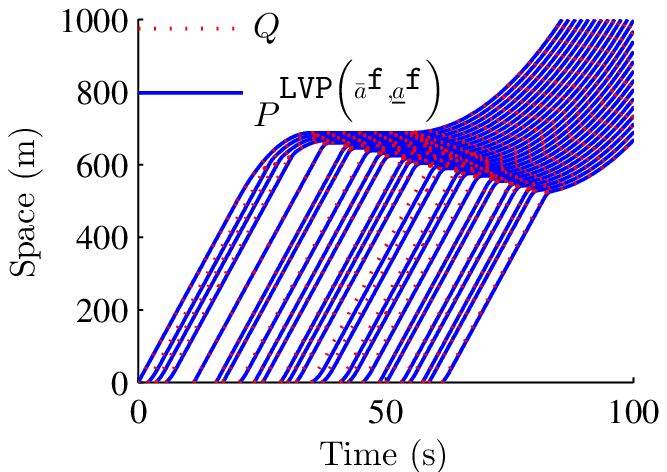}
\par\end{centering}

\begin{centering}
\qquad{}\qquad{}(a)\qquad{}\qquad{}\qquad{}\qquad{}\qquad{}\qquad{}\qquad{}(b)\qquad{}\qquad{}\qquad{}\qquad{}\qquad{}\qquad{}\qquad{}(c)
\par\end{centering}

\protect\caption{Comparison between KWT solution $Q$ and SHL solution $P^{\mbox{LVP}}(\bar{a}^{\mbox{f}},\underline{a}^{\mbox{f}})$
with (a) $\left(\bar{a}^{\mbox{f}},\underline{a}^{\mbox{f}}\right)=\left(\bar{a}/3,\underline{a}/3\right)$,
(b) $\left(\bar{a}^{\mbox{f}},\underline{a}^{\mbox{f}}\right)=\left(\bar{a},\underline{a}\right)$,
and (c) $\left(\bar{a}^{\mbox{f}},\underline{a}^{\mbox{f}}\right)=\left(3\bar{a},3\underline{a}\right)$.\label{fig:KWT_SHL_Traj}.}
\end{figure}

Figure \ref{fig:KWT_SHL_Error} plots the errors between $Q$ and
$P^{\mbox{LVP}}\left(\gamma\bar{a},\gamma\underline{a}\right)$ and
their bounds (defined in Theorem \ref{theo:KWT_SHL_bounds}), where
acceleration factor $\gamma$ increases from $1/3$ to $3$. We see
that $D\left(q_{n}-p_{n}\right)$ is always identical to 0 (or the
upper bound) and $D\left(p_{n}-q_{n}\right)$ is always above the
lower bound \eqref{eq:SH-KWT lower bound} for all $\gamma$ values.
As $\gamma$ increases, the errors and their bounds all converge to
0, which again confirms Corollary \ref{cor_KWT_SH_asymptotic}. In
summary, these experiments show that SHL can be viewed as a smoothed
version of KWT that replaces speed jumps in KWT with smooth accelerations
and decelerations. 

\begin{figure}
\begin{centering}
\includegraphics{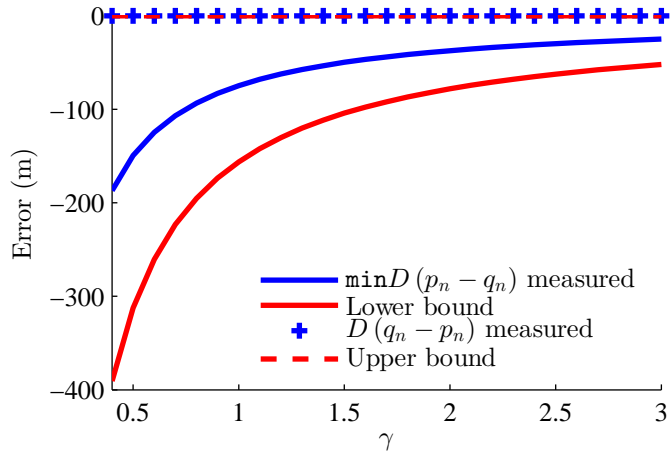}
\par\end{centering}

\protect\caption{Measured errors between KWT solution $Q$ and SHL solution $P^{\mbox{LVP}}\left(\gamma\bar{a},\gamma\underline{a}\right)$
v.s. their bounds \label{fig:KWT_SHL_Error}.}
\end{figure}

\section{Conclusion\label{sec:Conclusion}}

This paper investigates the problem of controlling multiple vehicle
trajectories on a highway with CAV technologies. We propose a shooting
heuristic to efficiently construct vehicle trajectories that follow
one another under a number of constraints, including the boundary
condition, physical limits, following safety, and traffic signals.
With slight adaptation, this heuristic is applicable to not only highway
arterials with interrupted traffic but also uninterrupted freeway
traffic. We generalize the time geography theory to consider finite
accelerations. This allows us to study the behavior of the proposed
algorithms. We find that the proposed algorithms can always find a
feasible solution to the original complex multi-trajectory control
problem under certain mild conditions. We further point out that the
shooting heuristic solution to the lead vehicle problem can be viewed
as a smoothed version of the classic kinematic theory's result. We
find that the kinematic wave theory is essentially a special case
of the proposed shooting heuristic with infinite accelerations. Further,
the difference between the shooting heuristic solution and the kinematic
wave solution is found to be limited within two theoretical bounds
independent of the size of the vehicles in the studied traffic stream.
Numerical experiments are conducted to illustrate some theoretical
results and draw additional insights into how the proposed algorithms
can improve highway traffic. 

This paper provides a methodological and theoretical foundation for
management of future CAV traffic. The following part II paper \citet{Ma2015}
of this study will apply the proposed constructive heuristic with
given acceleration rates to a trajectory optimization framework that
optimizes the overall performance of CAV traffic (e.g., in terms of
mobility, environment impacts and safety) by finding the best acceleration
rates. Theoretical results and numerical examples on this optimization
framework will be presented. This study overall expects to provide
both theoretical principles and application guidance for upgrading
the existing highway traffic management systems with emerging CAV
technologies. It can be extended in a number of directions to address
practical challenges and emerging opportunities in deploying CAV technologies,
such as calibration with field experiments, more complex geometries,
heterogeneous vehicles, and mixed manual and automated traffic.

\section*{Acknowledgments}

This research is supported in part by the U.S. National Science Foundation
through Grants CMMI CAREER\#1453949, CMMI \#1234936 and CMMI \#1541130
and by the U.S. Federal Highway Administration through Grant DTFH61-12-D-00020. 

\bibliographystyle{plainnat}
\bibliography{Literature}

\end{document}